\documentclass{article}
\usepackage{amsmath,amssymb,latexsym,amsthm,mathrsfs}

\newtheorem{theo}{Theorem}[section]
\newtheorem{pro}[theo]{Proposition}
\newtheorem{lem}[theo]{Lemma}
\newtheorem{cor}[theo]{Corollary}

\newcommand{\ra}{\rightarrow}

\theoremstyle{definition}
\newtheorem{defin}[theo]{Definition}
\newtheorem{nota}[theo]{Notation}
\newtheorem{exa}{Example}[section]

\theoremstyle{remark}
\newtheorem{rem}[theo]{Remark}

\title{Random walks on nilpotent groups driven by measures 
supported on powers of 
generators}
\author{Laurent Saloff-Coste\thanks{Both authors partially supported by NSF
grant DMS 1004771}\\
{\small Department of Mathematics}\\
{\small Cornell University} 
\and 
Tianyi Zheng\\
{\small  Department of Mathematics}\\
{\small Cornell University}
}

\begin{document}
\maketitle

\begin{abstract} We study the decay of convolution powers of
a large family $\mu_{S,a}$ of measures
on finitely generated nilpotent groups. 
Here, $S=(s_1,\dots,s_k)$
is a generating $k$-tuple of group elements  and $a=
(\alpha_1,\dots,\alpha_k)$ is a $k$-tuple of reals in the interval 
$(0,2)$.
The symmetric measure $\mu_{S,a}$ is supported by $S^*=\{s_i^{m}, 
1\le i\le k,\,m\in \mathbb Z\}$ and gives probability proportional to 
$$(1+m)^{-\alpha_i-1}$$
to $s_i^{\pm m}$, $i=1,\dots,k,$ $m\in \mathbb N$.
We determine the behavior of the probability of return 
$\mu_{S,a}^{(n)}(e)$ as $n$ tends to 
infinity. 
This behavior depends in somewhat 
subtle ways on  interactions between the $k$-tuple $a$ and 
the positions of the generators $s_i$ within the lower central 
series $G_{j}=[G_{j-1},G]$, $G_1=G$. 
\end{abstract}

\section{Introduction}\setcounter{equation}{0}
Generating sets play an essential role in the theory of countable groups. 
This is obvious when a group is defined by generators 
and relations or when a group is defined as the subgroup generated 
by a given finite subset of elements in a much larger group.
In this context, the 
larger ambient group serves as a sort of ``black box'' that encodes 
the law of the group.

Given a group $G$ with finite symmetric generating set $A$, 
the simple random walk on $G$ 
can be interpreted as a way to randomly explore the group $G$. Starting 
at the identity element $e$, the position of the walk at time $n$ is the product 
$\xi_1\dots\xi_n$ where the $G$-valued random variables $\xi_i$
are independent equidistributed with law given by 
the uniform probability on the set $A$.  
More generally, given a probability measure $\mu$ on $G$, the random
walk driven by $\mu$ corresponds to taking the sequence $(\xi_i)$ to be
i.i.d.\ with law $\mu$ and the position at time $n$ has distribution 
$\mu^{(n)}$, the $n$-fold convolution product of $\mu$ with itself.
In particular, $\mathbf P_e (\xi_1\dots \xi_n=e)=\mu^{(n)}(e)$. 
In the case of the simple random walk based on the generating set $A$, 
$\mu=|A|^{-1}\mathbf 1_A$.  

Not surprisingly, many aspects of the behavior of these random processes  
are closely related to the algebraic and geometric property of the 
underlying group $G$. Harry Kesten introduced this question
in his Ph.D. thesis published in 1958. One of Kesten's fundamental results
states that, for a random walk driven by a symmetric measure with generating 
support, the probability of return, 
$\mathbf P_e(\xi_1\dots\xi_n=e)$, decays exponentially fast if and only 
the group $G$ is non-amenable. See \cite{Kest1,Kest2}.

\subsection{The measures $\mu_{S,a}$}

This is the first of a series of papers where we study  
a natural family of random walks  
driven by measures $\mu_{S,a}$ which are defined as follows. 
The letter $S$ represents a finite generating tuple, i.e.,  
a list $S=(s_1,s_2,\dots,s_k)$ of generators 
(repetitions are permitted). 
In addition, we are given a $k$-tuple $a$ of (extended) positive reals 
$a=(\alpha_1,\alpha_2,\dots,\alpha_k)$, $\alpha_i\in (0,\infty]$.
The measure $\mu_{S,a}$ allows long steps along any of the one-parameter group 
$\langle s_i\rangle=\{s_i^n: n\in \mathbb Z\}$, $1\le i\le k$. 
The probability of such a long step along $\langle s_i\rangle$ is given by a 
power law whose exponent $\alpha_i$ is the $i$-th entry of the tuple $a$. 
Namely, we set,
\begin{equation}\label{muS}
\mu_{S,a}(g)=\frac{1}{k}\sum_{i=1}^k
c(\alpha_i) \sum_{m\in \mathbb Z}  (1+|m|)^{-\alpha_i-1}\mathbf 1_{s_i^m}(g)
\end{equation}
where 
$$c(\alpha)^{-1}= \sum_{\mathbb Z}(1+ |m|)^{-\alpha-1}.$$
We make the somewhat arbitrary convention that if $\alpha=\infty$ then 
$(1+|m|)^{-\alpha-1}=0$ unless 
$m=0,\pm 1$ in which case $(1+|m|)^{-\alpha-1}=1$. Note that $\mu_{S,a}$ is 
symmetric, that is, satisfies $\mu_{S,a}(g^{-1})=\mu_{S,a}(g)$.
 We can also describe $\mu_{S,a}$ as the push-forward of the probability 
measure $\mu_{a}$ 
on the free group $\mathbf F_k$ on $k$ generators $\mathbf s_i$, $1\le i\le k$, 
which gives probability 
$$\mu_{a}(\mathbf s_i^{\pm m})=k^{-1}c(\alpha_i)(1+|m|)^{-\alpha_i-1} \mbox{ to }
\mathbf s_i^{\pm m}.$$
Indeed, if $\pi$ is the projection from $\mathbf F_k$ onto $G$ which sends
$\mathbf s_i$ to $s_i$, $$\mu_{S,a}(g)=\mu_{a}(\pi^{-1}(g)).$$

On $\mathbb Z$, the power laws  $\mu_\alpha(\pm k)=c(\alpha) (1+|k|)^{-\alpha-1}$
are very natural probability measures. For $\alpha\in (0,2)$, $\mu_\alpha$ 
can be viewed as a discrete version of the symmetric stable laws which is the probability distribution on $\mathbb R$ whose Laplace transform is 
$e^{-|y|^\alpha}$.

The main result of this paper, Theorem \ref{th-main1} below, 
describes the behavior of 
$$n\mapsto \mu^{(n)}_{S,a}(e)$$ 
when $G$ is any given finitely generated nilpotent 
group, $S$ is any given finite generating tuple of elements of $G$ and 
the entries of the tuple  $a$ are in $(0,2)$. What makes 
this problem  interesting 
is the interaction between the nature of the 
long jumps allowed in the directions of 
each generators and the non-commutative structure of 
the group. As we shall see, the behaviors of the random walks
driven by the measures $\mu_{S,a}$ capture a wealth of information on 
the algebraic structure of $G$.

Because of the results of \cite{PSCstab} --- in particular, Theorem \ref{th-PSC1}
stated below --- the very precise form of the measure $\mu_{S,a}$ 
defined at (\ref{muS}) is not really essential in determining the behavior of 
$n\mapsto \mu_{S,a}^{(n)}(e)$. Indeed, any symmetric measure $\nu$ on $G$ 
such that $c\nu \le \mu_{S,a} \le C \nu $ will satisfy
$$\nu^{(kn)}(e)\le K \mu_{S,a}^{(n)}(e) \mbox{ and }\;\mu_{S,a}^{(kn)}(e)\le 
K\nu^{(n)}(e)$$ for some $k,K$ independent of $n$. 

\subsection{The case of $\mathbb Z^d$}
In the simplest non-trivial case 
where $G=\mathbb Z^2=\{(x,y): x,y\in \mathbb Z\}$, 
$S=\{(1,0),(0,1)\}$ and $a=(\alpha_1,\alpha_2)\in (0,\infty]^2$, 
it is not hard to see that
$\mu^{(n)}_{S,a}(e)$, $e=(0,0)$, behaves as follows. Set
$$\tilde{\alpha}=\min\{\alpha,2\},\;\;\frac{1}{\beta}= 
\frac{1}{\tilde{\alpha}_1} +\frac{1}{\tilde{\alpha}_2} \mbox{ and }
\gamma=\#\{i: \alpha_i=2\}.$$ 
\begin{enumerate}
\item If $2\not\in \{\alpha_1,\alpha_2\}$,
$\mu^{(n)}_{S,a}(e) \sim c(\alpha_1,\alpha_2) n^{-1/\beta};$
\item If $2\in \{\alpha_1,\alpha_2\}$,
$\mu^{(n)}_{S,a}(e) \simeq 
 n^{-1/\beta}(\log n)^{-\gamma/2}.$
\end{enumerate}

Here and in the rest of this paper $\sim$ and $\simeq$ are used with the 
following meaning. For two functions $f,g$ 
defined either over the  positive reals or the natural numbers, 
we say that $f\sim g$ 
(usually, at $0$ or infinity), if $\lim f/g= 1$. We say that $f\simeq g$ if 
there are constants $c_1$ 
such that $$c_1f(c_2t)\le g(t)\le c_3f(c_4 t)$$ (in a neighborhood of the 
relevant value, usually $0$ or infinity). We recommend to restrict the use of 
$\simeq$ to cases where one of the two functions $f$ or $g$ is monotone.

Next, let us review briefly what happens when $G=\mathbb Z^d$ and 
$S=(s_1,\dots,s_k)$, $k\ge d$.  By hypothesis, $S$ is generating.
Given $a=(\alpha_1,\dots,\alpha_k)$, we extract from $S$ a $d$-tuple
$\Sigma=(\sigma_1,\dots,\sigma_d)$ using the following algorithm.
Set $\Sigma_1=\{\sigma_1=s_{i_1}\}$ where 
$\alpha_{i_1}=\min\{\alpha_i: 1\le i\le k\}$.
For $t\ge 1$, if
$$\Sigma_t=(\sigma_1,\dots,\sigma_t),\;\;
\sigma_1=s_{i_1},\dots,\sigma_t=s_{i_t}$$ 
have been chosen,
pick $\sigma_{t+1}=s_{i_{t+1}}$ 
in $\{s_i: 1\le i\le k\}$ with the properties that  $\alpha_{i_{t+1}}=
\min\{\alpha_j: j \not\in \{i_1,\dots,i_t\}\}$ and the rank of the lattice 
generated by $\Sigma_{t+1}=\Sigma_t\cup\{\sigma_{t+1}\}$ is (strictly) greater than the rank of the lattice generated by $\Sigma_t$. Note that the 
final $d$-tuple 
$\Sigma$ might not generates $\mathbb Z^d$ but 
does generate a lattice of finite index in $\mathbb Z^d$. Set 
$a(\Sigma)=(\alpha_{i_1},\dots,\alpha_{i_d})$.

\begin{theo} Let $G=\mathbb Z^d$. Let $S=(s_i)_1^k$ 
be a generating $k$-tuple. Let $a=(\alpha_i)_1^k\in (0,\infty]^k$. Let  
$\Sigma=(\sigma_i)_1^d$ and $ a(\Sigma)$ be obtained from 
$(S,a)$ by the algorithm described above. 
Set 
$$\gamma=\#\{j\in\{1,\dots,d\}: \alpha_{i_j}=2\} 
\mbox{ and }\frac{1}{\beta}=\sum_{s=1}^d \frac{1}{\tilde{\alpha}_{i_s}}$$ 
where $\tilde{\alpha}=\min\{\alpha,2\}$.
Then we have
$$\mu^{(n)}_{S,a}(e)\simeq \mu^{(n)}_{\Sigma,a(\Sigma)}(e)\simeq n^{-1/\beta}[\log n]^{-\gamma/2}$$
\end{theo}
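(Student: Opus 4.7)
My plan is to work in the Fourier dual, since $G=\mathbb{Z}^d$ is Abelian. Plancherel gives
$$\mu^{(n)}_{S,a}(e)=\int_{\mathbb{T}^d}\hat{\mu}_{S,a}(\xi)^n\,d\xi,\qquad \hat{\mu}_{S,a}(\xi)=\frac{1}{k}\sum_{j=1}^k\phi_{\alpha_j}(\langle s_j,\xi\rangle),$$
where $\phi_\alpha$ is the characteristic function of the 1-dimensional law $\mu_\alpha$, with the classical near-zero behavior $1-\phi_\alpha(t)\simeq |t|^{\tilde{\alpha}}$ when $\alpha\ne 2$ and $1-\phi_2(t)\simeq t^2\log(1/|t|)$. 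The equivalence $\mu^{(n)}_{S,a}(e)\simeq \mu^{(n)}_{\Sigma,a(\Sigma)}(e)$ has an easy direction and a harder one. For the easy direction, the pointwise inequality $\mu_{S,a}\ge (d/k)\,\mu_{\Sigma,a(\Sigma)}$ on $\mathbb{Z}^d$ is immediate from the definitions, and Theorem \ref{th-PSC1} then delivers $\mu^{(n)}_{S,a}(e)\le C\,\mu^{(cn)}_{\Sigma,a(\Sigma)}(e)$.

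For the reverse inequality it suffices, in Fourier, to show $1-\hat{\mu}_{S,a}(\xi)\le C(1-\hat{\mu}_{\Sigma,a(\Sigma)}(\xi))$ near the origin. This is where the selection algorithm does its real work. First observe that $(\alpha_{i_l})_{l=1}^d$ is weakly increasing: at each step, the minimum exponent among remaining rank-increasing generators can only grow, since the previous winner has been removed from the pool. Now fix $s_j\notin\Sigma$ and let $t(j)$ be the smallest index with $s_j\in\mathrm{span}_{\mathbb{Q}}(\Sigma_{t(j)})$. At step $t(j)$ the generator $s_j$ was a rank-increasing candidate but lost to $\sigma_{t(j)}$, so $\alpha_j\ge\alpha_{i_{t(j)}}\ge\alpha_{i_l}$ for every $l\le t(j)$. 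Writing $Ns_j=\sum_{l\le t(j)}n_l\sigma_l$ in integers, we get $|\langle s_j,\xi\rangle|\le C\sum_{l\le t(j)}|\langle\sigma_l,\xi\rangle|$, and for $\xi$ near the origin each $|\langle\sigma_l,\xi\rangle|\le 1$, so
$$|\langle s_j,\xi\rangle|^{\tilde{\alpha}_j}\le C'\sum_{l\le t(j)}|\langle\sigma_l,\xi\rangle|^{\tilde{\alpha}_j}\le C'\sum_{l\le t(j)}|\langle\sigma_l,\xi\rangle|^{\tilde{\alpha}_{i_l}}.$$
The logarithmic factor when $\alpha_j=2$ is handled via monotonicity of $x\mapsto x^2\log(1/x)$ near zero. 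Summing over $j$ yields the Fourier comparison, and Theorem \ref{th-PSC1} converts it into $\mu^{(n)}_{S,a}(e)\ge c\,\mu^{(Cn)}_{\Sigma,a(\Sigma)}(e)$.

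Finally, for the explicit asymptotic of $\mu^{(n)}_{\Sigma,a(\Sigma)}(e)$, the sublattice generated by $\Sigma$ has finite index in $\mathbb{Z}^d$, so the linear change of variables $\eta_l=\langle\sigma_l,\xi\rangle$ has bounded Jacobian and near zero
$$1-\hat{\mu}_{\Sigma,a(\Sigma)}(\xi)\simeq\sum_{l=1}^d|\eta_l|^{\tilde{\alpha}_{i_l}}L_l(\eta_l),$$
with $L_l(t)=\log(1/|t|)$ if $\alpha_{i_l}=2$ and $L_l\equiv 1$ otherwise. Standard truncation arguments confine the main contribution to a small neighborhood of the origin, and the Fourier integral factorizes into 1-dimensional pieces $\int e^{-n|\eta|^{\tilde{\alpha}_{i_l}}L_l(\eta)}\,d\eta$, each comparable to $n^{-1/\tilde{\alpha}_{i_l}}$ for $\alpha_{i_l}\ne 2$ and to $(n\log n)^{-1/2}$ for $\alpha_{i_l}=2$. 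Multiplying them together gives $n^{-1/\beta}(\log n)^{-\gamma/2}$.

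The main difficulty will be the Fourier lower bound: the output of the greedy algorithm must interact cleanly with the exponents $\tilde{\alpha}_j$, and the logarithmic factors at $\alpha=2$ must be tracked carefully so as not to produce false gaps in the comparison. Once this combinatorial-Fourier step is in hand, the rest is a routine 1-dimensional computation carried out $d$ times in parallel.
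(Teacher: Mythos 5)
The paper does not actually prove this theorem --- it only remarks that ``with some work, this result can be extracted from'' Griffin (1986) --- so your Fourier-analytic argument is not a reproduction of the paper's proof but a new one, and it deserves to be assessed on its own terms. The key combinatorial step is sound: the selected exponents $(\alpha_{i_l})_{l=1}^d$ are indeed weakly increasing under the greedy algorithm, and for any discarded $s_j$ your inequality $|\langle s_j,\xi\rangle|^{\tilde\alpha_j}\le C'\sum_{l\le t(j)}|\langle\sigma_l,\xi\rangle|^{\tilde\alpha_{i_l}}$ is correct (the passage $(\sum a_l)^p\le C\sum a_l^p$ uses only that the number of summands is bounded by $d$, and the exponent drop uses $|\langle\sigma_l,\xi\rangle|\le 1$ together with $\tilde\alpha_j\ge\tilde\alpha_{i_l}$). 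This near-origin estimate $1-\hat\mu_{S,a}(\xi)\le C(1-\hat\mu_{\Sigma,a(\Sigma)}(\xi))$ is exactly the right tool, and the $d$-fold factorized asymptotic for $\mu_{\Sigma,a(\Sigma)}^{(n)}(e)$ is essentially correct.

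The gap is in the passage from the near-origin Fourier inequality to the conclusion $\mu^{(n)}_{S,a}(e)\ge c\,\mu^{(Cn)}_{\Sigma,a(\Sigma)}(e)$ via Theorem \ref{th-PSC1}. That theorem needs the global Dirichlet-form comparison $\mathcal E_{\mu_{S,a}}\le C\mathcal E_{\mu_{\Sigma,a(\Sigma)}}$, which in Fourier terms is the inequality $1-\hat\mu_{S,a}(\xi)\le C(1-\hat\mu_{\Sigma,a(\Sigma)}(\xi))$ for \emph{all} $\xi\in\mathbb T^d$, not just near $0$. This fails whenever $\Sigma$ generates a proper finite-index sublattice $\Lambda\subsetneq\mathbb Z^d$ --- a case the paper itself flags as possible (and it happens already for $d=1$, $S=(2,3)$, $a=(1,3/2)$, where $\Sigma=(2)$). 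At any nonzero $\xi_0$ in the dual lattice $\Lambda^{\vee}/\mathbb Z^d$ one has $\hat\mu_{\Sigma,a(\Sigma)}(\xi_0)=1$ while $\hat\mu_{S,a}(\xi_0)<1$ since $S$ generates all of $\mathbb Z^d$, so the Dirichlet-form comparison is genuinely false. Two standard repairs exist: (i) bypass Theorem \ref{th-PSC1} entirely for this direction and bound $\mu_{S,a}^{(n)}(e)=\int_{\mathbb T^d}\hat\mu_{S,a}^{\,n}\,d\xi$ from below by restricting to a small neighbourhood of the origin and feeding in your near-origin estimate there; or (ii) compare instead with $\tilde\mu=\tfrac12\mu_{\Sigma,a(\Sigma)}+\tfrac12 u_S$, which generates $\mathbb Z^d$, satisfies $1-\hat{\tilde\mu}\simeq 1-\hat\mu_{\Sigma,a(\Sigma)}$ near $0$ and is bounded away from $1$ elsewhere, so that $\mathcal E_{\mu_{S,a}}\le C\mathcal E_{\tilde\mu}$ holds globally and Theorem \ref{th-PSC1} applies. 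Either closes the gap; as written, the invocation of Theorem \ref{th-PSC1} is not justified.
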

With some work, this result can be extracted from \cite{Griffin1986}.

\subsection{The main result in its simplest form}\label{sec-main}

The goal of this paper is to prove the following theorem together with more sophisticated assorted results.
\begin{theo}\label{th-main1}
Let $G$ be a
nilpotent group equipped with a generating $k$-tuple 
$S=(s_i)_1^k$ and $a=(\alpha_i)_1^k\in (0,\infty]^k$. Assume
that the subgroup generated by $\{s_i: \alpha_i<2\}$ is of finite index in $G$.
Then there exists a real $D\ge 0$ depending on $(G,S,a)$ such that
$$\mu_{S,a}^{(n)}(e)\simeq n^{-D}.$$
\end{theo}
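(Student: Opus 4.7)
The plan is to establish matching upper and lower bounds of the form $c n^{-D} \le \mu_{S,a}^{(n)}(e) \le C n^{-D}$ by combining a reduction via Theorem \ref{th-PSC1} with an analysis of how the walk spreads through the lower central series of $G$. The first step is to invoke Theorem \ref{th-PSC1} so that only the one-dimensional tail behavior on each cyclic subgroup $\langle s_i\rangle$ matters, which means we are free to replace $\mu_{S,a}$ by any convenient comparable measure when this is useful. Next, assign to each generator the weight $w_i = 1/\tilde{\alpha}_i$ with $\tilde{\alpha}_i = \min(\alpha_i,2)$; this captures the typical scale $n^{w_i}$ of displacement along $\langle s_i\rangle$ after $n$ steps of the one-dimensional marginal. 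Fix a Mal'cev basis $(x_1,\dots,x_d)$ of $G$ adapted to the lower central series $G_1\supset G_2\supset\cdots$. To each $x_j$ attach the weight $W_j$ defined as the minimum, over all ways of writing $x_j$ modulo $G_{\ell(j)+1}$ as an iterated commutator in the $s_i$, of the sum of the $w_i$'s used. Then set $D=\sum_j W_j$; this is the candidate exponent, and it is precisely the volume growth exponent of the weighted quasi-ball $\{g:\max_j|c_j(g)|^{1/W_j}\le n\}$ in canonical coordinates of the second kind.

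For the upper bound $\mu_{S,a}^{(n)}(e)\le Cn^{-D}$, the plan is to establish a Nash-type inequality for the Dirichlet form $\mathcal{E}_{S,a}(f,f)=\frac12\sum_{g,h}|f(gh)-f(g)|^2\mu_{S,a}(h)$, of the form $\|f\|_2^{2+4/D}\le C\,\mathcal{E}_{S,a}(f,f)\,\|f\|_1^{4/D}$. I would approach this by passing to the associated graded nilpotent Lie group (via a Pansu/dilation procedure adapted to the weights $W_j$) and comparing $\mu_{S,a}$ with the stable-like measure there whose heat kernel bounds can be computed directly. The finite-index hypothesis on $\langle s_i:\alpha_i<2\rangle$ is used precisely at this step: it guarantees that the dilations of $\mu_{S,a}$ converge to a genuinely non-degenerate stable-like limit on the graded group, so there are no logarithmic corrections.

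For the lower bound, I would run a concentration argument. A moment estimate shows that the displacement along any $s_i$ in $n$ steps lies in a window of order $n^{w_i}$; expanding each canonical coordinate as a polynomial expression in these displacements (via the Mal'cev formalism), one finds that the $j$-th coordinate of the walk at time $n$ is of order at most $n^{W_j}$ with probability bounded below. Hence $\mu_{S,a}^{(n)}$ puts mass at least $\frac12$ on a set of cardinality $\simeq n^D$. Cauchy--Schwarz applied to $\mu_{S,a}^{(2n)}(e)=\sum_g\mu_{S,a}^{(n)}(g)^2$ then yields $\mu_{S,a}^{(2n)}(e)\ge cn^{-D}$.

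The main obstacle is twofold. First, one has to show that $D$, defined combinatorially as $\sum_j W_j$, is genuinely an invariant of $(G,S,a)$ and does not depend on the chosen Mal'cev basis or on the particular commutator expressions used to realize the basis elements; this requires a careful Lie-theoretic rearrangement argument. Second, and harder, is the upper bound, because the heterogeneity of the exponents $\alpha_i$ together with the non-commutative structure means the standard Fourier-analytic proof of Nash's inequality is unavailable, and one must really use the graded-group comparison together with a Dirichlet form comparison on the graded group. The lower bound, by contrast, is soft; it is the matching upper bound that carries most of the technical weight.
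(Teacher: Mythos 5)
Your overall plan (extract a weighted exponent $D$ from the commutator structure and prove matching upper and lower bounds) is in the same spirit as the paper's, and your choice of weights $w_i=1/\tilde{\alpha}_i$ and the resulting exponent $D$ agree with Definition \ref{def-DSa}. But you have the difficulty profile inverted, and the lower bound you propose has a real gap.

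You call the lower bound ``soft,'' reducing it to concentration plus Cauchy--Schwarz. The Cauchy--Schwarz step itself is fine, but the concentration statement is \emph{not} a moment estimate: for $\alpha_i<2$ the $i$-th marginal has infinite variance, so you cannot bound the displacement by a variance computation. What you actually need is that every Mal'cev coordinate $c_j(\xi_1\cdots\xi_n)$, which is a polynomial in the \emph{ordered} heavy-tailed increments (not just in the $k$ totals along each $\langle s_i\rangle$), satisfies $|c_j|\le Cn^{W_j}$ with probability bounded below, simultaneously in $j$. That amounts to a stable-type limit theorem for anisotropic heavy-tailed walks on nilpotent groups and is essentially as delicate as anything else in the problem. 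The paper sidesteps precisely this difficulty by a different route: it builds a norm-radial measure $\nu$ from the quasi-norm $\|\cdot\|_{\mathfrak F,\mbox{\scriptsize com}}$, proves the Dirichlet-form comparison $\mathcal E_{\mu_{S,a}}\le C\,\mathcal E_\nu$ (Theorem \ref{th-compN}, via Lemmas \ref{lem-compN1}--\ref{lem-compN2}), and then invokes the Barlow--Grigoryan--Kumagai on-diagonal estimate for radial stable measures (Theorem \ref{th-BGK}). You also do not address the case where some $\alpha_i=2$ for a non-core generator, where the paper has to replace the weight system by a modified $\mathfrak w'$ to keep the minimum weight above $1/2$ before Theorem \ref{th-BGK} applies (proof of Theorem \ref{th-low2}); without that adjustment your candidate $D$ would not match for such tuples.

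On the upper bound: the paper does \emph{not} go through the graded group or a Pansu-type dilation limit. It proves the pseudo-Poincar\'e inequality (\ref{KPP}) directly from a one-dimensional inequality in terms of the truncated second moment (Lemma \ref{lem-1d}), combines it with the volume lower bound from the approximate-coordinate sets $K(r)$ of Theorem \ref{th-coord3}, and then applies the Nash-inequality machinery of Theorem \ref{th-Nash}. This is purely combinatorial/Dirichlet-form and avoids both representation theory and heat-kernel estimates on the limit Lie group. Your graded-group route is plausible, but executing heat-kernel upper bounds for a general anisotropic stable-like generator on a nilpotent Lie group is itself a substantial project, and it is not obviously shorter than the paper's approach. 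Finally, your $D=\sum_j W_j$ via a Mal'cev basis needs to be shown equal to $\sum_j \bar w_j\,\mathrm{rank}(G^{\mathfrak w}_j/G^{\mathfrak w}_{j+1})$; this is true but not immediate, and the paper's Theorem \ref{th-coord2F} (existence of a commutator tuple whose images realize the free ranks) is exactly what makes it well-defined.
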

This statement suggests further questions
including the following three: 
\begin{itemize}
\item  Can we compute $D$? how does it depends on $S$, $a$ and $G$?
\item What happen if the subgroup generated by $\{s_i: \alpha_i<2\}$ is not
of finite index in $G$?
\item What happens on other groups? In particular, how does Theorem \ref{th-main1}
generalize to finitely generated groups of polynomial volume growth?
\end{itemize}
The first question will be answer completely in this paper. Indeed, we would 
not be able to prove the above theorem without a detailed understanding of 
how to compute the real $D$. The exact value of $D$ depends in 
an intricate and  interesting way on (a) the 
commutator structure of $G$, (b) the position of the generators $s_i$ 
in the commutator structure of $G$ and (c)  
the values of the parameters  $\alpha_i$. See Theorem \ref{th-main2} 
in the next subsection.

The second question is rather subtle and will not be completely elucidated 
in this paper although some partial results will be obtain in this direction.

In its full generality, the third question is too wide ranging to be discussed 
here in details. Partial results for various classes of groups 
(e.g., some classes of solvable groups and free groups) will be discussed 
elsewhere. The question regarding  groups of polynomial growth is tantalizing but
appears surprisingly difficult to attack.

\subsection{Weight systems and the value of $D$}\label{sec-wD}
The goal of this section is to give the reader a clear idea of the key 
ingredients that enter
the exact computation of the real $D$ governing the behavior of 
$\mu_{S,a}^{(n)}(e)$ in Theorem \ref{th-main1}.

Consider $S=(s_1,\dots,s_k)$ as a formal alphabet equipped with a weight 
system $\mathfrak w$ which assigns weight $w_i\in (0,\infty)$ to the letter 
$s_i$, $1\le i\le k$.  We extend our alphabet by adjoining to each $s_i$ 
its formal inverse $s_i^{-1}$. Using this alphabet, we build the set 
$\mathfrak C(S,m)$ of all formal commutators of length $m$ by induction on $m$. 
Commutators of length $1$ are the letters  in $S^{\pm 1}$. 
Commutators of length $m$
are the formal expression $c$ of the form $c=[c_1,c_2]$ where $c_1,c_2$ 
are commutators of length $m_1, m_2\ge 1$ with $m_1+m_2=m$. 

The commutators of length 2 are (the $\pm 1$ must be understood 
here as independent of each other)
$$[s^{\pm 1}_i,s^{\pm 1}_j], \;\;1\le i, j\le k.$$
The commutators of length 3 are 
$$[[s^{\pm 1}_i,s^{\pm 1}_j], s_\ell^{\pm 1}],\;
[s^{\pm 1}_i ,[s^{\pm 1}_j,s^{\pm 1}_\ell]], \;\;1\le i, j,\ell \le k.$$
For $1\le i_1, i_2,i_3, i_4 \le k$, the commutators of length 4 are 
$$[[[s^{\pm 1}_{i_1},s^{\pm 1}_{i_2}], 
s_{i_3}^{\pm 1}],s_{i_4}^{\pm 1}],\;\;
[[s^{\pm 1}_{i_1},[s^{\pm 1}_{i_2}, 
s_{i_3}^{\pm 1}]],s_{i_4}^{\pm 1}],\;\;
[[s_{i_1}^{\pm 1},s^{\pm 1}_{i_2}],[s^{\pm 1}_{i_3},s^{\pm 1}_{i_4}]]$$
$$[s_{i_1}^{\pm 1},[[s^{\pm 1}_{i_2},s^{\pm 1}_{i_3}],s^{\pm 1}_{i_4}]],\;\; 
[s_{i_1}^{\pm 1} ,[s^{\pm 1}_{i_2},[s^{\pm 1}_{i_3},s^{\pm 1}_{i_4}]]].$$
To any formal commutators we can associate its build-word and its group-word.
The build-word of a commutator $c$ is the word over $S$ that list the entries of 
$c$ in order after one removes brackets and $\pm 1$.  So, the build-word of 
$c=[[s_{i_1}^{\pm 1},s^{\pm 1}_{i_2}],[s^{\pm 1}_{i_3},s^{\pm 1}_{i_4}]]$
is $s_{i_1}s_{i_2}s_{i_3}s_{i_4}$. The group word is the word on $S^{\pm1}$
obtained by applying repeatedly the group rules 
$$[c_1,c_2]^{-1}=[c_2,c_1] \mbox{ and }[c_1,c_2]=c_1^{-1}c_2^{-1}c_1c_2.$$
So the group-word of 
$c=[[s_i,s^{- 1}_{j}],s_{\ell}]$ is  $s_js_i^{-1}s_j^{-1}s_is_\ell^{-1}s_i^{-1}s_js_i s_j^{-1} s_\ell$. 

\begin{defin}[Power weight systems] \label{def-pow}
Given a $k$-tuple $(s_1,\dots,s_k)$
of formal letters and a $k$-tuple $(w_1,\dots, w_k)$ of positive reals, define 
the weight system $\mathfrak w$ on $\mathfrak C(S)$ by setting (inductively)
$$ w(c)= w(c_1)+w(c_2) \mbox{ if }  c=[c_1,c_2].$$
Let
$$\bar{w}_1<\bar{w}_2<\dots <\bar{w}_j<\cdots$$
be the increasing sequence of the weight values of the weight 
system $\mathfrak w$. For $j=1,2,\dots$, 
let $\mathfrak C^\mathfrak w_j$  be the set of all commutators $c$ with $w(c)\ge \bar{w}_j$.
\end{defin}
Clearly, the weight of a formal commutator is the sum of the weights of 
the letters appearing in its build-word. If $S=(s_1,s_2)$ and $w_1=3, w_2=13/2$, 
then the  weight-value sequence is 
$$\bar{w}_1= 3,\bar{w}_2=6,\bar{w}_3=13/2,\bar{w}_4=9,\bar{w}_5=12,\bar{w}_6= 
25/2,\bar{w}_7=13,\dots$$

Given a group $G$ generated by a $k$-tuple $S=(s_1,\dots,s_k)$, any finite
word $\omega$ on the alphabet $S^{\pm 1}$ has a well defined 
image $\pi_G(\omega)$ in $G$. Similarly, any formal
commutator $c$ on the alphabet $S^{\pm 1}$ has an image in $G$ given 
by its group-word representation.

\begin{defin}[Group filtration associated to $\mathfrak w$] \label{def-Gw}
Let $G$ be a nilpotent group equipped with
a generating $k$-tuple  $S=(s_1,\dots,s_k)$ and a weight system $\mathfrak w$
generated by 
$(w_1,\dots,w_k)\in (0,\infty)^k$.  Set 
$$G^\mathfrak w_j= \langle \mathfrak C^\mathfrak w_j\rangle .$$
That is, $G^\mathfrak w_j$ is the subgroup of $G$ generated by the images 
of all formal commutators of weight greater or equal to $\bar{w}_j$.
Let $j_*=j_*(G,S,\mathfrak w)$ be the smallest integer such that 
$G^\mathfrak w_{j_*+1}=\{e\}$.
\end{defin}

\begin{exa} \label{basic-exa}
Let $G$ be the discrete Heisenberg group
$$G=\left\{ \left(\begin{array}{ccc} 1& x & z\\
0&1& y\\
0&0&1\end{array}\right): x,y,z\in \mathbb Z\right\}.$$
Let 
$$s_1=X=\left(\begin{array}{ccc} 1& 1 & 0\\
0&1& 0\\
0&0&1\end{array}\right),\;
s_2=Y=\left(\begin{array}{ccc} 1& 1 & 0\\
0&1& 0\\
0&1&1\end{array}\right),\; s_3=Z^5=\left(\begin{array}{ccc} 1& 0 & 5\\
0&1& 0\\
0&0&1\end{array}\right),$$
and $$w_1=1,\;w_2=3/2,\; w_3=3.$$   In this case, the increasing 
sequence $\bar{w}_j$ is given by $\bar{w}_1=1,\bar{w}_2=3/2,\bar{w}_3=2,\bar{w}_4=5/2,\bar{w}_5=3,\bar{w}_6=7/2,\dots$ and we have
$$G^{\mathfrak w}_6=\{e\},\; G^{\mathfrak w}_5=\{s_3^k: k\in \mathbb Z\},\; 
G^{\mathfrak w}_4= G^{\mathfrak w}_3 = 
\left\{ \left(\begin{array}{ccc} 1& 0 & z\\
0&1& 0\\
0&0&1\end{array}\right): z\in \mathbb Z\right\},$$
$$ G^{\mathfrak w}_2 =
\left\{ \left(\begin{array}{ccc} 1& 0 & z\\
0&1& y\\
0&0&1\end{array}\right): y,z\in \mathbb Z\right\},\;\;
G^{\mathfrak w}_1= G.$$
\end{exa}

\begin{pro}\label{pro-Gw1}
Referring to the setting and notation of {\em Definition \ref{def-Gw}},
for all $j=1,2\dots$, we have $G^\mathfrak w_j\subset G^\mathfrak w_{j+1}$
and $[G,G^\mathfrak w_j]\subset G^\mathfrak w_{j+1}$. In particular,
$$G=G^\mathfrak w_1\supseteq G^\mathfrak w_2\supseteq \cdots \supseteq 
G^\mathfrak w_j\supseteq \cdots\supseteq G^\mathfrak w_{j_*}
\supset G^\mathfrak w_{j_*+1}=\{e\}$$
is a descending normal series 
with $[G^\mathfrak w_j,G^\mathfrak w_j]\subset G^\mathfrak w_{j+1}$.
\end{pro}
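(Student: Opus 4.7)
The proposition asserts three things: the chain is descending (the initial inclusion ``$G^\mathfrak{w}_j\subset G^\mathfrak{w}_{j+1}$'' as stated is surely a typo for $G^\mathfrak{w}_{j+1}\subset G^\mathfrak{w}_j$, as the ``in particular'' display makes clear), each $G^\mathfrak{w}_j$ is normal in $G$, and $[G,G^\mathfrak{w}_j]\subset G^\mathfrak{w}_{j+1}$. The descending inclusion is immediate: since the weight values are strictly increasing, any formal commutator with $w(c)\geq \bar{w}_{j+1}$ automatically has $w(c)\geq \bar{w}_j$, so $\mathfrak{C}^\mathfrak{w}_{j+1}\subset \mathfrak{C}^\mathfrak{w}_j$ and therefore $G^\mathfrak{w}_{j+1}\subset G^\mathfrak{w}_j$. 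The plan is then to prove normality of the $G^\mathfrak{w}_j$ and the commutator containment simultaneously by \emph{downward} induction on $j$, starting from $j_*+1$ where both assertions are vacuous since $G^\mathfrak{w}_{j_*+1}=\{e\}$.

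The whole argument is driven by one combinatorial observation about the weight system: if $c$ is a formal commutator with $w(c)\geq \bar{w}_j$ and $s_i^{\pm 1}$ is any letter of the extended alphabet, then $[s_i^{\pm 1},c]$ is again a formal commutator, of weight $w_i+w(c)$. This weight is itself an attained value of the weight system (attained by $[s_i^{\pm 1},c]$) and is strictly greater than $\bar{w}_j$; by the very definition of the enumeration $\bar{w}_1<\bar{w}_2<\cdots$ it must therefore satisfy $w_i+w(c)\geq \bar{w}_{j+1}$. Pushing forward to $G$ gives the elementary inclusion
\[
[s_i^{\pm 1},\pi_G(c)] \;=\; \pi_G\bigl([s_i^{\pm 1},c]\bigr) \;\in\; G^\mathfrak{w}_{j+1}.
\]

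For the inductive step, assume $G^\mathfrak{w}_{j+1}$ is normal in $G$. Every element of $G^\mathfrak{w}_j$ can be written as a product of elements of the form $\pi_G(c)$ with $c\in \mathfrak{C}^\mathfrak{w}_j$, since inverses of such elements are of the same form thanks to the identity $[c_1,c_2]^{-1}=[c_2,c_1]$ which preserves weight, and every element of $G$ is a product of letters $s_i^{\pm 1}$. Iterating the standard commutator identities $[xy,z]=[x,z]^y[y,z]$ and $[x,yz]=[x,z][x,y]^z$ then reduces an arbitrary $[g,h]$ with $g\in G$, $h\in G^\mathfrak{w}_j$ to a product of $G$-conjugates of the elementary commutators $[s_i^{\pm 1},\pi_G(c)]$ with $c\in\mathfrak{C}^\mathfrak{w}_j$. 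Each such elementary commutator lies in $G^\mathfrak{w}_{j+1}$ by the observation above, and each of its $G$-conjugates lies in $G^\mathfrak{w}_{j+1}$ by the inductive normality hypothesis; this gives $[G,G^\mathfrak{w}_j]\subset G^\mathfrak{w}_{j+1}$. Normality of $G^\mathfrak{w}_j$ itself then follows from the identity $ghg^{-1}=h\cdot[h,g^{-1}]$ (in the paper's convention $[x,y]=x^{-1}y^{-1}xy$), because the factor $[h,g^{-1}]$ lies in $[G,G^\mathfrak{w}_j]\subset G^\mathfrak{w}_{j+1}\subset G^\mathfrak{w}_j$ by what was just proved and the descending property, closing the induction. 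The final statement $[G^\mathfrak{w}_j,G^\mathfrak{w}_j]\subset G^\mathfrak{w}_{j+1}$ is then immediate from $G^\mathfrak{w}_j\subset G$. The hard part is not any single calculation but ordering the induction correctly so that normality of the smaller subgroup is already in hand when the commutator identities are unpacked; everything else is standard commutator calculus driven by the one weight-sum inequality highlighted above.
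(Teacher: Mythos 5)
Your proof is correct and follows essentially the same route as the paper's: both rest on the observation that bracketing a weight-$\ge\bar{w}_j$ commutator with anything of positive weight yields an attained weight value strictly above $\bar{w}_j$, hence $\ge\bar{w}_{j+1}$, and both run a descending induction using normality at level $j+1$ to absorb the conjugates that the commutator expansion produces. The only cosmetic difference is that the paper cites the standard identity $[\langle X\rangle,\langle Y\rangle]=[X,Y]^{\langle X\rangle\langle Y\rangle}$ wholesale, whereas you unpack it via the identities $[xy,z]=[x,z]^y[y,z]$ and $[x,yz]=[x,z][x,y]^z$; the content is the same.
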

\begin{proof}
Recall that if  $X,Y$ are subsets of $G$, $[X,Y]$ 
denotes the subgroup generated by $\{[x,y]: x\in X,y\in Y\}$. 
Recall further that
$$[<X>,<Y>]= [X,Y]^{<X><Y>}$$ 
where the right-hand side is the group generated by all conjugates of $[X,Y]$ 
by elements of the form $g=xy$, $x\in <X>, y\in <Y>$.
Since  $[f_1,f_j]\in\mathfrak C^\mathfrak w_{j+1}$ for all 
$f_1\in \mathfrak C^\mathfrak w_1$, 
$f_j\in \mathfrak C^\mathfrak w_j$ and 
$$[G,G^\mathfrak w_j]= [\mathfrak C^\mathfrak w_1,\mathfrak C^\mathfrak w_j]^G$$
it follows that
$$[G,G^\mathfrak w_j] \subset  (G^\mathfrak w_{j+1})^G$$
Thus a descending induction on $j$ shows  that the groups $G^\mathfrak w_j$ 
are all normal subgroups of $G$ and that 
$$[G,G^\mathfrak w _j]\subset G^\mathfrak w_{j+1}.$$
Note that it may happen that $G^\mathfrak w_j=G^\mathfrak w_{j+1}$ 
for some values of $j$,
$1<j<j_*$. For instance, it may happen that all formal commutators of a 
certain weight are trivial in $G$.  In Example \ref{basic-exa}, $G^{\mathfrak 
w}_3=G^{\mathfrak w}_4$ 
because all commutators of weight $\bar{w}_3=2$ are obviously trivial. 
\end{proof}

\begin{defin}Referring to the setting and notation of Definition \ref{def-Gw},
let
$$R^\mathfrak w_j= \mbox{rank}(G^\mathfrak w_j/G^\mathfrak w_{j+1})$$
be the torsion free rank of the abelian group $G^\mathfrak w_j/G^\mathfrak w_{j+1}$.
\end{defin}
By construction, the images of the formal commutators of weight $\bar{w}_j$ 
form a generating subset of $G^\mathfrak w_j/G^\mathfrak w_{j+1},\;\; j=1,2,\dots,j_*.$ By definition, the torsion free rank of this abelian group 
is the minimal number of elements needed to generates 
$G^\mathfrak w_j/G^\mathfrak w_{j+1}$ modulo torsion. 

\begin{defin}\label{def-DSa}
Referring to the setup and notation of Definition \ref{def-Gw}, set
$$D(S,\mathfrak w)= \sum_1^{j_*} \bar{w}_j 
\mbox{ rank}(G^\mathfrak w_j/G^\mathfrak w_{j+1})
.$$
\end{defin}
Note that  $D(S,\mathfrak w)$ 
depends on the weights values $\bar{w}_j$ 
as well as on the algebraic relations between elements of $S$ in $G$ 
(via the rank of the group $G^\mathfrak w_j$).\vspace{.08in}

\noindent{\bf Example 1.1}(continued)
In Example \ref{basic-exa},
we have $j_*=5$, 
$$G^{\mathfrak w}_5/G^{\mathfrak w}_6= \mathbb Z,\;
G^{\mathfrak w}_4/G^{\mathfrak w}_5=\mathbb Z/5\mathbb Z,\;
G^{\mathfrak w}_3/G^{\mathfrak w}_4=\{0\},
\;G^{\mathfrak w}_2/G^{\mathfrak w}_3=\mathbb Z \mbox{ and }
G^{\mathfrak w}_1/G^{\mathfrak w}_2=\mathbb Z.$$ Hence
$\mbox{rank}(G^{\mathfrak w}_5/G^{\mathfrak w}_6)=1$, 
$\mbox{rank}(G^{\mathfrak w}_4/G^{\mathfrak w}_5)=
\mbox{rank}(G^{\mathfrak w}_3/G^{\mathfrak w}_4)=0,$
$\mbox{rank}(G^{\mathfrak w}_2/G^{\mathfrak w}_3)=\mbox{rank}(
G^{\mathfrak w}_1/G^{\mathfrak w}_2)=1$ and 
$D(S,\mathfrak w)=1+ 3/2+3=11/2$ since 
$\bar{w}_1=1,\bar{w}_2=3/2,\bar{w}_3=2,\bar{w}_4=5/2,\bar{w}_5=3,
\bar{w}_6=7/2,\dots$.

\begin{exa} Assume that the weight $w_i$ are all equal, namely, 
 $w_i=v$, $i=1,\dots,k$.
Then the weight-value sequence is  given by $\bar{w}_j=jv$ and $j_*$
is equal to the nilpotency class of $G$. In this case, the
descending normal series $G^\mathfrak w_j$ is the lower central series defined 
inductively by $G_1=G$, $G_j=[G,G_{j-1}]$, $j\ge 2$, and 
$D(S,\mathfrak w)=vD(G)$ where 
\begin{equation}\label{def-DG}
D(G)=\sum_1^{j_*} j \mbox{ rank}(G_j/G_{j+1}).
\end{equation}
\end{exa}

\begin{theo}\label{th-main2} 
Let $G$ be a
nilpotent group equipped with a generating $k$-tuple 
$S=(s_i)_1^k$ and $a=(\alpha_i)_1^k\in (0,\infty]^k$. Assume
that the subgroup generated by $\{s_i: \alpha_i<2\}$ 
is of finite index in $G$.
Consider the weight system  $\mathfrak w(a)=\mathfrak w$ induced by setting 
$w_i=1/\widetilde{\alpha}_i$ where $\widetilde{\alpha}=\min\{2,\alpha\}$.
Then 
$$\mu_{S,a}^{(n)}(e)\simeq n ^{-D(S,\mathfrak w)}$$
with $D(S,\mathfrak w)$ as in {\em Definition \ref{def-DSa}}.
\end{theo}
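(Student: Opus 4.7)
The strategy is to prove matching upper and lower bounds for $\mu_{S,a}^{(n)}(e)$ by constructing, for each $n$, an adapted ball $B_n \subset G$ of cardinality comparable to $n^{D(S,\mathfrak w)}$ inside which the walk is essentially concentrated at time $n$. After invoking the comparison principle of \cite{PSCstab} to replace $\mu_{S,a}$ by any convenient symmetric measure with equivalent support structure, the first step is geometric. Working in Mal'cev coordinates adapted to the filtration
\begin{equation*}
G = G^{\mathfrak w}_1 \supseteq G^{\mathfrak w}_2 \supseteq \cdots \supseteq G^{\mathfrak w}_{j_*} \supset \{e\},
\end{equation*}
which is a descending normal series by Proposition \ref{pro-Gw1}, I would define $B_n$ to be the set of $g \in G$ whose layer-$j$ Mal'cev coordinate $v_j(g) \in G^{\mathfrak w}_j/G^{\mathfrak w}_{j+1}$ satisfies $\|v_j(g)\| \leq n^{\bar w_j}$ for a fixed norm $\|\cdot\|$ on the free part of each abelian quotient. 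Standard Mal'cev bookkeeping then yields
\begin{equation*}
|B_n| \simeq \prod_{j=1}^{j_*} n^{\bar w_j R^{\mathfrak w}_j} = n^{D(S,\mathfrak w)}.
\end{equation*}

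For the upper bound, the plan is to establish a Nash-type inequality tailored to the Dirichlet form of $\mu_{S,a}$ and to the family $\{B_n\}$, by combining the volume estimate above with a pseudo-Poincar\'e inequality of the form $\|f - f_{B_n}\|_2^2 \leq C n \,\mathcal E_{\mu_{S,a}}(f,f)$; the Coulhon-Saloff-Coste machinery then delivers $\mu_{S,a}^{(n)}(e) \leq C n^{-D(S,\mathfrak w)}$. The pseudo-Poincar\'e inequality reduces, via the finite-index hypothesis on $\langle s_i : \alpha_i < 2\rangle$ and the fact that $\mu_\alpha$ on $\mathbb Z$ has finite $\alpha'$-moment for every $\alpha' < \widetilde{\alpha}$, to estimating how much a function can change along each one-parameter subgroup $\langle s_i \rangle$ within a ball of $\mathfrak w$-radius $\sim n$. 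For the lower bound, I would use the standard estimate $\mu_{S,a}^{(2n)}(e) = \|\mu_{S,a}^{(n)}\|_2^2 \geq \mu_{S,a}^{(n)}(B_n)^2 / |B_n|$ and show $\mu_{S,a}^{(n)}(B_n) \geq c > 0$ by a layer-by-layer concentration estimate: the projection of the walk to $G^{\mathfrak w}_j / G^{\mathfrak w}_{j+1}$ is a sum of stable-like increments whose typical size after $n$ steps should be $O(n^{\bar w_j})$.

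The hardest and most conceptual step is verifying that the walk does generate layer-$j$ displacements of the correct magnitude $n^{\bar w_j}$. These increments are not directly sampled by $\mu_{S,a}$ but arise only through commutators of generators living in shallower layers: $d$ long excursions of sizes $|m_\ell| \sim n^{w_{i_\ell}}$ along $s_{i_1}, \dots, s_{i_d}$ with $w_{i_1} + \cdots + w_{i_d} = \bar w_j$ produce, via iterated commutators, a displacement in $G^{\mathfrak w}_j / G^{\mathfrak w}_{j+1}$ whose Mal'cev coordinate scales multiplicatively like $\prod_\ell |m_\ell| \sim n^{\bar w_j}$. This is where the additive weight identity $\bar w_j = w_{i_1} + \cdots + w_{i_d}$ matches the multiplicative commutator scaling, and it is the precise point at which the weight system $\mathfrak w(a)$ is revealed to be the correct calibration of the nilpotent structure to the tail exponents $\alpha_i$. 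Converting this matching into uniform lower bounds on $\mu_{S,a}^{(n)}(g)$ for $g \in B_n$ will be the most delicate part and where I expect most of the technical work to concentrate, most likely through explicit path constructions combined with local limit theorems for the stable sums governing each layer, in the spirit of \cite{Griffin1986}.
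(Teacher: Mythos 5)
Your plan for the upper bound is essentially the paper's: construct adapted sets of volume $\simeq n^{D(S,\mathfrak w)}$ (this is Theorem~\ref{th-coord3} together with the volume count of Theorem~\ref{th-VolQ}), combine with a pseudo-Poincar\'e inequality (Theorem~\ref{th-PP}, which reduces to the one-dimensional Lemma~\ref{lem-1d}), and feed both into the Nash machinery of Theorem~\ref{th-Nash}. One correction: the finite-index hypothesis is \emph{not} used for the upper bound --- Theorem~\ref{th-up1} holds without it; that hypothesis only enters on the lower-bound side.

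For the lower bound you take a genuinely different route: show a concentration estimate $\mu^{(n)}_{S,a}(B_n)\ge c$ and apply Cauchy--Schwarz, $\mu^{(2n)}(e)\ge \mu^{(n)}(B_n)^2/|B_n|$. This is where the plan has a real gap. For $j\ge 2$ the layer-$j$ coordinate of the walk at time $n$ is \emph{not} a sum of $n$ i.i.d.\ stable-like increments, as your concentration sketch assumes; its dominant contributions are commutator (``stochastic area'') terms, i.e.\ multilinear combinations of partial sums of the lower-layer coordinates. With heavy-tailed ($\alpha<2$) driving laws, proving that these multilinear functionals are $O(n^{\bar w_j})$ with probability bounded below, uniformly in $n$, is a serious piece of analysis in its own right (nontrivial even on the Heisenberg group with a single stable exponent), and nothing in your outline addresses it. The appeal to ``local limit theorems for stable sums governing each layer'' does not apply because the layers are neither independent nor marginally stable. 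You also do not say how $B_n$ and the concentration claim must be adapted when some $\alpha_i\ge 2$ --- a case the theorem explicitly allows.

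The paper sidesteps the concentration problem entirely. It proves a Dirichlet-form domination $\mathcal E_{\mu_{S,a}}\le C\,\mathcal E_\nu$ (Theorem~\ref{th-compN}, via Lemmas~\ref{lem-compN1}--\ref{lem-compN2}), where $\nu$ is the norm-radial stable-like measure built from the quasi-norm of Definition~\ref{defin-FQN}; it then quotes a jump-process heat-kernel lower bound for $\nu$ (Theorem~\ref{th-BGK}, from Barlow--Grigor'yan--Kumagai), and transfers the bound to $\mu_{S,a}$ by the comparison principle of Theorem~\ref{th-PSC1}. This avoids ever analyzing the joint law of the layer coordinates. The generators with $\alpha_i\ge 2$ are dealt with by the weight-replacement argument in Theorem~\ref{th-low2}, which keeps the radial index fed into Theorem~\ref{th-BGK} strictly below $2$; under the finite-index hypothesis no such generator lies in the core, which is exactly what makes the replacement harmless. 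To salvage your direct approach you would need to prove from scratch multi-layer concentration for stable-driven nilpotent walks; the Dirichlet-form comparison is the cheaper road to the same destination.
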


\begin{exa} Let $G$ be the discrete Heisenberg group equipped with the 
generating triple $S=(s_i)_1^3$ has in Example \ref{basic-exa}. Let 
$a=(\alpha_i)_1^3$.  In this case, the condition 
that $\{s_i: \alpha_i<2\}$  generates a subgroup of finite index 
is equivalent to $\alpha_1,\alpha_2\in (0,2)$. 
Let $\mathfrak w$  be as defined  in Theorem \ref{th-main2}. Then
$$D(S,\mathfrak w)=
\frac{1}{\alpha_1}+\frac{1}{\alpha_2} +
\max\left\{ \frac{1}{\alpha_1}+\frac{1}{\alpha_2}, \frac{1}{\alpha_3}
\right\}.$$
\end{exa}

\subsection{Some background on random walks} 
Given a finite symmetric generating set $A$, we set $|x|_A=\inf \{k: x\in A^k\}$
(since $A^0=\{e\}$, by convention, $|e|=0$). 
This is called the word-length of $x$ (w.r.t.\ the generating set $A$).
With some abuse of notation, if $S=(s_1,\dots,s_k)$ is a generating $k$-tuple,
we write $|\cdot|_S$ for the word-length
associated with the symmetric generating set $\{s_i^{\pm 1}, 1\le i\le k\}$. 
The volume growth of $G$ (with respect to $A$) is the function $V_A(r)=\#\{g: |g|_A\le r\}$. The $\simeq$-equivalence class of the function $V_A$ is independent 
of the choice of $A$. It is a group invariant called the growth function of $G$.

We say that a probability measure $\phi$ is symmetric if  
$\check{\phi}=\phi$ where $\check {\phi}(x)=\phi(x^{-1})$, $x\in G$.
The Dirichlet form associated with $\phi$ is the quadratic form
$$\mathcal E_\phi(f,f)=\frac{1}{2}\sum_{x,y\in G}|f(xy)-f(x)|^2\phi(y).$$
This form is fundamental in the study of random walks because of the following  
basic result.   
\begin{theo}\label{th-PSC1}[\cite{PSCstab}] 
Assume that $\phi,\psi$ are two symmetric probability measures
on a countable group $G$. If $\mathcal E_\phi\le C \mathcal E_\psi$ then
$$ \psi^{(2kn)}(e)\le 2\phi^{(2n)}(e)+ 2e^{-2kn},  \;k=[C]+2.$$
\end{theo}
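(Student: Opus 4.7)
The plan is to translate the statement into the language of convolution operators on $\ell^2(G)$ and then deduce the probabilistic inequality from an operator/spectral inequality.

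First, I would set up the Hilbertian framework. For a symmetric probability measure $\mu$ on $G$, define the convolution operator $P_\mu: \ell^2(G) \to \ell^2(G)$ by $P_\mu f(x) = \sum_y f(xy)\mu(y)$. Because $\mu$ is symmetric, $P_\mu$ is a self-adjoint contraction, so its spectrum is contained in $[-1,1]$. The two key identities are $\mu^{(2n)}(e) = \|P_\mu^n \delta_e\|_2^2 = \langle P_\mu^{2n}\delta_e,\delta_e\rangle$ and $\mathcal{E}_\mu(f,f) = \langle (I-P_\mu)f,f\rangle$. Consequently the hypothesis $\mathcal{E}_\phi \le C\mathcal{E}_\psi$ is exactly the quadratic-form inequality
\[
I - P_\phi \;\le\; C(I - P_\psi).
\]

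Second, the heart of the argument is a pointwise/spectral inequality of the form: for real numbers $s,t\in[-1,1]$ satisfying $1-s\le C(1-t)$, and any integer $k\ge [C]+2$, one has
\[
t^{2k} \;\le\; s^{2} \;+\; e^{-2k}.
\]
The intuition is a case split. When $|t|$ is bounded away from $1$, $t^{2k}$ is already at most $e^{-2k}$ and we use the trivial bound $s^2\ge 0$. When $t$ is near $+1$, the Dirichlet-form comparison forces $s$ to be near $+1$ as well, and a short computation using $t^{2k}\le \exp(-k(1-t^2))$ together with $s \ge 1-C(1-t)$ and $k\ge C+1$ gives $t^{2k}\le s^2$. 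The only slightly delicate piece is the spectral region where $t$ is near $-1$, which is handled by passing to even powers (so $t^{2k}=|t|^{2k}$) and again invoking the Dirichlet-form inequality on the "lazy" operator $(I+P_\psi)/2$, which has spectrum in $[0,1]$.

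Third, this pointwise inequality is promoted to an operator inequality by spectral calculus applied to a single variable at a time, and then inserted between the two sides of the claim. Concretely, applying the spectral theorem for $P_\psi$ and integrating the pointwise bound against the spectral measure of $\delta_e$ under $P_\psi$ yields an inequality of the form $\langle P_\psi^{2k}g,g\rangle \le 2\langle P_\phi^{2}g,g\rangle + e^{-2k}\|g\|_2^2$ after absorbing the non-commutativity into the harmless factor of $2$ using the operator inequality from step one together with the contraction property $\|P_\phi\|\le 1$. Iterating this $n$ times and testing at $g=\delta_e$ telescopes to the desired bound $\psi^{(2kn)}(e)\le 2\phi^{(2n)}(e) + 2e^{-2kn}$.

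The main obstacle I anticipate is exactly the non-commutativity of $P_\phi$ and $P_\psi$: one cannot jointly diagonalize them, so the passage from the quadratic-form inequality $I-P_\phi\le C(I-P_\psi)$ to an inequality linking the iterates $P_\phi^{2n}$ and $P_\psi^{2kn}$ is not algebraic. The factor $2$ and the additive correction $2e^{-2kn}$ in the conclusion are precisely the slack one must build in to absorb this non-commutativity and the spectrum near $-1$; getting a clean bound with these specific constants (rather than, say, functions of $n$ that blow up) is the main technical task.
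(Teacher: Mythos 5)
The paper cites Theorem~\ref{th-PSC1} from \cite{PSCstab} without reproducing a proof, so the comparison here is between your proposal and what a correct argument must do. Your proposal has a genuine gap, and it is located exactly at the ``pointwise/spectral inequality'' you call the heart of the argument. The claimed inequality
\[
t^{2k}\le s^2+e^{-2k}\quad\text{whenever } s,t\in[-1,1],\ 1-s\le C(1-t),\ k\ge [C]+2
\]
is false. Take $C=2$, $k=4$, $t=1/2$ and $s=0$: the constraint $1-s\le C(1-t)$ holds with equality, yet $t^{2k}=2^{-8}\approx 3.9\times10^{-3}$ while $s^2+e^{-2k}=e^{-8}\approx 3.4\times10^{-4}$. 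The failure is structural, not a matter of constants: your ``$|t|$ bounded away from $1$'' case only covers $|t|\le e^{-1}$ (where $t^{2k}\le e^{-2k}$), and your ``$t$ near $+1$'' case only covers $t>1-1/C$ (where $1-C(1-t)>0$ so the Dirichlet-form constraint forces $s$ to be close to $1$); in the intermediate range $e^{-1}<t<1-1/C$ the constraint $1-s\le C(1-t)$ is vacuous (it allows $s=0$), $t^{2k}$ is not yet below $e^{-2k}$, and no version of the inequality with the stated constants can hold. The exponent $e^{-2kn}$ in the theorem is genuinely small; it cannot be earned by a real-variable case split of this kind.

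Even setting the numerical inequality aside, the step that ``integrates the pointwise bound against the spectral measure of $\delta_e$ under $P_\psi$'' cannot be carried out: the bound couples an eigenvalue $t$ of $P_\psi$ to an eigenvalue $s$ of $P_\phi$, and these two self-adjoint contractions have no joint spectral decomposition. Writing $\psi^{(2kn)}(e)=\int t^{2kn}\,d\mu_\psi(t)$ and $\phi^{(2n)}(e)=\int s^{2n}\,d\mu_\phi(s)$ involves two unrelated spectral measures, and the operator inequality $I-P_\phi\le C(I-P_\psi)$ does not imply any comparison of those two scalar measures at the fixed vector $\delta_e$ (simple $2\times 2$ counterexamples show that $A\le B$ does not control spectral measures at a fixed vector). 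You correctly identify non-commutativity as the obstacle, but asserting that the factor $2$ and the term $2e^{-2kn}$ ``absorb'' it is not an argument; it is precisely the content of the theorem. Likewise, the final ``iterating $n$ times and telescoping'' does not produce $\psi^{(2kn)}(e)\le 2\phi^{(2n)}(e)+2e^{-2kn}$ from a one-step comparison: convolution powers of two different measures do not iterate against each other, because $P_\psi$ and $P_\phi$ do not commute. The proof in \cite{PSCstab} does not go through a pointwise coupling of the two spectra; it exploits the extremal characterization $\mu^{(2n)}(e)=\|P_\mu^n\|_{\ell^1\to\ell^2}^2$ together with a direct analysis of the spectral measure of $P_\psi$ at $\delta_e$ (splitting off the part of the spectrum near $\pm1$ and using $\mathcal E_\phi\le C\mathcal E_\psi$ to control the size of that spectral projection in terms of $\phi^{(2n)}(e)$), and that mechanism is what is missing from your proposal.
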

This theorem will be use extensively in the present paper. In
\cite{PSCstab}, it is  used to prove that the long time asymptotic behavior
of the probability of return is roughly the same for all random walks 
driven by  symmetric measures with generating support and finite second moment. 
\begin{theo}[\cite{PSCstab}] \label{stab}
Assume that $\phi$ is a symmetric probability measure
on a finitely generated group $G$ with finite symmetric generating set $A$.
Let $u_A$ be the uniform probability measure on $A$. If $\phi$ satisfies
\begin{equation}\label{2m}
\sum_{g\in G}|g|_A^2\phi(g)<\infty
\end{equation}
then there are constants $k, C$ such that
$$u_A^{(2kn)}(e)\le C \phi^{(2n)}(e).$$
Further, if $\phi$  satisfies {\em (\ref{2m})} 
and $\phi>0$ on a finite generating set then
$$\phi^{(2n)}(e)\simeq u_A^{(2n)}(e).$$
\end{theo}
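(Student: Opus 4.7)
The plan is to deduce both assertions from Dirichlet form comparisons between $\phi$ and $u_A$ and then apply the stability result Theorem~\ref{th-PSC1}. The key technical device is a standard chain-decomposition of long steps into generator steps, which plays the role of a discrete gradient estimate.

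For the first assertion, I would show $\mathcal E_\phi \le C_1\mathcal E_{u_A}$ under hypothesis~(\ref{2m}). For each $g\in G$ fix a geodesic factorization $g=a_1\cdots a_{|g|_A}$ with $a_i\in A$. The discrete fundamental theorem of calculus combined with Cauchy-Schwarz gives
$$|f(xg)-f(x)|^2 \le |g|_A\sum_{i=1}^{|g|_A}|f(xa_1\cdots a_i)-f(xa_1\cdots a_{i-1})|^2.$$
Summing over $x$, left-translation invariance turns each inner sum into $\sum_y|f(ya_i)-f(y)|^2$, which is bounded by $2|A|\,\mathcal E_{u_A}(f,f)$. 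Weighting by $\phi(g)$ and summing over $g$ yields
$$\mathcal E_\phi(f,f)\le |A|\Big(\sum_g\phi(g)|g|_A^2\Big)\mathcal E_{u_A}(f,f),$$
whose bracketed factor is finite by~(\ref{2m}). Theorem~\ref{th-PSC1} applied with $\psi=u_A$ then gives $u_A^{(2kn)}(e)\le 2\phi^{(2n)}(e)+2e^{-2kn}$, and the additive exponential tail is absorbed using $\phi^{(2n)}(e)\ge \phi^{(2)}(e)^n>0$, which holds because $\phi^{(2)}(e)=\sum_g\phi(g)^2>0$ by symmetry. Enlarging $k$ if needed guarantees $e^{-2kn}\le \phi^{(2)}(e)^n$.

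For the reverse comparison, assume further $\phi(b)\ge c>0$ for every $b$ in some finite symmetric generating set $B$ (replace $B$ by $B\cup B^{-1}$ if necessary). Straight from the definition,
$$\mathcal E_\phi(f,f)\ge c\,|B|\,\mathcal E_{u_B}(f,f).$$
To pass from $u_B$ to $u_A$ I would apply the same chain-decomposition to $u_A$ itself: since $A$ is finite and $B$ generates, $u_A$ has bounded (and in particular finite second) moment with respect to $|\cdot|_B$, so the above computation gives $\mathcal E_{u_A}\le C_2'\mathcal E_{u_B}\le C_2\mathcal E_\phi$. A second application of Theorem~\ref{th-PSC1} then produces $\phi^{(2kn)}(e)\le 2u_A^{(2n)}(e)+2e^{-2kn}$, and after the same tail-absorption this combines with the first assertion to give $\phi^{(2n)}(e)\simeq u_A^{(2n)}(e)$.

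I do not expect any substantive obstacle here; the whole argument is a short Dirichlet form manipulation made possible by Theorem~\ref{th-PSC1}. The one point requiring care is the absorption of the additive tail $2e^{-2kn}$: one must choose $k$ large enough, depending on $\phi^{(2)}(e)$ and the comparison constant, so that $e^{-2kn}$ is dominated by the submultiplicative lower bound $\phi^{(2)}(e)^n\le \phi^{(2n)}(e)$. This is routine but easy to overlook.
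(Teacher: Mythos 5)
Your argument is correct and is essentially the proof from the cited source. The paper itself does not prove Theorem~\ref{stab} --- it records it as a known result from \cite{PSCstab} --- so there is no in-paper proof to compare against, but your route (two Dirichlet-form comparisons via geodesic chain decomposition and Cauchy--Schwarz, each fed into Theorem~\ref{th-PSC1}) is exactly the mechanism used there, and every step you wrote checks out, including the license to enlarge $k$ in Theorem~\ref{th-PSC1} by inflating the comparison constant $C$.

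One small point deserves a fuller justification than you gave: the inequality $\phi^{(2n)}(e)\ge\phi^{(2)}(e)^n$ does not follow merely from $\phi^{(2)}(e)=\sum_g\phi(g)^2>0$. What you actually need is the log-convexity of $n\mapsto\phi^{(2n)}(e)$, which for a symmetric $\phi$ comes from spectral calculus: writing the Markov operator as $P$, one has $\phi^{(2n)}(e)=\int_{[-1,1]}\lambda^{2n}\,d\mu(\lambda)$ with $\mu$ the (probability) spectral measure of $\delta_e$, and Jensen's inequality for the convex function $t\mapsto t^n$ on $[0,1]$ gives $\int(\lambda^2)^n\,d\mu\ge\bigl(\int\lambda^2\,d\mu\bigr)^n=\phi^{(2)}(e)^n$. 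With that filled in, your tail-absorption step is valid, and the rest of the proof stands.
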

This theorem implies that, if $A$ and $B$ are two symmetric finite 
generating sets
of the group $G$, we have $u^{(2n)}_A(e)\simeq u_B^{(2n)}(e)$. 
Further, for any symmetric $\phi$ with 
finite second moment and generating support, 
$\phi^{(2n)}(e)\simeq u_A^{(n)}(e)$. In this sense, the equivalence class of 
the function $n\mapsto u_A^{(2n)}(e)$ under 
the equivalence relation $\simeq$ is a group invariant. 
This group invariant, which we denote by $\Phi_G$, i.e.,
\begin{equation}\label{Phi}
\Phi_G(n)\simeq u_A^{(2n)}(e),\end{equation}
 has been studied extensively (\cite{PSCstab} shows that $\Phi_G$ 
is invariant under quasi-isometries). In particular,  
$$\Phi_G(n)\simeq \left\{\begin{array}{ll} n^{-D/2} &\mbox{if } G \mbox{ has volume growth } V(r)\simeq r^D,\\
\exp(-n^{1/3}) &\mbox{if } G \mbox{ is polycyclic with exponential volume growth},\\
\exp(-n) &\mbox{if } G \mbox{ is non-amenable.}\end{array}\right.$$
Nilpotent groups belong to the first category and have 
$D=D(G)$ given explicitly by (\ref{def-DG}).
Many other behaviors beyond the three mentioned above are known to occurs
and their are many groups for which
$\Phi_G$ is unknown. See, e.g., \cite{SCnotices,SCsurveydiff} and the references 
therein.

To explain how  Theorem \ref{stab} applies to 
the measures $\mu_{S,a}$ defined at (\ref{muS}), 
we need the following definition.
\begin{defin} \label{def-comlength}
Let $G$ be a nilpotent group with 
descending lower central series $G_j$. The commutator length
$\ell(g)$ of an element $g$ of $G$ is the supremum of 
the integers $\ell$ such that $g^m\in G_\ell$ for some integer $m$.
In particular, by definition,  torsion elements have infinite commutator length.
\end{defin}

\begin{cor} \label{cor-PhiG}
On any finitely generated group $G$ equipped 
with a generating $k$-tuple $S$, we have
$$\mu_{S,a}^{(n)}(e)\simeq \Phi_G(n)\simeq n^{-D(G)/2}$$
for all $k$-tuple $a=(\alpha_1,\dots,\alpha_k)$ such that 
$\alpha_i \ell(s_i)>2$ for all $i=1,\dots, k.$  
\end{cor}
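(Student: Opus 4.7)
The plan is to apply the stability theorem (Theorem~\ref{stab}). The measure $\mu_{S,a}$ is symmetric and strictly positive on the finite symmetric generating set $\{s_i^{\pm 1}:1\le i\le k\}$, so it remains to verify the second moment condition
$$\sum_{g\in G}|g|_A^{\,2}\,\mu_{S,a}(g)<\infty$$
for some (hence any) finite symmetric generating set $A$ of $G$. Using the explicit formula~(\ref{muS}), this reduces to showing that for each $i\in\{1,\dots,k\}$,
$$\sum_{m\ge 1}(1+m)^{-\alpha_i-1}\,|s_i^{m}|_A^{\,2}<\infty.$$

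The key input is a word-length estimate: if $\ell(s_i)=\ell<\infty$, then there is a constant $C_i$ such that $|s_i^{m}|_A\le C_i\, m^{1/\ell}$ for all $m\ge 1$. (The case $\ell(s_i)=\infty$ is trivial: then $s_i$ is torsion and $\{s_i^m:m\in\mathbb Z\}$ is finite, so $|s_i^m|_A$ is bounded.) To prove this, I would first establish the Bass--Guivarc'h type bound $|h^{n}|_A\le C(h)\,n^{1/\ell}$ for any $h\in G_\ell$: working in Mal'cev coordinates adapted to the lower central series, $h^n$ has $G_\ell/G_{\ell+1}$-block of Euclidean size $O(n)$ and higher-weight blocks of size $O(n^{j/\ell})$, and any such element can be written as a product of $O(n^{1/\ell})$ basic commutators of weight $\ell$, each a bounded word in $A$, together with higher-weight corrections that contribute only lower-order terms. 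Applying this to $h=s_i^{q}$ where $q$ is chosen so that $s_i^{q}\in G_\ell$ (which exists by definition of $\ell(s_i)$), and splitting $m=qn+r$ with $0\le r<q$, yields the stated estimate for $s_i^{m}$.

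Combining these ingredients, each sum in the display above is bounded up to a multiplicative constant by $\sum_{m\ge 1} m^{-\alpha_i-1+2/\ell(s_i)}$, which converges precisely when $\alpha_i\,\ell(s_i)>2$. Under this hypothesis, condition~(\ref{2m}) holds, so Theorem~\ref{stab} gives $\mu_{S,a}^{(2n)}(e)\simeq u_A^{(2n)}(e)\simeq \Phi_G(n)$, and the recalled nilpotent-group formula $\Phi_G(n)\simeq n^{-D(G)/2}$ with $D(G)$ as in~(\ref{def-DG}) closes the chain (the extension from even to all $n$ being a routine extra-convolution argument). The main obstacle is the word-length bound $|h^{n}|_A\le C(h)\,n^{1/\ell}$ for $h\in G_\ell$: this is classical, but must be invoked carefully to ensure the exponent depends only on the depth $\ell$ and not on some finer invariant of $h$, such as the weight of a particular commutator expression representing it.
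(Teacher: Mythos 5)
Your proposal is correct and follows essentially the same route as the paper: verify the second-moment condition (\ref{2m}) using the word-length estimate $|s_i^{m}|_A\simeq m^{1/\ell(s_i)}$, then invoke Theorem~\ref{stab} together with $\Phi_G(n)\simeq n^{-D(G)/2}$. The paper simply cites the word-length bound as well known (pointing to Proposition~\ref{pro-gn} for a more general version), where you sketch a Bass--Guivarc'h--type argument for it; otherwise the two proofs coincide.
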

\begin{proof} 
It is well known that for any fixed $g\in G$, we have
$|g^n|_S \simeq n^{1/\ell(g)}$ 
(see also Proposition \ref{pro-gn} where a more general version of this fact
is proved).  It follows that, as long as the $k$-tuple $a$
satisfies the condition stated in the corollary, $\mu_{S,a}$ 
has finite second moment. Hence, Theorem \ref{stab}
implies $\mu_{S,a}^{(n)}(e)\simeq \Phi_G(n)$ as desired.
\end{proof}
As a consequence of the more detailed results proved in this paper, 
we can state the following complementary result.
\begin{theo}\label{th-faster}  
Let $G$ be a nilpotent group equipped 
with a generating $k$-tuple $S$. Let $a\in (0,\infty]^k$. 
If there exists $i\in \{1,\dots,k\}$ such that 
$(\alpha_i,\ell(s_i))=(2,1)$ or  $\alpha_i \ell(s_i)< 2$
then we have  
\begin{equation}\label{faster}
 \lim_{n\ra \infty} [n^{D(G)/2}\mu_{S,a}^{(n)}(e)]=0.
\end{equation}
\end{theo}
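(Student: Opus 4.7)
The plan is to deduce Theorem \ref{th-faster} from the upper-bound half of the proof of Theorem \ref{th-main2}. That upper bound, established in the body of the paper via Nash / Faber--Krahn inequalities attached to the weighted filtration associated with $\mathfrak w=\mathfrak w(a)$, does not in fact need the finite-index hypothesis and takes the form
\[
\mu^{(n)}_{S,a}(e)\;\le\; C\, n^{-D(S,\mathfrak w)}(\log n)^{-\gamma/2},
\]
where $\gamma$ counts the critical indices, i.e.\ those $i$ with $(\alpha_i,\ell(s_i))=(2,1)$. Granted this, Theorem \ref{th-faster} reduces to two purely combinatorial claims: $D(S,\mathfrak w)>D(G)/2$ in Case A ($\alpha_{i_0}\ell(s_{i_0})<2$), and $\gamma\ge 1$ in Case B ($(\alpha_{i_0},\ell(s_{i_0}))=(2,1)$). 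The second is immediate from the definition of $\gamma$.

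The algebraic key to Case A is the integral representation
\[
D(S,\mathfrak w)\;=\;\int_0^\infty \mathrm{rank}\bigl(H^\mathfrak w_r\bigr)\,dr,\qquad H^\mathfrak w_r:=\bigl\langle c\in\mathfrak C(S):w(c)\ge r\bigr\rangle,
\]
obtained from Definition \ref{def-DSa} by telescoping; it exhibits $\mathfrak w\mapsto D(S,\mathfrak w)$ as non-decreasing in each $w_i$. For the baseline weight system $\mathfrak w_0$ with $w_i^0=1/2$, one checks $H^{\mathfrak w_0}_r=G_{\lceil 2r\rceil}$ (lower central series), hence $D(S,\mathfrak w_0)=D(G)/2$ and $D(S,\mathfrak w)\ge D(G)/2$ always. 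In Case A, setting $\ell=\ell(s_{i_0})$, the hypothesis is equivalent to $w_{i_0}=1/\alpha_{i_0}>\ell/2$. The length-one commutator $s_{i_0}$ has $\mathfrak w$-weight $w_{i_0}$, so $s_{i_0}\in H^\mathfrak w_r$ for every $r\le w_{i_0}$; on the other hand, for $r>\ell/2$ any $\mathfrak w_0$-weight-$\ge r$ commutator has length $>\ell$, so $H^{\mathfrak w_0}_r\subseteq G_{\ell+1}$, which by the very definition of $\ell(s_{i_0})$ contains no non-zero power of $s_{i_0}$. Since $G_{\ell+1}$ is normal in $G$, this gives $\mathrm{rank}(H^\mathfrak w_r)\ge\mathrm{rank}(H^{\mathfrak w_0}_r)+1$ throughout $r\in(\ell/2,w_{i_0}]$, and integrating,
\[
D(S,\mathfrak w)-D(G)/2\;\ge\; w_{i_0}-\ell/2\;=\;\frac{2-\alpha_{i_0}\ell}{2\alpha_{i_0}}\;>\;0.
\]

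The main obstacle is not the combinatorial monotonicity above, which is essentially a formal manipulation of Definition \ref{def-DSa}, but rather the upper bound in the generality required. The upper estimate underlying Theorem \ref{th-main2} rests on pseudo-Poincar\'e and Nash-type inequalities for the Dirichlet form of $\mu_{S,a}$ attached to the weight metric induced by $\mathfrak w$; one must verify that these continue to yield $\mu^{(n)}_{S,a}(e)\lesssim n^{-D(S,\mathfrak w)}(\log n)^{-\gamma/2}$ when $\{s_i:\alpha_i<2\}$ does not span a finite-index subgroup, the finite-index hypothesis entering only in the matching lower bound. Once this verification is in hand, Case A gives $\mu^{(n)}_{S,a}(e)=O(n^{-D(S,\mathfrak w)})=o(n^{-D(G)/2})$ and Case B gives $\mu^{(n)}_{S,a}(e)=O(n^{-D(G)/2}(\log n)^{-1/2})$; both are $o(n^{-D(G)/2})$, whence $n^{D(G)/2}\mu^{(n)}_{S,a}(e)\to 0$ as claimed.
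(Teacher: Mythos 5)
Your Case A argument is correct and is a nice, essentially self-contained alternative to what the paper does (the paper instead invokes Theorem~\ref{th-Vcomp} to compare the volume exponents attached to $\mathfrak w(a)$ and to the baseline weight system $(1/2,\dots,1/2)$). The integral representation $D(S,\mathfrak w)=\int_0^\infty \mathrm{rank}(H^\mathfrak w_r)\,dr$ is a correct reformulation of Definition~\ref{def-DSa} by Abel summation, monotonicity in each $w_i$ follows, and the extra unit of rank on $(\ell/2,w_{i_0}]$ is justified because $s_{i_0}\in H^\mathfrak w_r$ while $s_{i_0}$ has infinite order modulo $H^{\mathfrak w_0}_r\subset G_{\ell+1}$; the relevant normality is that of $H^{\mathfrak w_0}_r$ in $G$ (Proposition~\ref{pro-Gw1}), not of $G_{\ell+1}$, and additivity of Hirsch length over the resulting cyclic extension gives $\mathrm{rank}\langle H^{\mathfrak w_0}_r,s_{i_0}\rangle=\mathrm{rank}(H^{\mathfrak w_0}_r)+1\le \mathrm{rank}(H^{\mathfrak w}_r)$. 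Together with Theorem~\ref{th-up1}, which indeed requires no finite-index hypothesis, this settles Case A.

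Case B, however, has a genuine gap. The upper bound the paper actually proves (Theorem~\ref{th-up2}) is $\mu_{S,a}^{(n)}(e)\le C\,n^{-D_1(S,\mathfrak w')}[\log(e+n)]^{-D_2(S,\mathfrak w')}$, where $D_2(S,\mathfrak w')$ is the second-coordinate sum $\sum_h \bar v_{h,2}\,\mathrm{rank}(G^{\mathfrak w'}_h/G^{\mathfrak w'}_{h+1})$ computed from the two-dimensional weight filtration, \emph{not} $\gamma/2$ where $\gamma$ counts indices with $(\alpha_i,\ell(s_i))=(2,1)$. These quantities differ in general: for $G=\mathbb Z$, $S=(1,1)$, $a=(1,2)$ one has $\gamma=1$ yet $D_2(S,\mathfrak w')=0$, so your claimed bound $n^{-1}(\log n)^{-1/2}$ is false (the true rate is $n^{-1}$). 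In the other direction, for the Heisenberg group with $S=(X,Y)$, $a=(2,2)$ one has $\gamma=2$ but $D_2=2$. So ``$\gamma\ge 1$ is immediate'' does not close Case B: what you actually need is that $(\alpha_{i_0},\ell(s_{i_0}))=(2,1)$ forces either $D_1(S,\mathfrak w')>D(G)/2$ or $D_2(S,\mathfrak w')\ge 1/2$, and that is not formal. The paper supplies exactly this missing step via the second assertion of Theorem~\ref{th-Vcomp}: taking $\sigma=s_{i_0}$ one gets $\#Q(S,\mathfrak F(a),r)\ge c\,[\log(e+r)]^{1/2}\,r^{D(G)/2}$, and comparison with $\#Q(S,\mathfrak F(a),r)\simeq r^{D_1}[\log(e+r)]^{D_2}$ forces the required inequality. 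Without an argument of this kind your Case B does not go through.
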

Regarding (\ref{faster}), we conjecture but are not able to prove 
that the sufficient condition
provided by Theorem \ref{th-faster} is also necessary. 
See Theorems \ref{th-low3}--\ref{th-low32}.

\subsection{Radial stable laws}

Let $G$ be a finitely generated group with symmetric finite generating set $A$.
Set $B_m=\{g:|g|_A\le m\}$.
Define the radially symmetric ``stable law'' on $G$ with index $\alpha\in (0,2)$
to be probability measure
$$\mu_\alpha(g)= c_\alpha\sum_{m=0}^\infty (1+m)^{-\alpha-1}
\frac{\mathbf 1_{B_m}(g)}{V_A(m)}, \;\;c_\alpha^{-1}= 
\sum_0^\infty (1+m)^{-\alpha-1}
$$
Note that $\mu_\alpha$ is well defined for all $\alpha>0$ and  that
$$\forall\,0<\beta<\alpha<\infty,\;\;\sum_g|g|_A^\beta\mu_\alpha(g)<\infty .$$
It is observed in \cite{SCconv,SClisb,Varconv} that  
$$\forall\,n,\;\;V_A(n)\ge cn^D  \Longrightarrow 
\forall\,n,\;\;\mu_\alpha^{(n)}(e)\le Cn^{-D/\alpha}.$$
In addition, by \cite{HSC,BSClmrw}, for a given group $G$ and for some/any  
$\alpha\neq 2$,
\begin{equation}\label{polequ}
V_A(n)\simeq  cn^D  \Longleftrightarrow 
\mu_\alpha^{(n)}(e)\simeq Cn^{-D/\tilde{\alpha}}, 
\;\;\tilde{\alpha}=\min\{2,\alpha\}.\end{equation}
In fact, if we assume that the group $G$ has polynomial 
volume growth $V(n)\simeq n^D$ then
 $$\mu_\alpha(g) \simeq (1+ |g|_A)^{-D-\alpha}.$$
Further, it follows from \cite{HSC} 
that, for any $\alpha\in (0,2)$, 
there are constants $c_1(\alpha),c_2(\alpha)$ 
such that
$$  c_1(\alpha)\mu_\alpha\le \nu_\alpha\le c_2(\alpha)\mu_\alpha$$
where $\nu_\alpha$ denotes the measure that is $\alpha$-subordinated to $u_A$
in the sense of (\cite{BSCsubord}), that is,
$$\nu_\alpha = 
\sum_1^\infty \frac{\Gamma(n-\alpha)}{\Gamma(1-\alpha)\Gamma(n+1)}u_A^{(n)}.$$ 
Moreover, for any $\alpha\in (0,2)$,
$$\forall\,n\in \mathbb N,\;\;\;\mu_\alpha^{(n)}(e)\simeq \nu_\alpha^{(n)}(e)\simeq n^{-D/\alpha}.$$
In \cite{SCZ0}, motivated by applications given below, the authors
prove the following complementary statement regarding the behavior of $\mu_2$.
\begin{pro}[{\cite{SCZ0}}] 
Assume that $G$ has polynomial volume growth $V_S(n)\simeq n^D$.
Then we have
$$\mu_2^{(n)}(e)\simeq  (n \log n)^{-D/2} .$$
\end{pro}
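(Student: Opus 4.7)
The strategy is to bracket $\mu_2^{(n)}(e)$ between two quantities that both behave like $(n\log n)^{-D/2}$ by means of a truncation at a carefully chosen radius, combined with Gaussian heat-kernel estimates for finite-variance walks on nilpotent groups of polynomial volume growth. First I would establish the pointwise density estimate
$$\mu_2(g)\simeq (1+|g|_A)^{-(D+2)}$$
by direct substitution of $V_A(m)\simeq m^D$ into the defining sum for $\mu_2$: for $|g|_A=r$, the dominant contribution comes from $m$ of order $r$, producing a term of size $r^{-3}\cdot r^{-D}$. For a truncation parameter $R\ge 1$, set $\mu^R=\mu_2\,\mathbf 1_{B_R}$ and $\tilde\mu^R=\mu^R/\mu^R(G)$; from the density estimate one reads off $\mu^R(G)=1-O(R^{-2})$ and the critical feature
$$\sigma_R^2:=\sum_g|g|_A^2\,\tilde\mu^R(g)\simeq \log R,$$
the logarithmic divergence being the reason for the $\log n$ factor in the final answer.

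\textbf{Upper bound.} Since $\mu^R\le\mu_2$ pointwise, the Dirichlet forms satisfy $\mathcal E_{\tilde\mu^R}\le \mu^R(G)^{-1}\mathcal E_{\mu_2}\le 2\mathcal E_{\mu_2}$ for $R$ large enough. Applying Theorem \ref{th-PSC1} with $\phi=\tilde\mu^R$, $\psi=\mu_2$, $C=2$ then gives
$$\mu_2^{(8n)}(e)\le 2(\tilde\mu^R)^{(2n)}(e)+2e^{-8n}.$$
Since $\tilde\mu^R$ is a symmetric probability measure with finite second moment $\sigma_R^2\simeq \log R$ whose support generates $G$, the Hebisch--Saloff-Coste Gaussian upper bound for finite-variance walks on nilpotent groups of polynomial growth $D$ yields $(\tilde\mu^R)^{(n)}(e)\le C(n\sigma_R^2)^{-D/2}=C(n\log R)^{-D/2}$, with a constant chosen independently of $R$. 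Picking $R=\sqrt n$ (or any fixed positive power of $n$) gives $\log R\simeq \log n$, hence
$$\mu_2^{(n)}(e)\le C(n\log n)^{-D/2}.$$

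\textbf{Lower bound.} The lower bound is obtained by a direct conditioning argument. If $(X_i)_{i\ge 1}$ are i.i.d.\ with law $\mu_2$, then on the event $E_R=\{X_1,\dots,X_n\in B_R\}$, which has probability $\mu^R(G)^n$, the walk is driven by $\tilde\mu^R$, so
$$\mu_2^{(n)}(e)\ge \mu^R(G)^n\,(\tilde\mu^R)^{(n)}(e).$$
Taking $R=K\sqrt n$ with $K$ large keeps $\mu^R(G)^n=(1-O(1/K^2 n))^n$ bounded below by a positive constant, while the matching Gaussian \emph{lower} bound of Hebisch--Saloff-Coste furnishes $(\tilde\mu^R)^{(n)}(e)\ge c(n\log R)^{-D/2}\simeq c(n\log n)^{-D/2}$, completing the proof.

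\textbf{Main obstacle.} The quantitative heart of the argument is the two-sided Gaussian estimate $(\tilde\mu^R)^{(n)}(e)\simeq (n\log R)^{-D/2}$ with constants \emph{uniform in $R$} as $R\to\infty$. Two-sided Gaussian bounds for each fixed finite-variance symmetric walk on a nilpotent group of polynomial growth are classical, but here one must track explicitly how the constants depend on the second moment of the driving measure so that they do not degenerate with $R$. The required uniformity should follow from the scale-invariant parabolic Harnack inequality and the associated Nash and Poincar\'e inequalities on nilpotent groups, combined with a rescaling argument that reduces the walks of step-variance $\log R$ to standard Brownian motion on the Carnot-group tangent of $G$; implementing this uniformity carefully is the technical core of the argument.
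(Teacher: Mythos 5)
Note first that the paper does not actually prove this proposition: it is cited from the companion paper \cite{SCZ0} (``Return probability estimates for some radial measures,'' listed as in preparation), so there is no in-text proof to compare yours against. Your overall scheme --- truncate at the diffusive scale $R\simeq\sqrt n$, observe $\sigma_R^2\simeq\log R$, compare Dirichlet forms for the upper bound, condition on $E_R$ for the lower bound --- is a natural and plausible outline, and you correctly flag that the whole weight rests on an $R$-uniform Gaussian two-sided estimate for $\tilde\mu^R$. But as written that estimate is not true, and in the regime you use it the statement is circular.

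For the upper bound, the claim ``$(\tilde\mu^R)^{(n)}(e)\le C(n\sigma_R^2)^{-D/2}$ with $C$ independent of $R$'' fails for $n\ll R^2$: in that regime the truncation is not yet felt and the walk returns with probability $\simeq(n\log n)^{-D/2}$, which is \emph{larger} than $(n\log R)^{-D/2}$. The standard Hebisch--Saloff-Coste on-diagonal bound gives $\nu^{(n)}(e)\le C_\nu n^{-D/2}$ with a constant depending on $\nu$ in a way that is \emph{not} simply $1/\sigma_\nu^D$; getting the right variance-dependence requires a pseudo-Poincar\'e inequality tracking the \emph{truncated} second moment, not just the full one. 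And at your chosen scale $R\simeq\sqrt n$, the inequality $(\tilde\mu^{\sqrt n})^{(n)}(e)\le C(n\log n)^{-D/2}$ is essentially the same statement as the one you are trying to prove for $\mu_2$, so the truncation is a detour that does not reduce the difficulty. The correct route for the upper bound is to run the Nash-inequality machinery of Theorem \ref{th-Nash} directly on $\mu_2$, using the truncated moment $\mathcal G_{\mu_2}(m)\simeq\log m$ and a pseudo-Poincar\'e inequality of the form $\|f_g-f\|_2^2\le C\,|g|^2\,[\log(e+|g|)]^{-1}\,\mathcal E_{\mu_2}(f,f)$; proving \emph{that} inequality for the radial measure $\mu_2$ (the multidimensional analogue of Lemma \ref{lem-1d}, which in the paper is only stated for measures supported on cyclic subgroups) is precisely the non-routine step delegated to \cite{SCZ0}.

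Your lower bound is in better shape and can be completed cheaply without invoking any parabolic Harnack inequality or Carnot-group rescaling. Since $\|f_y-f\|_2^2\le C|y|_A^2\,\mathcal E_{u_A}(f,f)$ with $C$ depending only on $A$, summing against $\tilde\mu^R$ gives $\mathcal E_{\tilde\mu^R}\le C\sigma_R^2\,\mathcal E_{u_A}$ with $C$ independent of $R$. Applying Theorem \ref{th-PSC1} with $\phi=\tilde\mu^R$, $\psi=u_A$ and $k\simeq\sigma_R^2\simeq\log R$ yields $u_A^{(2kn)}(e)\le 2(\tilde\mu^R)^{(2n)}(e)+2e^{-2kn}$, and since $u_A^{(2m)}(e)\simeq m^{-D/2}$ uniformly in $m$ on a nilpotent group of growth degree $D$, one gets $(\tilde\mu^R)^{(2n)}(e)\ge c(n\log R)^{-D/2}$ uniformly in $R$ for $n\log R$ bounded below. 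Feeding this into your conditioning bound $\mu_2^{(n)}(e)\ge\mu^R(G)^n(\tilde\mu^R)^{(n)}(e)$ with $R=K\sqrt n$ then gives the desired lower bound $c(n\log n)^{-D/2}$. So: lower bound salvageable with a direct Dirichlet-form comparison; upper bound as proposed has a real gap and should instead go through a pseudo-Poincar\'e/Nash argument applied to $\mu_2$ itself.
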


The lower bounds on $\mu_{S,a}^{(n)}(e)$ obtained in this paper are proved by 
establishing Dirichlet form comparisons involving appropriate 
generalization of the above radially symmetric stable measures and 
using Theorem \ref{th-PSC1}.

\subsection{Background on nilpotent groups}
The classical setting 
for the study of random walks is the lattice $\mathbb Z^d$. 
See \cite{Spitzer}.  Since this work is concerned with random walks on 
nilpotent groups, we briefly discuss some of the similarities and 
differences between
the lattice $\mathbb Z^d$ and  finitely generated nilpotent groups. We also 
describe three basic examples.

The most fundamental similarity between a finitely generated nilpotent group
$G$ and the lattice $\mathbb Z^d$ is that,  assuming that $G$ is torsion free,
there exists a real nilpotent Lie group $\mathbb G$ such that $G$ can be 
identified with a discrete subgroup of $\mathbb G$ with compact quotient 
$\mathbb G/G$. In other words, $G$ is a (co-compact) lattice in $\mathbb G$ in 
exactly the same way that $\mathbb Z^d$ is a lattice in 
$\mathbb R^d$ (except that the 
quotient is not a group, in general).
This is a fundamental result of Malcev. See, e.g., 
Philip Hall famous notes \cite{PHall}. 
However, simply connected real nilpotent Lie groups and their lattices 
are classified only in very small dimensions. See \cite{deGraaf}. 
For instance, there are essentially $5$ distinct ``irreducible''
simply connected real nilpotent Lie groups of dimension 5. 
In dimension 6, there are 34.
No one knows the list of all simply connected nilpotent real Lie groups
of dimension 9, let alone higher dimensions. 

From a technical viewpoint, the study of random walks 
on abelian groups is mostly based on the use of the  Fourier 
transform (see \cite{Spitzer}). Although the representation theory of 
(real) nilpotent Lie groups is well developed, 
it has proved very hard to use this theory to 
study random walks (except in some very particular cases). For these reasons, 
the study of random walks on nilpotent groups is often based on techniques that 
are rather different from the classical techniques used in the abelian case.
This is certainly the case for the present work.

\begin{exa} Let $U(d)$ be the group 
of all upper triangular $d\times d$ matrices over 
$\mathbb Z$ with diagonal entries  
equal to $1$. This group is a lattice in the nilpotent real Lie 
group $\mathbb U(d)$ of all  upper triangular $d\times d$ matrices over 
the reals with diagonal entries equal to $1$. Let $E_{i,j}$, 
$1\le i<j\le d$, be the matrix
in $\mathbb U(d)$ with all non-diagonal entries equal to $0$ 
except for the entry 
in the $i$-th row and $j$-th column which equals $1$.  
These elements are  related by
$E_{i,j}E_{\ell,m}= \delta_{j,\ell}E_{i,m}$.
Further,
$$E_{i,j}=[E_{i,i+1},[E_{i+1,i+2},\dots,[E_{j-2,j-1},E_{j-1,j}]\cdots]].$$
In particular, the $(d-1)$-tuple
$S=(E_{i,i+1})_1^{d-1}$ is generating.
For any $m=1,\dots, d-1$, the elements $\{E_{i,i+m}: 1\le i\le d-m\}$ 
can be expressed as 
commutators of length $m$ on $S^{\pm 1}$ and form a minimal generating set
for the subgroup $U(d)_m=[U(d),U(d)_{m-1}]$ in the lower central 
series of $U(d)$.  The nilpotency class of $U(d)$ is $d-1$, that is, 
any commutator of length greater than $d-1$ equals the identity in $U(d)$.

Any matrix $M=(m_{i,j})$ in $U(d)$ can (obviously) be written uniquely
(order matters!)
$$M= \prod_{k=1}^{d-1}\left(\prod_{i=0}^{k-1}E_{k-i,d-i} ^{m_{k-i,d-i} }\right)
$$where the $m_{i,j}$ are simply the entry of the matrix $M$. 
Much less trivially,
there is also a unique expression of the form
$$M= \prod_{k=1}^{d-1}\left(\prod_{i=k}^{d-1}E_{i-k+1,i+1} ^{m'_{i-k+1,i+1} }\right)
$$
where  $(m'_{i,j})_{1\le i<j\le n}$ is obtained from 
$(m_{i,j})_{1\le i<j\le n}$ by a polynomial bijective transformation with 
polynomial inverse.

Since $A=\{E^{\pm 1}_{i,i+1}, 1\le i\le d-1\}$ generates $U(d)$, it is of great 
interest to describe the word length $|M|_A$ of a matrix $M\in U(d)$ in terms 
of the  coordinate systems $(m_{i,j})_{1\le i<j\le d}$ 
and $(m'_{i,j})_{1\le i<j\le d}$. The answer is essentially the same in 
both cases, namely, 
$$|M|_A \simeq \sum_{1\le i<j\le d}|m_{i,j}|^{1/|j-i|}\simeq 
\sum_{1\le i<j\le d}|m'_{i,j}|^{1/|j-i|}.$$
This well known (but non-trivial) result is the key to the volume 
growth estimate
$$V_{U(d),A}(r) \simeq r^{D(U(d))},\;\;D(U(d))=\sum_{i=1}^{d-1} i(d-i)$$
and to the assorted random walk result (see, e.g.,  \cite{VSCC})
$\Phi_{U(d)}(n)\simeq n^{-D(U(d))/2}.$
If we set $S=(s_i=E_{i,i+1})_1^{d-1}$ then for any
 $a=(\alpha_i)_1^{d-1}\in (0,2)^{d-1}$ our main result yields
$$\mu_{S,a}^{(n)}(e)\simeq n^{-D} ,\;\;D=\sum_{1\le i<j\le d}\sum_{m=i}^{j-1}
\frac{1}{\alpha_m}.$$
\end{exa}

\begin{exa} The free nilpotent group of nilpotency class 
$\ell$ on $k$ generators , $N(k,\ell)$, can be defined as the 
quotient of the free group on $k$ generators by the normal subgroup
generated by the images of all formal commutators of length greater 
than $\ell$. This group has the (universal) property that it covers 
any $k$ generated nilpotent group $G$ of nilpotency class $\ell$ 
with a covering homomorphism sending the canonical generating $k$-tuple 
of $N(k,\ell)$ to the given generating $k$-tuple of $G$.

Marshal Hall gave a description of $N(k,\ell)$ in terms of the 
so-called ``basic commutators''. See \cite[Chapter 11]{MHall}. 
Let $(s_1,\dots, s_k)$ 
be the canonical generators of $N(k,\ell)$. Define the ordered set of 
all basic commutators $c_1<\dots < c_t$ using the following 
inductive procedure. 

(1) $s_1,\dots,s_k$ are the basic commutators of 
length $1$ and, by definition $s_1<s_2<\dots<s_k$; (2) for each $m$
the basic commutators of length $m$ are all commutators of the form 
$c=[c',c'']$ with $c',c''$ basic commutators of length $m',m''$ with $m'+m''=m$ 
such that  $c'>c''$ and, if $c'=[d',d'']$ ($d,d'$ basic commutators) then 
$c''\ge d''$; (3) commutators of length $m$ come after commutators of length 
$m-1$ and are ordered arbitrary with respect to each other.
By a theorem of Witt (e.g., \cite[Theorem 11.2.2]{MHall}), 
the number of basic commutators of length $m$
on $k$ generators is
$M_k(m)=m^{-1}\sum_{d|m}\mu(d)k^{m/d}$ where $\mu$ denotes the classical 
M\"obius function.  Marshall Hall proved that the basic commutators of length $m$
form a basis of the abelian group $N(k,\ell)_m/N(k,\ell)_{m+1}$ for 
$1\le m\le \ell$ and that
any element $g$ of $N(k,\ell)$ 
can be written uniquely
$$g=\prod_1^t c_i^{x_i},\;\;x_i\in \mathbb Z.$$
Moreover, the length of $g$ with respect to the generating set 
$A=\{s_i^{\pm 1}\}$ satisfies $|g|_A\simeq \sum_1^t |x_i|^{1/m_i}$
where $m_i$ is the commutator length of $c_i$.
This gives the volume group estimate
$$V_A(r)\simeq r^{D(N(k,\ell))},\;\;D(N(k,\ell))=\sum _{m=1}^\ell m M_k(m)
=\sum_{m=1}^\ell \sum_{d|m}\mu(d)k^{m/d} $$
and the assorted random walk estimate
$\Phi_{N(k,\ell)}(n)\simeq n^{-D(N(k,\ell))/2}.$

In this case, the main result of the present work,
together with Witt's theorem (e.g., \cite[Theorem 11.2.2]{MHall}), gives 
that for any 
$k$-tuple $a=(\alpha_i)_1^k\in (0,2)^k$, we have
$$\mu_{S,a}^{(n)}(e)\simeq n^{-D}$$
where $$D= \sum_{m=1}^\ell\sum_{(m_1,\dots,m_k)\vdash m}
\frac{1}{m}\left(\sum_1^k\frac{m_i}{\alpha_i}\right)
\sum_{d|m_1,\dots,m_k}\mu(d) \binom{m/d}{m_1/d,\cdots,m_k/d}.$$
\end{exa}

\begin{exa} Let $G$ be the group 
$$G=\langle u_1,\dots, u_\ell,t|
[u_i,u_{j}]=1; [u_i,t]=u_{i+1},i<\ell;
[u_\ell,t]=1\rangle$$  
defined by generators and relations. This group is nilpotent of nilpotency 
class $\ell$ and it is generated by $S=(s_1=u_1,s_2=t)$ with $G_m$ generated by
$\{u_i: i\ge m\}$. In this case, we have $\Phi_G(n)\simeq n^{-D(G)/2}$
with $D(G)= 1+\ell(\ell+1)/2$.  
If we let $a=(\alpha_1,\alpha_2)\in (0,2)^2$, our main result yields $\mu_{S,a}^{(n)}(e)\simeq n^{-D}$ with 
$$D=\frac{\ell}{\alpha_1}+\frac{1+ (\ell-1)\ell/2}{\alpha_2}
.$$

In any of the above examples, we can also consider other choices 
of generating tuples. For instance, in the current example, we can fix 
$j\in \{1,\dots,\ell-1\}$ and consider the generating $3$-tuple 
$S_j=(s_1=u_1,s_2=t,s_3=u_{j+1})$ with $a'=(\alpha_1',\alpha_2',\alpha_3')
\in (0,2)^3$. In this case, our main result yields
$\mu_{S_j,a'}^{(n)}(e)\simeq n^{-D}$ with 
$$D=\left\{\begin{array}{cc} \frac{\ell}{\alpha'_1}+
\frac{1+(\ell-1)\ell/2}{\alpha'_2} 
& \mbox{ if } \frac{1}{\alpha'_3}\le \frac{1}{\alpha'_1}+ 
\frac{j}{\alpha'_2}\\  
\frac{j}{\alpha'_1}+\frac{1+j(j+1)/2}{\alpha'_2} 
+\frac{\ell-j}{\alpha_3'} +\frac{(\ell-j)(\ell-j+1)/2}{\alpha'_2} 
& \mbox{ if } \frac{1}{\alpha'_3}> \frac{1}{\alpha'_1}+ \frac{j}{\alpha'_2}. 
 \end{array}\right. 
$$
\end{exa}

\section{Quasi-norms and approximate coordinates} \setcounter{equation}{0}
\label{sec-norm}
This section describes results of an algebraic and geometric nature that play 
a key role in our study to the random walks driven by the measures $\mu_{S,a}$
defined at (\ref{muS}). One of the basic idea in the study of simple 
random walks on groups (i.e., the collection of random walks driven by 
the uniform probability measures $u_A$ where $A$ is a finite symmetric 
generating set)
is that the  notion of ``volume growth'' of the group leads to
basic upper bounds on $u_A^{(2n)}(e)$: the faster the volume growth, the faster
the decay of the probability of return. In the case of nilpotent group, this
heuristic leads to sharp bounds. Indeed, for any given $D\ge 0$, 
 $V_A(n)\simeq n^ D$ 
if and only if $u_A^{(2n)}(e)\simeq
n^{-D/2}$. See \cite{VSCC}.

The estimates of $\mu_{S,a}^{(n)}(e)$ obtained in this work are 
based on a similar heuristic which requires us to define appropriate 
geometries associated with the different choices of $S$ and $a$. 
This section defines these geometries and develop the needed key results.

\subsection{Weight systems and weight-functions systems}\label{sec-wF}
We refer the reader to subsection \ref{sec-wD} for notation regarding
words and formal commutators over a finite alphabet $S^{\pm 1}$, 
$S=(s_1,\dots,s_k)$.

\begin{defin}[Multidimensional weight system]  \label{def-wd}
Given  a $k$-tuple
$(w_1,\dots, w_k)$ with $w_i\in (0,\infty)\times \mathbb R^{d-1}$, $1\le i\le k$,
let $\mathfrak w$ be the weight system 
$$\mathfrak w:  \mathfrak C(S) \ni c\mapsto w(c)\in (0,\infty)\times 
\mathbb R^{d-1}$$
on the set $\mathfrak C(S)$ of all formal commutators on $S^{\pm 1}$ defined by 
$w(s_i^{\pm 1})= w_i$ and $w(c)=w(c_1)+w(c_2)$ if $c=[c_1,c_2]$.
Let
$$\bar{w}_1<\bar{w}_2<\cdots<\bar{w}_j<\dots$$
be the ordered sequence of the values  $w(c)$ when $c$ runs over all formal 
commutators and
 $(0,\infty)\times \mathbb R^{d-1}$ is given the usual lexicographic order.
\end{defin}
Note that we always have $w([c_1,c_2])> \max\{w(c_1),w(c_2)\}$.

\begin{defin} \label{def-Gw2}
For each $j=1,\dots$, let $\mathfrak C_j(S)$ be the 
set of all formal commutators of weight at least $\bar{w}_j$. 
If $G$ is a group generated by a $k$-tuple $S=(s_1,\dots, s_k)$, let
$G^\mathfrak w_j=\langle \mathfrak C_j(S)\rangle$ be the subgroup of $G$
generated by the image in $G$ of $\mathfrak C_j(S)$. Assuming that 
$G$ is nilpotent, let $j_*=j_*(\mathfrak w)$
be the smallest integer such that $G^\mathfrak w_{j_*+1}=\{e\}$.
\end{defin}
The proof of the following proposition is the same as that of Proposition 
\ref{pro-Gw1}.
\begin{pro}\label{pro-Gw2} Referring to the 
setting and notation of {\em Definition \ref{def-Gw2}}, 
assume that $G$ is nilpotent.
Then, for all $j=1,2\dots$, we have 
$G^\mathfrak w_j\subset G^\mathfrak w_{j+1}$
and $[G,G^\mathfrak w]\subset G^\mathfrak w_{j+1}$. In particular,
$$G=G^\mathfrak w_1\supseteq G^\mathfrak w_2\supseteq \cdots \supseteq 
G^\mathfrak w_j\supseteq \cdots\supseteq G^\mathfrak w_{j_*}
\supset G^\mathfrak w_{j_*+1}=\{e\}$$
is a descending normal series 
with $[G^\mathfrak w_j,G^\mathfrak w_j]\subset G^\mathfrak w_{j+1}$.
We let 
$R^\mathfrak w_j$ be the torsion free rank of the abelian 
group $G^\mathfrak w_j/G^\mathfrak w_{j+1}$.
\end{pro}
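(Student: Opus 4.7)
The plan is to mimic the proof of Proposition 1.2 line by line; the multidimensional weight system enters only through the lexicographic order on $(0,\infty)\times\mathbb R^{d-1}$, and the one property of weights we will actually use is the strict monotonicity $w([c_1,c_2])>\max\{w(c_1),w(c_2)\}$ already noted after Definition 2.1.

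First, I would record why that monotonicity holds in the multidimensional setting: every letter weight $w_i$ has positive first coordinate, so inductively every commutator weight $w(c)$ has positive first coordinate, and since $w([c_1,c_2]) = w(c_1)+w(c_2)$ is added coordinatewise, the first coordinate of $w([c_1,c_2])$ strictly exceeds the first coordinates of $w(c_1)$ and $w(c_2)$. By the lexicographic order this gives $w([c_1,c_2])>\max\{w(c_1),w(c_2)\}$. Consequently, for any $f_1\in\mathfrak C_1(S)$ and any $f_j\in\mathfrak C_j(S)$, the bracket $[f_1,f_j]$ has weight strictly greater than $\bar w_j$ and hence at least $\bar w_{j+1}$, so $[f_1,f_j]\in \mathfrak C_{j+1}(S)$.

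Next, using the commutator identity $[\langle X\rangle,\langle Y\rangle]=[X,Y]^{\langle X\rangle\langle Y\rangle}$ with $X=\mathfrak C_1(S)$ (whose image generates $G$) and $Y=\mathfrak C_j(S)$ (whose image generates $G^{\mathfrak w}_j$), the previous step yields
\[
[G,G^{\mathfrak w}_j]=[\mathfrak C_1(S),\mathfrak C_j(S)]^G\subset (G^{\mathfrak w}_{j+1})^G.
\]
I would then run a descending induction on $j$, starting from $j=j_*$ (where $G^{\mathfrak w}_{j_*+1}=\{e\}$ is trivially normal), to conclude simultaneously that every $G^{\mathfrak w}_j$ is normal in $G$ and that $[G,G^{\mathfrak w}_j]\subset G^{\mathfrak w}_{j+1}$ (with the conjugation on the right-hand side becoming vacuous once normality is known).

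The remaining claims are immediate: the inclusion $G^{\mathfrak w}_{j+1}\subset G^{\mathfrak w}_j$ follows from $\mathfrak C_{j+1}(S)\subset \mathfrak C_j(S)$, and $[G^{\mathfrak w}_j,G^{\mathfrak w}_j]\subset [G,G^{\mathfrak w}_j]\subset G^{\mathfrak w}_{j+1}$. I do not expect any genuine obstacle; the only conceptual point to verify carefully is that the lexicographic order on $(0,\infty)\times\mathbb R^{d-1}$ respects addition in the way needed, and this is guaranteed by the positivity of the first coordinates of the letter weights.
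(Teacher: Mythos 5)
Your proof is correct and follows the paper's own argument: the paper simply states that the proof of Proposition 2.4 is identical to that of Proposition 1.2, which is exactly the $[\langle X\rangle,\langle Y\rangle]=[X,Y]^{\langle X\rangle\langle Y\rangle}$ identity plus a descending induction on $j$ that you reproduce. Your explicit verification that positivity of the first coordinate guarantees $w([c_1,c_2])>\max\{w(c_1),w(c_2)\}$ in the lexicographic order is a nice touch (the paper just asserts this after Definition 2.1), but the route is the same.
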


\begin{defin}[Weight-function system] \label{def-F}
Given increasing 
functions 
$$F_i:[1,\infty)\rightarrow [1,\infty),$$ we define the weight-function 
system $\mathfrak F$ to be the collection of functions
$$F_c: [1,\infty)\rightarrow [1,\infty), \;c\in \mathfrak C(S),$$   
by setting inductively $F_{s^{\pm 1}_i}=F_i$, $1\le i\le k$, and 
$F_c=F_{c_1}F_{c_2}$ if $c=[c_1,c_2]$.
\end{defin}

\begin{rem} According to Definitions \ref{def-wd}-\ref{def-F}, 
if the build-sequence of the commutators $c$ of length $\ell $  is $(u_1,\dots,u_\ell)\in S^\ell$
then
$$w(c)= \sum_1^\ell w_i,\;\;F_c(r)=\prod_1^\ell F_i(r).$$
\end{rem}
\begin{rem} A key collection of examples of weight systems are the 
(one-dimensional) power-weight systems introduced in \ref{def-pow} 
where $w_i\in (0,\infty)$. Such a weight system is naturally associated with the 
weight-function system of power functions where $F_i(r)=r^{w_i}$.
In the context of the study of the 
 random walks driven by the measures $\mu_{S,a}$, these power weight systems 
and associated power function systems are relevant to the case when 
$a=(\alpha_i)_1^k\in (0,2)^k$. 
\end{rem}

\begin{exa} In order to study the measures $\mu_{S,a}$ with tuples $a$ with 
$\alpha_j =2$ for some $j$, 
it is necessary to introduce weight functions of the type  $r^2\log r$.
To allow for such functions, one can consider the two-dimensional weight systems
build on 
$$w_i=(u_i,v_i) \mbox{ with } u_i>0 \mbox{ and }
v_i\in \mathbb R, 1\le i\le k.$$ 
In this case a natural compatible  weight-function system  would be 
$$F_i(r)= r^{u_i} [\log (e+r)]^{v_i}, 1\le i\le k.$$
\end{exa} 

\begin{exa}
When dealing with more general measures than $\mu_{S,a}$,
it makes sense to consider multiparameter weight 
functions such that
 $$f_{v_1,v_2,v_3}(r)=r^{v_1}[\log (e+r)]^{v_2}[\log(e+\log (e+r))]^{v_3}
, \;\;v_1\in (0,\infty),\;\; 
v_2,v_3\in \mathbb R,$$
together with the natural associated lexicographical order on the parameter 
space $(v_1,v_2,v_3)$.
\end{exa}

In what follows we will mostly use weight-function systems $\mathfrak F$
such that
\begin{equation}\label{F1}
\exists\, C\ge 1,\forall i\in \{1,\dots,k\},\;\forall\,r\ge 1,\;\;
2F_i(r)\le F_i(Cr),\; F(2r)\le CF(r).
\end{equation}
Further, we will often make the assumption that we are given a weight system 
$\mathfrak w$ 
and a weight-function system $\mathfrak F$ that are compatible in the sense that
\begin{equation}\label{F2} \exists\,C\ge 1,\;\forall\,
c,c',\;\;
w(c)\preceq w(c') \Longleftrightarrow  \forall r,\;\;F_{c}(r)\le C F_{c'}(r).
\end{equation}
Note that under these two hypotheses, $w(c)=w(c')$ is equivalent to
 $F_c\simeq F_{c'}$. In this case, except for notational convenience, 
it is obviously somewhat redundant to use 
both $\mathfrak w $ and $\mathfrak F$ since they contain more or less  
the same information.

\begin{defin}\label{defin-bfF}
Referring to the setting and notation introduced above, assume that the 
weight-function system $\mathfrak F$ and the weight system $\mathfrak w$
satisfy (\ref{F1})-(\ref{F2}). For any $j=1,\dots,j_*$,
let $\mathbf F_j$ be a function such that for any commutator $c$ 
with $w(c)=\bar{w}_j$, we have
$$\mathbf F_j \simeq  F_ c .$$
(The function $\mathbf F_j$ corresponding to commutators $c$ with $w(c)=\bar{w}_j$ should not be confused $F_i=F_{s_i}$).
\end{defin}

In the following definition, given a finite tuple  
$\Sigma$ of elements of a nilpotent group $G$,
we let $\Omega(\Sigma)$  be the set of all finite words with formal letters 
in $\Sigma \cup \Sigma^{-1}$. For $\omega\in \Omega(\Sigma)$, we write 
$\pi(\omega)$ to denote the corresponding element of $G$. 
For $\omega\in \Omega(\Sigma)$ and $\sigma\in \Sigma$,
let $\mbox{deg}_{\sigma}(\omega)$ is the number of occurrences of 
$\sigma^{\pm 1}$ in $\omega$.

\begin{defin} \label{defin-FQN} Let $G$ be a nilpotent group generated by the 
$k$-tuple $S=(s_1,\dots,s_k)$.
Let $\mathfrak w,\mathfrak F$ be a weight system and associated  
weight function system on a generating $k$-tuple $S$ which satisfy 
(\ref{F1})-(\ref{F2}). 
For any tuple $\Sigma$ of elements 
in $\mathfrak C(S)$, set 
$F_{\Sigma}= F_c$ where $w(c)=\min\{w(\sigma): \sigma\in \Sigma\}$.   
For $g\neq e$, set
$$\|g\|_{\Sigma,\mathfrak F}=
\min\{ r\ge 1: 
g=\pi(\omega): \omega\in \Omega(\Sigma),\; 
\mbox{deg}_{c}(\omega)\le  F_{c}\circ F^{-1}_{\Sigma}(r) , c\in \Sigma\}.$$
By convention, $\|e\|_{\Sigma,\mathfrak F}=0$.
Set also
$$Q(\Sigma,\mathfrak F,r)=\{g\in G: F_{\Sigma}^{-1}(\|g\|_{\Sigma,\mathfrak F})
\le r\}.$$
Further, when $S$ and $\mathfrak w,\mathfrak F$ are fixed, set
$$\|g\|_{\mbox{\tiny com}}=\|g\|_{\mathfrak F,\mbox{\tiny com}}=
\|g\|_{\mathfrak C(S),\mathfrak F},\;\; 
\|g\|_{\mbox{\tiny gen}}=\|g\|_{\mathfrak F,\mbox{\tiny gen}}=
\|g\|_{S,\mathfrak F}$$
and 
$$Q_{\mbox{\tiny com}}(r)=Q(\mathfrak C(S),\mathfrak F,r),
\;\; Q_{\mbox{\tiny gen}}(r)=Q(S,\mathfrak F,r).$$
Note that $F_S=F_{\mathfrak C(S)}$.
\end{defin}
\begin{rem}\label{rem-norm}
If $\Sigma$ generates $G$ then $\|\cdot\|_{\Sigma,\mathfrak F}$ 
is a quasi-norm on $G$ (see \ref{radial} below for a precise definition).  
It is a norm on $G$ (i.e., satisfies the triangle 
inequality) if each of
the functions $\{F_{c}\circ F^{-1}_{\Sigma}$ , $c\in \Sigma\}$,
defined on $[1,\infty)$ can be extended to a convex function on 
$[0,\infty)$ that vanishes at $0$.
\end{rem}

\begin{exa}The simplest example is when the weight system $\mathfrak w$ is one 
dimensional, generated by $w(s_i)=w_i\in [2,\infty)$, and the associated 
weight function system $\mathfrak F$ is generated by $F_i(r)=r^{w_i}$.
In this case, it will sometimes be convenient to 
write $\|\cdot\|_{S,\mathfrak w}$
for $\|\cdot\|_{S,\mathfrak F}$ (resp. 
$\|\cdot\|_{\Sigma,\mathfrak w}$ for $\|\cdot\|_{\Sigma,\mathfrak F}$). 
\end{exa}

\begin{exa} \label{exa-illustr}
For further illustration, consider the  groups $\mathbb Z^3$
equipped with its natural generating $3$-tuple $S=(s_i)_1^3$ and the
discrete Heisenberg group (see Example \ref{basic-exa})
equipped with the generating $3$-tuple $S=(s_1=X,s_2=Y,s_3=Z)$ 
where $X$ is the matrix with 
$x=1,y=z=0$ and $Y,Z$ are  defined similarly.  Set $F_1(r)= r^{3/2}$, 
$F_2(r)= r^{2}\log (e+r)$, $F_3(r)= r^{\gamma}$, $\gamma>3/2$,
and let $\mathfrak F$ 
be the associated weight-function system (we let the reader define 
the natural $2$-dimensional weight system $\mathfrak w$ that is compatible with $\mathfrak F$). 

On $\mathbb Z^3$, it is clear from the definition that
$$\|(x,y,x)\|_{\mathfrak F,\mbox{\tiny gen}}\simeq \max\left\{ |x|,
\frac{|y|^{3/4}}{\log(e+|y|)^{3/4}},|z|^{3/(2\gamma)}\right\}.$$   

On the Heisenberg group, it is not immediately obvious how to compute
the $\|\cdot\|_{\mathfrak F,\mbox{\tiny gen}}$-norm 
of the element  $$g_{x,y,z}= \left(\begin{array}{ccc} 1& x & z\\
0&1& y\\
0&0&1\end{array}\right).$$
Theorem \ref{th-coord2F} below 
(and the fact that the matrix representation of $g_{x,y,z}$ is unique) 
leads to the conclusion that
$$\|g_{x,y,z}\|_{\mathfrak F,\mbox{\tiny gen}}\simeq
\max\left\{ |x|,
\frac{|y|^{3/4}}{\log(e+|y|)^{3/4}},|z|^{3/(2\gamma)}\right\}
\mbox{ if }\gamma> 7/2$$ and 
$$\|g_{x,y,z}\|_{\mathfrak F,\mbox{\tiny gen}}\simeq
\max\left\{ |x|,
\frac{|y|^{3/4}}{\log(e+|y|)^{3/4}},\frac{|z|^{3/7}}{[\log(e+|z|)]^{3/7}}\right\}
\mbox{ if } 3/2\le \gamma \le 7/2.$$

One can check (without much trouble) that 
$\|\cdot\|_{\mathfrak F, \mbox{\tiny gen}}$ satisfies the triangle inequality
in this case (on either $\mathbb Z$ or the Heisenberg group). 
We shall see that this choice of weight-function system is relevant to the 
study of the probability measure $\mu$ on $G$ such that
$$ \mu(s_i^{n}) \mbox{ is proportional to }\frac{1}{1+|n|F^{-1}_i(|n|)}, \;n\in \mathbb Z.$$
We will use this example to illustrate some of our main results in the 
rest of the paper. 
\end{exa}

The following theorem contains some of the key geometric results 
we will need to study the walk driven by measures of the type $\mu_{S,a}$.

\begin{theo}[$\mathfrak w$-$F$-adapted coordinates]\label{th-coord2F}
Let $G$ be a nilpotent group equipped with a generating $k$-tuple 
$S=(s_1,\dots,s_k)$. Let $\mathfrak w$, $\mathfrak F$ 
be weight and weight-function systems on $S$ satisfying 
{\em (\ref{F1})-(\ref{F2})}. 

Let $\Sigma=(c_1,\dots,c_t)$ be a tuple of formal commutators in 
$\mathfrak C(S)$ with  non-decreasing weights  $w(c_1)\preceq 
\dots\preceq  w(c_t)$. Let $m_j$, $j=0,\dots, j_*$ be defined by
$$\{c_i: w(c_i)=\bar{w}_j\}
= \{c_i: m_{j-1}<i\le m_j\}.$$
Assume  that (the image of) $\{c_i: w(c_i)=\bar{w}_j\}$
generates $G^\mathfrak w_j$ modulo $G^\mathfrak w_{j+1}$ and
that $ \{c_i: m_{j-1}<i\le m_{j-1} +R^\mathfrak w_j\}$
is free in $G^\mathfrak w_j/G^\mathfrak w_{j+1}$. Then the following 
properties hold:

\begin{itemize}
\item  There exists a constant $C=C(G,S,\mathfrak F)$ such that for
any $r\ge 1$, if $g\in G$ can be expressed as a word $\omega$
over $\mathfrak C(S)$
with $\mbox{\em deg}_{c}(\omega)\le F_{c}(r)$ for all 
$c\in \mathfrak C(S)$ then
$g$ can be expressed in the form
$$g=\prod_{i=1}^t c_i^{x_i}
\mbox{ with }|x_i|\le C\times \left\{\begin{array}{ll}\mathbf F_j(r) 
&\mbox{ if } m_{j-1}+1\le i\le R^\mathfrak w_j\\
1 &\mbox{ if } R^\mathfrak w _j+1\le i\le m_j.\end{array}\right.$$
\item There exist an integer $p=p(G,S,\mathfrak F)$, 
a constant $C=C(G,S,\mathfrak F)$ and a sequence
$(i_1,\dots,i_p)\in \{1,\dots, k\}^p$ such that 
if $g$ can be expressed as a word $\omega$ 
over $\mathfrak C(S)$
with $\mbox{\em deg}_{c}(\omega)\le F_{c}(r)$ for some $r\ge 1$  and all $c\in \mathfrak C(S)$ then
$g$ can be expressed in the form
$$g=\prod_{j=1}^p s_{i_j}^{x_j}
\mbox{ with } |x_j|\le CF_{{i_j}}(r).$$
\end{itemize}
\end{theo}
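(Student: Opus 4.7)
The plan is to perform a Hall-style collection process on $\omega$ that is adapted to the weight system $\mathfrak{w}$, then re-express the collected form in terms of $\Sigma$ using the generation and freeness hypotheses, and finally for the second bullet unwind the basic commutators into single-generator powers using the nilpotent-group identity $[a^m,b^n]\equiv[a,b]^{mn}$ modulo higher-weight commutators. Throughout I would proceed by strong induction along the weight sequence $\bar{w}_1\prec\bar{w}_2\prec\cdots\prec\bar{w}_{j_*}$, exploiting Proposition \ref{pro-Gw2} to absorb every error into a strictly higher weight subgroup.

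For the collection step, I would fix an ordering on $\mathfrak{C}(S)$ that refines the weight-order and repeatedly apply the swap rule $c_b c_a=c_a c_b\cdot[c_b,c_a]$ whenever an out-of-order adjacent pair appears in the current word. Since $[c_b,c_a]$ has weight strictly greater than $\max\{w(c_a),w(c_b)\}$ and $G$ is nilpotent, the process terminates. The essential quantitative claim, proved by strong induction on $j$, is that the collected word $\omega'$ satisfies $\deg_c(\omega')\le C\mathbf{F}_j(r)$ for every $c$ of weight $\bar{w}_j$. This count equals the initial count (bounded by a constant times $\mathbf{F}_j(r)$ by the hypothesis $\deg_c(\omega)\le F_c(r)\simeq\mathbf{F}_j(r)$) plus the letters of weight $\bar{w}_j$ created by swaps at lower levels. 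Each such creation arises from swapping letters $c_a,c_b$ with $w(c_a)+w(c_b)=\bar{w}_j$ and contributes at most $\deg_{c_a}\cdot\deg_{c_b}\le C^2\mathbf{F}_{w(c_a)}(r)\mathbf{F}_{w(c_b)}(r)$ by the inductive hypothesis. The crucial point is that the compatibility condition (\ref{F2}) combined with $F_c=F_{c_a}F_{c_b}$ for $c=[c_a,c_b]$ makes this product $\simeq\mathbf{F}_j(r)$, so the total creation at level $j$ remains $O(\mathbf{F}_j(r))$ since only finitely many pairs contribute.

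Next I would reduce the collected expression to the tuple $\Sigma$. Writing $g=\prod_{j=1}^{j_*}\prod_{w(c)=\bar{w}_j}c^{y_c}$ with products ordered from lowest weight to highest, at each level $j$ the factor $\prod_{w(c)=\bar{w}_j}c^{y_c}$ lies in $G^\mathfrak{w}_j$ and its image in the abelian quotient $G^\mathfrak{w}_j/G^\mathfrak{w}_{j+1}$ can by hypothesis be expressed in $\{c_i:m_{j-1}<i\le m_j\}$ with the first $R^\mathfrak{w}_j$ of them forming a free basis and the rest being bounded-order torsion generators. This yields $\prod_{i=m_{j-1}+1}^{m_j}c_i^{x_i}\cdot h_j$ with $|x_i|\le C\mathbf{F}_j(r)$ on the free part, $|x_i|\le C$ on the torsion part, and $h_j\in G^\mathfrak{w}_{j+1}$. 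Reordering factors within the level produces additional commutators, which again lie in $G^\mathfrak{w}_{j+1}$, and absorbing every $h_j$ into subsequent levels gives the desired normal form $g=\prod c_i^{x_i}$. Each absorbed correction is of size controlled by appropriate products of $\mathbf{F}_j(r)$'s, which (\ref{F2}) bounds by $\mathbf{F}_{j'}(r)$ for the relevant higher index $j'$, so the inductive bounds propagate down the list $\Sigma$.

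For the second bullet I would expand each $c_i^{x_i}$ recursively along the bracket structure of $c_i$. If $c_i=[c',c'']$ with $w(c')+w(c'')=\bar{w}_j$, the nilpotent identity $[c',c'']^n\equiv[(c')^a,(c'')^b]$ modulo $G^\mathfrak{w}_{j+1}$ for any factorization $ab=n$ lets me choose $a\le F_{c'}(r)$ and $b\le F_{c''}(r)$, which is possible because $|x_i|\le F_{c_i}(r)=F_{c'}(r)F_{c''}(r)$. Iterating down to single letters, and fixing once and for all a universal ordering and bracketing for this expansion, yields the promised universal sequence $(i_1,\dots,i_p)$ with $|x_j|\le CF_{i_j}(r)$; correction terms from each expansion step lie in strictly higher weight subgroups and are handled recursively at their own level. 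The main obstacle I anticipate is the bookkeeping in the collection step: it is precisely the compatibility condition (\ref{F2}) between $\mathfrak{w}$ and $\mathfrak{F}$ that makes $F_{c_a}(r)F_{c_b}(r)\simeq\mathbf{F}_{w(c_a)+w(c_b)}(r)$, and without this matching the creation of high-weight letters during collection would be uncontrollable.
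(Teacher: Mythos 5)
Your overall architecture — Hall collection, reduction to $\Sigma$ level by level, recursive expansion of commutator powers into generator powers — is the same as the paper's, and your observation that condition (\ref{F2}) is what keeps the degree growth controlled during collection is exactly the right insight. However there are two concrete gaps that would need to be repaired.

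First, in the second bullet you claim that since $|x_i|\le F_{c'}(r)F_{c''}(r)$ you may ``choose'' a factorization $x_i=ab$ with $|a|\le F_{c'}(r)$ and $|b|\le F_{c''}(r)$. This is false: an $x_i$ that is (say) a prime between $\max\{F_{c'}(r),F_{c''}(r)\}$ and their product admits no such factorization. What actually works — and what the paper does — is a quotient--remainder decomposition $x_i=ab+d$ (iterated along the entire build-word into a full mixed-radix expansion $x_i=\sum_k a_k\prod_{j>k}\lfloor F_{\sigma_j}(r)\rfloor$), which is implemented there via the substitution map $\Theta(\vec a,c)$ and Lemma~\ref{lem-Theta}. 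Only this mixed-radix structure produces a $g$-independent sequence $(i_1,\dots,i_p)$. Your ``fix a universal bracketing'' remark is pointing in the right direction, but by itself it does not give the exponent decomposition you need. Second, you describe $\{c_i: m_{j-1}+R^\mathfrak w_j<i\le m_j\}$ as ``bounded-order torsion generators'' of $G^\mathfrak w_j/G^\mathfrak w_{j+1}$. That is not what the hypotheses give: these elements are in general non-torsion; the hypothesis is only that the first $R^\mathfrak w_j$ commutators are free and the whole set generates, so the free part generates a \emph{finite-index} subgroup. The bound $|x_i|\le C$ for the non-free indices then comes from Euclidean division by that index (write $x_i=z_iN_h+y_i$, $0\le y_i<N_h$, and absorb $c_i^{z_iN_h}$ into the free part plus higher weight), not from torsion.

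Two smaller points worth flagging. Your estimate for the letters of weight $\bar{w}_j$ created during collection, ``$\deg_{c_a}\cdot\deg_{c_b}$'', under-counts the iterated creations (a newly created commutator is later itself moved past many letters and spawns further commutators); the paper controls this through a binomial-coefficient bound of the form $\sum_p\binom{q}{p}\sum_{c\in C_p(i,j)}\deg_c(\omega)$, which then collapses to the same order $\mathbf F_j(r)$ because $F_c F_{c_i}^p = F_{c_j}$ for $c\in C_p(i,j)$ and $q\le F_{c_i}(r)$. Your single-swap estimate gives the right answer but not a proof. Finally, the paper first carries out the whole argument in the free nilpotent group $N(k,\ell)$ (using that the quotients $N(k,\ell)^\mathfrak w_j/N(k,\ell)^\mathfrak w_{j+1}$ are free abelian, Lemma~\ref{lem-free}) and only afterwards pushes forward to general $G$; this is what makes the degree bounds on the correction terms in the expansion step rigorous. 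You work directly in $G$, which may be salvageable, but as written the corrections' degree bounds are asserted rather than established.
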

This important theorem will be proved in the last section of this article.
See also Theorem \ref{th-coord2Fimp} for an additional improvement of the 
the last statement of Theorem \ref{th-coord2F}. Note that in the decomposition 
$g=\prod_{j=1}^p s_{i_j}^{x_j}$, the sequence
$(i_j)_1^p$ is independent of the group element $g$.  

The proof of the following simple corollary is omitted.

\begin{cor} \label{cor-coord2F}
Referring to {\em Definition \ref{defin-FQN}}, the quasi-norms
$\|\cdot\|_{\mbox{\em\tiny com}}$ and 
$\|\cdot\|_{\mbox{\em\tiny gen}}$ defined on $G$ satisfy
$$  \|\cdot\|_{\mbox{\em \tiny gen}} \simeq 
\|\cdot\|_{\mbox{\em \tiny com}} \mbox{ over }  G.$$
Further, referring to the $t$-tuple $\Sigma=(c_1,\dots,c_t)$ of
{\em Theorem \ref{th-coord2F}}, we have
$$F_\Sigma^{-1}(\|\cdot\|_{\Sigma,\mathfrak F}) \simeq 
F^{-1}_{S}(\|\cdot\|_{\mbox{\em \tiny com}}) \mbox{ over }  G.$$
\end{cor}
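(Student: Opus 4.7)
The plan is to deduce both parts of the corollary as direct consequences of the two bullets of Theorem \ref{th-coord2F}, combined with the doubling condition (\ref{F1}). After the change of variables $r\mapsto F_\Sigma^{-1}(r)$, the quasi-norm rewrites as
\[
F_\Sigma^{-1}\bigl(\|g\|_{\Sigma,\mathfrak F}\bigr) = \inf\bigl\{\,r' \ge 1 : g = \pi(\omega),\ \omega \in \Omega(\Sigma),\ \deg_c(\omega) \le F_c(r')\ \text{for all } c\in \Sigma\,\bigr\},
\]
with analogous expressions for $F_S^{-1}(\|g\|_{\text{com}})$ and $F_S^{-1}(\|g\|_{\text{gen}})$. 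Note also that $F_S = F_{\mathfrak C(S)}$, because the minimum weight over all formal commutators is achieved on a single generator.

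The inclusions $S\subset \mathfrak C(S)$ and $\Sigma\subset\mathfrak C(S)$ immediately yield
\[
F_S^{-1}(\|g\|_{\text{com}}) \le F_\Sigma^{-1}(\|g\|_{\Sigma,\mathfrak F}) \quad \text{and}\quad F_S^{-1}(\|g\|_{\text{com}}) \le F_S^{-1}(\|g\|_{\text{gen}}),
\]
since any admissible word realizing the infimum on the right is also a $\mathfrak C(S)$-word satisfying the defining constraints for $\|g\|_{\text{com}}$ (with $\deg_c=0$ outside its alphabet).

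For the reverse bounds, set $r' := F_S^{-1}(\|g\|_{\text{com}})$, so that $g = \pi(\omega)$ for some $\omega\in \Omega(\mathfrak C(S))$ with $\deg_c(\omega)\le F_c(r')$ for every $c\in \mathfrak C(S)$. The first bullet of Theorem \ref{th-coord2F} provides a decomposition $g = \prod_{i=1}^t c_i^{x_i}$ with $|x_i| \le C\mathbf F_j(r')\asymp F_{c_i}(r')$ when $i$ lies in the free range $m_{j-1}+1\le i\le m_{j-1}+R^\mathfrak w_j$, and $|x_i|\le C$ (trivially absorbed since $F_{c_i}\ge 1$) in the torsion range. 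Viewing this product as a $\Sigma$-word, one has $\deg_{c_i} = |x_i|$, and the doubling condition (\ref{F1}) supplies a constant $C''$ with $C'F_{c_i}(r')\le F_{c_i}(C''r')$ for all $i$, showing $F_\Sigma^{-1}(\|g\|_{\Sigma,\mathfrak F})\le C''r'$. The second bullet is handled the same way: $g = \prod_{j=1}^p s_{i_j}^{x_j}$ with $|x_j|\le CF_{s_{i_j}}(r')$ and $p=p(G,S,\mathfrak F)$, so summing over the occurrences of each letter yields $\deg_{s_i}\le pCF_{s_i}(r')\le F_{s_i}(C''r')$ once more by doubling.

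This establishes the second equivalence of the corollary, and the first equivalence then follows by pushing the $F_S$-inverse comparison between $\|\cdot\|_{\text{gen}}$ and $\|\cdot\|_{\text{com}}$ back through the doubling function $F_S$ itself. The main (and only) real work has been delegated to Theorem \ref{th-coord2F}; the rest is a bookkeeping argument whose only subtle point is to absorb the multiplicative constants arising from the torsion terms and from the fixed length $p$ of the generator expression into the argument of the weight functions, which is precisely what (\ref{F1}) enables.
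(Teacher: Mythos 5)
Your proof is correct, and since the paper explicitly omits the proof of this corollary as ``simple,'' the intended argument is almost certainly the same: both equivalences are direct readings of the two bullets of Theorem \ref{th-coord2F}, with the multiplicative constants (from the torsion coordinates, from the fixed length $p$, and from $\mathbf F_j\simeq F_{c_i}$) absorbed into the argument of the $F_c$'s via the doubling hypothesis (\ref{F1}). Your reformulation of $F_\Sigma^{-1}(\|\cdot\|_{\Sigma,\mathfrak F})$ as the infimal $r'$ allowed by the degree constraints $\deg_c\le F_c(r')$, your use of the containments $S,\Sigma\subset\mathfrak C(S)$ for the trivial inequalities, and the final transfer from $F_S^{-1}$-level comparison to the norms themselves are all sound.
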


\begin{rem}In the case when the generators $s_i$ 
are given equal weight-functions, i.e., $F_i=F_j$, $1\le i\le j\le k$, 
the quasi-norms $\|\cdot\|_{S,\mathfrak F}, \|\cdot\|_{\Sigma,\mathfrak F}$ 
and $\|\cdot\|_{\mathfrak C(S),\mathfrak F}$ 
are all comparable to
the usual word-norm  $|\cdot|_S$. 
\end{rem}

\subsection{Norm equivalences}\label{sec-neq}
In this section, we briefly discuss how changing weight functions 
affect the quasi-norms $\|\cdot\|_{\mbox{\tiny com}}$ and 
$\|\cdot\|_{\mbox{\tiny gen}}$ introduced in Definition
\ref{defin-FQN}.

\begin{defin}\label{def-jw} 
Let $G$ be a countable nilpotent group equipped with a 
generating $k$-tuple $S=(s_1,\dots,s_k)$ 
and a (possibly multidimensional) weight system $\mathfrak w$ as above.  
For each $g\in G$, let 
$$j_\mathfrak w(g)=\max \{j:\exists u\in \mathbb N
,\;g^{u}\in G_{j}^{\mathfrak{w}}\}.$$ 
Let $\mbox{core}(\mathfrak w,S)$ be the  sub-sequence of $S$ obtained by 
keeping only those  $s_i$ such that
$w(s_i)=
\overline{w}_{j_\mathfrak w(s)}.$
\end{defin}
By construction,  we always have $w(s)\le \bar{w}_{j_\mathfrak w(s)}$.
Those generators $s\in S$  with $w(s)< \bar{w}_{j(s)}$ are, in some sense, 
inefficient. The following proposition makes this precise and motivates 
this definition.
\begin{pro} \label{pro-core}
Any formal commutator $c\in \mathfrak C(S)$ 
whose image in $G$ is free in 
$G^\mathfrak w_j/G^\mathfrak w_{j+1}$ must only use letters in 
$\mbox{\em core}(\mathfrak w,S)$. In particular,
referring to the sequence of commutators $c_1,\dots, c_t$
in {\em Theorem \ref{th-coord2F}}, any formal commutator $c_i$
with $i\in m_{j-1}+1,\dots,m_{j-1}+R^{\mathfrak w} _j$ 
must only use letters in $\mbox{\em core}(\mathfrak w,S)$.
\end{pro}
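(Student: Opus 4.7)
The plan is to establish the contrapositive: if $c\in\mathfrak{C}(S)$ has weight $w(c)=\bar{w}_j$ and its build-word uses some letter $s\in S$ with $w(s)<\bar{w}_{j_\mathfrak{w}(s)}$, then the image of $c$ in $G^\mathfrak{w}_j/G^\mathfrak{w}_{j+1}$ is torsion and hence cannot be free. The second sentence of the proposition is then immediate, since Theorem \ref{th-coord2F} requires each $c_i$ with $m_{j-1}<i\le m_{j-1}+R^\mathfrak{w}_j$ to be free in $G^\mathfrak{w}_j/G^\mathfrak{w}_{j+1}$. Fix such $c$ and $s$; let $j_0=j_\mathfrak{w}(s)$, so that $s^u\in G^\mathfrak{w}_{j_0}$ for some positive integer $u$ and $\bar{w}_{j_0}>w(s)$. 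Let $m$ denote the number of occurrences of $s$ in the build-word of $c$. The goal reduces to proving $c^{u^m}\in G^\mathfrak{w}_{j+1}$.

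Two ingredients are needed. First, the descending-induction argument behind Proposition \ref{pro-Gw2} actually yields the stronger inclusion
\[
[G^\mathfrak{w}_{j_1},G^\mathfrak{w}_{j_2}]\subset G^\mathfrak{w}_{j'},\qquad \bar{w}_{j'}\ge \bar{w}_{j_1}+\bar{w}_{j_2},
\]
since the generators of the left-hand side are conjugates of commutators $[f_1,f_2]$ with $f_i\in \mathfrak{C}^\mathfrak{w}_{j_i}$, which have weight $w(f_1)+w(f_2)\ge \bar{w}_{j_1}+\bar{w}_{j_2}$. Second, viewing $c$ as a function $c(x_1,\dots,x_\ell)$ of the letters (with multiplicities) in its build-word, one has the Hall-type multilinearity
\[
c(\dots,ab,\dots)\equiv c(\dots,a,\dots)\,c(\dots,b,\dots)\pmod{G^\mathfrak{w}_{j+1}},
\]
proved by induction on the depth of the commutator tree using $[xy,z]=[x,z][[x,z],y][y,z]$ and $[x,yz]=[x,z][x,y][[x,y],z]$. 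Each correction term arising in this expansion is a formal commutator obtained by bracketing an extra letter into $c$, hence has weight strictly greater than $w(c)=\bar{w}_j$, and therefore lies in $G^\mathfrak{w}_{j+1}$ by the first ingredient.

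Combining these, substitute $s^u$ for each of the $m$ occurrences of $s$ in the build-word of $c$. Applying the multilinearity one position at a time,
\[
c(\dots,s^u,\dots,s^u,\dots)\equiv c^{u^m}\pmod{G^\mathfrak{w}_{j+1}}.
\]
On the other hand, the left-hand side is the image in $G$ of the same bracket structure as $c$ but with each $s$-leaf replaced by $s^u\in G^\mathfrak{w}_{j_0}$; expanding the bracket tree inductively via the first ingredient places this element in some $G^\mathfrak{w}_{j''}$ with $\bar{w}_{j''}\ge w(c)+m(\bar{w}_{j_0}-w(s))>\bar{w}_j$, hence in $G^\mathfrak{w}_{j+1}$. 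Therefore $c^{u^m}\in G^\mathfrak{w}_{j+1}$, as required. The main technical step is formalizing the multilinearity: it is a routine variant of Philip Hall's classical commutator-collection process, but the bookkeeping must be carried out with respect to the non-standard filtration $\{G^\mathfrak{w}_j\}$ rather than the lower central series.
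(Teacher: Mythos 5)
Your proof is correct and takes essentially the same route as the paper's: both establish ``linearity of brackets'' modulo the $\mathfrak w$-filtration and then observe that substituting $s^u\in G^\mathfrak w_{j_0}$ (with $\bar{w}_{j_0}>w(s)$) for an occurrence of $s$ pushes the image of the commutator strictly deeper in the filtration, forcing a power of $c$ into $G^\mathfrak w_{j+1}$ and contradicting freeness. The paper is a bit more economical: it substitutes $s^u$ at a \emph{single} occurrence of $s$ (writing $c=[c',[s,c'']]$) and concludes $c^{u}\in G^\mathfrak w_{j+1}$, whereas you substitute at all $m$ occurrences and obtain $c^{u^m}\in G^\mathfrak w_{j+1}$; both are enough, since one only needs some power of $c$ to vanish modulo $G^\mathfrak w_{j+1}$. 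One small slip to flag: the inequality in your first ingredient is reversed — the inclusion $[G^\mathfrak w_{j_1},G^\mathfrak w_{j_2}]\subset G^\mathfrak w_{j'}$ holds for the index $j'$ with $\bar{w}_{j'}=\bar{w}_{j_1}+\bar{w}_{j_2}$ (equivalently, for every $j'$ with $\bar{w}_{j'}\le\bar{w}_{j_1}+\bar{w}_{j_2}$, the filtration being nested), not for $\bar{w}_{j'}\ge\bar{w}_{j_1}+\bar{w}_{j_2}$; the latter would demand inclusion into a strictly \emph{smaller} term, which is false in general (e.g.\ the Heisenberg group with equal weights) and which your own justification (that $[f_1,f_2]$ has weight $\ge\bar{w}_{j_1}+\bar{w}_{j_2}$) does not give. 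Your downstream application is consistent with the corrected form, so the argument goes through.
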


\begin{proof} Assume that the image of $c$ is in the torsion free part of 
$G_{j}^{\mathfrak{w}}/G_{j+1}^\mathfrak{w}$ and 
involves  $s\notin \mbox{core}(S)$, say $c=[c',[s,c'']]$.
Then $\exists u\in 
\mathbb{N},\;s^{u}\in G_{j(s)}^{\mathfrak{w}}$ with $\overline{w}_{j(s)}>w(s)$
(where we write $j(s)=j_\mathfrak w(s)$). 
From the linearity of brackets, we have
\[
c^{u}\equiv \lbrack c',[s^{u},c'']]\mbox{ }\mbox{ mod }G_{j+1}^{%
\mathfrak{w}} 
\]%
while $[c',[s^{u},c'']]\in G_{j+1}^{\mathfrak{w}}$ since $s^{u}\in
G_{j(s)}^{\mathfrak{w}}$ with $\overline{w}_{j(s)}>w(s).$ Therefore 
\[
c^{u}\equiv 0\mbox{ }\mbox{ mod }G_{j+1}^{\mathfrak{w}}. 
\]%
This contradicts the assumption that $c$ is free 
in  $G_{j}^{%
\mathfrak{w}}/G_{j+1}^{\mathfrak{w}}$.  The proposition follows.
\end{proof}

\begin{defin} Let $G$ be a countable nilpotent group equipped with a 
generating $k$-tuple $S=(s_1,\dots,s_k)$ 
and a (possibly multidimensional) weight system $\mathfrak w$ as above.  
Let $\Sigma=(c_1,\dots,c_t)$ be a sequence of formal
commutators as in Theorem \ref{th-coord2F}.
Let $\mbox{core}(\mathfrak w,S,\Sigma)$ be the sub-sequence of $S$ 
of those letters 
$s_\delta$ that appear in the build-sequence of 
one or more of the formal commutators $c_i\in \Sigma$ with
$i\in \cup_{j=1}^{q+1} \{m_{j-1}+1,\dots,m_{j-1}+R^{\mathfrak w}_j\}$.   
\end{defin}

\begin{rem}Proposition \ref{pro-core} shows that, for any sequence $\Sigma$
of formal commutators as in Theorem \ref{th-coord2F}, we have
$$\mbox{core}(\mathfrak w,S, \Sigma)\subset 
\mbox{core}(\mathfrak w,S).$$
\end{rem}
In what follows, given two tuples  $S=\{s_1,\dots,s_k)$, 
$\Theta=(\theta_1,\dots,\theta_\kappa)$ of elements of $G$ 
(possibly of different length $k,\kappa$), 
we write $S \subset \Theta$ if there is a 
one to one map $J: \{1,\dots,k\}\ra \{1,\dots,\kappa\}$ such that
$s_{J(i)}=\theta_i$ in $G$.  This applies, for instance, to 
the ``inclusion'' $\mbox{core}(\mathfrak w,S, \Sigma)\subset 
\mbox{core}(\mathfrak w,S)$  in the previous remark. 
Abusing notation, we will sometimes use 
the same letter $s$ 
to denote an element of $S$ and the associated element in $\Theta$.

\begin{pro}\label{pro-gn} Referring to the setting and notation of
{\em Theorem \ref{th-coord2F}},
for each $g\in G$ either $G$ is a torsion element and 
$\|g^n\|_{\mbox{\em\tiny com}}\simeq 1$ for all $n$ or
\begin{equation} \label{Ngn}
\forall\,n,\;\;\|g^n\|_{\mbox{\em \tiny com}} \simeq  F_{S}\circ 
\mathbf F^{-1}_{j}(n) \mbox{ where }j=j_{\mathfrak w}(g).\end{equation}
\end{pro}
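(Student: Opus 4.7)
The plan is to handle the torsion case trivially, then reduce to the case $g\in G^{\mathfrak w}_j$ with $j=j_{\mathfrak w}(g)$, and finally prove matching upper and lower bounds using Theorem \ref{th-coord2F} together with the abelian structure of the quotient $G^{\mathfrak w}_j/G^{\mathfrak w}_{j+1}$.

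If $g$ is a torsion element then $\{g^n:n\in\mathbb N\}$ is finite, so $\|g^n\|_{\mbox{\tiny com}}\simeq 1$ trivially. Assume henceforth that $g$ is non-torsion, set $j=j_{\mathfrak w}(g)$, and fix $u\in\mathbb N$ with $g^u\in G^{\mathfrak w}_j$. Concatenating representing words and invoking (\ref{F1}) to absorb the doubling of degrees yields the quasi-triangle inequality $\|h_1h_2\|_{\mbox{\tiny com}}\le C\max\{\|h_1\|_{\mbox{\tiny com}},\|h_2\|_{\mbox{\tiny com}}\}$. Writing $n=un'+q$ with $0\le q<u$ gives $g^n=g^{un'}g^q$ and $g^{un'}=g^n g^{-q}$, so $\|g^n\|_{\mbox{\tiny com}}$ and $\|g^{un'}\|_{\mbox{\tiny com}}$ agree up to bounded multiplicative and additive constants. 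Since $\mathbf F_j^{-1}(un')\simeq \mathbf F_j^{-1}(n)$ by (\ref{F1}), it suffices to prove (\ref{Ngn}) for $g$ replaced by $g^u$; I therefore assume from now on that $g\in G^{\mathfrak w}_j$.

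For the upper bound, since $g\in G^{\mathfrak w}_j=\langle\mathfrak C^{\mathfrak w}_j\rangle$ I fix once and for all a word $\omega_g$ over $\mathfrak C(S)$ representing $g$ and involving only commutators $c$ with $w(c)\succeq \bar w_j$. The concatenation $\omega_g^n$ then represents $g^n$ and satisfies $\deg_c\omega_g^n\le Cn$ for every $c$ that appears. Taking $r$ to be an appropriate constant multiple of $\mathbf F_j^{-1}(n)$, compatibility (\ref{F2}) together with $w(c)\succeq \bar w_j$ gives $F_c(r)\ge c'\mathbf F_j(r)\ge \deg_c\omega_g^n$ for every $c$ appearing. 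Hence the word $\omega_g^n$ witnesses $\|g^n\|_{\mbox{\tiny com}}\le F_S(r)\simeq F_S\circ\mathbf F_j^{-1}(n)$.

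For the lower bound, suppose $\|g^n\|_{\mbox{\tiny com}}\le F_S(r)$; Theorem \ref{th-coord2F} then yields $g^n=\prod_{i=1}^t c_i^{x_i}$ with $|x_i|\le C\mathbf F_{j_i}(r)$ on the free positions. Project onto the abelian quotient $G^{\mathfrak w}_j/G^{\mathfrak w}_{j+1}$: because $g\in G^{\mathfrak w}_j$ but no positive power of $g$ lies in $G^{\mathfrak w}_{j+1}$, the image $\bar g$ is non-torsion in this quotient, so in the $\mathbb Z$-basis $\{\bar c_i:m_{j-1}<i\le m_{j-1}+R^{\mathfrak w}_j\}$ of its free part some coordinate $y_{i^*}$ of $\bar g$ is nonzero. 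Abelianity gives $\overline{g^n}=n\bar g$, whose $i^*$-coordinate is $ny_{i^*}$; matching this with the projection of $\prod c_i^{x_i}$ (whose $j_i>j$ factors vanish modulo $G^{\mathfrak w}_{j+1}$ and whose $j_i=j$ free factors read off the basis coordinates) forces $x_{i^*}=ny_{i^*}$. Combining $n|y_{i^*}|\le C\mathbf F_j(r)$ with (\ref{F1}) gives $r\ge c'\mathbf F_j^{-1}(n)$, hence $\|g^n\|_{\mbox{\tiny com}}\ge c''F_S\circ\mathbf F_j^{-1}(n)$. The main obstacle I anticipate is precisely this matching step: one must know that the free coordinate $x_{i^*}$ delivered by Theorem \ref{th-coord2F} genuinely records the $i^*$-th free component of $g^n$ modulo $G^{\mathfrak w}_{j+1}$, uniquely and independently of the word representation used. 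This is a Mal'cev-type compatibility between the basis $(c_1,\dots,c_t)$ and the filtration $(G^{\mathfrak w}_j)$, implicit in the construction behind Theorem \ref{th-coord2F} and refined by Proposition \ref{pro-core}.
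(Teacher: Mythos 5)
Your upper bound and the reduction to $g\in G^{\mathfrak w}_j$ follow the paper closely and are fine. The lower bound, however, has a genuine gap, and you have half-noticed it yourself at the end but misdiagnosed its source.

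The problem is your step ``matching $\overline{g^n}=n\bar g$ with the projection of $\prod c_i^{x_i}$\,\ldots\ forces $x_{i^*}=ny_{i^*}$.'' You account for the factors $c_i^{x_i}$ with $w(c_i)\succ\bar w_j$ (they vanish mod $G^{\mathfrak w}_{j+1}$) and the free factors at level $j$ (they give coordinates), but you never address the factors $c_i^{x_i}$ with $w(c_i)\prec\bar w_j$, i.e.\ those at levels $h<j$. Theorem \ref{th-coord2F} only bounds these by $|x_i|\le C\mathbf F_{h}(r)$, which is \emph{unbounded} in $n$, and the sub-product $\prod_{i\le m_{j-1}}c_i^{x_i}$ does lie in $G^{\mathfrak w}_j$ (being $g^n$ times the inverse of a product of commutators of weight $\succeq\bar w_j$), so it can a priori contribute arbitrarily large amounts to the $i^*$-th free coordinate of $\overline{g^n}$. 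Thus you cannot conclude $x_{i^*}=ny_{i^*}$ from matching coordinates; you only get $x_{i^*}+(\text{contribution from lower levels})=ny_{i^*}$, and the contribution must be controlled. This is not something baked into Theorem \ref{th-coord2F} or Proposition \ref{pro-core}: the decomposition in Theorem \ref{th-coord2F} is not asserted to be unique, and Proposition \ref{pro-core} concerns which letters can appear in build-words, not coordinate rigidity.

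The paper closes this gap with an explicit bootstrap: it first proves, by an upward induction on $h\le j$ (claim (\ref{bounded})), that in any such representation of $g^{\kappa n}$ the coordinates $x_i$ for $i\le m_{h-1}$ are bounded by a constant $T$ independent of $n$. The induction step uses that $g^{\kappa n}\in G^{\mathfrak w}_h$ and that the $R^{\mathfrak w}_h$ distinguished commutators at level $h$ are free in $G^{\mathfrak w}_h/G^{\mathfrak w}_{h+1}$, so projecting modulo $G^{\mathfrak w}_{h+1}$ forces those coordinates to be bounded. Only after establishing (\ref{bounded}) can one safely compute at level $j$ modulo $G^{\mathfrak w}_{j+1}$, as you do, and conclude that $\sum_{i}|x_i|\ge c'n$ on the free positions at level $j$. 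Your proof needs this induction (or some equivalent control on the lower-level coordinates) inserted before the projection step; without it the lower bound does not follow.
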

\begin{proof} The upper bound is very easy. 
Let $\kappa$ be such that $g^\kappa\in G^\mathfrak w_j$, $j=j_\mathfrak w(g)$.
Since $g^\kappa$ is in $G^\mathfrak w_j$
it can be written as word $\omega$ using formal commutators of weight at least 
$\bar{w}_j$.  Hence, $g^{\kappa n}$ can be written as a word 
$\omega_n$, namely, $\omega $ repeated $n$ times. Obviously, 
if $w(c)\ge \bar{w}_j$,
$\mbox{deg}_{c}(\omega_n)\le  \mbox{deg}_c(\omega) n$. By definition, this 
implies 
$\|g^{\kappa n}\|_{\mbox{\tiny com}}\le C F_S\circ \mathbf F_j^{-1}(n)$.
The estimate $\|g^{n}\|_{\mbox{\tiny com}}\le C'  F_S\circ \mathbf F_j^{-1}(n)$
easily follows.

The lower bound is more involved. 
Using Theorem \ref{th-coord2F},
it suffices to show that any  writing of $g^{\kappa n}$ as a product
\begin{equation}\label{writting}
g^{\kappa n}=\prod_1^t c_i^{x_i} \mbox{ with }|x_i| \le C \mbox{ for } 
i\in \cup_h\{m_{h-1}+R^\mathfrak w_h+1,\dots,m_h\}
\end{equation}
must have 
$\max _{i\in \{m_{j-1}+1,\dots,m_{j-1}+R^\mathfrak w_j\}} \{|x_i|\} \ge  cn$.   
First, we claim that there exists a constant $T$ (independent of $g$ 
but depending
on the structure of $G$, $S$, the weight system $\mathfrak w$ 
and the constant $C$ appearing in the above displayed equation) 
such that for any $n$ and any writing of $g^{\kappa n}$ as above we have
\begin{equation}\label{bounded}
|x_i|\le T \mbox{ for all } i\le m_{h-1}, h\le j.
\end{equation}
The proof is by induction on $h\le j$. There is nothing to prove for $h=1$.
Assume that $h+1\le j$ and that we have proved that $|x_i|\le T$ for all 
$i\le m_{h-1}$.
Since $g^\kappa, g^{\kappa n} \in G^\mathfrak w_h$, the product
$\sigma=\prod _1^{m_{h-1}}c_i^{x_i}$ is in $G^\mathfrak w_h$. Since $|x_i|\le T$,
$i\le m_{h-1}$, $\sigma =\prod_{i> m_{h-1}} c_i^{z_i}$ with $|z_i|\le T'$
where $T'$ depends only on $G, S, \mathfrak w, T$ but not on $g,n$. Computing in 
$G^\mathfrak w_h$
modulo $G^\mathfrak w_{h+1}$, we have
$$g^{\kappa n}= \prod_{m_{h-1}+1}^{m_h} c_i^{x_i+z_i}=e\;\mbox{ mod } 
G^\mathfrak w_{h+1} .$$ The last equality 
holds because $g^{\kappa n}\in G^\mathfrak w_h$ and $h+1\le j$.
Since 
$$\{c_{m_{h-1}+1},\dots, c_{m_{h-1}+R^\mathfrak w_h}\}$$ is free
in $G^\mathfrak w_h/G^\mathfrak w_{h+1}$ and $\sup_i |z_i|\le T'$,
$\sup\{|x_i|:  m_{h-1}+R^\mathfrak w_h+1\le i\le m_h\}\le C$, 
there is a constant $T''$ depending only on $G,S,\mathfrak w, C$ 
and $T'$ such that
$|x_i|\le T''$ for $i\in \{m_{h-1}+1,\dots, m_{h-1}+R^\mathfrak w_h\}$.
This proves (\ref{bounded}).

On the one hand, since $j$ is the largest integer such that 
$g^u\in G^\mathfrak w_{j}$ 
for some $u$, it follows that for any $n$ we can write 
$$g^{\kappa n}= \prod_{i=m_{j-1}+1}^{m_j} 
c_i^{y_i} \mbox{ mod } G^\mathfrak w_{j+1}  \mbox{ with }
\sum_{i=m_{j-1}+1}^{m_{j-1}+R^\mathfrak w_j} 
|y_i| \ge cn$$ 
and  
$$\max\{|y_i|: m_{j-1}+R^\mathfrak w_j+1\le i\le m_j\}
\le C'.
$$
On the other hand, since any writing of $g^{\kappa n}$ as in 
(\ref{writting}) satisfies (\ref{bounded}), the same reasoning as in the 
induction step for (\ref{bounded}) gives
$$g^{\kappa n}= \prod_{m_{j-1}+1}^{m_j} c_i^{y_i-x_i-z_i}=e\;\mbox{ mod } 
G^{\mathfrak w}_{j+1} $$ 
with $|z_i|\le T$.
Since $\{c_i: m_{j-1}+1\le i \le m_{j-1}+R^\mathfrak w_j\}$ is free,  
the facts that
$$\sum_{i=m_{j-1}+1}^{m_{j-1}+R^\mathfrak w_j}  
|y_i| \ge cn ,\;\;\max\{|y_i|: m_{j-1}+R^\mathfrak w_j+1\le i\le m_j\}
\le C'$$ and $|z_i|\le T$ together imply that  
$$\sum_{i=m_{j-1}+1}^{m_{j-1}+R^\mathfrak w_j} |x_i|\ge c'n.$$ 
Hence, $\|g^{\kappa n}\|_{\mbox{\tiny com}}\simeq 
F_S\circ \mathbf F_j^{-1}(n)$. \end{proof}

\begin{theo} \label{th-normeq}
Let $G$ be a countable nilpotent group equipped with two 
generating tuples $S,S'$ and associated multidimensional weight systems  
$\mathfrak w,\mathfrak w'$ as well as weight function systems 
$\mathfrak F,\mathfrak F'$  satisfying {\em (\ref{F1})-(\ref{F2})}. By definition, $F_S$ and $F'_{S'}$ are the weight functions associated with the smallest 
weights in $\mathfrak w$ and $\mathfrak w'$, respectively. 
Let $\Sigma=(c_1,\dots,c_t)$ be a sequence of formal
commutators as in {\em Theorem \ref{th-coord2F}} applied to 
$(S,\mathfrak w,\mathfrak F)$.
\begin{enumerate}
\item Assume that $S'\supset \mbox{\em core}(\mathfrak w,S,\Sigma)$ 
and $F'_s\ge F_s$ 
for all $s\in \mbox{\em core }(\mathfrak w,S,\Sigma)$.
Then $$ \forall\,g\in G,\;\;
(F'_{S'})^{-1}(\|g\|_{S',\mathfrak F'}) \le C F_S^{-1}(\|g\|_{S,\mathfrak F})$$
\item Assume that, for all $s\in S'$,  
$F'_s\le \mathbf F_{j_\mathfrak w(s)}$. 
Then $$ \forall\,g\in G,\;\;
(F'_{S'})^{-1}(\|g\|_{S',\mathfrak F'}) \ge c  F_S^{-1}(\|g\|_{S,\mathfrak F})$$
\end{enumerate}
\end{theo}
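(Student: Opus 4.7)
The plan is to handle both inequalities by rewriting $g$ in the opposite alphabet and leveraging the already-established equivalence $\|\cdot\|_{S,\mathfrak F}\simeq\|\cdot\|_{\mbox{\tiny com}}$ of Corollary~\ref{cor-coord2F}. Set $\rho=F_S^{-1}(\|g\|_{S,\mathfrak F})$ and $\rho'=(F'_{S'})^{-1}(\|g\|_{S',\mathfrak F'})$; the two conclusions become $\rho'\le C\rho$ (Part~1) and $\rho'\ge c\rho$ (Part~2).

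For Part~1, I would first apply Theorem~\ref{th-coord2F} in $(G,S,\mathfrak w,\mathfrak F)$ to decompose $g=\prod_{i=1}^{t}c_i^{x_i}$ with $|x_i|\le C\mathbf F_j(\rho)$ when $c_i$ lies in the free block at level $j$ and $|x_i|\le C$ otherwise. Proposition~\ref{pro-core} guarantees that each free $c_i$ uses only letters in $\mbox{core}(\mathfrak w,S,\Sigma)\subseteq S'$. The central step is to re-express every such $c_i^{x_i}$ as a word over $\mbox{core}(\mathfrak w,S,\Sigma)$ in which each $s'$ appears at most $C'F_{s'}(\rho)$ times; this follows from the factorisation $|x_i|\le CF_{c_i}(\rho)=CF_{c'}(\rho)F_{c''}(\rho)$ (when $c_i=[c',c'']$) combined with the commutator-collection identity $[c',c'']^{x'x''}\equiv[(c')^{x'},(c'')^{x''}]$ modulo strictly higher-weight commutators, recursion on the length of $c_i$, and an induction along $G^\mathfrak w_j$ absorbing higher-weight tails into the $c_{i'}^{x_{i'}}$-factors with $i'>i$---precisely the mechanism already used inside the proof of Theorem~\ref{th-coord2F}. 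Each torsion $c_i^{x_i}$ is a bounded element of $G$ and contributes at most $O(1)$ letters of $S'$. Summing over the finite list of $c_i$'s, and using $F'_{s'}\ge F_{s'}$ on $\mbox{core}$ together with the doubling property~(\ref{F1}), we obtain an $S'$-word $\omega'$ for $g$ with $\mbox{deg}_{s'}(\omega')\le F'_{s'}(C\rho)$ for every $s'\in S'$, which says exactly $\rho'\le C\rho$.

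For Part~2, I would go in the reverse direction. Starting with an optimal $S'$-word $\omega'$ satisfying $\mbox{deg}_{s'}(\omega')\le F'_{s'}(\rho')$, I collect $\omega'$ into basic-commutator normal form over $S'$,
$$g=\prod_{s'\in S'}(s')^{N_{s'}}\cdot\prod_{\gamma}\gamma^{y_\gamma},$$
where $\gamma$ ranges over basic commutators over $S'$ of length $\ge 2$; the Hall--Petresco formulae give $|N_{s'}|\le\mbox{deg}_{s'}(\omega')$ and $|y_\gamma|\le K\prod_{s'}\mbox{deg}_{s'}(\omega')^{n_{s'}(\gamma)}$, with $n_{s'}(\gamma)$ the multiplicity of $s'$ in the build-sequence of $\gamma$. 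Each factor is then pushed through Proposition~\ref{pro-gn} applied in the original system $(\mathfrak w,\mathfrak F)$. For a single letter, the hypothesis $F'_{s'}\le\mathbf F_{j_\mathfrak w(s')}$ yields
$$F_S^{-1}\bigl(\|(s')^{N_{s'}}\|_{\mbox{\tiny com}}\bigr)\simeq \mathbf F_{j_\mathfrak w(s')}^{-1}(|N_{s'}|)\le \mathbf F_{j_\mathfrak w(s')}^{-1}\bigl(F'_{s'}(\rho')\bigr)\le \rho'.$$
For a higher basic commutator $\gamma$, the additivity of $\mathfrak w$-weights under brackets together with the multiplicativity relation $\mathbf F_a\mathbf F_b\simeq\mathbf F_\star$ (for $\bar w_\star=\bar w_a+\bar w_b$) built into Definition~\ref{defin-bfF} gives $\mathbf F_{j_\mathfrak w(\gamma)}(\rho')\gtrsim\prod_{s'}\mathbf F_{j_\mathfrak w(s')}^{n_{s'}(\gamma)}(\rho')\gtrsim|y_\gamma|$, so a second call to Proposition~\ref{pro-gn} furnishes $F_S^{-1}(\|\gamma^{y_\gamma}\|_{\mbox{\tiny com}})\le C\rho'$. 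Concatenating the $\mathfrak C(S)$-words of all finitely many factors and using the doubling of each $F_c$ produces $F_S^{-1}(\|g\|_{\mbox{\tiny com}})\le C\rho'$; Corollary~\ref{cor-coord2F} then delivers $\rho\le C\rho'$.

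The crux in both parts is the same: naive substitution between generating alphabets destroys the per-letter degree bounds, so one must rearrange words using commutator identities, which hold exactly only modulo strictly higher-weight commutators. Closing the resulting inductive loop is precisely what Theorem~\ref{th-coord2F} already does in Part~1 and what Proposition~\ref{pro-gn} together with the super-additivity of $\mathfrak w$-levels does in Part~2. The real work, then, is not to build any new machinery but to check that the quantitative bounds produced by commutator-collection and by Hall--Petresco sit inside the envelopes allowed by the two hypotheses $F'_s\ge F_s$ on $\mbox{core}(\mathfrak w,S,\Sigma)$ and $F'_s\le\mathbf F_{j_\mathfrak w(s)}$ on $S'$.
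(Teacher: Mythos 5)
Your proposal is correct in substance and rests on the same pillars (Theorem~\ref{th-coord2F}, Proposition~\ref{pro-core}, Proposition~\ref{pro-gn}), but in both parts you take a longer route than the paper does, re-deriving machinery that is already packaged in the cited statements.

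For Part~1, after obtaining $g=\prod c_i^{x_i}$ with $|x_i|\le CF_{c_i}(\rho)$ on the free block and $|x_i|\le C$ on the torsion block, you then expand each $c_i^{x_i}$ into an $S'$-letter word with controlled per-letter degree --- essentially re-proving a version of Lemma~\ref{lem-cprods}. This is unnecessary: since Proposition~\ref{pro-core} makes the free $c_i$'s formal commutators already over $\mbox{core}(\mathfrak w,S,\Sigma)\subset S'$, the product $\prod c_i^{x_i}$ \emph{is} a word over $\mathfrak C(S')$, with $\mathfrak C(S')$-degrees bounded by $CF_{c_i}(\rho)\le CF'_{c_i}(\rho)$ and an $O(1)$ contribution from the torsion part. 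That bounds $\|g\|_{\mathfrak C(S'),\mathfrak F'}$, and Corollary~\ref{cor-coord2F} finishes. You never need a per-letter $S'$-word.

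For Part~2, you collect the optimal $S'$-word via Hall--Petresco into a normal form $\prod (s')^{N_{s'}}\prod\gamma^{y_\gamma}$ and estimate each factor with Proposition~\ref{pro-gn}. This works, but it forces you to prove a super-additivity statement $\bar w_{j_\mathfrak w(\gamma)}\ge\sum_{s'}n_{s'}(\gamma)\bar w_{j_\mathfrak w(s')}$ for basic commutators $\gamma$ over $S'$, which you assert via ``additivity of $\mathfrak w$-weights under brackets'' without distinguishing the $\mathfrak w$-weight of a formal commutator from the filtration level $j_\mathfrak w$ of its image; the correct argument uses the bilinearity of the bracket map $G^\mathfrak w_p/G^\mathfrak w_{p+1}\otimes G^\mathfrak w_q/G^\mathfrak w_{q+1}\to G^\mathfrak w_j/G^\mathfrak w_{j+1}$ applied to $(s')^{u_{s'}}$, and must also note that if the putative weight exceeds $\bar w_{j_*}$ then $\gamma$ is torsion and contributes $O(1)$. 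The paper sidesteps all of this by invoking the bounded-length coordinate statement in Theorem~\ref{th-coord2F} (second bullet) for $(S',\mathfrak w',\mathfrak F')$: writing $g=\prod_1^p (s'_{i_j})^{x_j}$ with $|x_j|\le F'_{s'_{i_j}}(\rho')$ for a fixed length $p$ reduces everything to single-letter powers, for which the hypothesis $F'_s\le\mathbf F_{j_\mathfrak w(s)}$ feeds directly into Proposition~\ref{pro-gn}; then the quasi-norm inequality over a bounded number of factors closes the argument. Your route buys nothing here and requires a nontrivial additional lemma that you leave implicit.
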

\begin{proof} To prove the first statement, referring to the notation used
in Theorem \ref{th-coord2F},  Set 
$$I_1=
\cup_j \{m_{j-1}+1,\dots,m_{j-1}+R^\mathfrak w_j\},\;\; 
I_2=\{1,\dots, t\}\setminus I_1$$  and recall that
any any $g\in G$ can be written as 
$$g=\prod_1^tc_i^{x_i}, \;\;\; |x_i|\le C\left\{\begin{array}{cc}
F_{c_i}(F_S^{-1}(\|g\|_{\mbox{\tiny com}})) &\mbox{ if } i\in I_1\\
1 &\mbox{ if } i\in I_2.\end{array}\right. $$
By hypothesis, 
$F'_{c_i}\ge F_{c_i}$ for $i\in I_1$. Further, each $c_i$, $i\in I_2$, 
is a product of elements in $S'$. Hence, we obtain an expression for $g$
as a word $\omega$ on formal commutators on $S'$ with 
$\mbox{deg}_{c}(\omega)\le C F'_c(F_S^{-1}(\|g\|_{\mbox{\tiny com}}))$. 
This proves that 
$(F'_{S'})^{-1}(\|g\|_{S',\mathfrak F'}) \le C F_S^{-1}(\|g\|_{S,\mathfrak F})$
as desired.

To prove the second statement, apply Theorem \ref{th-coord2F}(iii) to 
$(S',\mathfrak w',\mathfrak F')$ to write any $g\in G$ as a product
$$g=\prod_1^p (s'_{i_j})^{x_j} \mbox{ with }|x_j|\le  F'_{s'_{i_j}}\circ 
(F'_{S'})^{-1}(\|g\|_{S',\mathfrak F'})$$
where $s'_{i,j}\in S'$ (note that the sequence $(i_j)$ and the integer $p$ are fixed and independent of $g$).
By Proposition  \ref{pro-gn} and the hypothesis  
$\mathbf F_{j_{\mathfrak w}(s)} \ge F'_s$ for all $s\in S'$, we obtain that
$F^{-1}_S(\|g\|_{S,\mathfrak F})\le C (F'_{S'})^{-1}(\|g\|_{S',\mathfrak F'})$
as desired.
\end{proof}

\begin{cor}\label{cor-normeq}
Let $G$ be a countable nilpotent group equipped with two 
generating tuple $S,S'$ and associated  multidimensional weight systems  
$\mathfrak w,\mathfrak w'$  with function systems 
$\mathfrak F,\mathfrak F'$  
satisfying {\em (\ref{F1})-(\ref{F2})}. 
Let $\Sigma=(c_1,\dots,c_t)$ be a sequence of formal
commutators as in {\em Theorem \ref{th-coord2F}} applied to 
$(S,\mathfrak w,\mathfrak F)$.
Assume that there exists $C\in (0,\infty)$ such that
the following two conditions are satisfied:
\begin{itemize}
\item[(i)] $\mbox{\em core}(\mathfrak w,S,\Sigma)\subset S'$ and, 
$\forall\, s\in \mbox{\em core}(\mathfrak w,S,\Sigma),\;\; CF'_s\ge F_s.$
\item[(ii)]
$\forall\,s\in S',\;\; F'_s\le C\mathbf F_{j_\mathfrak w(s)}$.
\end{itemize}
Then
$$ \forall\,g\in G,\;\;
(F'_{S'})^{-1}(\|g\|_{S',\mathfrak F'}) 
\simeq  F_S^{-1}(\|g\|_{S,\mathfrak F}).$$
In particular,
$$\forall\,r>0,\;\;\#Q(S',\mathfrak F',r)
\simeq \#Q(S,\mathfrak F,r).$$
\end{cor}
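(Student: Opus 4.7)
The plan is to derive the corollary directly from the two inequalities provided by Theorem \ref{th-normeq}, showing that conditions (i) and (ii) of the corollary are strong enough to invoke parts (1) and (2) of the theorem, respectively, modulo harmless multiplicative constants that are absorbed using the doubling property (\ref{F1}).

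First I would establish the upper bound $(F'_{S'})^{-1}(\|g\|_{S',\mathfrak F'}) \le C_1 F_S^{-1}(\|g\|_{S,\mathfrak F})$. Condition (i) of the corollary differs from the strict hypothesis of Theorem \ref{th-normeq}(1) only by the extra constant $C$ in $CF'_s \ge F_s$. Iterating (\ref{F1}), an inequality of the form $F'_s(r) \ge F_s(r)/C$ can be upgraded to $F'_s(r) \ge F_s(r/C')$ for a constant $C'$ depending only on $C$ and on the doubling constants in (\ref{F1}). Inspecting the proof of Theorem \ref{th-normeq}(1), the multiplicative factor propagates through the commutator expansion $c=[c_1,c_2]$ in a controlled way because commutators have bounded length (by the nilpotency of $G$), so the conclusion survives up to an adjustment of the ambient scale, which is exactly what the $\simeq$ relation allows.

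Next, for the lower bound, I would apply Theorem \ref{th-normeq}(2) in the same spirit. The condition $F'_s \le C\mathbf F_{j_\mathfrak w(s)}$ of the corollary upgrades via (\ref{F1}) to $F'_s(r)\le \mathbf F_{j_\mathfrak w(s)}(C''r)$, so that the proof of Theorem \ref{th-normeq}(2) (which relies on Proposition \ref{pro-gn} applied to each $s\in S'$) delivers $(F'_{S'})^{-1}(\|g\|_{S',\mathfrak F'}) \ge c F_S^{-1}(\|g\|_{S,\mathfrak F})$ after the same scale adjustment. Combining both directions proves the first assertion of the corollary.

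For the volume statement, the just-established equivalence of quasi-norms yields constants $c,C>0$ such that
$$Q(S,\mathfrak F,cr)\subseteq Q(S',\mathfrak F',r)\subseteq Q(S,\mathfrak F,Cr).$$
Taking cardinalities gives $\#Q(S,\mathfrak F,cr)\le \#Q(S',\mathfrak F',r)\le \#Q(S,\mathfrak F,Cr)$, which is exactly $\#Q(S',\mathfrak F',r)\simeq \#Q(S,\mathfrak F,r)$ since $r\mapsto \#Q(S,\mathfrak F,r)$ is monotone. The only genuine bookkeeping obstacle is tracking how the constants from (i) and (ii) propagate through the build-words and commutator decompositions provided by Theorem \ref{th-coord2F}; the doubling property (\ref{F1}) is what makes all these inequalities effectively scale-invariant and allows constants to be absorbed at every step.
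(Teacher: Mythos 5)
Your argument is correct and is exactly the intended derivation: the corollary is a direct consequence of Theorem \ref{th-normeq}, and the multiplicative constants $C$ in hypotheses (i) and (ii) are absorbed by the doubling property (\ref{F1}), which converts a bound $F'_s \ge F_s/C$ into $F'_s(r) \ge F_s(r/C')$ and similarly for (ii); the volume statement then follows from the ball inclusions induced by the quasi-norm equivalence. The paper leaves the proof to the reader precisely because it is this routine deduction.
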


\begin{exa}[Continuation of Example \ref{exa-illustr}] \label{exa-illustr2}
Consider the 
discrete Heisenberg group as in  Example \ref{exa-illustr}
equipped with the generating $3$-tuple $S=(s_1=X,s_2=Y,s_3=Z)$
and $S'=(s'_i=X,s'_2=Y)$. Set $F_1(r)=F'_1(r)= r^{3/2}$, 
$F_2(r)= F'_2(r)=r^{2}\log (e+r)$, $F_3(r)= r^{\gamma}$, $\gamma>3/2$,
and let $\mathfrak F,\mathfrak F'$ be the associated weight-function systems. 
The natural $2$ dimensional weight systems $\mathfrak w,\mathfrak w'$ 
are generated by $w_1=w'_1=(3/2,0)$, $w_2=w'_2=(2,1)$, $w_3= (\gamma,0)$.
The first observation is that $\mbox{core}(\mathfrak w,S)=(s_1,s_2,s_3)$
is $\gamma>7/2$ and $\mbox{core}(\mathfrak w,S)=(s_1,s_2)$ if 
$3/2<\gamma\le 7/2$. It follows that, 
$\forall\, g\in G$, $\|g\|_{S', \mathfrak F'}\simeq \|g\|_{S,\mathfrak F}$
if $\gamma\in (3/2, 7/2]$ whereas these norms are not equivalent if $\gamma>7/2$.
\end{exa}

\section{Volume estimates}
\label{sec-vol}
This section gathers some of the main results we will need 
regrading volume estimates for the balls $Q(S,\mathfrak F,r)$ introduced in Definition \ref{defin-FQN}. It also addresses the question of how changes 
in the weight-function system affect these volume estimates.

We start with a  
general and very flexible result which admits a rather simple proof. 
In this theorem, 
the weight-function system $\mathfrak F$  is not
necessarily tightly related to the weight system $\mathfrak w$.
The proof of this theorem will be given in the last section of this paper.
\begin{theo} \label{th-coord3}
Let $\mathfrak w$ be a multidimensional weight system as in 
{\em Section \ref{sec-wF}}. Assume that we are given weight functions
$F_i$, $1\le i\le k$ satisfying {\em (\ref{F1})}. 
Let $\Sigma=(c_1,\dots, c_s)$ be a $s$-tuple of formal commutators on 
$\{s_i^{\pm 1}: 1\le i\le k\}$.
Assume that, for any $h$, the family $\{c_i: w(c_i)=\bar{w}_h\}$ projects to 
a free family in the abelian group $G^\mathfrak w_h/G^\mathfrak w_{h+1}$.  
Then there exist an integer $M=M_\Sigma$ and a sequence 
$(i_1,\dots,i_M)\in \{1,\dots,k\}^M$, depending on $\Sigma$
such that for any $r>0$ there exists a subset $K_\Sigma(r)\subset G$
satisfying the following two properties: 
\begin{enumerate}
\item $\displaystyle \# K_\Sigma(r) \ge \prod_{i=1}^s (2F_{c_i}(r)+1) $
\item $g\in K_\Sigma(r)\Longrightarrow g=\prod_{j=1}^Ms_{i_j}^{x_{j}}$, 
$|x_{j}|\le F_{i_j}(r).$
\end{enumerate}
Further, every $s_{i_j}$, $1\le j\le M$, belongs to the build-sequence
of at least one $c_h\in \Sigma$.
\end{theo}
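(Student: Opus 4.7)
The plan is to define
$$K_{\Sigma}(r)=\left\{\prod_{i=1}^s c_i^{y_i}:\;y_i\in\mathbb Z,\;|y_i|\le F_{c_i}(r)\right\},$$
with the product taken in a fixed order of non-decreasing weight. For the cardinality bound in (1), I would show that distinct integer tuples $(y_i)$ produce distinct elements of $G$, so that $\#K_\Sigma(r)\ge \prod_i(2F_{c_i}(r)+1)$ by counting. Injectivity is established by induction along the $\mathfrak{w}$-filtration: working modulo $G^{\mathfrak{w}}_{h+1}$ for $h=1,2,\dots,j_*$ in turn, the product collapses in the abelian quotient $G^{\mathfrak{w}}_h/G^{\mathfrak{w}}_{h+1}$ to the sum $\sum_{w(c_i)=\overline{w}_h} y_i\bar c_i$ (the lower-weight factors having been pinned down at earlier induction steps), and the freeness hypothesis on $\{c_i : w(c_i)=\overline{w}_h\}$ forces these $y_i$ to be determined by $g$.

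For (2), I would first prove, by induction on the commutator length of $c$, an \emph{expansion lemma}: for every $|y|\le F_c(r)$,
$$c^y=\Bigl(\prod_{m=1}^{N_c} s_{j_{\pi_c(m)}}^{e_m(y,r)}\Bigr)\cdot z(y,r),$$
where $N_c$ and the pattern $(j_{\pi_c(m)})_{m\le N_c}$ depend only on $c$, the letters $s_{j_{\pi_c(m)}}$ all belong to the build-sequence of $c$, $|e_m(y,r)|\le CF_{s_{j_{\pi_c(m)}}}(r)$, and $z(y,r)\in G^{\mathfrak{w}}_{h+1}$ with $h$ the weight level of $c$. The base case $c=s_j$ is trivial. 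The inductive step $c=[c_1,c_2]$ is driven by the bilinearity relation
$$[c_1^u,c_2^v]\equiv c^{uv}\pmod{G^{\mathfrak{w}}_{h+1}},$$
a direct consequence of $[G,G^{\mathfrak{w}}_h]\subset G^{\mathfrak{w}}_{h+1}$ (Proposition \ref{pro-Gw2}). Writing $y=uv+\rho$ with $|u|\le F_{c_1}(r)$, $|v|\le CF_{c_2}(r)$, $|\rho|<|u|$ (possible because $F_c=F_{c_1}F_{c_2}$), I would plug in the inductive expansions of $c_1^{\pm u}$, $c_2^{\pm v}$ and the smaller residue $c^\rho$, then use (\ref{F1}) to absorb $C$ into the exponent bound via $F_i(Cr)\le C'F_i(r)$.

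Substituting the lemma into $g=\prod_i c_i^{y_i}$ produces a fixed-pattern product of generator powers times an accumulated error which, after commuting each of the individual errors $z(y_i,r)\in G^{\mathfrak{w}}_{h(c_i)+1}$ past all subsequent blocks (at the cost, by $[G,G^{\mathfrak{w}}_j]\subset G^{\mathfrak{w}}_{j+1}$, of only higher-weight correction terms), lies in $G^{\mathfrak{w}}_2$. Expressing this cumulative error as a product of powers of formal commutators of weight $\ge\overline{w}_2$ with integer exponents bounded in terms of the original $F_{c_i}(r)$'s, and applying the expansion lemma again to those commutators, a descending induction on $j_*$ completes the proof: the recursion terminates because $G^{\mathfrak{w}}_{j_*+1}=\{e\}$, yielding the fixed sequence $(i_1,\dots,i_M)\in\{1,\dots,k\}^M$ and bounds $|x_j|\le F_{i_j}(r)$ required by the theorem. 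The main obstacle throughout is precisely this bookkeeping: the sequence $(i_j)$ must be chosen independently of $g$, yet the $y$-dependent error factors continually threaten to disturb the pattern, which is why the proof depends so heavily on the nilpotency of $G$ for termination, on (\ref{F1}) to absorb multiplicative constants at each stage, and on the freeness hypothesis to control which commutators are genuinely necessary.
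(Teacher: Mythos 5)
Your plan diverges from the paper's proof in a way that creates a genuine gap. You take $K_\Sigma(r)=\{\prod_i c_i^{y_i}: |y_i|\le F_{c_i}(r)\}$ and then try to \emph{rewrite} each such element as a fixed-pattern product of generator powers, collecting the error terms $z(y_i,r)\in G^{\mathfrak w}_{h(c_i)+1}$ and recursing. That rewriting step is precisely the hard content of Theorem \ref{th-coord2F} (proved in Section 5.2 via the full commutator-collecting machinery of Lemmas \ref{lem-bullet2}--\ref{lem-Free}), and it is not needed here. The paper sidesteps it entirely: Lemma \ref{lem-B} gives, for each commutator $c$ of weight $\bar w_j$, a \emph{fixed} sequence $(i_1,\dots,i_{\ell(c)})$ such that $g_c^n\equiv s_{i_1}^{n_1}\cdots s_{i_\ell}^{n_\ell}\ \mathrm{mod}\ G^{\mathfrak w}_{j+1}$ with $|n_m|\le F_{s_{i_m}}(r)$ whenever $|n|\le F_c(r)$; then Theorem \ref{th-coord31} defines $K(r)$ directly as the set of products $\prod_i \mathbf g_{c_i}(\mathbf x_i)$ of generator powers in that fixed pattern with $|x^i_j|\le F^{c_i}_j(r)$. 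With this choice, property (2) is true by \emph{definition} of $K(r)$, and property (1) follows by lifting each $(y_i)$ to $(\mathbf y_i)$ via Lemma \ref{lem-B} and running your filtration-injectivity argument --- the direction you already have.

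Concretely, the step that fails in your version is the assertion that the accumulated error ``can be expressed as a product of powers of formal commutators of weight $\ge \bar w_2$ with integer exponents bounded in terms of the original $F_{c_i}(r)$'s.'' That bookkeeping is not elementary; it is the heart of Lemma \ref{lem-bullet2}, and it only produces bounds of the form $F_{i_j}(Cr)$ for a constant $C>1$ determined by the nilpotency class. But the statement of Theorem \ref{th-coord3} has \emph{no room for constants}: the same $r$ must give $\#K_\Sigma(r)\ge\prod(2F_{c_i}(r)+1)$ and $|x_j|\le F_{i_j}(r)$. Replacing $r$ by $r/C$ to fix the exponent bound destroys the cardinality bound, and your appeal to ``(\ref{F1}) to absorb multiplicative constants'' cannot repair that. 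Also, your decomposition $y=uv+\rho$ in the expansion lemma should bound the residue by the cofactor you keep fixed (as in the paper's $n=ab+d$), not by $|u|$. In short: your injectivity argument matches the paper's, but you should define $K_\Sigma(r)$ as the fixed-pattern products of generator powers from Lemma \ref{lem-B}, not as products of commutator powers --- then (2) is automatic and the constant-free bounds hold.
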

Theorem \ref{th-coord3} is very useful for comparing the volume growth 
associated with different ``weight-function systems''. See the proof of 
Theorem \ref{th-Vcomp} below.

Next we state and prove sharp volume estimates related to 
Theorem \ref{th-coord2F}.

\begin{theo} \label{th-VolQ}
Referring the setting and notation of {\em
Theorem \ref{th-coord2F}}, we have
$$\#Q(\mathfrak C(S),\mathfrak F,r)\simeq \#Q(\Sigma, \mathfrak F,r)
\simeq \#Q(S,\mathfrak F,r)\simeq \prod_{j=1}^{j_*}
\mathbf F_j(r)^{R^\mathfrak w_j}.
$$
\end{theo}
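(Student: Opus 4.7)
The plan is to use Corollary \ref{cor-coord2F} to reduce the three cardinality comparisons to a single one, then sandwich the common order of growth by the product $\prod_j \mathbf F_j(r)^{R^\mathfrak w_j}$ using Theorem \ref{th-coord2F}(i) for the upper bound and Theorem \ref{th-coord3} for the lower bound. By Corollary \ref{cor-coord2F}, we have $\|\cdot\|_{\mbox{\tiny gen}} \simeq \|\cdot\|_{\mbox{\tiny com}}$ and $F_\Sigma^{-1}(\|\cdot\|_{\Sigma,\mathfrak F}) \simeq F_S^{-1}(\|\cdot\|_{\mbox{\tiny com}})$ on $G$. Since $F_S = F_{\mathfrak C(S)}$ (the generators sit at the smallest weight) and the weight functions satisfy the doubling hypothesis (\ref{F1}), the three balls satisfy $\#Q_i(r) \le \#Q_j(Cr) \le \#Q_i(C'r)$ for a uniform constant, and it suffices to sandwich just one of them.

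For the upper bound I will work with $Q_{\mbox{\tiny com}}(r)$. Any $g$ in this ball is, by Definition \ref{defin-FQN}, the image of some word $\omega$ over $\mathfrak C(S)$ with $\deg_c(\omega) \le F_c(r)$ for every formal commutator $c$. The first item of Theorem \ref{th-coord2F} converts this into a representation $g = \prod_{i=1}^{t} c_i^{x_i}$ with $|x_i| \le C\,\mathbf F_j(r)$ when $m_{j-1}+1 \le i \le m_{j-1}+R^\mathfrak w_j$ and $|x_i| \le C$ otherwise. Pick one such tuple for each $g$; the resulting map $Q_{\mbox{\tiny com}}(r) \to \mathbb Z^t$ is injective because the tuple determines the product. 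Counting,
$$\# Q_{\mbox{\tiny com}}(r) \le \prod_{j=1}^{j_*}(2C\mathbf F_j(r)+1)^{R^\mathfrak w_j}\cdot (2C+1)^{t - \sum_j R^\mathfrak w_j},$$
and since the second factor is a constant depending only on $(G,S,\mathfrak w,\mathfrak F)$, this yields $\# Q_{\mbox{\tiny com}}(r) \le C'\prod_j \mathbf F_j(r)^{R^\mathfrak w_j}$.

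For the lower bound I will work with $Q_{\mbox{\tiny gen}}(r)$, applied to the sub-tuple $\Sigma_0 = \{c_i : m_{j-1}+1 \le i \le m_{j-1} + R^\mathfrak w_j,\ 1 \le j \le j_*\}$ consisting of the free commutators at each weight level. The hypothesis of Theorem \ref{th-coord3} is exactly the freeness of $\{c \in \Sigma_0 : w(c) = \bar{w}_h\}$ in $G^\mathfrak w_h/G^\mathfrak w_{h+1}$, so the theorem delivers a set $K_{\Sigma_0}(r) \subset G$ of size
$$\# K_{\Sigma_0}(r) \ge \prod_{c \in \Sigma_0}(2F_c(r)+1) \ge c\,\prod_{j=1}^{j_*}\mathbf F_j(r)^{R^\mathfrak w_j},$$
using $F_c \simeq \mathbf F_j$ whenever $w(c) = \bar{w}_j$, which is guaranteed by Definition \ref{defin-bfF} together with the compatibility (\ref{F2}). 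Moreover, every element of $K_{\Sigma_0}(r)$ admits a representation $\prod_{j=1}^{M}s_{i_j}^{x_j}$ with $|x_j| \le F_{i_j}(r)$ and $M = M_{\Sigma_0}$ a fixed integer, so the doubling condition (\ref{F1}) places $K_{\Sigma_0}(r)$ inside $Q_{\mbox{\tiny gen}}(C''r)$ for a constant $C''$ independent of $r$.

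The real work is already hidden inside Theorems \ref{th-coord2F} and \ref{th-coord3}: the former supplies the sharp exponent bound $|x_i| \le C\,\mathbf F_j(r)$ (rather than a cruder polynomial), which is essential to match the upper bound to $\prod_j \mathbf F_j(r)^{R^\mathfrak w_j}$; the latter simultaneously produces the needed count and a short generator-word representation. Once those two structural inputs are in hand, the argument sketched above reduces to elementary counting, so no step in this particular proof should present a serious obstacle.
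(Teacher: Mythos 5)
Your proposal is correct and follows essentially the same strategy as the paper: the equivalences and the upper bound fall out of Theorem \ref{th-coord2F}(i) by parametrizing each element of the ball by a tuple of exponents $(x_i)$ and counting, and the lower bound is obtained by exhibiting a large subset of the ball indexed injectively by exponents along the free commutators. The one small divergence is in the lower bound: the paper constructs the set $\{\pi(\prod_j\prod_{i=m_{j-1}+1}^{m_{j-1}+R_j}c_i^{x_i}):|x_i|\le F_{c_i}(r)\}$ directly and verifies injectivity by projecting successively through the filtration, whereas you invoke Theorem \ref{th-coord3} applied to the sub-tuple $\Sigma_0$ of free commutators; Theorem \ref{th-coord3} is itself proved (Theorem \ref{th-coord31}) by precisely that projection argument, so this is a repackaging rather than a new idea. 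Both routes are valid; yours is slightly less self-contained but avoids re-deriving the injectivity check, and it correctly uses the extra output of Theorem \ref{th-coord3} (the short product representation $\prod s_{i_j}^{x_j}$ together with doubling of the $F_i$) to place $K_{\Sigma_0}(r)$ inside $Q_{\mbox{\tiny gen}}(C''r)$, which is needed since you chose to bound $Q_{\mbox{\tiny gen}}$ rather than $Q_\Sigma$ from below.
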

\begin{rem}Assume that the weight system $\mathfrak w$ is unidimensional,
generated by $(w_i)_1^k\in (0,\infty)^k$,
and the weight-functions $F_i$ are power functions
$F_i(r)= r^{\mathfrak w_i}$, $i=1,\dots, k$. Then
$$Q(S,\mathfrak F,r)\simeq r^{D(S,\mathfrak w)}$$
with $D(S,\mathfrak w)$ as in Definition \ref{def-DSa}.
\end{rem}

\begin{proof} The equivalences 
$\#Q(\mathfrak C(S),\mathfrak F,r)\simeq \#Q(\Sigma, \mathfrak F,r)
\simeq \#Q(S,\mathfrak F,r)$ and the
upper bound $\#Q(\Sigma, \mathfrak F,r)\le C \prod_{j=1}^{j_*}
\mathbf F_j(r)^{R^\mathfrak w_j}$
follows immediately from Theorem \ref{th-coord2F} and inspection.

The lower bound $\#Q(\Sigma, \mathfrak F,r)\ge c\prod_{j=1}^{j_*}
\mathbf F_j(r)^{R^\mathfrak w_j}$
requires an additional argument.
Note that $Q(\Sigma,\mathfrak F,r)$ contains the image in $G$ of 
$$\prod _{j=1}^{j_*} \prod_{i=m_{j-1}+1}^{m_{j-1}+R_j} c_i^{x_i},\;\;|x_i|\le F_{c_i}(r).$$
Further, it is not hard to check that 
$$\prod _j \prod_{i=m_{j-1}+1}^{m_{j-1}+R_j} c_i^{x_i}=
\prod _j \prod_{i=m_{j-1}+1}^{m_{j-1}+R_j} c_i^{y_i}$$
implies
$$  x_i=y_i,\;\;
i\in \bigcup_{j=1}^{j_*}\{m_{j-1}+1,\dots,m_{j-1}+R_j\}.$$  
The desired lower bound follows.
\end{proof}

\begin{theo} \label{th-Vcomp}
Let $G$ be a countable nilpotent group equipped with two 
generating tuples $S,S'$ and associated multidimensional weight systems  
$\mathfrak w,\mathfrak w'$ as well as weight function systems 
$\mathfrak F,\mathfrak F'$  satisfying {\em (\ref{F1})-(\ref{F2})}. 
Let $\Sigma=(c_1,\dots,c_t)$ be a sequence of formal
commutators as in {\em Theorem \ref{th-coord2F}} applied to 
$(S,\mathfrak w,\mathfrak F)$.
Assume that  $S'\supset \mbox{\em core}(\mathfrak w,S,\Sigma)$ and that 
$$F'_s\ge F_s  \mbox{ for all }s\in \mbox{\em core}(\mathfrak w,S,\Sigma).$$  
Then
$$\#Q(S',\mathfrak F',r)\simeq \prod_{j=1}^{j_*(\mathfrak w')} 
\mathbf F'_j(r)^{R^{\mathfrak w'}_j}
\ge \#Q(S,\mathfrak F,r)\simeq \prod _{j=1}^{j_*(\mathfrak w)} 
 \mathbf F_j(r)^{R^{\mathfrak w}_j}
.$$
Assume 
further that there exists $\sigma \in S'$ such that 
$F'_\sigma \ge  \mathbf F_{j_\mathfrak w(\sigma)}$. Then
$$\#Q(S',\mathfrak F',r) 
\ge c\left(\frac{F'_\sigma(r)}{\mathbf F_{j_\mathfrak w(\sigma)}(r)}\right) 
\#Q(S,\mathfrak F,r).$$
\end{theo}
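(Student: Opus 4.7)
The plan is to reduce both $\#Q(S, \mathfrak{F}, r)$ and $\#Q(S', \mathfrak{F}', r)$ to the explicit products provided by Theorem \ref{th-VolQ}, and to exhibit the required lower bounds on $\#Q(S', \mathfrak{F}', r)$ by transporting the construction of Theorem \ref{th-coord3} from the $S$-side, exploiting the fact that every letter invoked there already lies in $S'$. First, apply Theorem \ref{th-coord3} to $(S, \mathfrak{w}, \mathfrak{F})$ with the sub-tuple $\Sigma^* = \{c_i : i \in I_1\}$, $I_1 = \bigcup_j \{m_{j-1}+1, \dots, m_{j-1}+R^{\mathfrak{w}}_j\}$, of free-direction commutators from Theorem \ref{th-coord2F}; the freeness hypothesis is met by construction. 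This yields a set $K_{\Sigma^*}(r) \subset G$ of cardinality $\geq \prod_{c \in \Sigma^*}(2F_c(r)+1) \simeq \#Q(S, \mathfrak{F}, r)$, each element of which is a product $\prod_{j=1}^{M} s_{i_j}^{x_j}$ with $|x_j| \leq F_{s_{i_j}}(r)$. By the last clause of Theorem \ref{th-coord3}, each $s_{i_j}$ belongs to the build-sequence of some $c \in \Sigma^*$, hence lies in $\mbox{core}(\mathfrak{w}, S, \Sigma) \subset S'$. The hypothesis $F'_s \geq F_s$ on $\mbox{core}$ gives $|x_j| \leq F'_{s_{i_j}}(r)$; collecting occurrences of each letter $s' \in S'$ and applying the doubling property (\ref{F1}), one obtains $\deg_{s'}(g) \leq M F'_{s'}(r) \leq F'_{s'}(Cr)$, so $K_{\Sigma^*}(r) \subset Q(S', \mathfrak{F}', Cr)$. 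The first inequality follows from the doubling of $\#Q(S', \mathfrak{F}', \cdot)$ in $r$ (itself a consequence of Theorem \ref{th-VolQ} and (\ref{F1})).

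For the refinement involving $\sigma \in S'$ with $F'_\sigma \geq \mathbf{F}_{j_{\mathfrak{w}}(\sigma)}$, set $j = j_{\mathfrak{w}}(\sigma)$. By the definition of $j_\mathfrak{w}$, $\sigma^u \in G^{\mathfrak{w}}_j$ for some $u \geq 1$ while no power of $\sigma$ lies in $G^{\mathfrak{w}}_{j+1}$, so $\sigma^u$ projects to a non-torsion element of $G^{\mathfrak{w}}_j/G^{\mathfrak{w}}_{j+1}$. Since $C_j = \{c \in \Sigma^* : w(c) = \bar{w}_j\}$ is a basis of the torsion-free part of this quotient, there is some $c_0 \in C_j$ occurring with non-zero coefficient in the expansion of $\sigma^u$, and a standard basis-exchange argument in $(G^{\mathfrak{w}}_j/G^{\mathfrak{w}}_{j+1}) \otimes \mathbb{Q}$ shows that $(C_j \setminus \{c_0\}) \cup \{\sigma\}$ is again free modulo torsion. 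Form $\tilde{\Sigma}$ by replacing $c_0$ with $\sigma$ in $\Sigma^*$, and apply Theorem \ref{th-coord3} to the configuration in which $\sigma$ is adjoined to the alphabet $S$ as a formal letter of weight $\bar{w}_j$ and weight function $F'_\sigma$ (with the original letters retaining their weights and weight functions from $(\mathfrak{w}, \mathfrak{F})$), using commutator tuple $\tilde{\Sigma}$. The resulting set has cardinality $\geq F'_\sigma(r) \prod_{c \in \Sigma^* \setminus \{c_0\}} F_c(r) \simeq \bigl(F'_\sigma(r)/\mathbf{F}_{j_\mathfrak{w}(\sigma)}(r)\bigr) \cdot \#Q(S, \mathfrak{F}, r)$. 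The same letter-collecting argument, now with letters drawn from $\mbox{core}(\mathfrak{w}, S, \Sigma) \cup \{\sigma\} \subset S'$, places this set inside $Q(S', \mathfrak{F}', Cr)$, yielding the refined inequality.

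The hardest step is justifying that the freeness of $(C_j \setminus \{c_0\}) \cup \{\sigma\}$ in $G^{\mathfrak{w}}_j/G^{\mathfrak{w}}_{j+1}$ persists as the freeness hypothesis required by Theorem \ref{th-coord3} in the enlarged weight system: adjoining $\sigma$ at weight $\bar{w}_j$ introduces new commutators $[\sigma, \cdot]$ into the filtration. One must verify that these new commutators already lie in $G^{\mathfrak{w}}_{j+1}$ modulo torsion (which follows from $\sigma^u \in G^{\mathfrak{w}}_j$ together with standard nilpotent commutator identities), so that the torsion-free part of the relevant quotient is unaffected by the enlargement; the rest is bookkeeping.
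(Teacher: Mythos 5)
Your treatment of the first inequality is fine and essentially parallels the paper's: the paper applies Theorem \ref{th-coord3} to the free subfamily of $\Sigma$ but substitutes the weight-functions $\mathfrak F'$ for $\mathfrak F$ (legitimate since $\mathrm{core}(\mathfrak w,S,\Sigma)\subset S'$), then notes $F'_{c_i}\ge F_{c_i}$ to land $\#K(r)\ge c\prod\mathbf F_j(r)^{R^\mathfrak w_j}$; you instead build $K_{\Sigma^*}(r)$ with $\mathfrak F$ and use $F'_s\ge F_s$ to show it sits inside $Q(S',\mathfrak F',Cr)$. Both routes are sound and the bookkeeping is equivalent.

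The second part has a genuine gap. You propose to adjoin $\sigma$ to the alphabet as a new formal letter of weight $\bar w_j$ and then re-invoke Theorem \ref{th-coord3}. But Theorem \ref{th-coord3}'s freeness hypothesis is stated with respect to the filtration $G^{\mathfrak w}_h$ \emph{built from the formal commutators over the given alphabet}. Once you add $\sigma$ to the alphabet, new formal commutators $[\sigma,\cdot],[[\sigma,\cdot],\cdot],\dots$ appear, and they are assigned weights by the enlarged system that bear no a priori relation to the actual level in the old filtration at which the corresponding group elements lie (for instance if $\sigma=s^n$ for a generator $s$ of much lower weight, $[\sigma,\cdot]=[s^n,\cdot]$ already sits very low). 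So the enlarged filtration $G^{\mathfrak w_{\mathrm{new}}}_h$ need not coincide with the original $G^{\mathfrak w}_h$, and the freeness you verified for $(C_j\setminus\{c_0\})\cup\{\sigma\}$ lives in the \emph{old} quotient. You recognize this (``the hardest step'') but your proposed resolution --- that the new commutators lie in $G^{\mathfrak w}_{j+1}$ ``modulo torsion by standard identities'' --- is both unproved and, as sketched, not the right statement (what is needed is an equality of the two filtrations, not a torsion claim about specific commutators).

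The paper avoids all of this by invoking Theorem \ref{th-coord32} rather than Theorem \ref{th-coord3}. That theorem is designed precisely for this purpose: it allows a tuple $(b_i)$ of \emph{arbitrary} group elements (not formal commutators), keeps the \emph{original} filtration $G^{\mathfrak w}_h$, and only asks for (a) freeness of $\{b_i:h(i)=h\}$ in $G^{\mathfrak w}_h/G^{\mathfrak w}_{h+1}$ and (b) a controlled expansion of each $b_i^n$ into generator powers. In the paper's argument one takes $b_{i_*}=\sigma^u$ with weight function $\widetilde F^{i_*}(r)=F'_\sigma(r/|u|)$ and the remaining $b_i=c_i$ with $\widetilde F^i=F'_{c_i}$; the basis exchange you describe supplies the freeness, and the definition of $j_{\mathfrak w}(\sigma)$ and $\mathbf F_{j_{\mathfrak w}(\sigma)}$ supplies the expansion bound, all within the fixed filtration. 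If you replace your invocation of Theorem \ref{th-coord3} by Theorem \ref{th-coord32} in this way, your proof closes.
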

\begin{proof}
Since $\mbox{core}(\mathfrak w,S,\Sigma)\subset S'$ it follows 
that, for any  $c_i\in \Sigma$, $F'_{c_i}$ is well defined as
the product of $F'_{s}$ with $s\in \mbox{core}(\mathfrak w,S,\Sigma)\subset S'$. 
Use the 
collection of commutators $c_i$, 
$i\in \{m_{j-1}+1,\dots, m_{j-1}+R^{\mathfrak w}_j\}$, $j=1,\dots,j_*$ in 
Theorem \ref{th-coord2F} 
with the weight system $\mathfrak w$ and 
weight-function system  $\mathfrak F'$. For each $r$, 
Theorem \ref{th-coord3} provides a set $K(r)\in G$ such that
\begin{equation} \label{cruxK}
\# K(r) \ge  
\prod_{j=1}^{j_*(\mathfrak w)}\prod_{i=m_{j-1}+1}^{m_{j-1}+R^\mathfrak w_i} F'_{c_i}(r) \end{equation}
and, by Theorem \ref{th-coord2F} , Theorem \ref{th-coord3} and the definition of 
$\mbox{core}(\mathfrak w,S,\Sigma)$,
$$ K(r)\subset \{g\in G: \|g\|_{S',\mathfrak F'}\le F'_{S'}(r)\}.$$
By Theorem
\ref{th-VolQ}, it follows that 
$$\forall\,r,\;\;\#K(r)\le \#Q(S',\mathfrak F',r).$$

By hypothesis, $F'_s\ge F_s$ if $s\in \mbox{core}(\mathfrak w,S,\Sigma)$. Hence 
$F'_{c_i}\ge F_{c_i}$ (i.e., $w'(c_i)\ge w(c_i)$). By (\ref{cruxK}) and 
Theorem \ref{th-VolQ}, this implies $\#K(r)\ge c\prod_{j=1}^{j_*(\mathfrak w)}
\mathbf F_j^{R^\mathfrak w_j}$.
This proves the first statement.

Suppose now that there exists $\sigma\in S'$ such that
$w'(s)>\bar{w}_{j_\mathfrak w(\sigma)}$.  Set $j_0=j_\mathfrak w(\sigma)$.
In the sequence of commutators $c_1,\dots,c_t$ used above, consider the
the free family 
$$\{c_i: i\in \{m_{j_0-1}+1,\dots,m_{j_0-1}+R^\mathfrak w_{j_0}\}\} 
\mbox{ in }G^\mathfrak w_{j_0}/G^\mathfrak w_{j_0+1}.$$ 
By hypothesis,  there exists an integer $u$
such that $\sigma^u\in G^\mathfrak w_{j_0}$ is free in 
$G^\mathfrak w_{j_0}/G^{\mathfrak w}_{j_0+1}$. Since a maximal free subset of
$\{\sigma^u\}\cup
\{c_i: i\in \{m_{j_0-1}+1,\dots,m_{j_0-1}+R^\mathfrak w_{j_0}\}\}$ in 
$G^\mathfrak w _{j_0}/G^\mathfrak w_{j_0+1}$
containing $\sigma^u$ must contain $R^\mathfrak w_{j_0}$ elements, 
we can replace one of the $c_i$, say $c_{i_*}$  
by $\sigma^u$ so that the $R^\mathfrak w_{j_0}$-tuple  so obtained is free in $G^\mathfrak w _{j_0}/G^\mathfrak w_{j_0+1}$. 
Let $b_i= c_i$ if $i\neq i_*$, $b_{i_*}=\sigma^u$, $\widetilde{F}^i=F'_{c_i}$ 
if $i\neq i_*$, $\widetilde{F}^{i_*}(r)=F'_{\sigma}(r/|u|)$, 
and apply Theorem \ref{th-coord32}.
The desired result follows.
\end{proof}

\section{Random walk upper bounds} \label{sec-rwub}
\setcounter{equation}{0}
This section is devoted to obtaining upper bounds on 
the return probability of a large collection of random walks 
including those driven by the measures $\mu_{S,a}$. Generalizing one of the 
approaches developed in \cite{VSCC} for simple random walks, we will make 
use of  appropriate volume growth estimates and of the notion of 
pseudo-Poincar\'e inequality. 

\subsection{Pseudo-Poincar\'e inequality} Let $G$ be a group generated by a 
finite symmetric set $A$. Then it holds that for any finitely supported 
function $f$ on $G$,
\begin{equation}\label{PP}
\|f_g-f\|^2_2\le C_A |g|^2_A \mathcal E_A(f,f)
\end{equation}
where $$\mathcal E_A(f,f)=\frac{1}{2|A|}\sum_{x\in G, y\in A}|f(xy)-f(x)|^2.$$
This expression is the Dirichlet form 
associated with the simple random walk based on $A$. Inequality (\ref{PP}) 
captures a fundamental universal property of Cayley graphs. 
In \cite{VSCC}, it is proved that this simple property implies 
interesting upper-bounds on $u_A^{(2n)}(e)$ in terms of the 
volume growth function $V_A$.

The main result of this section is a pseudo-Poincar\'e inequality 
adapted to probability measure of the form
\begin{equation}\label{def-mutrunc}
\mu(g)=k^{-1}\sum_{j=1}^k \sum_{n\in \mathbb Z}\mu_i(n)\mathbf 1_{s_i^n}(g).
\end{equation}
where $(s_1,\dots,s_k)$ is a generating $k$-tuple in $G$ and  
the $\mu_i$'s are probability measures on $\mathbb Z$ with 
truncated second moment
\begin{equation}\label{def-Gi}
\mathcal G_i(n):=\sum_{|m|\le n}m^2 \mu_i(n)
\end{equation}
satisfying
\begin{equation} \label{tsmhyp}
\mathcal G_i(n)\ge c n^{2-\tilde{\alpha}_i} L_i(n),\;\; 
\tilde{\alpha}_i\in (0,2],
\end{equation}
for some slowly positive varying functions $L_i$, $1\le i\le k$.
Under these circumstances, we let $F_i$ denote the inverse function of 
$n\mapsto n^{\tilde{\alpha}_i}/L_i(n)$. 
The function $F_i$ is a regularly varying 
function of positive index $1/\tilde{\alpha}_i\in [2,\infty)$.
In addition, we assume that the $\mu_i$'s are essentially decreasing in the sense that there is a constant $C_1$ such that
\begin{equation}\label{decreasing}
\forall\, i=1,\dots, k, 0\le m\le n,\;\;\mu_i(n)\le C_1 \mu_i(m).
\end{equation}

\begin{exa} The measure $\mu_{S,a}$ with $a=(\alpha_i)_1^k\in (0,\infty)^k$
satisfies
$$\mathcal G_i(n) \simeq \left\{\begin{array}{ll}n^{2-\alpha_i} & \mbox{ if } 
\alpha_i\in (0,2),\\
\log n & \mbox{ if } \alpha_i=2,\\
1 &\mbox{ if } \alpha_i>2.
   \end{array}\right.$$ 
Hence, in this case, we have $\tilde{\alpha}_i=\min\{\alpha_i,2\}$ and
$L_i=1$ except if $\alpha_i=2$ in which case $L_i(n)=\log n$.
\end{exa}

We will make use of the following general result 
(which is essentially well-known). We let $\mathcal C_c(G)$ be the 
set of all finitely supported function on $G$ and set $f_g(x)=f(xg)$.
\begin{theo} \label{th-Nash}
Let $G$ be a finitely generated group. Let $\mu$ be a symmetric 
probability measure on $G$. Assume that for each $r\ge 1$ there is a subset 
$K(r)$ of $G$ such that
\begin{equation}\label{KPP1}
\forall\, g\in K(r),\;\;\|f_g-f\|^2_2\le C_0\; r \mathcal E_\mu(f,f).
\end{equation}
and 
\begin{equation}\label{VK}
\forall \, r\ge 1,\;\; \#K(r)\ge v(r)
\end{equation} where $v$ is increasing and 
regularly varying of positive index.  
Let $\psi$ be the right-continuous inverse of $v$. 
Then there is a function $\Psi\simeq \psi$ such that the Nash inequality
\begin{equation} \label{KNash}
\forall \, f\in \ell^1(G),\;\;
\|f\|_2 ^2\le \Psi( \|f\|^2_1/\|f\|^2_2) \mathcal E_\mu(f,f)
\end{equation}
is satisfied. Moreover
\begin{equation}\label{KUB}
\mu^{(2n)}(e)\le C_1 \eta (n)
\end{equation}
where $\eta$ is defined implicitly by 
$$ \tau=\int_1^{1/\eta(\tau)} \Theta(s)\frac{ds}{s},\;\; \tau>0.$$
\end{theo}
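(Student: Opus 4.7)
The plan is to carry out the classical two-step Nash--Varopoulos program: use (\ref{KPP1}) and (\ref{VK}) to derive the Nash inequality (\ref{KNash}), then integrate the resulting discrete differential inequality to obtain the return-probability bound (\ref{KUB}).

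For the first step, introduce for each $r\ge 1$ the averaging operator
\[
P_{K(r)}f(x)=\frac{1}{\#K(r)}\sum_{g\in K(r)}f(xg).
\]
Two bounds are then immediately available: by the triangle inequality, $\|P_{K(r)}f\|_\infty \le \|f\|_1/\#K(r) \le \|f\|_1/v(r)$; and by Jensen's inequality together with (\ref{KPP1}),
\[
\|f-P_{K(r)}f\|_2^{2}\le \frac{1}{\#K(r)}\sum_{g\in K(r)}\|f-f_g\|_2^{2}\le C_0\, r\, \mathcal E_\mu(f,f).
\]
Writing $\|f\|_2^{2}=\langle f-P_{K(r)}f,\,f\rangle+\langle P_{K(r)}f,\,f\rangle$ and applying Cauchy--Schwarz to the first inner product and $\ell^1$-$\ell^\infty$ duality to the second gives
\[
\|f\|_2^{2}\le \bigl(C_0\, r\, \mathcal E_\mu(f,f)\bigr)^{1/2}\|f\|_2 + \frac{\|f\|_1^{2}}{v(r)}.
\]
Choosing $r$ such that $v(r) \simeq 2\|f\|_1^{2}/\|f\|_2^{2}$, i.e.\ $r \simeq \psi(\|f\|_1^{2}/\|f\|_2^{2})$, absorbs the second term into the left-hand side and produces (\ref{KNash}) with some $\Psi\simeq\psi$. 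Regular variation of $v$, hence of $\psi$, is what lets the multiplicative constants generated by this optimization be absorbed into the $\simeq$ equivalence class.

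For the second step, set $u_n=\mu^{(2n)}(e)=\|\mu^{(n)}\|_2^{2}$. One may assume $\mu$ is lazy by replacing it with $\tfrac12(\delta_e+\mu)$, which only changes Dirichlet forms and return probabilities by bounded factors and therefore affects (\ref{KUB}) only through constants (invoke, e.g., Theorem \ref{th-PSC1}). A direct computation using the symmetry of $\mu$ gives $\mathcal E_\mu(\mu^{(n)},\mu^{(n)}) = u_n - \mu^{(2n+1)}(e)$, and for a lazy walk the spectral representation yields $\mu^{(2n+1)}(e) \ge u_{n+1}$, so
\[
\mathcal E_\mu(\mu^{(n)},\mu^{(n)}) \le u_n - u_{n+1}.
\]
Applying (\ref{KNash}) to $f=\mu^{(n)}$ (with $\|f\|_1=1$ and $\|f\|_2^{2}=u_n$) produces the discrete differential inequality
\[
u_n \le \Psi(1/u_n)\,(u_n - u_{n+1}).
\]
Its continuous analogue $-u'=u/\Psi(1/u)$ becomes $ds/dn\ge s/\Psi(s)$ under the substitution $s=1/u$; integrating from $s=1$ (corresponding to $u_0=1$) to $s=1/u_n$ and comparing the discrete sum with the integral, using monotonicity of $n\mapsto u_n$ and regular variation of $\Psi$, yields $n \le \int_1^{1/u_n} \Psi(s)\,ds/s$ up to constants, which inverts to $u_n \le C_1 \eta(n)$ by the very definition of $\eta$ (the symbol $\Theta$ in the statement being $\Psi$).

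The main delicate point is the optimization in Step~1: one must verify that the choice of $r$ by $v(r)\simeq 2\|f\|_1^{2}/\|f\|_2^{2}$ indeed produces a function $\Psi$ in the $\simeq$ equivalence class of $\psi$, which is where the regular variation of $v$ enters essentially. Step~2 is the standard Nash--Varopoulos passage from a Nash inequality to a return-probability bound and requires only the mild regularity of $\Psi$ inherited from $\psi$; the laziness reduction removes the only non-trivial obstacle (namely controlling $\mu^{(2n+1)}(e)$ from below by $u_{n+1}$) at no cost in the final estimate.
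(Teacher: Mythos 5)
Your proposal is correct and follows essentially the same route as the paper: both construct the averaging operator $f_{K(r)}$, bound $\|f-f_{K(r)}\|_2$ via \eqref{KPP1} and bound $f_{K(r)}$ via $\#K(r)\ge v(r)$, then optimize over $r$; the only cosmetic difference is that you split $\|f\|_2^2$ as an inner product $\langle f-f_{K(r)},f\rangle+\langle f_{K(r)},f\rangle$ while the paper uses the triangle inequality on $\|f\|_2$ directly. For the passage from \eqref{KNash} to \eqref{KUB}, the paper simply cites the standard Nash--Coulhon references, whereas you spell out the laziness reduction, the identity $\mathcal E_\mu(\mu^{(n)},\mu^{(n)})=u_n-\mu^{(2n+1)}(e)$, and the integration of the discrete differential inequality; this is exactly the argument those references contain, and your identification of the undefined symbol $\Theta$ with $\Psi$ is the correct reading of the statement.
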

\begin{proof} Assuming (\ref{KPP1}) and $\#K(r)\ge v(r)$, 
the Nash  inequality (\ref{KNash})
easily follows from writing
$$ \|f\|_2\le \|f-f_{K(r)}\|_2 +\|f_{K(r)}\|_2\le  C_0 r\mathcal E(f,f)+ 
v(r)^{-1/2}\|f\|_1$$
and optimizing in $r$. Here $f_{K(r)}(x)$ is the average of $f$ over $xK(r)$
so that, obviously, $\|f_{K(r)}\|_2\le (\#K(r))^{-1/2}\|f\|_1$ and  
(\ref{KPP}) gives
$\|f-f_{K(r)}\|_2\le C_0 r\mathcal E_\mu(f,f)$ with $C_0=CMk$.
The return probability estimate (\ref{KUB}) is a well-known consequence of 
(\ref{KNash}). See \cite{CNash,PSCnote}.
\end{proof}
\begin{rem} In this theorem, the parametrization of the set $K(r)$ is chosen so 
that $r$ appears on the right-hand side of (\ref{KPP1}) instead of $r^2$.
\end{rem}

\begin{theo}  \label{th-PP}
Let $G$ be a finitely generated nilpotent group equipped with a
generating $k$-tuple $(s_1,\dots, s_k)$. Let $\mu$ be as in 
{\em (\ref{def-mutrunc})} with $(\tilde{\alpha}_i)_1^k$, $L_i$ and $F_i$ be 
as in {\em (\ref{tsmhyp})}. Assume that {\em (\ref{decreasing})} holds. 
Assume that there exists an integer $M$ and  a
sequence $(i_j)_1^M\in \{1,\dots,k\}^M$ such that for each $r\ge 1$
there is a subset  $K(r)$ of $ G$ with the property that
\begin{equation}\label{hypK}
g\in K(r)\Longrightarrow g=\prod_1^Ms_{i_j}^{x_j}\;\;\mbox{ with  } |x_j|\le 
F_{i_j}(r).
\end{equation}
Then there exists a constant $C=C(\mu)$ such that
\begin{equation}\label{KPP}
\forall\, g\in K(r),\;\;\|f_g-f\|_2\le CM^2\; r \mathcal E_\mu(f,f).
\end{equation}
\end{theo}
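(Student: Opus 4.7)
The plan is to reduce, via a telescoping decomposition of $g\in K(r)$ into single-generator powers $s_{i_j}^{x_j}$, to a collection of one-directional pseudo-Poincar\'e estimates, and then to invoke~(\ref{tsmhyp}) together with the essential decrease~(\ref{decreasing}) to control each of those pieces. (I assume the statement is $\|f_g-f\|_2^{2}\le CM^{2}r\,\mathcal E_\mu(f,f)$; the squared form is the one needed to feed Theorem~\ref{th-Nash}.)

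\textbf{Step 1 (telescoping).} By hypothesis~(\ref{hypK}), write $g=s_{i_1}^{x_1}\cdots s_{i_M}^{x_M}$ with $|x_j|\le F_{i_j}(r)$, and set $g_0=e$, $g_j=s_{i_1}^{x_1}\cdots s_{i_j}^{x_j}$. Then
\[
f_g-f=\sum_{j=1}^{M}\bigl(f_{s_{i_j}^{x_j}}-f\bigr)_{g_{j-1}}.
\]
Since right translation is an isometry of $\ell^{2}(G)$, Cauchy--Schwarz yields
\[
\|f_g-f\|_2^{2}\le M\sum_{j=1}^{M}\bigl\|f_{s_{i_j}^{x_j}}-f\bigr\|_2^{2}.
\]

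\textbf{Step 2 (reduction to one generator).} The Dirichlet form splits as $\mathcal E_\mu=k^{-1}\sum_{i=1}^{k}\mathcal E^{(i)}$ with $\mathcal E^{(i)}(f,f)=\tfrac12\sum_{y\in G,\,n\in\mathbb Z}|f(ys_i^{n})-f(y)|^{2}\mu_i(n)$. It therefore suffices to establish, for every $i$ and every integer $x$ with $|x|\le F_i(r)$, the one-directional estimate
\[
\|f_{s_i^{x}}-f\|_2^{2}\le C\, r\,\mathcal E^{(i)}(f,f).
\]
Combined with Step~1 (and absorbing the factor $k$ into $C$) this gives the advertised $\|f_g-f\|_2^{2}\le CM^{2}r\,\mathcal E_\mu(f,f)$.

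\textbf{Step 3 (one-dimensional Poincar\'e inequality; the crux).} Partitioning $G$ into left cosets of $\langle s_i\rangle$ reduces the last display to the purely scalar statement: for every $h\in\ell^{2}(\mathbb Z)$ and every $|x|\le F_i(r)$,
\[
\sum_{z}|h(z+x)-h(z)|^{2}\le C\, r\cdot\tfrac12\sum_{z,n}|h(z+n)-h(z)|^{2}\mu_i(n).
\]
By Plancherel on $\mathbb Z$ this is equivalent to the pointwise characteristic-function inequality $|1-e^{i\theta x}|^{2}\le Cr\bigl(1-\hat\mu_i(\theta)\bigr)$ for $\theta\in(-\pi,\pi]$. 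I would bound the left side by $\min\bigl(4,x^{2}\theta^{2}\bigr)\le\min\bigl(4,F_i(r)^{2}\theta^{2}\bigr)$, and, on the other side, split $\sum_n\mu_i(n)(1-\cos n\theta)$ at $|n|\simeq 1/|\theta|$: the essential decrease~(\ref{decreasing}) and~(\ref{tsmhyp}) then produce the matching lower bound $1-\hat\mu_i(\theta)\ge c\,\theta^{2}\mathcal G_i(1/|\theta|)$ for $0<|\theta|\le\pi$. The defining relation $F_i(r)^{2}/\mathcal G_i(F_i(r))\simeq r$ and the regular variation of $\mathcal G_i$ finally match the two bounds at both scales $|\theta|\le 1/F_i(r)$ and $1/F_i(r)\le|\theta|\le\pi$.

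\textbf{Main obstacle.} Everything in Steps~1--2 is mechanical: telescoping, Cauchy--Schwarz, and the elementary decomposition of $\mathcal E_\mu$. The real analytic content lies entirely in Step~3: converting the truncated-second-moment hypothesis plus monotonicity of $\mu_i$ into the Fourier lower bound $1-\hat\mu_i(\theta)\gtrsim\theta^{2}\mathcal G_i(1/|\theta|)$, and then checking the pointwise ratio at the correct scale $|\theta|\simeq 1/F_i(r)$ via the regular variation of $n\mapsto n^{2}/\mathcal G_i(n)$. If one prefers to avoid Fourier analysis, the same bound can be obtained by comparing $\mu_i$ to the uniform measure on $\{-F_i(r),\dots,F_i(r)\}$ (whose Dirichlet form on $\mathbb Z$ is dominated by $C\,\mu_i(\{|n|\le F_i(r)\})^{-1}\mathcal E_{\mu_i}$ thanks to~(\ref{decreasing})), and applying the standard pseudo-Poincar\'e inequality for a finite-range symmetric walk on $\mathbb Z$; the bookkeeping of constants is then the only subtle point.
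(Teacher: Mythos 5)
Your Steps 1--2 (telescoping, Cauchy--Schwarz, splitting $\mathcal E_\mu$ into the pieces $\mathcal E^{(i)}$) reproduce exactly what the paper does to reduce (\ref{KPP}) to a one-dimensional estimate. Step 3, however, takes a genuinely different route from the paper. The paper proves the one-dimensional inequality as Lemma \ref{lem-1d}(i), which is stated and proved directly on the group $G$ (not on $\mathbb Z$) by an elementary division-algorithm argument: one writes $n=a_m m+b_m$, telescopes $\|f-f_{s^n}\|_2^2$ in terms of the $m$-jumps and the remainders $b_m$, and uses the essential-decrease hypothesis (\ref{decreasing}) to absorb the remainder terms into $\mathcal E_{s,\nu}$. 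You instead pass to left cosets of $\langle s_i\rangle$ and invoke Plancherel on $\mathbb Z$, reducing the problem to the pointwise multiplier inequality $|1-e^{i\theta x}|^2\le C r\,(1-\hat\mu_i(\theta))$. This is a legitimate and arguably more conceptual path, and the verification you sketch (using $1-\hat\mu_i(\theta)\gtrsim\theta^2\mathcal G_i(1/|\theta|)$ for $|\theta|\le 1$, and the relation $F_i(r)^{\tilde\alpha_i}/L_i(F_i(r))=r$) is correct. Two small points deserve care: (a) your stated lower bound on $1-\hat\mu_i$ degenerates on $1<|\theta|\le\pi$ (where $\lfloor 1/|\theta|\rfloor=0$), and one needs the separate observation that $\mu_i(1)>0$ (a consequence of (\ref{decreasing})) to bound $1-\hat\mu_i$ away from $0$ there; (b) the coset reduction to $\ell^2(\mathbb Z)$ tacitly assumes $s_i$ has infinite order, whereas the paper's Lemma \ref{lem-1d} is formulated and proved on $G$ itself and so handles torsion generators without further discussion --- a flexibility the paper actually exploits (Lemma \ref{lem-1d}(ii) is reused verbatim in Lemma \ref{lem-HSC} on the group rather than on $\mathbb Z$).

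Your proposed \emph{non}-Fourier alternative at the end, however, is flawed as stated. Comparing $\mathcal E_{u_r}$ (uniform on $\{-F_i(r),\dots,F_i(r)\}$) with $\mathcal E_{\mu_i}$ pointwise gives $\mathcal E_{u_r}\le \frac{C_1}{(2N+1)\mu_i(N)}\mathcal E_{\mu_i}$ with $N=F_i(r)$, not the constant $C\,\mu_i(\{|n|\le F_i(r)\})^{-1}$ that you claim; feeding this into the trivial finite-range pseudo-Poincar\'e inequality yields $\|h_x-h\|_2^2\lesssim \mu_i(N)^{-1}\mathcal E_{\mu_i}(h,h)$, and for the archetype $\mu_i(n)\simeq (1+|n|)^{-\alpha_i-1}$ one has $\mu_i(N)^{-1}\simeq r^{1+1/\alpha_i}\gg r$. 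So this crude comparison loses a full power of $r^{1/\alpha_i}$. The point of the paper's Lemma \ref{lem-1d} is precisely to make a \emph{multi-scale} comparison, summing the contribution from every intermediate jump size $m\le n$ with weight $m^2\nu(m)$, which is what produces the sharp factor $|y|^2/\mathcal G_\nu(|y|)$ rather than $1/\mu_i(N)$.
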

\begin{proof} Because we assume (\ref{hypK}), the proof boils down 
to a collection of one dimensional inequalities, one for each 
of the measures $\mu_i$ on $\mathbb Z$ that appear in the definition
(\ref{def-mutrunc}) of $\mu$.  Indeed,  Lemma \ref{lem-1d} stated below 
shows that
there exists a constant $C$ such that, for each 
$i\in \{1,\dots,k\}$ and  $y\in \mathbb Z$ with $|y|\le F_{i}(r)$ we have
\begin{equation}\label{claim1d}
\|f_{s_i^y}-f\|^2_2\le C \, r\, \mathcal E _{\mu }(f,f)\end{equation}
for any finitely supported function $f$ on $G$. 
Together, (\ref{hypK}) and (\ref{claim1d})
imply (\ref{KPP}).  Since there exists a constant $C$ such that, 
for all $i\in \{1,\dots,k\}$, 
$$|y|\le F_i(r) \mbox{ implies }
\mathcal G_i(|y|)^{-1}|y|^2\le C r,$$ the claim (\ref{claim1d}) follows from Lemma \ref{lem-1d}. 
\end{proof}
\begin{lem} \label{lem-1d}
Let $\nu$ be a symmetric probability measure on $\mathbb Z$ 
satisfying  
$$\exists\, C_1,\;\;\forall\, 0\le m\le n,\;\;\nu(n)\le C_1\nu(m).$$ 
Let $G$ be a finitely generated group equipped with a distinguished element $s$.
Set $$\mathcal E_{s,\nu}(f,f)=\frac{1}{2}\sum_{x\in G,z\in \mathbb Z}
|f(xs^z)-f(x)|^2\nu(z)\;\mbox{ and }\;\;\mathcal G_\nu(m)=\sum_{|n|\le m}|n|^2\nu(n).$$
\begin{itemize}
\item[(i)] For any finitely supported function $f$ on $G$ we have
$$\forall\,y\in\mathbb Z,\;\;\|f_{s^y}-f\|_2^2\le C_\nu 
\left(\mathcal G_{\nu}(|y|)\right)^{-1}|y|^2\mathcal E_{s,\nu} (f,f).$$
\item[(ii)]  Further, 
for any two finitely supported functions $f,g$ we have%
\[
\forall x\in G,\ y\in 
\mathbb{Z}
,\ \left\vert f\ast g(xs^{y})-f\ast g(x)\right\vert ^{2}\leq C_{\nu }(%
\mathcal{G}_{\nu }(\left\vert y\right\vert ))^{-1}\left\vert y\right\vert
^{2}\mathcal{E}_{s,\nu }(f,f)\left\Vert g\right\Vert _{2}^{2}.
\]
\end{itemize}
\end{lem}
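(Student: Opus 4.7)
The plan is to prove part (i) by a simple combinatorial averaging over the possible ``step sizes'' in the Dirichlet form, and then derive part (ii) as a pointwise consequence via Cauchy--Schwarz applied to the convolution integrand. Both parts rely on the translation invariance of counting measure on $G$, which converts norms like $\|f_{s^z}-f\|_2^2$ into shift-invariant quantities on the cyclic subgroup $\langle s\rangle$.

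For part (i), the core step is the following one-sided comparison: for every nonzero integer $z$ with $|z|\leq|y|$,
$$\|f_{s^y}-f\|_2^2 \leq C\bigl(|y|/|z|\bigr)^2\,\|f_{s^z}-f\|_2^2,$$
with $C$ depending only on the monotonicity constant $C_1$. To prove this, I would write $y=qz+r$ with $q$ of magnitude $O(|y|/|z|)$ and $|r|\leq|z|$, telescope
$$f(xs^y)-f(x)=\sum_{j=1}^q\bigl[f(xs^{jz})-f(xs^{(j-1)z})\bigr]+\bigl[f(xs^{qz+r})-f(xs^{qz})\bigr],$$
apply Cauchy--Schwarz to the $q+1$ summands, and sum over $x$. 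Each of the first $q$ contributions equals $\|f_{s^z}-f\|_2^2$ after a right-shift of the summation index, while the last produces a remainder $\|f_{s^r}-f\|_2^2$ that must be folded back into the bound using the essentially decreasing hypothesis on $\nu$.

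With the comparison in hand, I would multiply both sides by $z^2\nu(z)$, sum over $z$ with $0<|z|\leq|y|$, and divide by $\mathcal{G}_\nu(|y|)$. The weight $z^2$ precisely cancels the factor $1/|z|^2$ on the right-hand side, leaving
$$\|f_{s^y}-f\|_2^2\leq \frac{C|y|^2}{\mathcal{G}_\nu(|y|)}\sum_{0<|z|\leq|y|}\nu(z)\,\|f_{s^z}-f\|_2^2\leq \frac{2C|y|^2}{\mathcal{G}_\nu(|y|)}\mathcal{E}_{s,\nu}(f,f),$$
which is exactly the assertion of (i). For part (ii), one writes $(f\ast g)(xs^y)-(f\ast g)(x)$ as a sum over $u$ of $g(u)$ times a difference of the form $f(\cdot)-f(\cdot)$ (using the convention in which differentiating by right translation of $x$ lands on the $f$-factor), applies Cauchy--Schwarz in $u$ to extract $\|g\|_2^2$, and recognizes the remaining quantity as $\|f_{s^y}-f\|_2^2$ up to change of variable. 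The result then follows from part (i) applied to $f$.

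The main obstacle is the remainder analysis in the core comparison step: the naive telescoping only bounds $\|f_{s^y}-f\|_2^2$ in terms of $\|f_{s^z}-f\|_2^2+\|f_{s^r}-f\|_2^2$, and without the essentially decreasing hypothesis $\nu(n)\leq C_1\nu(m)$ for $m\leq n$ one cannot control the second of these two terms by a constant multiple of the first (nor absorb it into $\mathcal{E}_{s,\nu}(f,f)$ with the correct scaling). This hypothesis guarantees that smaller jumps are ``not underrepresented'' in $\nu$ relative to larger ones, which is precisely what is needed to make the averaging step deliver the sharp factor $|y|^2/\mathcal{G}_\nu(|y|)$.
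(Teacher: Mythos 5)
Your plan for part (ii) matches the paper's (Cauchy--Schwarz in the convolution variable, then apply (i)), and your overall scheme for part (i) (telescope, weight by $z^2\nu(z)$, sum, divide by $\mathcal{G}_\nu$) is also the paper's scheme. However, the pivot of your argument --- the pointwise \emph{core comparison}
$$\|f_{s^y}-f\|_2^2 \leq C\bigl(|y|/|z|\bigr)^2\,\|f_{s^z}-f\|_2^2 \quad\text{for } 0<|z|\leq|y|$$
--- is simply false, and the essentially decreasing hypothesis on $\nu$ cannot rescue it because that hypothesis says nothing about a fixed pair $(y,z)$ and a fixed $f$. Counterexample: take $G=\mathbb Z$, $s=1$, and $f=\sum_{k=0}^{N}\mathbf 1_{\{kz\}}$ with $0<r<z$ and $y=z+r$. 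Then $\|f_{s^z}-f\|_2^2=2$ (only the two endpoints contribute), whereas $\|f_{s^y}-f\|_2^2=2(N+1)$ because the shifted lattice $\{kz-r\}$ is disjoint from $\{kz\}$. Since $|y|/|z|<2$, the claimed inequality would force $N+1\leq 4C$, which fails for $N$ large.

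What the paper does instead is never assert a pointwise-in-$z$ comparison. Writing $n=a_m m+b_m$ with $0\leq b_m<m$, it keeps the two-term bound $\|f_{s^n}-f\|_2^2\leq 2a_m^2\|f_{s^m}-f\|_2^2+2\|f_{s^{b_m}}-f\|_2^2$, multiplies by $m^2\nu(m)$, and sums over $m$. The first piece is controlled pointwise because $a_m m\leq n$. The second piece, $\sum_m\|f_{s^{b_m}}-f\|_2^2\,m^2\nu(m)$, is controlled only \emph{after} summation, by re-indexing over the remainder $b=b_m$: for fixed $b$, the eligible $m$ are the divisors of $n-b$ in $(b,n]$, and $\sum_{m\mid n-b}m^2\leq\zeta(2)n^2$; this is the step your proposal is missing. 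The hypothesis $\nu(m)\leq C_1\nu(b)$ for $b\leq m$ then transfers the weight from $\nu(m)$ to $\nu(b)$, producing $C_1\zeta(2)\,n^2\mathcal E_{s,\nu}(f,f)$. So the role of the essentially decreasing hypothesis is correct in spirit, but it operates inside a double sum with a nontrivial divisor count, not inside a pointwise comparison between two translates. As written, your proof has a real gap at this step.
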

\begin{proof}[Proof of (i)]
For any pair of integers  $0<m\leq n,$ write 
$n=a_{m}m+b_{m}$ with $0\leq b_{m}<m$ and 
\begin{eqnarray*}
\left\Vert f-f_{s^{n}}\right\Vert _{2}^{2} &=&\sum_{x\in G}
(f(xs^{n})-f(x))^{2} \\
&\leq &2\sum_{x\in G }(f(xs^{a_{m}m})-f(x))^{2}+2\sum_{x\in G
}(f(xs^{b_{m}})-f(x))^{2} \\
&\leq &2a_{m}^{2}\sum_{x\in G }(f(xs^{m})-f(x))^{2}+2\sum_{x\in G
}(f(xs^{b_{m}})-f(x))^{2}.
\end{eqnarray*}
This yields
\begin{eqnarray*}
\| f-f_{s^{n}}\| _{2}^{2}
\left(\sum_{m=1}^n m^2\nu(m)\right) &\leq&
2\sum_{x\in G }\sum_{m=1}^n (f(xs^{m})-f(x))^{2} a_{m}^{2}m^2\nu(m)\\
&& +2\sum_{x\in G
}\sum_{m=1}^n (f(xs^{b_{m}})-f(x))^{2}m^2\nu(m).
\end{eqnarray*}
Next, observe that
\begin{eqnarray*}
\lefteqn{\sum_{x\in G }\sum_{m=1}
^n (f(xs^{m})-f(x))^{2}(a_{m}m)^{2}\nu (m)} \hspace{1in}&&\\
&\leq&  n^{2}\sum_{x\in G }\sum_{m=1}^n(f(xs^{m})-f(x))^{2}\nu(m)
\leq n^{2}\mathcal{E}_{s,\nu }(f,f). 
\end{eqnarray*}
Further, using the hypothesis that $\nu$ is essentially decreasing, i.e., 
$\nu(m)\le C_1\nu(b)$ is $0\le b\le m$, write
\begin{eqnarray*}
\lefteqn{\sum_{x\in G }\sum_{m=1}^n(f(xs^{b_{m}})-f(x))^{2}m^{2}\nu (m)}
\hspace{1in} &&\\
&=&\sum_{x\in G}\sum_{b=1}^{n/2}\sum_{m|n-b \atop  b<m\le n}
(f(xs^{b})-f(x))^{2}m^{2}\nu (m) \\
&\leq &C_{1}\sum_{x\in G }\sum_{b=1}^{n/2}\left(\sum_{
m|n-b \atop b<m\le n}m^{2}\right)
(f(xs^{b})-f(x))^{2}\nu (b).
\end{eqnarray*}%
As
$$\sum_{
m|n-b \atop b<m\le n}m^{2}\le (\sum_1^\infty i^{-2})n^2,$$
we obtain
$$\sum_{x\in G }\sum_{m=1}^n(f(xs^{b_{m}})-f(x))^{2}m^{2}\nu (m)
\le C_2n^2\mathcal E_{s,\nu}(f,f).$$
It follows that, for both $n>0$ and $n<0$,
\begin{eqnarray*}
\| f-f_{s^{n}}\| _{2}^{2} \left(\sum_{0<m\leq |n|}m^{2}\nu(m)\right)
 &\le & 2(1+C_2)n^2 \mathcal{E}_{s,\nu }(f,f).
\end{eqnarray*}
\end{proof} 
\begin{proof}[Proof of (ii)]
By Cauchy-Schwarz%
\begin{eqnarray*}
\left\vert f\ast g(xs^{y})-f\ast g(x)\right\vert&=&  \
\left\vert \sum_{z\in G}(f(z^{-1}xs^{y})-f(z^{-1}x))g(z)\right\vert  \\
&\leq &\left( \sum_{z\in G}(f(z^{-1}xs^{y})-f(z^{-1}x))^{2}\right) ^{\frac{1%
}{2}}\left( \sum_{z\in G}|g(z)|^2\right) ^{\frac{1}{2}} \\
&=&\left\Vert f-f_{s^{y}}\right\Vert _{2}\left\Vert g\right\Vert _{2}.
\end{eqnarray*}%
Applying part (i) to $\left\Vert f-f_{s^{y}}\right\Vert _{2}$ yields  the
desired inequality.
\end{proof}

\begin{rem} When $G=\mathbb Z$, Lemma \ref{lem-1d} provides an 
interesting and new pseudo-Poincar\'e inequality for probability measure $\nu$
satisfying (\ref{decreasing}) (i.e., which are essentially decreasing) in terms of the truncated second moment $\mathcal G_\nu$. Namely, 
assuming (\ref{decreasing}), we have 
$$\sum_{x\in \mathbb Z}|f(x+y)-f(x)|^2\le C_\nu \frac{|y|^2}{\mathcal G_\nu(|y|)}
\mathcal E_\nu(f,f)$$
where
$$\mathcal E_\nu(f,f)=\frac{1}{2}\sum_{x,z\in \mathbb Z}|f(x+z)-f(x)|^2\nu(z).$$
Together with the trivial fact that $\#\{y:|y|\le r\}=2r+1$, 
this pseudo-Poincar\'e inequality and Theorem \ref{th-Nash} 
provide a sharp Nash inequality satisfied 
by $\mathcal E_\nu$.
\end{rem}

\subsection{Assorted return probability upper bounds}
This section describes direct applications of Theorem \ref{th-coord3} 
together with Theorems \ref{th-Nash}-\ref{th-PP}. 
We use the notation introduced in Sections \ref{sec-wD} and \ref{sec-wF}.

\begin{theo}\label{th-up1} 
Let $G$ be a finitely generated nilpotent group equipped with a 
generating $k$-tuple $(s_1,\dots,s_k)$ and a $k$-tuple of positive reals
$a=(\alpha_1,\dots,\alpha_k)$. Let $\mathfrak w$ be the weight system which 
assigns weight $w_i=1/\tilde{\alpha}_i$ to $s_i$ where 
$\tilde{\alpha}_i=\min\{2,\alpha_i\}$.
Then
$$\mu_{S,a}^{(n)}(e)\le C_{S,a} n^{-D(S,\mathfrak w)}$$
where $D(S,\mathfrak w)=
\sum_h \bar{w}_h \mbox{ \em rank}(G^\mathfrak w_h/G^\mathfrak w_{h+1}).$
\end{theo}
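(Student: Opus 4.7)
The plan is to assemble three ingredients already developed in the paper: the explicit ball-like set constructed in Theorem~\ref{th-coord3}, the pseudo-Poincaré inequality of Theorem~\ref{th-PP}, and the Nash-inequality to return-probability implication of Theorem~\ref{th-Nash}. Because $\mu_{S,a}$ has the form \eqref{def-mutrunc} with each $\mu_i$ a symmetric power law, the essentially-decreasing hypothesis \eqref{decreasing} is automatic and the truncated second moment $\mathcal{G}_i$ satisfies \eqref{tsmhyp} with $\tilde{\alpha}_i = \min\{\alpha_i,2\}$ and $L_i$ either constant (if $\alpha_i \ne 2$) or equal to $\log n$ (if $\alpha_i = 2$).

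First, I would equip $\mathfrak{w}$ with the compatible power weight-function system $F_i(r) = r^{1/\tilde{\alpha}_i} = r^{w_i}$, so that $F_c(r) = r^{w(c)}$ and $\mathbf{F}_j(r) = r^{\bar{w}_j}$. Next, choose a tuple of formal commutators $\Sigma = (c_1,\ldots,c_s)$ such that, for each $h \in \{1,\ldots,j_*\}$, the subfamily $\{c_i : w(c_i) = \bar{w}_h\}$ consists of exactly $R^{\mathfrak{w}}_h$ commutators whose images form a free family in $G^{\mathfrak{w}}_h / G^{\mathfrak{w}}_{h+1}$. Theorem~\ref{th-coord3} then yields, for each $r \ge 1$, an integer $M$, a fixed sequence $(i_1,\ldots,i_M)\in \{1,\ldots,k\}^M$, and a subset $K(r) \subset G$ whose elements admit the representation $\prod_{j=1}^M s_{i_j}^{x_j}$ with $|x_j| \le F_{i_j}(r)$, and with
\[
\#K(r) \;\ge\; \prod_{i=1}^s \bigl(2F_{c_i}(r)+1\bigr) \;\ge\; c \prod_{h=1}^{j_*} r^{\bar{w}_h R^{\mathfrak{w}}_h} \;=\; c\, r^{D(S,\mathfrak{w})}.
\]

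With $K(r)$ in hand, the structural hypothesis \eqref{hypK} of Theorem~\ref{th-PP} is satisfied with these same functions $F_i$, provided that $F_i(r) = r^{1/\tilde{\alpha}_i}$ is dominated by the inverse of $n \mapsto n^{\tilde{\alpha}_i}/L_i(n)$; this is immediate when $\alpha_i \ne 2$ and holds with strict slack when $\alpha_i = 2$ or $\alpha_i > 2$ because of the additional $L_i$ or the constancy of $\mathcal{G}_i$. Theorem~\ref{th-PP} then delivers the pseudo-Poincaré inequality
\[
\forall\, g \in K(r), \quad \|f_g - f\|_2^2 \;\le\; C M^2 \, r \, \mathcal{E}_{\mu_{S,a}}(f,f).
\]

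Finally, I would feed $v(r) = c\, r^{D(S,\mathfrak{w})}$ and the above pseudo-Poincaré inequality into Theorem~\ref{th-Nash}. Its inverse is $\psi(t) \simeq t^{1/D(S,\mathfrak{w})}$, and a direct evaluation of $\tau = \int_1^{1/\eta(\tau)} s^{1/D(S,\mathfrak{w})} \, ds/s$ gives $\eta(\tau) \simeq \tau^{-D(S,\mathfrak{w})}$. The conclusion $\mu_{S,a}^{(2n)}(e) \le C n^{-D(S,\mathfrak{w})}$ follows, and the odd-index case reduces to the even one via $\mu_{S,a}^{(n)}(e) \le \mu_{S,a}^{(n-1)}(e)$ using symmetry. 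The only mildly delicate step is the compatibility check between the purely polynomial weight functions chosen for $\mathfrak{w}$ and the potentially logarithmic functions naturally associated to the truncated second moments when some $\alpha_i = 2$; since we seek only a polynomial upper bound, that mismatch is harmless and in fact works in our favor.
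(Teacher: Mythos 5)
Your proof is correct and follows exactly the paper's strategy: apply Theorem \ref{th-coord3} to build the ball-like set $K(r)$ with $\#K(r)\gtrsim r^{D(S,\mathfrak w)}$, invoke the pseudo-Poincar\'e inequality of Theorem \ref{th-PP}, and convert via the Nash inequality of Theorem \ref{th-Nash}. The explicit compatibility check between the polynomial weight functions $r^{w_i}$ and the truncated-second-moment functions appearing in Theorem \ref{th-PP} is a detail the paper leaves implicit, and you handled it correctly.
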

\begin{proof} 
By Theorem \ref{th-coord3}, for each $r \ge 1$ we can find a subset 
$K(r)$ of $G$ such that $\#K(r)\ge r^{D(S,\mathfrak w)}$ and $g\in K(r)$ implies
$g=\prod_1^M s_{i_j}^{x_{j}}$ with $|x_i|\le  r^{w(s_{i_j})}$. 
The result then follows from Theorems \ref{th-Nash}-\ref{th-PP}
\end{proof}
\begin{rem} If all the $\alpha_i$'s are in $(0,2)$ or, more generally, if
$R^\mathfrak w_h>0$ implies $\bar{w}_h>1/2$, the upper bound given in Theorem \ref{th-up1}
is sharp. Indeed, we will prove a matching lower bound in the next section.

If all the $\alpha_i$'s are greater than $2$ the measure $\mu_{S,a}$ 
has finite second moment and 
$D(S,\mathfrak w)= \frac{1}{2}\sum h \mbox{ rank}(G_h/G_{h+1})$. In this case
the upper bound of Theorem \ref{th-up1} is also sharp. It coincides with the 
bound provided by Corollary \ref{cor-PhiG}.

We conjecture that this upper bound is sharp when $\alpha_i\neq 2$ 
for all $i\in \{1,\dots,k\}$ but we have not been able to prove 
this conjecture when there exists $i,j$ such that $\alpha_i<2$ and $\alpha_j>2$.
\end{rem}  

The next result shows that Theorem \ref{th-up1} is not always sharp when
some of the $\alpha_i$'s are equal to $2$. 

\begin{theo} \label{th-up2} 
Let $G$ be a finitely generated nilpotent group equipped with a 
generating $k$-tuple $(s_1,\dots,s_k)$ and a $k$-tuple of positive reals
$a=(\alpha_1,\dots,\alpha_k)\in (0,\infty]^k$. 
Let $\mathfrak w=\mathfrak w(a)$ be the two-dimensional weight system which assigns weight 
$w_i=(v_{i,1},v_{i,2})$ to $s_i$ where 
$$v_{i,1}=\frac{1}{\tilde{\alpha_i}},\;\;\tilde{\alpha}_i=\min\{2,\alpha_i\}$$
and
$$v_{i,2}= 0 \mbox{ unless } \alpha_i=2 \mbox{ in which case } v_{i,2}=1/2.$$
Then
$$\mu_{S,a}^{(n)}(e)\le C_{S,a} n^{-D_1(S,\mathfrak w)}
[\log (e+n)]^{-D_2(S,\mathfrak w)}$$
where 
$$D_i(S,\mathfrak w)=\sum_h \bar{v}_{h,i} 
\mbox{ \em rank}(G^\mathfrak w_h/G^\mathfrak w_{h+1}), \;\;\bar{w}_h=(\bar{v}_{h,1},\bar{v}_{h,2}).$$
\end{theo}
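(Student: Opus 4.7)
The plan is to follow the skeleton of Theorem \ref{th-up1}, but to replace the one-dimensional power weight system by the two-dimensional system $\mathfrak{w}=\mathfrak{w}(a)$ of the statement so that every direction with $\alpha_i=2$ contributes an extra $\sqrt{\log r}$ factor forced by its truncated second moment. Concretely, I would attach to each generator $s_i$ the weight function
$$F_i(r)=r^{v_{i,1}}[\log(e+r)]^{v_{i,2}},$$
so that $F_i(r)\simeq r^{1/\alpha_i}$ when $\alpha_i<2$, $F_i(r)\simeq (r\log r)^{1/2}$ when $\alpha_i=2$, and $F_i(r)\simeq r^{1/2}$ when $\alpha_i>2$ or $\alpha_i=\infty$. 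Up to multiplicative constants, each such $F_i$ is the inverse of $n\mapsto n^{\tilde{\alpha}_i}/L_i(n)$ with $L_i=1$ unless $\alpha_i=2$, in which case $L_i(n)=\log n$, matching the one-dimensional computation $\mathcal{G}_i(n)\simeq n^{2-\tilde{\alpha}_i}L_i(n)$ that sits at the heart of the pseudo-Poincar\'e inequality. Compatibility of this $\mathfrak{F}$ with the lexicographic $\mathfrak{w}$ (conditions (\ref{F1})--(\ref{F2})) is routine, and for a commutator $c$ of weight $\bar{w}_h=(\bar{v}_{h,1},\bar{v}_{h,2})$ one has $F_c=\mathbf{F}_h$ with $\mathbf{F}_h(r)\simeq r^{\bar{v}_{h,1}}[\log(e+r)]^{\bar{v}_{h,2}}$.

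\textbf{Volume and pseudo-Poincar\'e.} Next, I would choose a tuple $\Sigma=(c_1,\dots,c_t)$ of formal commutators whose subfamilies of equal weight project to free families in the successive quotients $G^{\mathfrak{w}}_h/G^{\mathfrak{w}}_{h+1}$, and apply Theorem \ref{th-coord3} to $(\mathfrak{w},\mathfrak{F},\Sigma)$. This produces an integer $M$, a sequence $(i_1,\dots,i_M)\in\{1,\dots,k\}^M$, and, for every $r\ge 1$, a subset $K(r)\subset G$ with
$$\#K(r)\ge\prod_{i=1}^t F_{c_i}(r)\simeq r^{D_1(S,\mathfrak{w})}[\log(e+r)]^{D_2(S,\mathfrak{w})},$$
every element of which has a presentation $g=\prod_{j=1}^M s_{i_j}^{x_j}$ with $|x_j|\le F_{i_j}(r)$. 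Since $\mu_{S,a}$ has the product form (\ref{def-mutrunc}), satisfies (\ref{decreasing}), and the above $F_i$ are exactly the functions appearing in (\ref{tsmhyp}), Theorem \ref{th-PP} gives the pseudo-Poincar\'e inequality
$$\forall\,g\in K(r),\qquad\|f_g-f\|_2^2\le C\,r\,\mathcal{E}_{\mu_{S,a}}(f,f).$$

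\textbf{Nash inequality and inversion.} Feeding these two estimates into Theorem \ref{th-Nash} with $v(r)=r^{D_1}[\log(e+r)]^{D_2}$ yields a Nash inequality with profile $\Psi(u)\simeq u^{1/D_1}[\log u]^{-D_2/D_1}$, and hence a return-probability bound $\mu_{S,a}^{(2n)}(e)\le C\eta(n)$ where $\eta$ is defined implicitly by $n=\int_1^{1/\eta(n)}\Psi(s)\,ds/s$. A short asymptotic computation (substitute $u=\log s$, or integrate by parts once) gives
$$\int_1^T s^{1/D_1-1}[\log s]^{-D_2/D_1}\,ds\simeq D_1\,T^{1/D_1}[\log T]^{-D_2/D_1};$$
setting this $\simeq n$ with $T=1/\eta(n)$ and inverting (using $\log T\simeq D_1\log n$ to leading order) produces $T\simeq n^{D_1}[\log n]^{D_2}$ and therefore $\eta(n)\simeq n^{-D_1}[\log n]^{-D_2}$, exactly the stated bound.

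\textbf{Main obstacle.} The delicate piece of bookkeeping is the exponent of the logarithm in the final bound. One must verify that the implicit-integral inversion in Step 3 yields $[\log n]^{-D_2}$ rather than, say, $[\log n]^{-D_2/D_1}$; this is where the leading-order identity $\log T\simeq D_1\log n$ is used. All other ingredients are direct applications of earlier results: Theorem \ref{th-coord3} supplies the volume estimate with the correct log power, Theorem \ref{th-PP} converts it into a pseudo-Poincar\'e inequality using exactly the truncated-second-moment data encoded by $L_i$, and Theorem \ref{th-Nash} produces the upper bound in the form required.
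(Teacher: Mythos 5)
Your proposal is correct and matches the paper's approach: the paper's proof is a one-line remark that the argument of Theorem \ref{th-up1} applies verbatim with the refined two-dimensional weight system and weight functions $F_c(r)=r^{v_1}[\log(e+r)]^{v_2}$, which is exactly the pipeline you describe (Theorem \ref{th-coord3} for volume, Theorem \ref{th-PP} for pseudo-Poincar\'e, Theorem \ref{th-Nash} and the implicit inversion for the return bound). Your explicit check that the Nash-inequality inversion yields $[\log n]^{-D_2}$ rather than $[\log n]^{-D_2/D_1}$, via $\log T\simeq D_1\log n$, is exactly the bookkeeping the paper leaves to the reader, and it is done correctly.
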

\begin{proof}The proof is the same as for 
Theorem \ref{th-up1} but uses a refined weight system and the 
associated weight function system  $\mathfrak F(a)$ 
where the function $F_c$ associated to a commutator of weight $v(c)=(v_1,v_2)$
is $F_c(r)=r^{v_1} [\log(e+r)]^{v_2}$.
\end{proof}
\begin{rem}\label{rem-th-up2} Referring to Theorem \ref{th-up2}, 
let $\Sigma$ be a sequence of formal commutators as in Theorem \ref{th-coord2F}
applied to $S, \mathfrak w, \mathfrak F (a)$.
Assume that 
for any $i$ such that
$s_i\in \mbox{core}(\mathfrak w,S, \Sigma)$, we have $\alpha_i=2$. 
Then $D_1(S,\mathfrak w)=D_2(S,\mathfrak w)=D(G)/2$ and
$$\mu_{S,a}^{(n)}(e)\le C_{S,a} [n \log n]^{-D(G)/2}.$$
\end{rem}

\begin{exa}   Let $G$ be the group of $4$ by $4$ unipotent 
upper-triangular matrices
$$G=\left\{\left(\begin{array}{cccc} 1 & x_{1,2} & x_{1,3} & x_{1,4}\\
0& 1& x_{2,3}& x_{2,4} \\
0& 0& 1& x_{3,4}\\
0& 0& 0& 1\end{array}\right): x_{i,j}\in \mathbb Z\right\}.$$
With obvious notation, let $X_{i,j}$ be the matrix in $G$ with a $1$ in position $i,j$ and all other non-diagonal entries equal to $0$. Consider the generating $4$-tuple
$$S=(s_1=X_{1,2},s_2=X_{2,3}, s_3=X_{3,4},s_4=X_{1,4}).$$
The non-trivial brackets are 
$$[X_{1,2},X_{2,3}]
=X_{1,3}, [X_{2,3},X_{3,4}]=X_{2,4}, [X_{1,2},X_{2,4}]=
[X_{1,3},X_{3,4}]=X_{1,4}.$$
Let $a=(1,2,5,1/3)$. 
The $2$-dimensional weight system $\mathfrak w$ is generated by
$w(s_1)=(1,0),w(s_2)=(\frac{1}{2},\frac{1}{2}),
w(s_3)=(\frac{1}{2},0), w(s_4)=(3,0)$. This implies
$$\textstyle 
w([X_{1,2},X_{2,3}])=(\frac{3}{2},\frac{1}{2}),
w([X_{2,3},X_{3,4}])= (1,\frac{1}{2}),$$
$$\textstyle 
w([X_{1,2},[X_{2,3},X_{3,4}]
])=(2,\frac{1}{2}),
w([[X_{1,2},X_{2,3}]
,X_{3,4}])=(2,\frac{1}{2}).$$
Ignoring (as we may) the weight values that would obviously lead to trivial 
quotients $G^\mathfrak w_h/G^\mathfrak w_{h+1}$, we have
$\bar{w}_1=(\frac{1}{2},0),
\bar{w}_2=(\frac{1}{2},\frac{1}{2})$, 
$\bar{w}_3=(1,0)$,
$\bar{w}_4=(1,\frac{1}{2})$, 
$\bar{w}_5=(\frac{3}{2},\frac{1}{2})$, $\bar{w}_6=(2,\frac{1}{2})$ 
and $\bar{w}_7=(3,0)$. Next we compute the  groups $G^\mathfrak w _i$. We have
\begin{eqnarray*}G^\mathfrak w_7=G^\mathfrak w_6 =<X_{1,4}> &\subset& 
G^\mathfrak w_5=
<X_{1,4},X_{1,3}>\\  
& \subset& G^\mathfrak w_4 
= <X_{1,4},X_{1,3},X_{2,4}>\\ &\subset& G^\mathfrak w_3
=<X_{1,4},X_{1,3},X_{2,4},X_{1,2}>\\
&\subset& G^\mathfrak w_2= <X_{1,4},X_{1,3},X_{2,4},X_{1,2},X_{2,3}>\\
&\subset& G^\mathfrak w_1= <X_{1,4},X_{1,3},X_{2,4},X_{1,2},X_{2,3},X_{3,4}>=
G.
\end{eqnarray*}
This gives
$$D_1(S,\mathfrak w)=\frac{1}{2}+
\frac{1}{2}+ 1+1+\frac{3}{2}+ 3=\frac{15}{2}$$ 
and 
$$D_2(S,\mathfrak w)=0+\frac{1}{2}+0+\frac{1}{2}+\frac{1}{2}+0=\frac{3}{2}.$$
We believe that the associated upper bound
$\mu_{S,a}^{(n)}(e)\le Cn^{-15/2}[\log n]^{-3/2}$ is sharp but, at this writing,
 we are not able to obtain a matching lower bound. 
\end{exa}

As a corollary of Theorem \ref{th-up2}, we can prove Theorem 
\ref{th-faster}. The bracket length $\ell(g)$ of an element of $G$ is defined 
just before Theorem \ref{th-faster}.
\begin{cor} Referring to {\em Theorem \ref{th-up2}}, 
assume that $S$ and $a$ are such that there exists $i\in \{1,\dots,k\}$
with the property that 
$$(\alpha_i,\ell(s_i))=(2,1) \mbox{ or } \;\;\alpha_i \ell(s_i)<2.$$
Then   
\begin{equation}
 \lim_{n\ra \infty} n^{D(G)/2}\mu_{S,a}^{(n)}(e)=0
\end{equation}
where $D(G)=\sum j\; \mbox{\em rank}(G_j/G_{j+1})$ where $G_j$ is the 
lower central series of $G$.
\end{cor}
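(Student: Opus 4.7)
The plan is to combine the upper bound of Theorem \ref{th-up2} with the volume comparison Theorem \ref{th-Vcomp}. I would take as reference the uniform weight system $\mathfrak w^\circ$ on $S$ defined by $w^\circ_i=(1/2,0)$ for all $i$. Under $\mathfrak w^\circ$ one has $G^{\mathfrak w^\circ}_j = G_j$ (the lower central series), so by Theorem \ref{th-VolQ} the reference volume is $\#Q(S,\mathfrak F^\circ,r)\simeq r^{D(G)/2}$. The weight functions are $F^\circ_i(r)=r^{1/2}$ and $F_i(r)=r^{1/\tilde\alpha_i}[\log(e+r)]^{v_{i,2}}$, and because $\tilde\alpha_i\le 2$ and $v_{i,2}\ge 0$ one has $F_i\ge F^\circ_i$ pointwise. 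Theorem \ref{th-up2} combined with Theorem \ref{th-VolQ} applied to $\mathfrak w(a)$ give
\[
\mu_{S,a}^{(n)}(e)\le Cn^{-D_1}(\log n)^{-D_2}\quad \text{and}\quad \#Q(S,\mathfrak F,r)\simeq r^{D_1}(\log r)^{D_2},
\]
where $D_i=D_i(S,\mathfrak w(a))$. It thus suffices to prove that, under the hypothesis, either $D_1>D(G)/2$, or $D_1=D(G)/2$ with $D_2\ge 1/2$; both force $n^{D(G)/2}\mu_{S,a}^{(n)}(e)\to 0$.

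The core step is to apply the ``assume further'' clause of Theorem \ref{th-Vcomp} to the pair $(\mathfrak w^\circ,\mathfrak w)$, taking $\sigma=s_i$ to be the distinguished generator provided by the hypothesis. The first hypothesis $S\supset\mathrm{core}(\mathfrak w^\circ,S,\Sigma)$ is automatic since the two generating tuples coincide, and $F_s\ge F^\circ_s$ for all $s\in S$. For the uniform system $\mathfrak w^\circ$ one has $j_{\mathfrak w^\circ}(\sigma)=\ell(\sigma)$ and $\mathbf F^\circ_{\ell(\sigma)}(r)=r^{\ell(\sigma)/2}$. I would then verify, in each of the two allowed cases, the further hypothesis $F_\sigma\ge\mathbf F^\circ_{\ell(\sigma)}$ and compute the ratio. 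In Case A ($\alpha_i\ell(s_i)<2$), the hypothesis forces $\alpha_i<2$ (else $\alpha_i\ell(s_i)\ge 2$), hence $\tilde\alpha_i=\alpha_i$ and $F_\sigma(r)/\mathbf F^\circ_{\ell(\sigma)}(r)=r^{1/\alpha_i-\ell(s_i)/2}=r^\epsilon$ with $\epsilon>0$. In Case B ($(\alpha_i,\ell(s_i))=(2,1)$), the same ratio equals $[\log(e+r)]^{1/2}$. Theorem \ref{th-Vcomp} then yields
\[
\#Q(S,\mathfrak F,r)\ge c\Bigl(F_\sigma(r)/\mathbf F^\circ_{\ell(\sigma)}(r)\Bigr)r^{D(G)/2}.
\]

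Comparing this inequality with the expression $\#Q(S,\mathfrak F,r)\simeq r^{D_1}(\log r)^{D_2}$ and letting $r\to\infty$, one deduces in Case A that $D_1\ge D(G)/2+\epsilon$, and in Case B that either $D_1>D(G)/2$ or $D_1=D(G)/2$ with $D_2\ge 1/2$. In every scenario,
\[
n^{D(G)/2}\mu_{S,a}^{(n)}(e)\le Cn^{D(G)/2-D_1}(\log n)^{-D_2}\longrightarrow 0,
\]
as $n\to\infty$, which is the desired conclusion.

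The main obstacle I anticipate is ensuring that the ``assume further'' clause of Theorem \ref{th-Vcomp} applies cleanly in the two-dimensional weight setting used by Theorem \ref{th-up2}. In Case B the improvement over the reference volume comes from a purely logarithmic factor, with no power gain, so one must confirm that the conclusion of Theorem \ref{th-Vcomp} indeed produces that log factor in $\#Q(S,\mathfrak F,r)$ rather than only the trivial bound $\#Q\ge \#Q^\circ$. Once this is in hand, the translation of the resulting multiplicative improvement into the alternatives ``$D_1>D(G)/2$'' or ``$D_1=D(G)/2$ and $D_2\ge 1/2$'' is a routine asymptotic comparison.
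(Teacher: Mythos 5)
Your proposal is correct and takes essentially the same approach as the paper's own proof: both introduce the uniform reference weight system $w^\circ_i = (1/2,0)$ (so that $\#Q(S,\mathfrak F^\circ,r)\simeq r^{D(G)/2}$), invoke the \emph{assume further} clause of Theorem \ref{th-Vcomp} with the distinguished generator $\sigma=s_i$ to extract a multiplicative gain (power in the case $\alpha_i\ell(s_i)<2$, logarithmic in the case $(\alpha_i,\ell(s_i))=(2,1)$), and then conclude from the upper bound of Theorem \ref{th-up2}. Your write-up is somewhat more explicit in converting the volume comparison into the dichotomy $D_1>D(G)/2$ or ($D_1=D(G)/2$ and $D_2\ge 1/2$), whereas the paper states the conclusion more tersely, but the argument is the same.
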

\begin{proof} Pick $i_0$ among those $i\in \{1,\dots, k\}$ such that 
$(\alpha_i,\ell(s_i))=(2,1)$ or $\alpha_i \ell(s_i)<2$ so that  
$\alpha_{i_0}$ is smallest possible.  Let $\mathfrak w'=\mathfrak w(a)$ be the 
$2$-dimensional weight system introduced in Theorem \ref{th-up2} and 
let $\mathfrak F'=\mathfrak F(a)$ be the  weight function 
system appearing in the proof of Theorem \ref{th-up2}.  Let 
$\mathfrak w$ be the weight system that assigns weight $(1/2,0)$ to every 
$s_i\in S$ with weight function $F_{s_i}=(1+r)^{\frac{1}{2}}$.

If $\alpha_{i_0}<2/\ell(s_{i_0})$ then
by Theorem \ref{th-Vcomp} shows that  
$D_1(S,\mathfrak w')> D(S,\mathfrak w)=D(G)/2$. 
If $\alpha_{i_0}=2$ then we must have $\ell(s_{i_0})=1$.
This time, it follows that
$D_2(S,\mathfrak w')\ge 1/2> D_2(S,\mathfrak w)=0$. 
In both case, Theorem \ref{th-up2} show that  
$\mu_{S,a}^{(n)}(e)=o(n^{-D(G)/2})$ as desired.
\end{proof}

The next statement illustrates the use of a weight system $\mathfrak w$ 
and weight-functions system $\mathfrak F$ 
that are not tightly connected to each other
(including cases when the weight functions $F_c$ cannot be order in a 
useful way). 
\begin{theo} \label{th-up3} 
Let $G$ be a finitely generated nilpotent group equipped with a 
generating $k$-tuple $(s_1,\dots,s_k)$. Assume that
$\mu$ is a probability measure on $G$ of the form {\em(\ref{def-mutrunc})}
with 
$$\mu_i(n)= \kappa_i (1+|n|)^{-\alpha_i-1} \ell_i(|n|),\;\;1\le i\le k,$$
where each $\ell_i$ is a positive slowly varying function satisfying 
$\ell_i(t^b)\simeq \ell_i(t)$ for all $b>0$ and $\alpha_i\in (0,2)$.
Let $\mathfrak w$ be the power weight system associated with 
$a=(\alpha_1,\dots,\alpha_k)$ by setting $w_i=1/\alpha_i$. Let $(c_i)_1^t$ be a t-tuple of formal 
commutators such that for each $h$, the family $\{c_i: w(c_i)=\bar{w}_h\}$
projects to a linearly independent family in 
$G^\mathfrak w_h/G^\mathfrak w_{h+1}$.
Let $(s^{\pm 1}_{i_j})_{j=1}^N $ be the list of all the letters (with multiplicity) used in the build-words for the commutators $c_i$, $1\le i\le t$. Then
$$\mu^{(n)}(e)\le C n^{-D(S,\mathfrak w)} L(n)^{-1}$$
where
$$D(S,\mathfrak w)= \sum_h \bar{w}_h \mbox{\em rank}(G^\mathfrak w_h/G^\mathfrak w_{h+1})
\mbox{ and }\;\;
L(n)=\prod_1^N \ell_{i_j}(n)^{1/\alpha_{i_j}}.$$
\end{theo}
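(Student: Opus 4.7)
The plan is to follow the three-step scheme used for Theorems \ref{th-up1} and \ref{th-up2}: build a large set $K(r)$ whose elements have controlled factorizations in the generators, derive a pseudo-Poincar\'e inequality on $K(r)$ via Theorem \ref{th-PP}, and feed the resulting Nash-type inequality into Theorem \ref{th-Nash}. The novelty compared to those earlier theorems is that the one-dimensional marginals $\mu_i$ now carry slowly varying correction factors $\ell_i$, which must be tracked through every step.

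First I would verify the hypotheses of Theorem \ref{th-PP} on the one-dimensional side. The hypothesis $\ell_i(t^b)\simeq\ell_i(t)$ ensures, on the one hand, that $\mu_i$ is essentially decreasing (condition (\ref{decreasing})), since the strictly decreasing power $(1+|n|)^{-\alpha_i-1}$ dominates any bounded oscillation of $\ell_i$ on geometric intervals. On the other hand, a Karamata-type summation gives $\mathcal{G}_i(n)\simeq n^{2-\alpha_i}\ell_i(n)$, so $\tilde{\alpha}_i=\alpha_i\in(0,2)$ and $L_i\simeq\ell_i$ in the sense of (\ref{tsmhyp}). Inverting the regularly varying function $n\mapsto n^{\alpha_i}/\ell_i(n)$, and again using $\ell_i(t^b)\simeq\ell_i(t)$ to substitute the slowly varying factor under a power root, yields $F_i(r)\simeq r^{1/\alpha_i}\ell_i(r)^{1/\alpha_i}$.

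Next I would apply Theorem \ref{th-coord3} to the tuple $\Sigma=(c_1,\ldots,c_t)$, where for the exponent $D(S,\mathfrak{w})$ to be realized it is understood that $\{c_i:w(c_i)=\bar{w}_h\}$ is chosen as a basis of $G^{\mathfrak{w}}_h/G^{\mathfrak{w}}_{h+1}$ modulo torsion. This produces an integer $M$, a sequence $(i_1,\ldots,i_M)$ drawn entirely from the letters appearing in the build-sequences of the $c_i$, and a set $K(r)\subset G$ satisfying
$$\#K(r)\ge\prod_{i=1}^t(2F_{c_i}(r)+1),\qquad g\in K(r)\Rightarrow g=\prod_{j=1}^M s_{i_j}^{x_j}\ \text{with}\ |x_j|\le F_{i_j}(r).$$
Expanding $F_{c_i}=\prod_{s\in\mathrm{build}(c_i)}F_s$ and regrouping, the exponent in $r$ equals $\sum_i w(c_i)=\sum_h \bar{w}_h R^{\mathfrak{w}}_h=D(S,\mathfrak{w})$, while the combined slowly varying factor is exactly $L(r)=\prod_{j=1}^N \ell_{i_j}(r)^{1/\alpha_{i_j}}$, giving $\#K(r)\gtrsim r^{D(S,\mathfrak{w})}L(r)$.

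Finally, Theorem \ref{th-PP} applied to this $K(r)$ gives the pseudo-Poincar\'e inequality $\|f_g-f\|_2^2\le CM^2 r\,\mathcal{E}_\mu(f,f)$ for $g\in K(r)$, and Theorem \ref{th-Nash} applied with $v(r)\simeq r^{D(S,\mathfrak{w})}L(r)$ produces the advertised return probability bound $\mu^{(n)}(e)\le Cn^{-D(S,\mathfrak{w})}L(n)^{-1}$. The main obstacle is purely bookkeeping with slowly varying functions: one must invert $n^{\alpha_i}/\ell_i(n)$ to get $F_i$, multiply a product of $F_{c_i}$'s whose slowly varying parts have different exponents, and finally invert $v$ inside the Nash-to-heat-kernel machinery. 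The strong hypothesis $\ell_i(t^b)\simeq\ell_i(t)$ is precisely what makes all these inversions and compositions commute up to $\simeq$, producing the clean multiplicative answer rather than a messier implicit formula.
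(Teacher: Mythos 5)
Your argument is correct and follows exactly the route the paper itself relies on (the paper states Theorem~\ref{th-up3} without an explicit proof, the same Theorem~\ref{th-coord3}--Theorem~\ref{th-PP}--Theorem~\ref{th-Nash} machinery that proved Theorems~\ref{th-up1} and~\ref{th-up2} being understood to carry over). The Karamata computation giving $\mathcal G_i(n)\simeq n^{2-\alpha_i}\ell_i(n)$, the Potter-bound check of the essentially-decreasing condition~(\ref{decreasing}), the inversion $F_i(r)\simeq r^{1/\alpha_i}\ell_i(r)^{1/\alpha_i}$ using $\ell_i(t^b)\simeq\ell_i(t)$, the identification of $\prod_i F_{c_i}(r)\simeq r^{D(S,\mathfrak w)}L(r)$, and your remark that the families $\{c_i:w(c_i)=\bar w_h\}$ must be chosen of full rank $R^{\mathfrak w}_h$ for the stated exponent $D(S,\mathfrak w)$ to emerge are all accurate.
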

Note that this theorem does not offer one but many upper bounds. For each $n$, 
one can choose the commutator sequence $(c_i)_1^t$ so as to maximize 
the size of the resulting $L(n)$. 
\begin{exa} Consider the Heisenberg group
$$G=\left\{\left(\begin{array}{ccc} 1 & x & z \\
0& 1& y \\
 0& 0& 1\end{array}\right): x,y,z\in \mathbb Z\right\},$$
with generating $3$-tuple $S=(X,Y,Z)$ where $X$ is the matrix with 
$x=1,y=z=0$ and $Y,Z$ a defined similarly. Let $a=(\alpha_1,\alpha_2,
\alpha_3)\in (0,2)$
and let $\ell_1\equiv 1,\ell_2,\ell_3$ be slowly varying functions such that
$\ell_2 \le \ell_3 $ if and only if $n\in \cup_k [n_{2k},n_{2k+1}]$
for some increasing sequence $n_k$ tending to infinity. We 
also assume that $\ell_2,\ell_3$ satisfy $\ell_i(t^b)\simeq \ell_i(t)$ 
for all $b>0$. Applying Theorem \ref{th-up3}, we obtain:

\begin{itemize}
\item If $\frac{1}{\alpha_3}<\frac{1}{\alpha_1}+\frac{1}{\alpha_3}$ then we have
$$\mu^{(n)}(e)\le  C n^{-2(\frac{1}{\alpha_1}+\frac{1}{\alpha_2})} \ell_2(n)^{-\frac{2}{\alpha_2}}.
$$
\item If $\frac{1}{\alpha_3}>\frac{1}{\alpha_1}+\frac{1}{\alpha_3}$ then we have
$$\mu^{(n)}(e)\le  C n^{-\sum_1^3\frac{1}{\alpha_i}} 
\ell_2(n)^{-\frac{1}{\alpha_2}} \ell_3(n)^{-\frac{1}{\alpha_3}}.
$$
\item Finally, if $\frac{1}{\alpha_3}=\frac{1}{\alpha_1}+\frac{1}{\alpha_3}$, we have
$$\mu^{(n)}(e)\le  C n^{-\frac{2}{\alpha_3}}
\left\{\begin{array}{ll} \ell_2(n)^{-\frac{2}{\alpha_2}}& \mbox{ if } n\in \cup_k [n_{2k-1},n_{2k}]\\
 \ell_2(n)^{-\frac{1}{\alpha_2}}\ell_3(n)^{\frac{1}{\alpha_3}}
& \mbox{ if } n\in \cup_k [n_2k,n_{2k+1}].\\
\end{array}\right.$$
\end{itemize}
\end{exa}

\begin{exa}[continuation of Example {\ref{exa-illustr}-\ref{exa-illustr2}}]
Consider again the Heisenberg group with $S=(s_1=X,s_2=Y,s_3=Z)$.
Set $F_1(r)=r^{3/2}$, $F_2(r)=r^2\log (e+r), F_3(r)=r^\gamma$ with $\gamma> 3/2.$Let $\mu$ be the probability measure which assigns to $s_i^n$, $i=1,2,3$, $n\in \mathbb Z$ a probability proportional to
$\frac{1}{(1+|n|F^{-1}_i(|n|))}$. 
Namely, 
$$\mu(g)=\frac{1}{3}\sum_{i=1}^3\sum_{n\in \mathbb Z}
\mu_i(n)\mathbf 1_{s_i^n}(g),\;\; \mu_i(n)=\frac{c}{1+|n|F_i^{-1}(|n|)}.$$
Referring to the notation (\ref{def-Gi})(\ref{tsmhyp}), we have
\begin{eqnarray*}
\mathcal G_1(n)&\simeq & (1+n)^{2-(2/3)},\;\;
\tilde{\alpha}_1=2/3,\;\; L_1\equiv 1,\\
\mathcal G_2(n)&\simeq & (1+n)^{2-(1/2)}[\log(e+n)]^{-1/2},\;\;
\tilde{\alpha}_2=1/2, \;\;L_2(n)\simeq  [\log (e+n)]^{-1/2}\\
\mathcal G_3(n)&\simeq &(1+n)^{2-1/\gamma},\;\;
\tilde{\alpha}_3=1/\gamma, \;\;L_3\equiv 1.
\end{eqnarray*}
Apply Theorem \ref{th-up3} with $\alpha_i=\tilde{\alpha}_i$, $\ell_i=L_i$.
If $\gamma \in (3/2, 7/2]$, use the sequence of formal commutators
$(c_1=s_1,c_2=s_2,c_3=[s_1,s_2])$. If $\gamma>7/2$, 
use the sequence of formal commutators $(c_1=s_1,c_2=s_2,c_3=s_3)$ instead.
This gives
$$\mu^{(n)}(e)\le C\left\{\begin{array}{cc} (1+n)^{-7}[\log(e+ n)]^{-2}
&\mbox{ if } \gamma\in (3/2,7/2]\\
(1+n)^{-(7/2)-\gamma}[\log(e+ n)]^{-1}
&\mbox{ if } \gamma>7/2.\\
\end{array}\right.$$
Below, we will prove a matching lower bound.
\end{exa}

\section{Norm-radial measures and return probability lower bounds}\label{sec-low}
\setcounter{equation}{0}
The aim of this section is to provide lower bounds for the return probability
for the random walk driven by the measure $\mu_{S,a}$ on a nilpotent group $G$,
that is, lower bounds on $\mu_{S,a}^{(n)}(e)$. These lower bounds are 
obtained via comparison with appropriate norm-radial measures.

\subsection{Norm-radial measures} \label{radial}
A (proper) 
norm $\|\cdot\|$ on a countable 
group $G$ is a function $g\mapsto \|g\|\in [0,\infty)$
such that $\|g\|=0$ if and only if $g=e$, $\#\{g\|\le r\}$ is finite for all 
$r>0$, $\|g\|=\|g^{-1}\|$
and $\|g_1g_2\|\le \|g_1\|\|g_2\|$. If the triangle inequality is 
replaced by the weaker property that there exists $K$ such that
$\|g_1g_2\|\le K\|g_1\|\|g_2\|$, we say that $\|\cdot\|$ is a quasi-norm.

The associated left-invariant distance
is obtained by setting $d(g_1,g_2)=\|g_1^{-1}g_2\|$.  A norm is  
$\kappa$-geodesic
if for any element $g\in G$  there is a sequence 
$g_1,\dots, g_N$ with $N\le \kappa \|g\|$ such that  $\|g_i^{-1}g_{i+1}\|\le 
\kappa$. 

A simple observation is that any two $\kappa$-geodesic proper norms 
$\|\cdot\|_1,\|\cdot\|_2$ are comparable in the sense that there 
is a constant $C\in (0,\infty)$ such that
$$C^{-1}\|g\|_1\le\|g\|_2\le C\|g\|_1.$$ 

The word-length norm associated to any finite symmetric generating set is a 
proper $1$-geodesic norm. Most of the quasi-norms that we will consider below 
are not $\kappa$-geodesic. In general, they are not norms but only quasi-norms. 

\begin{theo} \label{th-BGK}
Let $G$ be a countable group. Let $\|\cdot\|$ be a norm on $G$ 
such that 
$$\forall r\ge 1, \;\;V(r)=\#\{g: \|g\|\le r\}\simeq r^D$$
for some $d>0$.
Fix $\gamma\in (0,2)$ and set
$$\nu_\gamma(g)= \frac{C_\gamma}{(1+\|g\|)^\gamma V(\|g\|)},\;\;
C_\gamma^{-1}=\sum _g\frac{1}{(1+\|g\|)^\gamma V(\|g\|)}.$$
Then we have
\begin{equation}\label{norm1}
\forall\,n\in \mathbb N,\;\;\nu_\gamma^{(n)}(e)
\simeq c n^{-D/\gamma}.\end{equation}
\end{theo}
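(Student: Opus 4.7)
The plan is to prove the two-sided estimate $\nu_\gamma^{(n)}(e)\simeq n^{-D/\gamma}$ by establishing the upper bound via a Nash inequality and the lower bound via a Cauchy–Schwarz / stable-limit concentration argument. Both bounds exploit that $\nu_\gamma$ inherits enough radial structure from $\|\cdot\|$ to mimic a $\gamma$-stable law at scale $r^{1/\gamma}$ in norm.

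\emph{Preliminary estimates.} From $V(r)\simeq r^D$ and shell-summation I would derive $\nu_\gamma(g)\simeq (1+\|g\|)^{-(D+\gamma)}$, the tail bound $\nu_\gamma(\{g:\|g\|\ge r\})\simeq r^{-\gamma}$, and (using $\gamma<2$) the truncated moment-type quantity $\sum_{\|g\|\le r}\|g\|^2\nu_\gamma(g)\simeq r^{2-\gamma}$. These are the analytic inputs that govern the scaling of the walk.

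\emph{Upper bound.} I would apply Theorem~\ref{th-Nash} with $K(r):=\{g:\|g\|\le r^{1/\gamma}\}$, so that $\#K(r)\simeq r^{D/\gamma}$ and the right-continuous inverse satisfies $\psi(t)\simeq t^{\gamma/D}$. The required pseudo-Poincaré inequality $\|f_g-f\|_2^2\le Cr\,\mathcal E_{\nu_\gamma}(f,f)$ on $K(r)$ follows from the sharper group-theoretic bound
\[
\|f_g-f\|_2^2\le C\|g\|^{\gamma}\,\mathcal E_{\nu_\gamma}(f,f)\qquad \text{for all } g\in G.
\]
To prove this I would adapt Lemma~\ref{lem-1d} to the norm setting: telescope $f(xg)-f(x)=[f(xg)-f(xh)]+[f(xh)-f(x)]$ using $xg=xh\cdot h^{-1}g$, then average over $h$ in the shell $\{h:\|h\|\asymp \|g\|\}$ (on which $\nu_\gamma$ is essentially constant and has total mass $\simeq \|g\|^{-\gamma}$), and use the identities $\sum_x|f(xh)-f(x)|^2=\|f_h-f\|_2^2$ and $\mathcal E_{\nu_\gamma}(f,f)=\tfrac12\sum_h\nu_\gamma(h)\|f_h-f\|_2^2$ together with Cauchy–Schwarz. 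Feeding this into Theorem~\ref{th-Nash} yields the Nash inequality $\|f\|_2^2\le C(\|f\|_1/\|f\|_2)^{2\gamma/D}\mathcal E_{\nu_\gamma}(f,f)$ and hence $\nu_\gamma^{(2n)}(e)\le Cn^{-D/\gamma}$.

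\emph{Lower bound.} Symmetry gives $\nu_\gamma^{(2n)}(e)=\|\nu_\gamma^{(n)}\|_2^2$, and Cauchy–Schwarz yields
\[
\nu_\gamma^{(2n)}(e)\ge \frac{\mathbf P_e(X_n\in B)^{2}}{\#B}
\]
for any finite $B\subset G$, where $X_n=\xi_1\cdots\xi_n$ with $\xi_i$ i.i.d.\ of law $\nu_\gamma$. Take $B=\{g:\|g\|\le An^{1/\gamma}\}$, so that $\#B=V(An^{1/\gamma})\simeq n^{D/\gamma}$. The norm triangle inequality gives $\|X_n\|\le \sum_{i=1}^n\|\xi_i\|$, and the i.i.d.\ positive variables $\|\xi_i\|$ satisfy $\mathbf P(\|\xi\|>r)\simeq r^{-\gamma}$, placing them in the domain of attraction of a positive $\gamma$-stable law. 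The classical stable-limit theorem then gives $\mathbf P(\sum_i\|\xi_i\|\le An^{1/\gamma})\ge c>0$ uniformly in $n$ for $A$ large, hence $\mathbf P_e(X_n\in B)\ge c$ and $\nu_\gamma^{(2n)}(e)\ge c'n^{-D/\gamma}$.

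\emph{Main obstacle.} The delicate step is the non-commutative pseudo-Poincaré inequality $\|f_g-f\|_2^2\le C\|g\|^{\gamma}\mathcal E_{\nu_\gamma}(f,f)$. On $\mathbb Z$ this is Lemma~\ref{lem-1d}, whose proof uses Euclidean division $n=a_mm+b_m$; in a general group with an abstract proper norm, the decomposition must be replaced by averaging entirely within the group, using only the triangle inequality and the radial structure of $\nu_\gamma$. Obtaining the sharp exponent $\|g\|^{\gamma}$ (rather than the naive $\|g\|^{D+\gamma}$ coming from a trivial one-step bound via $\nu_\gamma(g)^{-1}$) is the crux. If the direct averaging proves too delicate, a fallback is to compare $\mathcal E_{\nu_\gamma}$ with the Dirichlet form of an $\|\cdot\|$-radial truncation of $\nu_\gamma$ together with an added ``small-scale'' generating measure, and invoke Theorem~\ref{th-PSC1}; the remaining ingredients (preliminary estimates and the stable-limit lower bound) are standard.
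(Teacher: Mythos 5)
The paper's own proof is a one-line reduction: it observes that the discrete-time return probability behaves like that of the associated continuous-time jump process and cites Barlow--Grigor'yan--Kumagai \cite{BGK}. Your approach is genuinely different---a self-contained argument avoiding that machinery---and your upper bound is correct and well adapted to the surrounding framework: the shell-averaging argument does yield the sharp pseudo-Poincar\'e inequality $\|f_g-f\|_2^2\le C\|g\|^{\gamma}\mathcal E_{\nu_\gamma}(f,f)$ (average $f(xg)-f(x)=[f(xg)-f(xh)]+[f(xh)-f(x)]$ over $h$ in a shell of cardinality $\simeq\|g\|^D$ on which $\nu_\gamma\simeq\|g\|^{-(D+\gamma)}$, and use that $\|h^{-1}g\|\le 2\|g\|$ so the corresponding $\nu_\gamma$-weights are still bounded below by $c\|g\|^{-(D+\gamma)}$), and then Theorem~\ref{th-Nash} with $K(r)=\{g:\|g\|\le r^{1/\gamma}\}$ gives $\nu_\gamma^{(2n)}(e)\le Cn^{-D/\gamma}$.

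Your lower bound, however, has a genuine gap for $\gamma\in[1,2)$. The inequality $\|X_n\|\le\sum_1^n\|\xi_i\|$ is too lossy: the positive random variables $\|\xi_i\|$ have $\mathbf P(\|\xi\|>r)\simeq r^{-\gamma}$, so $\mathbf E[\|\xi\|]<\infty$ precisely when $\gamma>1$, and then $\sum_1^n\|\xi_i\|\approx n\,\mathbf E[\|\xi\|]$ by the law of large numbers (and $\approx n\log n$ when $\gamma=1$), which is much larger than $n^{1/\gamma}$. Consequently $\mathbf P\bigl(\sum_1^n\|\xi_i\|\le An^{1/\gamma}\bigr)\to 0$ for every fixed $A$ in the entire range $1\le\gamma<2$, and your claimed uniform lower bound $\ge c>0$ fails. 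The ``positive one-sided $\gamma$-stable'' normalization $n^{1/\gamma}$ without centering is only valid for $\gamma<1$; for $\gamma\ge 1$ the stable limit theorem centers around $n\mathbf E[\|\xi\|]$ (or $n\log n$). What is missing is the cancellation coming from symmetry of $\nu_\gamma$: one must control $\|X_n\|$ directly, not through the subadditive upper bound. This is precisely what the exit-time estimates of \cite{BGK} supply (hence the paper's citation), or what one obtains by a truncation argument (kill jumps with $\|\xi_i\|>n^{1/\gamma}$, which costs only a constant factor in probability) combined with a Chebyshev/maximal-function argument for the truncated symmetric walk---but neither step is a consequence of the triangle inequality alone. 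As written, the lower bound holds only for $\gamma\in(0,1)$ and needs to be replaced for $\gamma\in[1,2)$.
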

\begin{rem} This is a subtle result in that, as stated, it depends very 
much on the fact that $\|\cdot\|$ is norm versus a quasi-norm. 
Indeed, the lower bound in (\ref{norm1}) 
is false if $\gamma\ge 2$ and the only thing that prevents us to apply the 
result to $\|\cdot\|^\theta$ with $\theta>1$ is that, in general,
$\|\cdot \|^\theta$ is only a quasi-norm when $\theta>1$.  
However, by Theorem \ref{th-PSC1}, (\ref{norm1}) 
holds true for any measure $\nu$ such that $\nu\simeq \nu_\gamma$.
 \end{rem}
\begin{rem}Definition \ref{defin-FQN} provides a great variety 
of examples of norms to which Theorem \ref{th-BGK} applies.
\end{rem}

\begin{proof} The probability of return $\nu_\gamma^{(n)}(e)$ 
behaves in the same way as the  probability of return of the associated 
the continuous time jump process. For the continuous time jump process, the result follows from \cite{BGK}. 
\end{proof}

\subsection{Comparisons between  $\mu_{S,a}$ and radial measures}

Let $G$ be a countable group. Let $\|\cdot\|$ be a quasi-norm on $G$. 
Set $$\forall r\ge 1, \;\;V(r)=\#\{g: \|g\|\le r\}.$$
Let $\phi:[0,\infty)\ra (0,\infty)$ be continuous. 
Consider the following hypotheses:
\begin{equation}\label{Ndoub}
\exists\,C,\;\;\forall\,r\ge 0,\;\;V(2r)\le CV(r);
\end{equation}
\begin{equation}\label{phidoub}
\exists \,C,\;\;
\forall\, \lambda\in (1/2,2),\;t\in (0,\infty), \;\;\phi(t)\le C \phi (\lambda t);
\end{equation}
and
\begin{equation}\label{phisum}
\sum_g\frac{1}{\phi(\|g\|)V(\|g\|)}<\infty.
\end{equation}

\begin{lem} \label{lem-compN1}
Assume {\em (\ref{Ndoub})-(\ref{phidoub})-(\ref{phisum})}.
For each $n\in \mathbb Z$, let  $g_n\in G$ and 
$\Lambda_n\subset G$ be such that:
\begin{enumerate}
\item $g\in \Lambda_n \Longrightarrow  \|g^{-1}g_n\|\le C \|g_n\| 
\mbox{ and } \|g\|\le C\|g_n\|$
\item $V(\|g_n\|)\le C n\# \Lambda_n$  
\item  $\forall\,g\in G,\;\;\#\{n: g\in \Lambda_n\}\le C$ and 
$\#\{n: g\in g_n^{-1}\Lambda_n\}\le C$.
\end{enumerate}
Then there is a constant $C_1$ such that
$$\sum_{n\in \mathbb Z}\sum_{x\in G}
\frac{|f(xg_n)-f(x)|^2}{(1+n)\phi(\|g_n\|)}\le
C_1 \sum_{x,g\in G}
\frac{|f(xg)-f(x)|^2 }{\phi(\|g\|)V(\|g\|)}.$$
\end{lem}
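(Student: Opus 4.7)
The plan is to use each $y \in \Lambda_n$ as a pivot between $x$ and $xg_n$, Cauchy--Schwarz, and then interchange the order of summation. The first step is to write, for every $y \in \Lambda_n$,
$$f(xg_n) - f(x) = \bigl(f(xg_n) - f(xy)\bigr) + \bigl(f(xy) - f(x)\bigr),$$
average this identity over $y \in \Lambda_n$, and square to get
$$|f(xg_n) - f(x)|^2 \le \frac{2}{\#\Lambda_n}\sum_{y\in\Lambda_n}\Bigl(|f(xg_n) - f(xy)|^2 + |f(xy) - f(x)|^2\Bigr).$$

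Next, I would bring the first summand into standard difference form by the substitution $u = xy$: with $h = y^{-1}g_n$, one has $\sum_x |f(xg_n) - f(xy)|^2 = \sum_u |f(uh) - f(u)|^2$. Dividing by $(1+n)\phi(\|g_n\|)$, summing over $x$ and $n$, and using hypothesis (2) in the form $\frac{1}{(1+n)\#\Lambda_n} \le \frac{C}{V(\|g_n\|)}$, yields
$$\text{LHS} \;\le\; 2C\sum_n\sum_{y\in\Lambda_n}\sum_x \frac{|f(x\cdot y^{-1}g_n) - f(x)|^2 + |f(xy) - f(x)|^2}{\phi(\|g_n\|)V(\|g_n\|)}.$$

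The heart of the proof is the interchange of summation, treating the two terms in parallel. For the "$y$-term", I interchange $n$ and $y$, and apply the first clause of (3) to conclude that each $g\in G$ lies in at most $C$ of the sets $\Lambda_n$; for those $n$, hypothesis (1) gives $\|g\| \le C\|g_n\|$, so by the doubling assumptions (\ref{Ndoub})--(\ref{phidoub}) one obtains $\phi(\|g_n\|)V(\|g_n\|) \ge c\,\phi(\|g\|)V(\|g\|)$, which lets us recognize the RHS of the claimed inequality. For the "$h$-term", I make the bijective change of variable $y \mapsto h = y^{-1}g_n$ on each $\Lambda_n$, note that the condition $y\in\Lambda_n$ translates to $h^{-1} \in g_n^{-1}\Lambda_n$, invoke the \emph{second} clause of (3) to bound the number of relevant $n$ for each fixed $h$ by $C$, and again use condition (1) (now in the form $\|h\| = \|y^{-1}g_n\| \le C\|g_n\|$) together with doubling to replace $\phi(\|g_n\|)V(\|g_n\|)$ by $\phi(\|h\|)V(\|h\|)$.

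The main obstacle is bookkeeping the left/right shifts through the pivot argument, so that the substitution $u = xy$ and the change of variable $h = y^{-1}g_n$ correctly produce the two forms of condition (3) — one for $\Lambda_n$ itself, one for $g_n^{-1}\Lambda_n$ — which is precisely why hypothesis (3) is stated with both quantifiers. A secondary delicate point is that passing from $\phi(\|g_n\|)$ to $\phi(\|g\|)$ via (\ref{phidoub}) requires the two norms to be comparable, which is consistent with the typical use of the lemma where $\Lambda_n$ plays the role of an annulus and the hypotheses force $\|y\| \simeq \|g_n\|$ for $y \in \Lambda_n$; this comparability has to be extracted from (1), (2), and the doubling of $V$.
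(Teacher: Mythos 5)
Your argument is exactly the paper's proof: pivot through the elements of $\Lambda_n$ and average, use hypothesis (2) to trade $(1+n)\#\Lambda_n$ for $V(\|g_n\|)$, shift variables to put both terms in difference form, and then use hypothesis (1) with the doubling conditions to adjust the denominators and the two clauses of hypothesis (3) to control the multiplicity when interchanging the sums. The proposal is correct, and your closing remark about needing the comparability of $\|g\|$, $\|g^{-1}g_n\|$ with $\|g_n\|$ when invoking (\ref{phidoub}) is a fair observation about a point the paper treats implicitly as well.
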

\begin{proof} Using 2,1 and 3 successively, write
\begin{eqnarray*}
\lefteqn{\sum_n\sum_x
\frac{|f(xg_n)-f(x)|^2}{(1+n)\phi(\|g_n\|)} \le
C \sum_n\sum_x\frac{|f(xg_n)-f(x)|^2\# \Lambda_n}{\phi(\|g_n\|)V(\|g_n\|)}}&&\\
&\le &
2C 
\sum_n \sum_{g\in\Lambda_n}\sum_x (
|f(xg_n)-f(xg)|^2+|f(xg)-f(x)|^2) \frac{1}{\phi(\|g_n\|)V(\|g_n\|)}\\
&\le &
C' 
\sum_n \sum_{g\in\Lambda_n}\sum_x \left(
\frac{|f(xg^{-1}g_n)-f(x)|^2 }{\phi(\|g^{-1}g_n\|)V(\|g^{-1}g_n\|)}
+
\frac{|f(xg)-f(x)|^2 }{\phi(\|g\|)V(\|g\|)}\right)\\
&\le &C'' 
\sum_{x,g} 
\frac{|f(xg)-f(x)|^2 }{\phi(\|g\|)V(\|g\|)}.
\end{eqnarray*}
\end{proof}
\begin{rem}\label{rem-sumphi}
Note that under the hypotheses of Lemma \ref{lem-compN1}, we have
$$\sum\frac{1}{(1+n)\phi(\|g_n\|)}<\infty.$$
\end{rem}
The next lemma will allow us to apply Lemma \ref{lem-compN1} in the context 
of Theorem \ref{th-coord2F}.  Assume that $G$ is a nilpotent group generated 
by the $k$ -tuple $(s_1,\dots,s_k)$. In addition, we are given a weight 
system $\mathfrak w$ and weight functions $F_c$  such that (\ref{F1})-(\ref{F2})
holds.  Observe that for any commutators 
$c,c'$, we have
\begin{equation} \label{Fsum} \forall\, r_1,r_2\ge 1,\;\;
F_{c'}\circ F^{-1}_c
(r_1+r_2)\simeq F_{c'}\circ F^{-1}_c(r_1)+
F_{c'}\circ F^{-1}_c(r_2).\end{equation}
Indeed, it follows from our hypotheses that  $F_{c'}\circ F^{-1}_c$ is an increasing doubling function.

\begin{lem} \label{lem-compN2}
Referring to the setting of {\em Theorem \ref{th-coord2F}},
fix $h\in \{1,\dots,q\}$, $i\in \{m_{h-1}+1,\dots m_{h-1}+R_h\}$ and an 
integer $u$. For each $n\in \mathbb Z$, let $z_n\in G^\mathfrak w_{h+1}$  
with  $\|z_n\|_{\mathfrak F,\mbox{\em \tiny com}}\le F_{c_1}\circ 
F_{c_i}^{-1}(n)$. 
Set  
$$g_n=\pi(c^{un}_i)z_n \in G$$
and 
$$\Lambda_n=\left\{g=\pi\left(\prod_1^q\prod_{m_{h-1}+1}^{m_{h-1}+R_h} c^{x_j}_j
\right) 
: |x_j|\le F_{c_j}\circ F_{c_i}^{-1}(n),\; x_i=\lfloor\frac{un}{2}\rfloor
.\right\}.$$
Then $(g_n)$ and $(\Lambda_n)$ satisfy the hypotheses 1,2 and 3 of 
{\em Lemma \ref{lem-compN1}}.
\end{lem}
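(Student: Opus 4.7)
The plan is to set $r=F_{c_i}^{-1}(n)$ and verify the three hypotheses of Lemma~\ref{lem-compN1} on this common scale. The cornerstone of the argument is to establish the two-sided estimate $\|g_n\|_{\mbox{\tiny com}}\simeq F_S(r)$. The upper bound is immediate from the quasi-triangle inequality: Proposition~\ref{pro-gn} applied to $\pi(c_i)$, whose bracket index is $j_\mathfrak{w}(\pi(c_i))=h$ (since $c_i$ is free in $G^\mathfrak{w}_h/G^\mathfrak{w}_{h+1}$), gives $\|\pi(c_i^{un})\|_{\mbox{\tiny com}}\simeq F_S(r)$, while the hypothesis on $z_n$ together with $F_{c_1}=\mathbf{F}_1\simeq F_S$ (both correspond to the smallest weight $\bar{w}_1$) gives $\|z_n\|_{\mbox{\tiny com}}\le CF_S(r)$. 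The lower bound $\|g_n\|_{\mbox{\tiny com}}\ge cF_S(r)$ is the main technical point. I would argue that in any decomposition $g_n=\prod c_j^{y_j}$ produced by Theorem~\ref{th-coord2F} the coordinate $y_i$ is forced to agree with $un$ up to a bounded additive error. Indeed, $g_n\equiv \pi(c_i^{un}) \pmod{G^\mathfrak{w}_{h+1}}$ and $\{c_j:m_{h-1}<j\le m_{h-1}+R_h^\mathfrak{w}\}$ projects to a free family in $G^\mathfrak{w}_h/G^\mathfrak{w}_{h+1}$; this is exactly the setting in which the inductive argument leading to~(\ref{bounded}) in the proof of Proposition~\ref{pro-gn} applies, now to the element $g_n$ in place of a power of a fixed element.

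Property~1 will then follow quickly. If $g=\pi(\prod\prod c_j^{x_j})\in\Lambda_n$, the exhibited word satisfies $\deg_{c_j}\le|x_j|\le F_{c_j}(r)$, hence by definition $\|g\|_{\mbox{\tiny com}}\le F_S(r)\le C\|g_n\|_{\mbox{\tiny com}}$, and the quasi-triangle inequality gives $\|g^{-1}g_n\|_{\mbox{\tiny com}}\le K(\|g\|_{\mbox{\tiny com}}+\|g_n\|_{\mbox{\tiny com}})\le C'\|g_n\|_{\mbox{\tiny com}}$. For property~2, Theorem~\ref{th-VolQ} yields $V(\|g_n\|_{\mbox{\tiny com}})\simeq \prod_j\mathbf{F}_j(r)^{R_j^\mathfrak{w}}$; adapting the construction of Theorem~\ref{th-coord3} with $x_i$ frozen to the value $\lfloor un/2\rfloor$ (so one factor of $F_{c_i}(r)=n$ disappears from the product) gives $\#\Lambda_n\ge c\prod_j\mathbf{F}_j(r)^{R_j^\mathfrak{w}}/n$, i.e., $V(\|g_n\|_{\mbox{\tiny com}})\le Cn\,\#\Lambda_n$.

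Property~3 will again rest on the bounded-ambiguity uniqueness of the $c_i$-coordinate established in Step~1: any two decompositions of a fixed $g\in G$ produced by Theorem~\ref{th-coord2F} differ in the coordinate indexed by $i$ by at most a constant $T$ depending only on $(G,S,\mathfrak{w},\mathfrak{F})$, exactly as in the derivation of~(\ref{bounded}). If $g\in\Lambda_n$, then $g$ admits a decomposition with $x_i=\lfloor un/2\rfloor$, so $n$ is determined by $g$ up to a uniformly bounded set of integers, yielding $\#\{n:g\in\Lambda_n\}\le C$. The second count follows by the same argument applied to $g_ng$, whose $c_i$-coordinate is (again to within bounded error) $\lfloor un/2\rfloor+un$. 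The hard part of the whole plan is the lower bound in Step~1: once the $c_i$-coordinate is pinned down to within an additive constant, all three properties are routine bookkeeping.
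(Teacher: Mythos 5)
Your proof is correct and takes essentially the same route as the paper's (quite terse) argument, carefully unpacking the cited references to Proposition \ref{pro-gn}, Theorem \ref{th-coord2F}, and Theorem \ref{th-VolQ}; in particular you correctly identify that the lower bound $\|g_n\|_{\mbox{\tiny com}}\ge cF_S(r)$ requires re-running the induction leading to (\ref{bounded}) for $g_n$ rather than a naive quasi-triangle estimate, since $\|z_n\|_{\mbox{\tiny com}}$ may be comparable to $\|\pi(c_i^{un})\|_{\mbox{\tiny com}}$. The only minor divergence is in Property~3: because elements of $\Lambda_n$ are products over the free commutators only, the exponent $x_i$ is in fact determined exactly by the group element (projecting successively onto $G^{\mathfrak w}_{h'}/G^{\mathfrak w}_{h'+1}$ and using freeness, as in the proof of Theorem \ref{th-VolQ}), which is how the paper gets $\lfloor un/2\rfloor=\lfloor um/2\rfloor$ directly rather than up to a bounded error; your bounded-ambiguity version reaches the same conclusion.
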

\begin{proof} By Proposition \ref{pro-gn} and Theorem \ref{th-coord2F}, 
$\|g_n\|_{\mathfrak F,\mbox{\tiny com}}\simeq F_{c_1}\circ F^{-1}_{c_i}(n)$
and $g\in \Lambda_n$ implies 
$$ \|g\|_{\mathfrak F,\mbox{\tiny com}}\leq CF_{c_1}\circ F^{-1}_{c_i}(n),$$
so, Property 1 in Lemma \ref{lem-compN1} is satisfied. Property 2 also 
follows from Theorem \ref{th-coord2F} and the proof of Theorem \ref{th-VolQ}.

Suppose that $ g \in \Lambda_n \cap \Lambda_m$. Then, computing modulo 
$G^\mathfrak w_{h+1}$ and using the fact that $ 
[G^\mathfrak w_h,G^\mathfrak w_h]\subset G^\mathfrak w_{h+1}$
we obtain that  $\lfloor un/2\rfloor=\lfloor um/2\rfloor$.
Similarly, $g\in  g_n^{-1}\Lambda_n \cap g_m^{-1}\Lambda_m$ implies
 $n+\lfloor un/2\rfloor=m+\lfloor um/2\rfloor$. 
In both cases we must have $|n-m|\le 1$. This shows that Property 3 of Lemma \ref{lem-compN1} is satisfied. 
\end{proof}

The main result of this section is the following theorem.
\begin{theo} \label{th-compN}
Let $G$ be  a nilpotent group with generating the $k$-tuple 
$S=(s_1,\dots,s_k)$.  Let $I_{\mbox{\em \tiny tor}}=\{i\in\{1,\dots,k\}: 
s_i \mbox{ is torsion in } G\}$. Fix a weight 
system $\mathfrak w$ and a weight-function system  $\mathfrak F$  such that 
{\em (\ref{F1})-(\ref{F2})} are satisfied. Let 
$\|\cdot\|=\|\cdot\|_{\mathfrak F,\mbox{\em \tiny com}}$ 
be the associated quasi-norm introduced in {\em Definition \ref{defin-FQN}}.
For each $i\in \{1,\dots,k\}\setminus I_{\mbox{\em \tiny tor}}$, let 
$$h_i=j_\mathfrak w(s_i).$$
 Let  $\phi$ be such that 
{\em (\ref{phidoub})-(\ref{phisum})} are satisfied.

Let  $\mu$ be a probability measure on $G$ of the form
$$\mu(g)=\frac{1}{k}\sum_{j=1}^k\sum_{n\in \mathbb Z}\mu_i(n)
\mathbf 1_{s_i^n}(g)$$
where $\mu_i$ is an arbitrary symmetric probability measure on $\mathbb Z$ if 
$i\in I_{\mbox{\em \tiny tor}}$ and
$$\mu_i(n)= \frac{C_i}{(1+n)\phi(F_{c_1}\circ \mathbf F_{h_i}^{-1}(n))},
\;\; C_i^{-1}=\sum_n 
\frac{1}{(1+n)\phi(F_{c_1}\circ \mathbf F^{-1}_{h_i}(n))},$$
for $ i \in \{1,\dots, k\}\setminus I_{\mbox{\em \tiny tor}}.$
\if Assume that there exists $C_1$ such that
for each $i$ such that $s_i$ is not a torsion element in $G$  we have
$$\mathcal E_{s_i,\mu_i}(f,f)\le  C_1\sum_{g\in G}\sum_{n\in \mathbb Z}
\frac{|f(gs_i^n)-f(g)|^2}{n\phi(\|s_i^n\|)}.$$
\fi
Then there exists $C$ such that
$$\mathcal E_{\mu}(f,f)\le C \mathcal E_\nu (f,f)$$
where 
$$\nu(g)= \frac{C_\phi}{\phi(\|g\|)V(\|g\|)},\;\; 
C_\phi^{-1}=\sum_g\frac{1}{\phi(\|g\|)V(\|g\|)}.
$$ 
In particular, there are constants $c>0$ and $N$ such that 
$$\mu^{(2n)}(e)\ge  c \nu^{(2Nn)}(e).$$
\end{theo}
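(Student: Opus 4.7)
The goal is the Dirichlet form comparison $\mathcal{E}_\mu(f,f) \leq C \,\mathcal{E}_\nu(f,f)$, from which the return probability bound $\mu^{(2n)}(e) \geq c\,\nu^{(2Nn)}(e)$ follows by Theorem~\ref{th-PSC1}, since $\nu^{(2Nn)}(e)$ decays only polynomially (by Theorem~\ref{th-BGK}) and therefore dominates the exponentially small error term appearing there. Expanding the definition, $\mathcal{E}_\mu(f,f) = \frac{1}{2k}\sum_i\sum_{x,n}|f(xs_i^n)-f(x)|^2\mu_i(n)$, so the problem splits into one-dimensional contributions, one per generator $s_i$. For torsion indices $i \in I_{\mbox{\em \tiny tor}}$, the cyclic subgroup $\langle s_i\rangle$ is finite and the corresponding sum is absorbed into a constant multiple of $\mathcal{E}_\nu(f,f)$; only the non-torsion indices require real work.

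Fix a non-torsion $s_i$. The definition $h_i = j_\mathfrak{w}(s_i)$ supplies a minimal positive integer $u_i$ for which $s_i^{u_i} \in G^\mathfrak{w}_{h_i}$ has non-torsion image in $G^\mathfrak{w}_{h_i}/G^\mathfrak{w}_{h_i+1}$. By Proposition~\ref{pro-gn} and the doubling condition (\ref{phidoub}), $\mu_i(n) \simeq \bigl((1+|n|)\phi(\|s_i^n\|)\bigr)^{-1}$. Decomposing $n = u_i m + r$ with $0 \leq r < u_i$ and using $|a+b|^2 \leq 2|a|^2 + 2|b|^2$ after translating the summation variable $x$, the contribution of $s_i$ to $\mathcal{E}_\mu(f,f)$ is controlled, up to $u_i$ boundary terms each bounded by a constant multiple of $\mathcal{E}_\nu(f,f)$, by
$$\sum_{m \in \mathbb{Z},\, x \in G} \frac{|f(xs_i^{u_i m}) - f(x)|^2}{(1+|m|)\,\phi(\|s_i^{u_i m}\|)}.$$

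To bound this sum, insert $s_i^{u_i}$ as a free representative of $G^\mathfrak{w}_{h_i}/G^\mathfrak{w}_{h_i+1}$ in a coordinate sequence $\Sigma$ of Theorem~\ref{th-coord2F} and apply Lemma~\ref{lem-compN2} with that representative playing the role of $c_i$, the parameter $u = 1$, and $z_m \equiv e$. This produces $g_m = s_i^{u_i m}$ together with sets $\Lambda_m$ satisfying the three hypotheses of Lemma~\ref{lem-compN1}, which then delivers the desired bound by $C\,\mathcal{E}_\nu(f,f)$. Summing over all $i$ establishes $\mathcal{E}_\mu(f,f) \leq C \,\mathcal{E}_\nu(f,f)$, and Theorem~\ref{th-PSC1} completes the proof.

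The main technical subtlety is that $s_i^{u_i}$ is not literally a formal commutator in $\mathfrak{C}(S)$, so one must verify that Theorem~\ref{th-coord2F} and Lemma~\ref{lem-compN2} remain valid when $s_i^{u_i}$ is substituted for a formal commutator in the role of a free representative. This is straightforward because the proofs of those results only use the non-torsion freeness of the chosen representatives in the successive abelian quotients $G^\mathfrak{w}_h/G^\mathfrak{w}_{h+1}$, a property that $s_i^{u_i}$ enjoys by the very choice of $u_i$; once this is granted, the residue decomposition and the two lemmas fit together cleanly and the comparison follows directly.
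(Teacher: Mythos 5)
Your overall architecture --- split $\mathcal{E}_\mu$ into one generator at a time, handle torsion indices trivially, write $n = u_i m + r$ and absorb the boundary terms, then reduce to Lemmas~\ref{lem-compN1}--\ref{lem-compN2} --- is exactly the paper's strategy. But the core step diverges, and the divergence hides a real gap.

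You propose to treat $s_i^{u_i}$ itself as a free coordinate in a system $\Sigma$ of Theorem~\ref{th-coord2F}, set $z_m \equiv e$, and invoke Lemma~\ref{lem-compN2} with $c_i$ replaced by $s_i^{u_i}$. The justification you give --- ``the proofs of Theorem~\ref{th-coord2F} and Lemma~\ref{lem-compN2} only use non-torsion freeness of the chosen representatives'' --- is not accurate. Theorem~\ref{th-coord2F} is proved by a commutator-collection argument on the free nilpotent group $N(k,\ell)$, and that argument exploits the fact that the $c_i$ are \emph{formal commutators}: the weight arithmetic $w([c_1,c_2]) = w(c_1) + w(c_2)$, the build-word structure, and the binomial bookkeeping of the collecting process are all specific to formal commutators. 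Substituting an arbitrary group element (a power of a generator) in the tuple breaks those proofs. The paper does have a tool for exactly this substitution in the \emph{volume} setting, namely Theorem~\ref{th-coord32}, and that is precisely why Theorem~\ref{th-Vcomp} is stated and proved separately from Theorem~\ref{th-VolQ}. What is missing in your argument is a corresponding generalization of Lemma~\ref{lem-compN2} that accepts a group element $b$ (with controlled $\|b^n\|$ via Proposition~\ref{pro-gn} and Theorem~\ref{th-coord32} replacing Theorems~\ref{th-coord2F} and~\ref{th-VolQ} for the volume count) in place of a formal commutator $c_i$. That generalization is plausible, but it requires re-deriving properties~1--3 of Lemma~\ref{lem-compN1} for the modified $\Lambda_m$, and as written you have not done it; you have asserted it.

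The paper avoids the issue entirely. After writing $s^{\kappa} = \pi\bigl(\prod_{j=m_{h-1}+1}^{m_{h-1}+\rho} c_j^{x_j}\bigr) z$ with $z \in G^\mathfrak{w}_{h+1}$, one has $s^{\kappa\ell} = \pi\bigl(\prod c_j^{x_j \ell}\bigr) z_\ell$ with a controlled correction $z_\ell \in G^\mathfrak{w}_{h+1}$, and the triangle inequality in $L^2$ splits the Dirichlet-form sum into $\rho$ atomic pieces $\pi(c_j^{x_j\ell})$ (with $z=e$) plus one piece $\pi(c_{i_2}^{x_{i_2}\ell})z_\ell$ carrying the correction. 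Each piece is literally of the form treated by Lemma~\ref{lem-compN2} with $c_i$ a genuine formal commutator. This decomposition costs an extra factor $\rho$ and a correction term, but it uses the lemmas exactly as stated. If you want to keep your cleaner ``$s_i^{u_i}$ as coordinate'' route, you should explicitly state and prove the variant of Lemma~\ref{lem-compN2} built on Theorem~\ref{th-coord32} rather than assert it; otherwise, adopt the paper's decomposition of $s^{\kappa\ell}$ into commutator powers.
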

\begin{proof} Fix $i$ and write $s=s_i$.  By Definition \ref{def-jw},
either $s$ is a torsion 
element and $s^\kappa=e$ for some $\kappa$ or $j_\mathfrak w(s)=h<\infty$. 
In the second case we can find $\kappa $ such that
$$s^\kappa = \pi(\prod_{m_{h-1}+1}^{m_{h-1}+ \rho} c_i^{x_i})z, \;x_{m_{h-1}+\rho}\neq 0,\;\; z\in G^\mathfrak w_{h+1}.$$  
If $s$ is torsion, it is very easy to see that
$\mathcal E_{s,\mu_i}(f,f)\le C \mathcal \nu(f,f)$.
In the course of this proof, $C$ denotes a generic constant that may change 
from line to line. If $s$ is not torsion and 
$$s^\kappa = \pi(\prod_{m_{h-1}+1}^{m_{h-1}+ \rho} c_i^{x_i})z, 
\;x_{m_{h-1}+\rho}\neq 0,\;\; z\in G^\mathfrak w_{h+1},$$
set $F=F_{c_{m_{h-1}+1}}$ (we have $F\simeq F_{c_j}$, 
$j\in \{m_{h-1}+1,m_h\}$). Then, for any $n$, we have
$$s^{\kappa n} = \pi(\prod_{m_{h-1}+1}^{m_{h-1}+ \rho} c_i^{x_i n})z_n
\mbox{ with } \|z_n\|\le CF_{c_1}\circ F^{-1} (|n|),\;\; z_n\in G^\mathfrak w_{h+1}.$$  

Now, write $n= \kappa u_n+v_n$ with $|v_n|<\kappa$ and
$$\sum_g|f(gs^n)-f(g)|^2\le 2 (\sum_g|f(g s^{\kappa u_n})-f(g)|^2+
\sum_g|f(gs^{v_n})-f(g)|^2).$$
By Lemma \ref{lem-compN2} and Remark \ref{rem-sumphi}, the 
hypotheses of Theorem \ref{th-compN} imply that 
$\sum ((1+n)\phi(\|s^n\|))^{-1}<\infty$. Hence, 
it is is easy to check that
\begin{equation}\label{phi0}
\sum_g\sum_n\frac{|f(gs^{v_n})-f(g)|^2}{(1+n)\phi(\|s^n\|)}\le C \mathcal E_\nu(f,f).
\end{equation}
Consequently, it suffices to show that 
$$\sum_g\sum_n\frac{|f(gs^{\kappa u_n})-f(g)|^2}{(1+n) \phi(\|s^n\|)}
\le C \mathcal E_\nu(f,f).$$
We have  $\|s^n\|\simeq \|s^{\kappa u_n}\|\simeq F_{c_1}\circ F^{-1}(\kappa u_n)$. Hence
\begin{equation}\label{phi1}
\sum_g\sum_n\frac{|f(gs^{\kappa u_n})-f(g)|^2}{(1+n)\phi(\|s^n\|)} \le 
C\sum_g\sum_\ell \frac{|f(gs^{\kappa \ell})-f(g)|^2}{\ell \phi(F_{c_1}\circ
F^{-1}(\ell))}.
\end{equation}
Next, set $i_1=m_{h-1}+1,i_2=m_{h-1}+\rho$ and write
\begin{eqnarray*}
\lefteqn{\sum_g\sum_\ell |f(gs^{\kappa \ell})-f(g)|^2 }&&\\
&\le &\rho\left(\sum_g\sum_\ell  \sum_{i=i_1}^{i_2-1}
 |f(g\pi(c_i^{x_i \ell}))-f(g)|^2
+\sum_g\sum_\ell |f(g\pi(c_{i_2}^{x_{i_2} \ell})z_\ell)-f(g)|^2\right).
\end{eqnarray*}
By Lemmas \ref{lem-compN1}-\ref{lem-compN2}, for each  $i=i_1,\dots,i_2-1$,
we have
$$
\sum_g\sum_\ell 
 \frac{|f(g\pi(c_i^{x_i \ell}))-f(g)|^2}{(1+\ell) \phi(\|\pi(c_i^{x_i \ell})\|)}
\le C  \mathcal E_\nu (f,f)$$
and, since $z_\ell \in G^\mathfrak w _{h+1}$ and $\|z_\ell\|\le CF_{c_1}\circ 
F^{-1}(\ell)$, 
$$\sum_g\sum_\ell \frac{|f(g\pi(c_{i_2}^{x_{i_2} \ell})z_\ell)-f(g)|^2}{(1+\ell)
\phi(\|\pi(c_{i2}^{x_{i_2} \ell})z_\ell\|)} 
\le C  \mathcal E_\nu (f,f).$$ 
Further, for each $i=i_1,\dots,i_2$ with $x_i\neq 0$, we have
$$\|\pi(c_i^{x_i\ell})\|\simeq F_{c_1}\circ F^{-1}(\ell)$$
as well as $\|\pi(c_i^{x_{i_2}\ell})z_\ell\|\simeq F_{c_1}\circ F^{-1}(\ell)$.
Hence (\ref{phi1}) and the above estimates give 
$$
\sum_g\sum_n\frac{|f(gs^{\kappa u_n})-f(g)|^2}{(1+n)\phi(\|s^n\|)} \le C \mathcal E
_\nu(f,f).$$
Together with (\ref{phi0}), this gives 
$$ \sum_{g\in G}\sum_{n\in \mathbb Z}
\frac{|f(gs^n)-f(g)|^2}{(1+n)\phi(\|s^n\|)}\le C \mathcal E_\nu(f,f).$$
Since this holds true for each $s=s_i$, $i=1,\dots,k$, 
the desired result follows.
\end{proof}

\subsection{Assorted corollaries: return probability lower bounds}
In this section we use the  comparison with  norm-radial measures to obtain
explicit lower estimates on $\mu_{S,a}^{(n)}(e)$. 
The simplest and 
 most important result of this type is as follows.

\begin{theo}\label{th-low1} 
Let $G$ be a finitely generated nilpotent group equipped with a 
generating $k$-tuple $(s_1,\dots,s_k)$ and a $k$-tuple of positive reals
$a=(\alpha_1,\dots,\alpha_k) \in (0,2)^k$. 
Let $\mathfrak w$ be the weight system which 
assigns weight $w_i=1/\alpha_i$ to $s_i$.
Then
$$\mu_{S,a}^{(n)}(e)\ge c_{S,a} n^{-D(S,\mathfrak w)}$$
where $D(S,\mathfrak w)=\sum_h \bar{w}_h \mbox{ \em rank}(G^\mathfrak w_h/G^\mathfrak w_{h+1}).$
\end{theo}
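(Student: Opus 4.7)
The plan is to compare $\mu_{S,a}$ to a norm-radial stable-like measure to which Theorem \ref{th-BGK} applies and transfer the resulting lower bound via the Dirichlet-form machinery of Theorems \ref{th-PSC1} and \ref{th-compN}.

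To set up, I would equip $S$ with the power weight-function system $\mathfrak F$ given by $F_i(r) = r^{w_i}$, $w_i = 1/\alpha_i$, compatible with $\mathfrak w$ in the sense of (\ref{F1})-(\ref{F2}). Since every $\alpha_i < 2$, $\bar w_1 = \min_i w_i > 1/2$. Let $\|\cdot\| := \|\cdot\|_{\mathfrak F,\mathrm{com}}$. The functions $F_c\circ F_S^{-1}(r) = r^{w(c)/\bar w_1}$ have exponent $\ge 1$ and extend to convex functions on $[0,\infty)$ vanishing at $0$, so by Remark \ref{rem-norm}, $\|\cdot\|$ is an honest norm. Theorem \ref{th-VolQ} identifies the corresponding volume as $V(R) := \#\{g : \|g\| \le R\} \simeq R^{D(S,\mathfrak w)/\bar w_1}$.

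Next I would apply Theorem \ref{th-compN}, not to $\mu_{S,a}$ itself but to the auxiliary measure $\tilde\mu = k^{-1}\sum_i\sum_n \tilde\mu_i(n)\mathbf 1_{s_i^n}$ defined by the theorem's template with $\phi(t) = (1+t)^\gamma$ and $\gamma := 1/\bar w_1 \in (0,2)$. The hypotheses (\ref{phidoub})-(\ref{phisum}) are immediate, the latter because $\sum_g (1+\|g\|)^{-\gamma}/V(\|g\|)$ is comparable to $\int_1^\infty R^{-\gamma-1}\,dR < \infty$. A direct computation with power weights gives $\tilde\mu_i(n) \simeq (1+|n|)^{-1-1/\bar w_{h_i}}$ where $h_i = j_{\mathfrak w}(s_i)$. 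For $s_i \in \mathrm{core}(\mathfrak w,S)$, $\bar w_{h_i} = w_i$ and $\tilde\mu_i$ is comparable to the $i$-th one-dimensional component of $\mu_{S,a}$; for $s_i \notin \mathrm{core}(\mathfrak w,S)$, $\bar w_{h_i} > w_i$, so $\tilde\mu_i$ decays strictly \emph{slower} than $(1+|n|)^{-\alpha_i-1}$. In either case $\mu_{S,a}(s_i^n) \le C\tilde\mu(s_i^n)$, hence $\mu_{S,a} \le C\tilde\mu$ pointwise and $\mathcal E_{\mu_{S,a}} \le C\mathcal E_{\tilde\mu}$.

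It then suffices to chain three inequalities. Theorem \ref{th-compN} applied to $\tilde\mu$ gives $\tilde\mu^{(2n)}(e) \ge c\nu^{(2Nn)}(e)$, where $\nu(g) = C_\phi/(\phi(\|g\|)V(\|g\|))$; Theorem \ref{th-BGK} applied to the norm $\|\cdot\|$ of growth exponent $D(S,\mathfrak w)/\bar w_1$ with index $\gamma = 1/\bar w_1$ yields $\nu^{(n)}(e) \simeq n^{-D(S,\mathfrak w)}$; and Theorem \ref{th-PSC1} combined with $\mathcal E_{\mu_{S,a}} \le C\mathcal E_{\tilde\mu}$ gives $\tilde\mu^{(2k'n)}(e) \le 2\mu_{S,a}^{(2n)}(e) + 2e^{-2k'n}$. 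Composing these yields $\mu_{S,a}^{(2n)}(e) \gtrsim n^{-D(S,\mathfrak w)}$ for large $n$, and the trivial bound $\mu_{S,a}^{(n)}(e) \ge \mu_{S,a}(e)^n$ takes care of small $n$ and parity. The only delicate step is the pointwise domination $\mu_{S,a} \le C\tilde\mu$: $\mu_{S,a}$ does not literally fit the template of Theorem \ref{th-compN} when some $s_i$ lies outside $\mathrm{core}(\mathfrak w,S)$, and the needed one-sided comparison rests on the algebraic inequality $\bar w_{h_i} \ge w_i$, equivalently on Proposition \ref{pro-gn}'s identification $\|s_i^n\|_{\mathrm{com}} \simeq F_{c_1}\circ\mathbf F_{h_i}^{-1}(n)$.
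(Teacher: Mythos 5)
Your proposal is correct and follows essentially the same route as the paper: comparison of $\mathcal E_{\mu_{S,a}}$ with the Dirichlet form of the norm-radial measure $\nu$ produced by Theorem \ref{th-compN} (using $\phi(t)=(1+t)^{1/\bar w_1}$ and the key inequality $\bar w_{h_i}\ge w_i$ to dominate $\mu_{S,a}$ by the template measure), then Theorem \ref{th-BGK} for $\nu^{(n)}(e)\simeq n^{-D(S,\mathfrak w)}$ and Theorem \ref{th-PSC1} to transfer the lower bound. The only cosmetic difference is that the paper works with the norm $\|\cdot\|_{\Sigma,\mathfrak F}$ rather than $\|\cdot\|_{\mathrm{com}}$, which are comparable by Corollary \ref{cor-coord2F}.
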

\begin{rem}This lower bound matches precisely the upper bound given by 
Theorem \ref{th-up1}.  Thus, as stated in Theorems 
\ref{th-main1}-\ref{th-main2}, 
for any $a\in (0,2)^k$, 
$$\mu_{S,a}^{(n)}(e)\simeq n^{-D(S,\mathfrak w)}.$$
Note however that, in Theorems \ref{th-main1}-\ref{th-main2}, the constraints 
on the $\alpha_i$'s is weaker.  This more general case will be treated below.
\end{rem}

\begin{proof} Fix a sequence $\Sigma=(c_i)_1^t$ of commutators as in Theorem 
\ref{th-coord2F} and let $\|\cdot\|$ be the associated norm 
$\|\cdot\|=\|\cdot\|_{\Sigma}$ introduced in Definition \ref{defin-FQN}.
Note that, by Remark \ref{rem-norm}, 
$\|\cdot\|$ is indeed not only a quasi-norm but a norm.  
By hypothesis, $1/w(c_1)<2$.
Hence Theorem \ref{th-BGK}, together with Theorem \ref{th-VolQ},
shows that the norm-radial measure
$$\nu(g)= \frac{C}{(1+\|g\|)^{1/w(c_1)}V(\|g\|)}$$
satisfies 
\begin{equation}
\label{nucrux}
\nu^{(n)}(e)\ge c n^{- w(c_1)D(S,\mathfrak w)/w(c_1)}=c n^{-D(S,\mathfrak w)}.\end{equation} 
Theorem \ref{th-compN} produces a symmetric measure $\mu$ such that
$\mathcal E _\mu\le C\mathcal E_\nu$. This measure $\mu$ is given by
$$\mu(g)=\frac{1}{k}\sum_{j=1}^k\sum_{n\in \mathbb Z}\mu_i(n)
\mathbf 1_{s_i^n}(g)$$
where $\mu_i$ is an arbitrary symmetric probability measure on $\mathbb Z$ if 
$i\in I_{\mbox{\tiny tor}}$ and
$$\mu_i(n)= \frac{C_i}{(1+n) 
(1+F_{c_1}\circ \mathbf F_{h_i}^{-1}(n))^{1/w(c_1)}}$$
with $$ C_i^{-1}=\sum_n 
\frac{1}{(1+n) (1+F_{c_1}\circ \mathbf F^{-1}_{h_i}(n))^{1/w(c_1)}}$$ for $\; i 
\in \{1,\dots, k\}\setminus I_{\mbox{\tiny tor}}.$
In the latter case, we have 
$\mathbf F_{h_i}(t)= 
t^{\bar{w}_{h_i}}$ with $\bar{w}_{h_i}\ge w(s_i)=1/\alpha_i$ and
$F_{c_1}(t)= t^{w(c_1)}$. Hence
$$\mu_i(n)\simeq  \frac{C_i }{(1+n)^{1+1/\bar{w}_{h_i}}}
\ge \frac{C'_i}{(1+n)^{1+\alpha_i}}
.$$
It follows that if we pick $\mu_i$ to be given by $\mu_i(n)=c_i(1+n)^
{-(1+\alpha_i)}$ for $i\in  I_{\mbox{\tiny tor}}$, and $\mu_i=c_i(1+n)^{1+1/\bar{w}_{h_i}}$ if $i\in I\setminus I_{\mbox{\tiny tor}}$,
 we obtain a measure $\mu$
such that
$$\mathcal E_{\mu_{S,a}}\le C \mathcal E_\mu\le C'\mathcal E_
\nu.$$ By Theorem \ref{th-PSC1}, this implies that there are $c,N\in (0,\infty)$
such that
$$ \mu_{S,a}^{(2n)}(e)\ge c \nu ^{(2nN)}(e).$$
Thus the  lower bound stated in Theorem \ref{th-low1} 
follows from (\ref{nucrux}).
\end{proof}

The following theorem extends the range of applicability of the 
previous result. In particular, the statement is different but equivalent to 
the statement recorded in Theorem \ref{th-main2}. See also Theorem 
\ref{th-low33} below.

\begin{theo}\label{th-low2} 
Let $G$ be a finitely generated nilpotent group equipped with a 
generating $k$-tuple $(s_1,\dots,s_k)$ and a $k$-tuple of positive reals
$a=(\alpha_1,\dots,\alpha_k) \in (0,\infty]^k$. Set $\tilde{\alpha}_i=\min\{\alpha_i,2\}$. Let $\mathfrak w$ be the weight system which assigns weight 
$1/\tilde{\alpha}_i$ to $s_i\in S$. Let $\Sigma$ be a sequence 
of formal commutators as in {\em Theorem \ref{th-coord2F}}. Assume that
$w(s)>1/2$ for all $s\in \mbox{\em core}(\mathfrak w,S,\Sigma)$. 
Then
$$\mu_{S,a}^{(n)}(e)\simeq  n^{-D(S,\mathfrak w)}.$$
\end{theo}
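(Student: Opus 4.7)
The upper bound $\mu_{S,a}^{(n)}(e)\le Cn^{-D(S,\mathfrak w)}$ is immediate from Theorem~\ref{th-up1}, so only the matching lower bound remains. The plan is to follow the strategy of Theorem~\ref{th-low1}, namely comparison with a norm-radial measure via Theorems~\ref{th-BGK},~\ref{th-compN}, and~\ref{th-PSC1}, and to verify that the extra case created by allowing $\alpha_i\ge 2$ for non-core generators is handled automatically by the core hypothesis.

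First, fix a commutator sequence $\Sigma=(c_1,\dots,c_t)$ as in Theorem~\ref{th-coord2F} and take $\mathfrak F$ to be the power weight-function system $F_i(r)=r^{1/\tilde\alpha_i}$. Put $\|\cdot\|:=\|\cdot\|_{\Sigma,\mathfrak F}$; because each ratio $F_{c_i}/F_{c_1}$ is a convex power function vanishing at $0$, Remark~\ref{rem-norm} guarantees that $\|\cdot\|$ is an honest norm on $G$. The commutator $c_1$ is a free letter of minimal weight, so Proposition~\ref{pro-core} forces $c_1\in\mbox{core}(\mathfrak w,S,\Sigma)$, and by hypothesis $w(c_1)>1/2$; set $\gamma:=1/w(c_1)\in(0,2)$. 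Using Theorem~\ref{th-VolQ} and a change of variables one computes $V(r)=\#\{g:\|g\|\le r\}\simeq r^{D(S,\mathfrak w)/w(c_1)}$, so Theorem~\ref{th-BGK} produces a norm-radial measure $\nu(g)\propto 1/((1+\|g\|)^{\gamma} V(\|g\|))$ satisfying $\nu^{(n)}(e)\simeq n^{-D(S,\mathfrak w)}$.

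Next, apply Theorem~\ref{th-compN} with $\phi(r)=(1+r)^{\gamma}$ to construct a measure $\mu=k^{-1}\sum_{i}\sum_n \mu_i(n)\mathbf 1_{s_i^n}$ with $\mathcal E_\mu\le C\mathcal E_\nu$. For each non-torsion $s_i$, using $F_{c_1}(t)=t^{w(c_1)}$ and $\mathbf F_{h_i}(t)=t^{\bar w_{h_i}}$ with $h_i=j_{\mathfrak w}(s_i)$ yields $\mu_i(n)\simeq (1+|n|)^{-(1+1/\bar w_{h_i})}$. The crux is the pointwise comparison $\mu_{S,a,i}(n)\le C\mu_i(n)$, equivalently $\alpha_i\ge 1/\bar w_{h_i}$, which splits into two elementary cases: when $\alpha_i<2$ this reads $1/\alpha_i=w(s_i)\le\bar w_{h_i}$, true by definition of $h_i=j_{\mathfrak w}(s_i)$; when $\alpha_i\ge 2$ this reads $1/\bar w_{h_i}\le 1/w(s_i)=\tilde\alpha_i=2\le\alpha_i$. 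Torsion generators may be handled by choosing $\mu_i(n)=c_i(1+|n|)^{-(1+\alpha_i)}$. Consequently $\mathcal E_{\mu_{S,a}}\le C'\mathcal E_\mu\le C''\mathcal E_\nu$, and Theorem~\ref{th-PSC1} yields $\mu_{S,a}^{(2n)}(e)\ge c\,\nu^{(2Nn)}(e)\simeq n^{-D(S,\mathfrak w)}$.

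The only place where the core hypothesis is truly indispensable is in guaranteeing $\gamma=1/w(c_1)<2$, without which Theorem~\ref{th-BGK} is inapplicable and the norm-radial comparison strategy breaks down; this is the main (though rather slight) obstacle, and indeed the conjectural status of the sharp lower bound without some such restriction is exactly what the remarks following Theorem~\ref{th-up1} acknowledge. The remaining two-case comparison and Dirichlet-form manipulations are direct extensions of the $a\in(0,2)^k$ argument already carried out in Theorem~\ref{th-low1}.
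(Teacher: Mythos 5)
Your proposal breaks down at the single most important step, and the failure occurs precisely in the case that makes Theorem \ref{th-low2} genuinely more delicate than Theorem \ref{th-low1}. You assert that ``$c_1$ is a free letter of minimal weight, so Proposition \ref{pro-core} forces $c_1\in\mbox{core}(\mathfrak w,S,\Sigma)$'', and then conclude $w(c_1)>1/2$, hence $\gamma=1/w(c_1)<2$. But $c_1$ need not be free. The index set of free commutators of weight $\bar{w}_1$ is $\{1,\dots,R^{\mathfrak w}_1\}$, and nothing prevents $R^{\mathfrak w}_1=0$, i.e.\ $G^{\mathfrak w}_1/G^{\mathfrak w}_2$ torsion. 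In fact the hypothesis of the theorem essentially forces this in the interesting case: if some $\alpha_i\ge 2$ then the minimal weight is $w(c_1)=1/2$, and ``$w>1/2$ on $\mbox{core}(\mathfrak w,S,\Sigma)$'' excludes every weight-$1/2$ generator from core, which (as the paper points out) implies $G^{\mathfrak w}_1/G^{\mathfrak w}_2$ is torsion. So in exactly that scenario $c_1\notin\mbox{core}(\mathfrak w,S,\Sigma)$, $w(c_1)=1/2$, $\gamma=2$, and Theorem \ref{th-BGK} does not apply. (Concretely: $G=\mathbb Z^2$, $S=((1,0),(0,1),(1,1))$, $a=(1,1,3)$; then $c_1=s_3$, $w(c_1)=1/2$, $\mbox{core}(\mathfrak w,S,\Sigma)=\{s_1,s_2\}$, so the hypothesis holds but your $\gamma=2$.)

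The fix, which is the actual content of the paper's proof, is to change the weight system rather than to argue that the bad case cannot occur. One defines $\mathfrak w'$ by setting $w'(s)=w(s)$ when $w(s)\ne 1/2$ and $w'(s)=\bar{w}_2$ when $w(s)=1/2$; the torsion observation above guarantees $w(s)\le w'(s)\le\bar{w}_{j_{\mathfrak w}(s)}$ for every $s$, so Corollary \ref{cor-normeq} applies and gives $\|g\|^{1/w_\Sigma}_{\Sigma,\mathfrak w}\simeq\|g\|^{1/w'_S}_{S,\mathfrak w'}$ together with comparable volume functions. Replacing $\nu$ by the equivalent $\nu'$ built from the $\mathfrak w'$-norm, one now has $w'_S>1/2$, Theorem \ref{th-BGK} applies, and $D(S,\mathfrak w')=D(S,\mathfrak w)$ follows from the volume comparison and Theorem \ref{th-VolQ}. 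Everything after that (the two-case pointwise comparison of $\mu_{S,a,i}$ against the $\mu_i$ produced by Theorem \ref{th-compN}, and the use of Theorem \ref{th-PSC1}) is fine in your write-up; it is the assertion $c_1\in\mbox{core}$ that is wrong, and with it the claim that the core hypothesis ``guarantees $\gamma<2$''.
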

\begin{proof} The upper bound follows from Theorem \ref{th-up1}. 
The lower bound is more subtle.  Consider any $s\in S$ such that 
$w(s)=1/2$ (i.e., $s=s_i$ with $\alpha_i\ge 2$). Observe that $1/2$ is the lowest possible value for weights in $\mathfrak w$ and that the hypothesis that
$w>1/2$ on $\mbox{core}(\mathfrak w,S,\Sigma)$ implies that
 $G^{\mathfrak w}_1/G^{\mathfrak w}_2$ is a torsion group. In particular,
this implies that $\bar{w}_{j_{\mathfrak w}(s)}>1/2=w(s)$. By Corollary \ref{cor-normeq},
the weight system $\mathfrak w'$ generated by 
$$w'(s)=\left\{\begin{array}{cc} w(s) & \mbox{ if } w(s)\neq 1/2 \\
\bar{w}_2 &\mbox{ if }  w(s)=1/2\end{array}\right.$$ is such that 
$w(s)\le w'(s)\le \bar{w}_{j_{\mathfrak w}(s)}$ for all $s\in S$ and $w'(s)>1/2$
for all $s\in S$.  Now, Theorem \ref{th-compN} gives the comparison
$\mathcal E_{\mu_{S,a}}\le C\mathcal E_\nu$ with 
$$\nu(g)\simeq \frac{1}{ (1+ \|g\|_{\Sigma,\mathfrak w})^{1/w_\Sigma}
V_{\Sigma,\mathfrak w}(\|g\|_{\Sigma,\mathfrak w})}.$$
However, since the minimum weight value $w_\Sigma$ may be equal to $1/2$, 
we cannot apply Theorem \ref{th-BGK} directly.
We proceed as follows. By the definition of $w'$ and 
Corollary \ref{cor-normeq}, we have
$$\forall \,g\in G,\;\;\|g\|_{\Sigma,\mathfrak w}^{1/w_\Sigma}\simeq 
\|g\|_{S,\mathfrak w'}^{1/w'_S}.$$
Note that this implies that
$$ V_{\Sigma,\mathfrak w}(\|g\|_{\Sigma,\mathfrak w}) =\#\{g'\in G:\|g'\|_{\Sigma,\mathfrak w}\le \|g\|_{\Sigma,\mathfrak w}\} \simeq  V_{S,\mathfrak w'}(\|g\|_{S,\mathfrak w'}). $$
Hence we have
$$\mathcal E_\nu \simeq \mathcal E_{\nu'}$$
where
$$\nu'(g)\simeq  
\frac{1}{ (1+ \|g\|_{S,\mathfrak w'})^{1/w'_S}
V_{S,\mathfrak w'}(\|g\|_{S,\mathfrak w'})}.$$
Now, since by construction $w'_S>1/2$, we can apply Theorem \ref{th-BGK}
which gives $(\nu')^{(n)}(e)\simeq n^{-D(S,\mathfrak w')}=n^{-D(S,\mathfrak
w)}$. Also, we have
$\mathcal E_{\mu_{S,a}}\le C \mathcal E_\nu \simeq \mathcal E_{\nu'}$.
Hence
$$\mu_{S,a}^{(n)}(e)\ge c n^{-D(S,\mathfrak w)}.$$
This ends the proof of Theorem \ref{th-low2}.\end{proof}

Our next results provides  a comparison  between the behaviors of two measures 
$\mu_{S,a}$ and $\mu_{S',a'}$. Compare to Corollary \ref{cor-PhiG} and 
Theorem \ref{th-faster} which treats comparison with $\mu_{S',a'}$ when 
$a'=(\alpha'_i)_1^{k'}\in (2,\infty]^{k'}$, a case that is excluded in 
Theorem \ref{th-low3}.  
\begin{theo}\label{th-low3}
Let $G$ be a finitely generated nilpotent group equipped with a 
generating $k$-tuple $(s_1,\dots,s_k)$ and a $k$-tuple of positive reals
$a=(\alpha_1,\dots,\alpha_k) \in (0,\infty]^k$. 
Set $\tilde{\alpha}_i=\min\{\alpha_i,2\}$. Let $\mathfrak w$ be the weight system which assigns weight 
$1/\tilde{\alpha}_i$ to $s_i\in S$.  Fix another weight system  
$\mathfrak w'=(w'_1,\dots,w'_k)$ with minimal weight $w'_S> 1/2$.
Let $\Sigma$ be a sequence 
of formal commutators as in {\em Theorem \ref{th-coord2F}} for $(S,\mathfrak w')$.
Assume that 
$w(s)\ge w'(s)$ for all $s\in \mbox{\em core}(\mathfrak w',S,\Sigma)$.
Then
$$\mu_{S,a}^{(n)}(e)=o( n^{-D(S,\mathfrak w')})$$
if and only if there exists $s\in S$ such that $w(s)> \bar{w}'_{j_{\mathfrak w'}(s)}$.
\end{theo}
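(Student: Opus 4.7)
The statement is biconditional, so I would prove the two implications separately. The forward direction---existence of some $s\in S$ with $w(s)>\bar{w}'_{j_{\mathfrak w'}(s)}$ forces $\mu_{S,a}^{(n)}(e)=o(n^{-D(S,\mathfrak w')})$---rests on the volume comparison Theorem \ref{th-Vcomp}, while the backward direction---absence of such an $s$ forces the matching lower bound $cn^{-D(S,\mathfrak w')}$---mirrors the comparison-with-a-radial-measure strategy of Theorem \ref{th-low2} but carried out relative to $\mathfrak w'$ rather than $\mathfrak w$.

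For the forward direction, my plan is to apply Theorem \ref{th-Vcomp} with the roles of the two weight systems swapped, feeding it $(S,\mathfrak w',\mathfrak F')$ as the reference setup and $(S,\mathfrak w,\mathfrak F)$ as the ``other'' one (where $\mathfrak F,\mathfrak F'$ are the power weight-function systems attached to $\mathfrak w,\mathfrak w'$ respectively). The blanket hypothesis $w(s)\ge w'(s)$ on $\mbox{core}(\mathfrak w',S,\Sigma)$ is precisely the condition $F_s\ge F'_s$ on the core required by that theorem, so its first conclusion yields $\#Q(S,\mathfrak F,r)\ge\#Q(S,\mathfrak F',r)$. The existence of $\sigma\in S$ with $w(\sigma)>\bar{w}'_{j_{\mathfrak w'}(\sigma)}$ then triggers the ``further'' conclusion, giving the strict polynomial improvement
\[
\#Q(S,\mathfrak F,r)\ge c\,r^{\,w(\sigma)-\bar{w}'_{j_{\mathfrak w'}(\sigma)}}\#Q(S,\mathfrak F',r).
\]
Reading off polynomial growth exponents via Theorem \ref{th-VolQ} forces $D(S,\mathfrak w)\ge D(S,\mathfrak w')+\varepsilon$ with $\varepsilon=w(\sigma)-\bar{w}'_{j_{\mathfrak w'}(\sigma)}>0$, and the upper bound Theorem \ref{th-up1} then gives $\mu_{S,a}^{(n)}(e)\le Cn^{-D(S,\mathfrak w)}\le Cn^{-D(S,\mathfrak w')-\varepsilon}=o(n^{-D(S,\mathfrak w')})$.

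For the backward direction, assume $w(s)\le\bar{w}'_{j_{\mathfrak w'}(s)}$ for every $s\in S$. I would work with the norm $\|\cdot\|:=\|\cdot\|_{\Sigma',\mathfrak F'}$ on $G$; the hypothesis $w'_S>1/2$ forces each ratio $w'(c)/w'_S$ to be at least $1$, so by Remark \ref{rem-norm} this is a genuine norm (not merely a quasi-norm). Its volume satisfies $V(\rho):=\#\{g:\|g\|\le\rho\}\simeq\rho^{D(S,\mathfrak w')/w'_S}$ by Theorem \ref{th-VolQ}, and with $\phi(r)=(1+r)^{1/w'_S}$ the norm-radial measure
\[
\nu(g)=\frac{C_\phi}{\phi(\|g\|)V(\|g\|)}
\]
satisfies $\nu^{(n)}(e)\simeq n^{-D(S,\mathfrak w')}$ by Theorem \ref{th-BGK} (applied with exponent $\gamma=1/w'_S\in(0,2)$ and ambient dimension $D=D(S,\mathfrak w')/w'_S$, so that $D/\gamma=D(S,\mathfrak w')$). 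Theorem \ref{th-compN}, invoked with this $\phi$ and the weight system $\mathfrak w'$, manufactures a probability measure $\mu$ with one-dimensional pieces $\mu_i(n)\simeq(1+n)^{-1-1/\bar{w}'_{j_{\mathfrak w'}(s_i)}}$ and $\mathcal E_\mu\le C\mathcal E_\nu$. The pointwise domination $\mu_{S,a}(s_i^n)\simeq(1+n)^{-\alpha_i-1}\le C'\mu_i(n)$ reduces to $\alpha_i\ge 1/\bar{w}'_{j_{\mathfrak w'}(s_i)}$; when $\alpha_i\le 2$ this is $1/\alpha_i=w(s_i)\le\bar{w}'_{j_{\mathfrak w'}(s_i)}$, our standing assumption, and when $\alpha_i>2$ it follows automatically from $\bar{w}'_{j_{\mathfrak w'}(s_i)}\ge w'_S>1/2>1/\alpha_i$. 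Linearity of the Dirichlet form in the jump measure then yields $\mathcal E_{\mu_{S,a}}\le C''\mathcal E_\nu$, and Theorem \ref{th-PSC1} produces $\mu_{S,a}^{(2n)}(e)\ge c\,\nu^{(2Nn)}(e)\ge c'\,n^{-D(S,\mathfrak w')}$, ruling out the $o$-bound.

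The main conceptual step is recognizing that the norm, the radial test measure, and the exponent $\gamma$ in $\phi$ must all be manufactured out of $\mathfrak w'$ (not $\mathfrak w$), with the precise choice $\gamma=1/w'_S$ arranged so that the threshold $\alpha_i=1/\bar{w}'_{j_{\mathfrak w'}(s_i)}$ for domination of $\mu_{S,a}$ by the output of Theorem \ref{th-compN} is exactly the assumption $w(s_i)\le\bar{w}'_{j_{\mathfrak w'}(s_i)}$. Once this alignment is made, both directions assemble from results already established in the paper without new technical input.
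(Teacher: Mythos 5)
Your proof is correct, and it reconstructs exactly the argument the paper compresses into its one-line citation of Theorems \ref{th-up1}, \ref{th-low2}, \ref{th-Vcomp} and Corollary \ref{cor-normeq}: the forward direction via the volume comparison $\#Q(S,\mathfrak F,r)\gtrsim r^{\varepsilon}\#Q(S,\mathfrak F',r)$ from Theorem \ref{th-Vcomp} with the roles swapped (hence $D(S,\mathfrak w)>D(S,\mathfrak w')$ and Theorem \ref{th-up1} finishes), and the backward direction by rerunning the Theorem \ref{th-low2} comparison-with-norm-radial argument with the radial measure built from $\|\cdot\|_{\Sigma,\mathfrak F'}$ and $\gamma=1/w'_S\in(0,2)$, where the domination threshold $\alpha_i\ge 1/\bar{w}'_{j_{\mathfrak w'}(s_i)}$ matches the hypothesis $w(s_i)\le\bar{w}'_{j_{\mathfrak w'}(s_i)}$ precisely. (Two cosmetic notes: the ``further'' hypothesis in Theorem \ref{th-Vcomp} should be read with strict inequality, as its proof shows and as you correctly do; and $\Sigma'$ in your backward direction should read $\Sigma$.)
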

\begin{proof}Apply Theorems \ref{th-up1} and \ref{th-low2} together with 
Corollary \ref{cor-normeq} and Theorem \ref{th-Vcomp}. 
\end{proof}

\begin{theo}\label{th-low32} 
Let $G$ be a finitely generated nilpotent group equipped with a 
generating $k$-tuple $(s_1,\dots,s_k)$ and a $k$-tuple of positive reals
$a=(\alpha_1,\dots,\alpha_k) \in (0,\infty]^k$.  Set 
$\tilde{\alpha}_i=\min\{\alpha_i,2\}$.
Let $\mathfrak w$ be the weight system which 
assigns weight $w_i=1/\tilde{\alpha}_i$ to $s_i$.
Then there exists $A\ge 0$ such that
$$\mu_{S,a}^{(n)}(e)\ge c_{S,a} n^{-D(S,\mathfrak w)} [\log n]^{-A}.$$

Further, let $\Sigma$ be as in {\em Theorem \ref{th-coord2F}} applied
to $(S,\mathfrak w)$ and assume that $\alpha_i=2$ for all $i\in \{1,\dots,k\}$
such that $s_i\in \mbox{\em core}(S,\mathfrak w,\Sigma)$. Then
$$\mu_{S,a}^{(n)}(e)\simeq  [n\log n]^{-D(G)/2}.$$
\end{theo}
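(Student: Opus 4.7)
The plan is to treat the two assertions separately, both via comparisons with norm-radial measures in the spirit of Theorems \ref{th-compN} and \ref{th-BGK}. The upper bounds are already in hand: Theorem \ref{th-up2} applied with the two-dimensional weight system $\mathfrak{w}(a)$ yields $\mu_{S,a}^{(n)}(e)\le Cn^{-D(S,\mathfrak{w})}[\log n]^{-D_2}$, which implies the (weaker) upper bound of the first assertion, while Remark \ref{rem-th-up2} supplies precisely the upper bound $\mu_{S,a}^{(n)}(e)\le C[n\log n]^{-D(G)/2}$ of the second assertion. So the work lies entirely in the lower bounds.

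For the first assertion, I would work with the two-dimensional weight system $\mathfrak{w}^{\#}$ and companion weight-function system $\mathfrak{F}^{\#}$ already introduced in the proof of Theorem \ref{th-up2}: set $w^{\#}(s_i)=(1/\tilde{\alpha}_i,\epsilon_i)$ with $\epsilon_i=1/2$ if $\alpha_i=2$ and $\epsilon_i=0$ otherwise, and $F^{\#}_i(r)=r^{1/\tilde{\alpha}_i}[\log(e+r)]^{\epsilon_i}$. The pair $(\mathfrak{w}^{\#},\mathfrak{F}^{\#})$ satisfies (\ref{F1})--(\ref{F2}) and, by Theorem \ref{th-VolQ}, the associated quasi-norm $\|\cdot\|_{\#}=\|\cdot\|_{\mathfrak{F}^{\#},\mbox{\tiny com}}$ has volume $V^{\#}(r)\simeq r^{D(S,\mathfrak{w})}[\log r]^{E}$ for a constant $E=E(S,\mathfrak{w}^{\#})\ge 0$. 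Applying Theorem \ref{th-compN} with an appropriate $\phi$ (the $\alpha$-power profile at the minimal weight of $\mathfrak{w}^{\#}$) yields $\mathcal{E}_{\mu_{S,a}}\le C\mathcal{E}_{\nu}$ for a norm-radial measure $\nu$ on $\|\cdot\|_{\#}$. The content of \cite{BGK}, extended to polylogarithmic corrections in the volume (or alternatively obtained by combining Theorem \ref{th-BGK} applied to a slight perturbation of the weights with a standard optimization in the perturbation parameter), delivers $\nu^{(n)}(e)\ge cn^{-D(S,\mathfrak{w})}[\log n]^{-A}$ for some $A$ expressible in terms of $E$ and the $\epsilon_i$'s, and Theorem \ref{th-PSC1} transfers this bound to $\mu_{S,a}^{(n)}(e)$.

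For the second assertion, the hypothesis forces $\tilde{\alpha}_i=2$ (hence $w(s_i)=1/2$) on every core generator, so a direct computation using Definition \ref{def-DSa} and the fact that non-core generators contribute no new torsion-free rank gives $D(S,\mathfrak{w})=D(G)/2$. With the same 2D choice above, the weight exponents on the core collapse to $w^{\#}(s_i)=(1/2,1/2)$, and from Theorem \ref{th-VolQ} and Definition \ref{defin-bfF} one reads off $V^{\#}(r)\simeq\prod_j\bigl(r^{j/2}[\log r]^{j/2}\bigr)^{R^{\mathfrak{w}}_j}\simeq [r\log r]^{D(G)/2}$. Theorem \ref{th-compN} with $\phi(r)=r^2$ then produces a $\nu$ that is comparable, via Corollary \ref{cor-normeq} and the identification of $\|\cdot\|_{\#}$ with a log-adjusted word norm, to the radial $\alpha=2$ stable measure $\mu_2$ on $G$ recalled in the Radial stable laws subsection. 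Invoking the proposition of \cite{SCZ0} ($\mu_2^{(n)}(e)\simeq(n\log n)^{-D(G)/2}$) together with Theorem \ref{th-PSC1} then yields the required matching lower bound $\mu_{S,a}^{(n)}(e)\ge c[n\log n]^{-D(G)/2}$.

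The main obstacle will be the identification step in the second assertion: the quasi-norm $\|\cdot\|_{\#}$ produced by our 2D weight-function system is not literally the word norm $|\cdot|_A$ but differs from it by logarithmic factors, so the radial measure $\nu$ is not equal to $\mu_2$. Checking that $\mathcal{E}_{\nu}$ and $\mathcal{E}_{\mu_2}$ are nevertheless comparable (which is what is needed to apply Theorem \ref{th-PSC1}) amounts to verifying that the two sources of log factors---the one inserted into $F^{\#}$ and the one appearing naturally in $V_A$-volumes of $\mu_2$-balls---exactly cancel in the final exponent. For the first assertion, the analogous subtlety is that Theorem \ref{th-BGK} is stated only for pure power volumes, so one must justify (either via \cite{BGK} directly or by perturbing weights and optimizing) that polylogarithmic corrections to the volume translate to at most polylogarithmic corrections in the return probability.
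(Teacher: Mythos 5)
Your overall strategy (Dirichlet-form comparison with a norm-radial measure via Theorem \ref{th-compN}, transfer via Theorem \ref{th-PSC1}, reliance on \cite{SCZ0} to supply the lower bound at the critical exponent) is the same as the paper's, but you have chosen a more complicated normalization than the paper does, and the difficulties you yourself identify at the end are not actually resolved — whereas the paper's simpler choice dissolves them.

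The paper works throughout with the \emph{one-dimensional} weight system $\mathfrak w$ (weights $w_i=1/\tilde\alpha_i$), not the two-dimensional system $\mathfrak w^{\#}$ from Theorem \ref{th-up2}. With $\mathfrak w$, the quasi-norm $\|\cdot\|_{\Sigma,\mathfrak w}$ is a genuine norm (Remark \ref{rem-norm}) with \emph{pure polynomial} volume $V(r)\simeq r^{D(S,\mathfrak w)}$, and Theorem \ref{th-compN} gives $\mathcal E_{\mu_{S,a}}\le C\mathcal E_\nu$ with $\nu(g)=C/\bigl((1+\|g\|)^{2}V(\|g\|)\bigr)$. The entire remaining difficulty is that the exponent $2$ sits exactly at the boundary of Theorem \ref{th-BGK}; the paper cites \cite{SCZ0} for the bound $\nu^{(n)}(e)\ge cn^{-D(S,\mathfrak w)}[\log n]^{-A}$ in this boundary case, and explicitly warns that this is ``rather subtle and difficult in the present generality.'' Your proposal instead bakes the logarithms into the norm by using $\mathfrak w^{\#}$, so the volume becomes $r^D[\log r]^E$ and the needed BGK-type result with polylogarithmic volume correction is simply not available anywhere in the paper. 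Your fallback (``perturbing weights and optimizing'') is a hand-wave: shifting $w_i=1/2$ to $1/2+\varepsilon$ changes $D$ as well, the constants in \cite{BGK} are not uniform as $\varepsilon\downarrow 0$, and there is no argument given that the optimization recovers a merely polylogarithmic loss rather than a power loss. This is not a small gap — it is precisely the ``subtle and difficult'' step that the paper defers to \cite{SCZ0}, and your route does not avoid it, it compounds it.

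For the second assertion the same mismatch causes the obstacle you describe. With the paper's 1D system, the hypothesis $\alpha_i=2$ on all of $\mbox{core}(S,\mathfrak w,\Sigma)$ forces every relevant generator to have weight exactly $1/2$, so $\|g\|_{\Sigma,\mathfrak w}\simeq|g|_S$ \emph{on the nose} (Corollary \ref{cor-normeq}), $D(S,\mathfrak w)=D(G)/2$, and $\nu\simeq\mu_2$ with no logarithmic discrepancy to reconcile. Then \cite{SCZ0} (both as cited in the Radial Stable Laws subsection and as invoked directly in the paper's proof) gives $\nu^{(n)}(e)\ge c[n\log n]^{-D(G)/2}$. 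Your 2D choice, by contrast, makes $\|\cdot\|_{\#}$ differ from $|\cdot|_S$ by logarithms, and your last paragraph correctly flags that you would then need to check that the two sources of log factors ``exactly cancel'' — but you do not carry this out, and it is not clear that a routine application of Corollary \ref{cor-normeq} gives it, since that corollary compares norms arising from weight-\emph{function} systems rather than directly to the word metric. So: the idea is right, the plan as stated leaves the two hardest steps open, and both openings are avoidable by using the 1D weight system as the paper does.
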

\begin{proof} The proof of the general lower bound
is essentially the same as for Theorem \ref{th-low1}, 
except that we cannot rule out the possibility that 
$w(c_1)=1/2$. If $w(c_1)>1/2$ 
then the previous proof applies  and 
we obtain  $\mu_{S,a}^{(n)}(e)\ge cn^{-D(S,\mathfrak w)}$ which is better 
than the statement we need to prove. If $w(c_1)=1/2$ then we have a comparison
\begin{equation}\label{compmunu0}
\mathcal E_{\mu_{S,a}} \le C \mathcal E_\nu
\end{equation}
with 
$$\nu(g)= \frac{C}{(1+\|g\|)^{2}V(\|g\|)}.$$
To conclude, we need a lower bound on $\nu^{(n)}(e)$. This turns out to be  
rather subtle and difficult question in the present generality.
In \cite{SCZ0} we show that there exists $A\ge 0$ such that 
\begin{equation}
\label{nucrux2}
\nu^{(n)}(e)\ge c n^{-D(S,\mathfrak w)}[\log n]^{-A}.\end{equation} 
This proves the desired lower bound on $\mu_{S,a}^{(n)}(e)$.

When  $\alpha_i=2$ for all $i\in \mbox{core}(S,\mathfrak w,\Sigma)$, it follows 
that 
$$D(S,\mathfrak w)=G(G)/2\; \mbox{ and }\;\;\|g\|\simeq |g|_{S}$$ 
where $|g|_S$ denotes the usual word-length of $g$ over 
the symmetric generating set $\{s_i^{\pm 1}: 1\le i\le k\}$. 
Theorem \ref{th-up2} provides the  upper bound 
$$\mu_{S,a}^{(n)}(e)\le C 
[n\log n]^{-D(G)/2}.$$
For the lower bound,
by the Dirichlet form inequality (\ref{compmunu0}), 
it suffices to bound $\nu^{(n)}(e)$ from below. Using the fact that 
$\|g\|\simeq |g|_{S}$, we prove in \cite{SCZ0} that,
in this special case, (\ref{nucrux2})
holds with $A=D(G)/2$.  This provides the desired matching lower bounds 
$$\mu_{S,a}^{(n)}(e)\ge c 
[n\log n]^{-D(G)/2}.$$
\end{proof}

\begin{theo}
\label{th-low33} 
Let $G$ be a finitely generated nilpotent group equipped with a 
generating $k$-tuple $(s_1,\dots,s_k)$ and a $k$-tuple of positive reals
$a=(\alpha_1,\dots,\alpha_k) \in (0,\infty]^k$. 
Set $\tilde{\alpha}_i=\min\{\alpha_i,2\}$ and 
$w_i=1/\tilde{\alpha}_i$. Let $\mathfrak w$ be the associated weight system.  
Let $\Sigma$ be as in {\em Theorem 
\ref{th-coord2F}} applied to $(S,\mathfrak w)$.
Let 
$$\Theta= (\theta_1=s_{i_1},\dots ,\theta_\kappa=s_{i,\kappa})=
\mbox{\em core}(S,\mathfrak w,\Sigma).$$
Let $H$ be the subgroup of $G$ generated by $\Theta$.
Set $b=(\beta_1=\alpha_{i_1},
\cdots,\beta_\kappa=\alpha_{i_\kappa})$,  $\tilde{\beta}_i=\tilde{\alpha}_{i_j}$,
$v(\theta_i)=w(s_{i_j})$.
Let $\mathfrak v$ be the weight system associated to $v$ on $(H,\Theta)$, respectively. Then
$$D(\Theta,\mathfrak v)=D(S,\mathfrak w).$$
In particular, letting $e_H, e_G$ be the identity elements in $H$ and $G$,
respectively, we have:
\begin{itemize}
\item if $\alpha_i \in (0,2)$ for all $i$ 
such that $s_i\in \mbox{\em core}(S,\mathfrak w,\Sigma)$ then
$$\mu_{S,a}^{(n)}(e_G)\simeq \mu_{\Theta,b}^{(n)}(e_H)\simeq 
n^{-D(\Theta,\mathfrak v)}.$$
\item if $\alpha_i=2$ for all $i$ 
such that $s_i\in \mbox{\em core}(S,\mathfrak w,\Sigma)$ then
$$\mu_{S,a}^{(n)}(e_G)\simeq \mu_{\Theta,b}^{(n)}(e_H)\simeq 
[n\log n]^{-D(H)/2}.$$
\end{itemize} 
\end{theo}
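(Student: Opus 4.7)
The plan is to establish the algebraic identity $D(\Theta,\mathfrak v)=D(S,\mathfrak w)$ and then apply the sharp return-probability estimates of Theorem \ref{th-low2} and Theorem \ref{th-low32} to the two triples $(G,S,a)$ and $(H,\Theta,b)$ separately.

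For the algebraic step, fix $\Sigma=(c_1,\dots,c_t)$ as in Theorem \ref{th-coord2F} for $(G,S,\mathfrak w)$ and let $c_{i_1},\dots,c_{i_R}$ (with $R=\sum_j R^{\mathfrak w}_j$) be the sub-tuple of commutators occupying the ``free positions'' $\bigcup_j\{m_{j-1}+1,\dots,m_{j-1}+R^{\mathfrak w}_j\}$. By Proposition \ref{pro-core}, each $c_{i_\ell}$ uses only letters of $\Theta=\mbox{core}(\mathfrak w,S,\Sigma)$, so it can be reinterpreted as a formal commutator over $\Theta$ whose $\mathfrak v$-weight equals its $\mathfrak w$-weight. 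The key claim is that for every weight value $\bar w_j$ that arises in $\mathfrak v$, the subfamily $\{c_{i_\ell}:w(c_{i_\ell})=\bar w_j\}$ remains free in the corresponding quotient $H^{\mathfrak v}_{j''}/H^{\mathfrak v}_{j''+1}$: since $\mathfrak v$-values form a subset of $\mathfrak w$-values, $H^{\mathfrak v}_{j''+1}$ is contained in $G^{\mathfrak w}_{j+1}$, so any non-trivial relation $\prod c_{i_\ell}^{x_{i_\ell}}\in H^{\mathfrak v}_{j''+1}$ yields a relation in $G^{\mathfrak w}_{j+1}$ contradicting the freeness of $\{c_{i_\ell}\}$ in $G^{\mathfrak w}_j/G^{\mathfrak w}_{j+1}$. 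This gives $R^{\mathfrak v}_{j''}\ge R^{\mathfrak w}_j$ for every matching index, while weight values outside the $\mathfrak v$-spectrum contribute $R^{\mathfrak w}=0$ by Proposition \ref{pro-core}. Summing, and using that the Hirsch length of a nilpotent group equals the sum of torsion-free ranks of successive quotients in any normal series with abelian quotients,
\[
\sum_j R^{\mathfrak w}_j=\mbox{Hirsch}(G)\ge\mbox{Hirsch}(H)=\sum_{j''} R^{\mathfrak v}_{j''}\ge\sum_j R^{\mathfrak w}_j,
\]
which forces equality termwise. Since the non-zero contributions to $D(S,\mathfrak w)$ and $D(\Theta,\mathfrak v)$ occur at the same weight values with matching ranks, $D(\Theta,\mathfrak v)=D(S,\mathfrak w)$ follows.

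With the identity in hand, both probabilistic equivalences are immediate. In case (i), $w(s_i)=1/\alpha_i>1/2$ for every $s_i\in\mbox{core}(S,\mathfrak w,\Sigma)$, so Theorem \ref{th-low2} gives $\mu_{S,a}^{(n)}(e_G)\simeq n^{-D(S,\mathfrak w)}$; the same theorem applied to $(H,\Theta,b)$, whose core weights are again all $>1/2$, yields $\mu_{\Theta,b}^{(n)}(e_H)\simeq n^{-D(\Theta,\mathfrak v)}$, and the two match by the algebraic identity. In case (ii), every core generator has weight $1/2$, so Proposition \ref{pro-core} forces the $\mathfrak w$-filtration to agree, on its torsion-free quotients, with the lower central series of $G$ rescaled by the factor $1/2$, giving $D(S,\mathfrak w)=D(G)/2$ and, analogously, $D(\Theta,\mathfrak v)=D(H)/2$. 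The second part of Theorem \ref{th-low32} yields $\mu_{S,a}^{(n)}(e_G)\simeq[n\log n]^{-D(G)/2}$, and applied to $(H,\Theta,b)$, where every $\beta_j=2$, it gives $\mu_{\Theta,b}^{(n)}(e_H)\simeq[n\log n]^{-D(H)/2}$. The algebraic identity, which in this case is equivalent to $D(G)=D(H)$, closes the proof.

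The delicate point is the freeness assertion inside $H$ with respect to its intrinsic $\mathfrak v$-filtration rather than the ambient $\mathfrak w$-filtration; in particular, the equality $\mbox{Hirsch}(H)=\mbox{Hirsch}(G)$ is not assumed a priori but emerges from the combination of the pointwise rank inequality and the universal Hirsch-length bound. Once this algebraic kernel is in place, the proof amounts to a book-keeping application of the return-probability machinery already developed in Sections \ref{sec-rwub} and \ref{sec-low}.
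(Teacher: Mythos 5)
Your proof is correct, and the two arguments share the same algebraic kernel: the observation that the free commutators $(c_i)$ at positions $m_{j-1}+1,\dots,m_{j-1}+R^\mathfrak{w}_j$ use only letters of $\Theta$ (this is by definition of $\mbox{core}(\mathfrak{w},S,\Sigma)$, rather than Proposition \ref{pro-core} which refers to the larger set $\mbox{core}(\mathfrak{w},S)$), hence project into the $H$-filtration, and that the containment $H^{\mathfrak v}_{j''+1}\subset G^{\mathfrak w}_{j+1}$ forces any relation to vanish. Where you diverge from the paper is in how you close the loop. The paper first establishes $D(\Theta,\mathfrak v)\le D(S,\mathfrak w)$ by the elementary volume inclusion $\{g\in H:\|g\|^{1/v_\Theta}_{\Theta,\mathfrak v}\le r\}\subset\{g\in G:\|g\|^{1/w_S}_{S,\mathfrak w}\le r\}$ together with Theorem \ref{th-VolQ}, then proves the termwise rank inequality $\mbox{rank}(H^\mathfrak w_j/H^\mathfrak w_{j+1})\ge\mbox{rank}(G^\mathfrak w_j/G^\mathfrak w_{j+1})$ to get the reverse. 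You instead invoke the invariance of Hirsch length across normal series with abelian quotients plus the monotonicity $\mbox{Hirsch}(H)\le\mbox{Hirsch}(G)$ to force termwise equality of ranks and hence $D(\Theta,\mathfrak v)=D(S,\mathfrak w)$. Your route is slicker and stays entirely in the algebraic category but imports a standard group-theoretic fact (Hirsch length monotonicity for subgroups) that the paper's machinery never uses; the paper's volume-inclusion route is self-contained within Section \ref{sec-vol}. The application of Theorems \ref{th-low2} and \ref{th-low32} to both triples $(G,S,a)$ and $(H,\Theta,b)$ is the same in both write-ups; your treatment of it is more explicit than the paper's (which leaves the probabilistic step implicit after proving the algebraic identity), which is a welcome clarification.
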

\begin{rem} One can easily prove that $H$ is a subgroup of finite index in $G$.
 It is also easy to prove by the direct comparison  
techniques of \cite{PSCstab} that
$$\forall\,n,\;\;\mu_{S,a}^{(2Kn)}(e_G)\le C \mu_{\Theta,b}^{(2n)}(e_H)$$
for some integer $K$ and constant $C$ and for each $a=(\alpha_1,\dots,\alpha_k)$.
The converse inequality seems significantly harder to prove although we conjecture it does hold true.
\end{rem}
\begin{proof}
First we  observe that  $D(\Theta,\mathfrak v)\le D(S,\mathfrak w)$.
Indeed, this follows immediately from the obvious fact that
$$\{g\in H: \|g\|^{1/v_\Theta}_{\Theta,\mathfrak v}\le r\}\subset 
\{g\in G: \|g\|^{1/w_S}_{S,\mathfrak w}\le r\}.$$
To prove that $D(\Theta,\mathfrak v)\ge D(S,\mathfrak w)$, it is convenient to
introduce the generating $k$-tuple $S^*=(s^*_i)_1^k$ of $H$ such that
$s^*_{i,j}=s_{i_j}$ if $s_{i_j}=\theta_j\in \Theta$, and $s^*_{i_j}=e$ 
otherwise. Both $S$ and $S^*$ are equipped with the weight system $\mathfrak w$.
Obviously, the non-decreasing sequence of subgroups $(H^\mathfrak w_j)$
is a trivial refinement of the sequence $(H^\mathfrak v_j)$ in the sense that 
the two sequences differ only by insertion of some repetitions. For instance, 
$A,B,C$ may become $A,A, B,B,B,B, C$.  It follows that 
$D(\Theta,\mathfrak v)=D(S^*,\mathfrak w)$. The notational advantage is that 
the weight system $\mathfrak w$ with increasing weight-value sequence 
$\bar{w}_j$ is now shared by $S$ and $S^*$. We wish to prove that
$$\mbox{rank}(H^\mathfrak w_j/H^\mathfrak w_{j+1})\ge 
\mbox{rank}(G^\mathfrak w _j/G^\mathfrak w_{j+1}).$$

The (torsion free) rank of an abelian group can be computed as the 
cardinality of a maximal free subset. Set $R=R^\mathfrak w_j$ be the torsion 
free rank of $G^\mathfrak w_j/G^\mathfrak w_{j+1}$. Let
$(c_{m_{j-1}+1},\dots, c_{m_{j-1}+R})$ 
be the formal commutators given by Theorem 
\ref{th-coord2F}  which form a maximal free subset of
$G^\mathfrak w_j/G^\mathfrak w_{j+1}$.  
By definition of $\mbox{core}(S,\mathfrak w,\Sigma)$, 
the images of these formal commutators in $G$ belong to $H$. In fact, 
they clearly belong to $H^\mathfrak w_j\subset G^\mathfrak w_j $.
Now, we also have $H^\mathfrak w_{j+1}\subset G^\mathfrak w_{j+1}$.
Assume that
$\prod_{m_{j-1}+1}^{m_{j-1}+R}c_i^{x_i}=e $ in  
$H^\mathfrak w_j/H^\mathfrak w_{j+1}$. 
Then, a fortiori, this product is trivial
in 
$$H^\mathfrak w_jG^\mathfrak w_{j+1}/G^\mathfrak w_{j+1} \simeq 
H^\mathfrak w_j/(H^\mathfrak w_j\cap G^\mathfrak w_{j+1})$$
since $(H^\mathfrak w_j\cap G^\mathfrak w_{j+1})\subset H^\mathfrak w_{j+1}$. 
In particular, this product must be trivial in 
$G^\mathfrak w_j/G^\mathfrak w_{j+1}$. This implies that $x_i=0$ for all $i$ 
so that  $H^\mathfrak w_j/H^\mathfrak w_{j+1}$ admits a free subset of size $R$.
It follows that $\mbox{rank}(H^\mathfrak w_j/H^\mathfrak w_{j+1})\ge R$ 
as desired. 
\end{proof}

To state the final result of this section, we need some preparation.
Consider the class of measure $\mu$ of the form (\ref{def-mutrunc})
with 
\begin{equation}\label{slow-li}
\mu_i(n)= \kappa_i (1+|n|)^{-\alpha_i-1} \ell_i(|n|),\;\;1\le i\le k,
\end{equation}
where each $\ell_i$ is a positive slowly varying function satisfying 
$\ell_i(t^b)\simeq \ell_i(t)$ for all $b>0$ and $\alpha_i\in (0,2)$.
Consider the weight-function system $\mathfrak F$ generated by letting $F_i$ 
be the inverse function of $r\mapsto r^{\alpha_i}/\ell_i(r)$. 
Note that $F_i$ is regularly varying of order $1/\alpha_i$ and that
$F_i(r)\simeq [r\ell_i(r)]^{1/\alpha_i}$, $r\ge 1$, $i=1,\dots,k$. 
We make 
the fundamental assumption that the functions $F_i$ have the property that
for any $1\le i,j\le k$, either $F_i(r)\le C F_j(r)$ of $F_j(r)\le CF_i(r)$.
For instance, this is clearly the case if all $\alpha_i$ are distinct.
Without loss of generality, we can assume that there exists a 
multidimensional weight system $\mathfrak w$, say of dimension $d$,
with 
$$w_i=(v_i^1,\dots,v_i^d), \;\;v_i^1=1/\alpha_i, \;\;1\le i\le k,$$ and such that
$\mathfrak w $ and $\mathfrak F$ are compatible in the sense that 
(\ref{F1})-(\ref{F2}) hold true.  Separately, consider also 
the one-dimensional weight system $\mathfrak v$ generated by $v_i=1/\alpha_i$, 
$1\le i\le k$. Note that one can check that
$$D(S,\mathfrak v)=\sum_j \bar{v}_j
R^\mathfrak v_j =\sum_j \bar{v}^1_iR^\mathfrak w_j $$  
where, by definition, $\bar{w}_j=(\bar{v}^1_j,\dots,\bar{v}^d_j)$.
Fix $\alpha_0\in (0,2)$ such that 
$$\alpha_0>\max\{\alpha_i: 1\le i\le k\}$$
and $\alpha_0/\alpha_i \not\in \mathbb N$, $i=1,\dots, k$.
Observe that there are convex functions $K_i\ge 0$, $i=0,\dots,k$, such that $K_i(0)=0$ and
\begin{equation}\label{conv} 
\forall r\ge 1,\;\; F_i(r^{\alpha_0}) \simeq K_i(r).
\end{equation}  
Indeed,  $r\mapsto F_i(r^{\alpha_0})$ is regularly varying of index $\alpha_0/\alpha_i$ with $1<\alpha_0/\alpha_i\not\in \mathbb N$. By 
\cite[Theorems 1.8.2-1.8.3]{BGT} there are smooth
positive convex functions $\tilde{K}_i$ such that  $\tilde{K}_i(r)\sim 
F_i(r^{\alpha_0})$. If $\tilde{K}_i(0)>0$, it is easy to construct a convex function
$K_i:[0,\infty)\rightarrow [0,\infty)$ such that 
$K_i\simeq \widetilde{K}_i$ on $[1,\infty)$ and $K_i(0)=0$.

\begin{theo} \label{th-eq3} 
Let $G$ be a finitely generated nilpotent group equipped with a 
generating $k$-tuple $(s_1,\dots,s_k)$. Assume that
$\mu$ is a probability measure on $G$ of the form {\em (\ref{def-mutrunc})}
with $\mu_i$ as in {\em (\ref{slow-li})}. 
Let $\ell_i$, $F_i$, $\mathfrak F, \mathfrak w, \mathfrak v$ be as 
described above. Let $(c_i)_1^t$ be a $t$-tuple of formal commutators as in 
{\em Theorem \ref{th-coord2F}} applied to $G,S, \mathfrak w,\mathfrak F$. 
Let $(s^{\pm 1}_{i_j})_{j=1}^N $ be the list of all the letters 
(repeated according to multiplicity) used in the build-words for 
the commutators $c_i$ with
$i\in \bigcup_j \{m_{j-1}+1,\dots,m_{j-1}+R^\mathfrak w_j\}$. Then
$$\mu^{(n)}(e)\simeq n^{-D(S,\mathfrak v)} L(n)^{-1}$$
where
$$L(n)=\prod_1^N \ell_{i_j}(n)^{1/\alpha_{i_j}}.$$
\end{theo}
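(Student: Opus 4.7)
The upper bound $\mu^{(n)}(e) \le C\,n^{-D(S,\mathfrak{v})}L(n)^{-1}$ is already provided by Theorem \ref{th-up3} once one verifies that the sequence $(c_i)_1^t$ specified in the statement is a valid input: the hypothesis in Theorem \ref{th-up3} is exactly that the families $\{c_i: w(c_i) = \bar{w}_h\}$ project to linearly independent families in $G^\mathfrak{w}_h/G^\mathfrak{w}_{h+1}$, which holds by construction (Theorem \ref{th-coord2F}). A direct inspection gives $D(S,\mathfrak{w}) = D(S,\mathfrak{v})$ (both expressions use the first coordinate of $\bar{w}_j$) and identifies the resulting slowly varying factor with the prescribed $L(n) = \prod_{j=1}^N \ell_{i_j}(n)^{1/\alpha_{i_j}}$.

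For the matching lower bound, the plan is to reduce to a norm-radial measure and invoke Theorem \ref{th-BGK} together with Theorem \ref{th-compN} and Theorem \ref{th-PSC1}. The key construction is of a norm (not merely a quasi-norm) whose volume has the desired regularly varying behavior. Using the convex functions $K_i$ with $K_i(r) \simeq F_i(r^{\alpha_0})$ and $K_i(0) = 0$ introduced in the preamble to the theorem, form the weight-function system $\mathfrak{K}$; this is compatible with the weight system $\mathfrak{w}$ in the sense of (\ref{F1})-(\ref{F2}) because the relative ordering of the $F_i$ is preserved under the monotone change $r \mapsto r^{\alpha_0}$. Let $N(g) = \|g\|_{\Sigma,\mathfrak{K}}$ denote the associated quasi-norm of Definition \ref{defin-FQN}; by Remark \ref{rem-norm}, convexity of the $K_i$ with $K_i(0) = 0$ ensures that $N$ is in fact a proper norm. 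Theorem \ref{th-VolQ} applied to $(G,S,\mathfrak{w},\mathfrak{K})$ yields
\[
V_N(r) \simeq \prod_j \mathbf{K}_j(r)^{R^\mathfrak{w}_j} \simeq \prod_j \mathbf{F}_j(r^{\alpha_0})^{R^\mathfrak{w}_j} \simeq r^{\alpha_0 D(S,\mathfrak{v})}\,L(r),
\]
using the hypothesis $\ell_i(t^b) \simeq \ell_i(t)$ for all $b > 0$.

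Introduce the radial measure $\nu(g) \propto 1/((1+N(g))^{\alpha_0} V_N(N(g)))$. Applying Theorem \ref{th-BGK}, in its slowly varying form available from \cite{BGK}, with exponent $\gamma = \alpha_0 \in (0,2)$ and volume $V_N$, yields $\nu^{(n)}(e) \ge c\,n^{-D(S,\mathfrak{v})} L(n)^{-1}$. To transport this to $\mu$, apply Theorem \ref{th-compN} to the weight-function system $\mathfrak{K}$ with $\phi(t) = (1+t)^{\alpha_0}$: this produces a measure $\tilde{\mu}$ of the form (\ref{def-mutrunc}) satisfying $\mathcal{E}_{\tilde{\mu}} \le C \mathcal{E}_\nu$, whose one-dimensional components are $\tilde{\mu}_i(n) \simeq 1/\bigl((1+n)\phi(F_{c_1} \circ \mathbf{K}_{h_i}^{-1}(n))\bigr)$. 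A slowly varying computation based on $K_i(r) \simeq F_i(r^{\alpha_0})$ shows $\tilde{\mu}_i(n) \simeq (1+n)^{-\alpha_i - 1}\ell_i(n) \simeq \mu_i(n)$, so $\mathcal{E}_\mu \simeq \mathcal{E}_{\tilde{\mu}} \le C\mathcal{E}_\nu$, and Theorem \ref{th-PSC1} gives $\mu^{(2n)}(e) \ge c\,\nu^{(2Mn)}(e)$ for some integer $M$, completing the lower bound.

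The principal obstacle lies in the slowly varying extension of Theorem \ref{th-BGK} used above: the statement in the excerpt assumes pure polynomial volume $V(r)\simeq r^D$, whereas we need the conclusion for volumes of the form $r^D L(r)$ with $L$ slowly varying, yielding the matching factor $L(n^{1/\gamma})^{-1}\simeq L(n)^{-1}$ in the return probability. This requires verifying that the heat-kernel / jump-process techniques of \cite{BGK} accommodate this level of generality, which in turn depends on having sufficiently robust Nash-type inequalities in the regularly varying regime. A subsidiary technical task is the precise matching of $\tilde{\mu}_i$ with $\mu_i$, which reduces to the identity $\phi(F_{c_1} \circ \mathbf{K}_{h_i}^{-1}(n)) \simeq n^{\alpha_i}/\ell_i(n)$; this follows from the definition of $K_i$ and the invariance $\ell_i(t^b)\simeq \ell_i(t)$.
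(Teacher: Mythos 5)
Your upper bound is exactly the paper's: Theorem \ref{th-up3} applied to the free sub-families of commutators, plus the observation that $D(S,\mathfrak w)=D(S,\mathfrak v)$. Your lower bound also follows the same architecture as the paper (a genuine norm built from the convex functions $K_i$ of (\ref{conv}), the regularly varying extension of Theorem \ref{th-BGK} from \cite{SCZ0} --- which you correctly flag as an external input, as does the paper --- then Theorem \ref{th-compN} and Theorem \ref{th-PSC1}). However, there is a genuine gap in how you close the loop between $\nu$ and $\mu$.

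The problem is the final matching step, which you call a ``subsidiary technical task'' but which is in fact false as you have set things up. Theorem \ref{th-compN}, applied to the system $\mathfrak K$ on the tuple $S$ (or $\Sigma$) with $\phi(t)=(1+t)^{\alpha_0}$, produces components $\tilde\mu_i(n)\simeq \bigl((1+n)\,\phi(K_{c_1}\circ\mathbf K_{h_i}^{-1}(n))\bigr)^{-1}$, where $c_1$ is the commutator of \emph{minimal weight in $S$}, i.e.\ the generator with the largest exponent $\alpha_{\max}=\max_i\alpha_i$. Since $K_{c_1}\circ\mathbf K_{h_i}^{-1}=F_{c_1}\circ\mathbf F_{h_i}^{-1}$ and $F_{c_1}$ is regularly varying of index $1/\alpha_{\max}$, one gets $\phi(K_{c_1}\circ\mathbf K_{h_i}^{-1}(n))\simeq n^{\alpha_0\alpha_i/\alpha_{\max}}$ up to slowly varying factors, not $n^{\alpha_i}/\ell_i(n)$; the identity you assert holds only if $\alpha_0=\alpha_{\max}$, which is excluded since $\alpha_0>\max_i\alpha_i$. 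As $\alpha_0/\alpha_{\max}>1$, the measure $\tilde\mu_i$ decays strictly faster than $\mu_i$, so the pointwise domination $\mu\le C\tilde\mu$ needed to get $\mathcal E_\mu\le C\mathcal E_\nu$ goes the wrong way. The same parametrization slip appears in your volume estimate ($V_N(\lambda)=\#\{N\le\lambda\}\simeq\prod_j\mathbf K_j(K_\Sigma^{-1}(\lambda))^{R_j}$, not $\prod_j\mathbf K_j(\lambda)^{R_j}$) and in your appeal to Remark \ref{rem-norm}, which requires convexity of the composites $K_c\circ K_\Sigma^{-1}$, not of the $K_c$ themselves. The paper's device that repairs all three points at once is to adjoin the dummy generator $s_0=e$ carrying a pure-power weight function of weight $1/\alpha_0<\min_i 1/\alpha_i$, so that the minimal weight function of the augmented system is $r\mapsto r^{1/\alpha_0}$ and hence $K_{S_0}=\mathrm{id}$: then every composite $K_c\circ K_{S_0}^{-1}=K_c$ is convex of index $\alpha_0 w(c)>1$, the norm and its volume are parametrized by the intrinsic radius to the power one, and $\phi(\|s_i^n\|)\simeq\phi\bigl((F_i^{-1}(n))^{1/\alpha_0}\bigr)=F_i^{-1}(n)\simeq n^{\alpha_i}/\ell_i(n)$, which makes $\tilde\mu_i\simeq\mu_i$ and closes the chain.
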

\begin{proof} The upper bounds follows immediately from Theorem \ref{th-up3}.
For the lower bound, it is technically convenient to adjoint to $S$
the dummy generator $s_0=e$ with associated weight function 
$F_0(r)=r^{\alpha_0}$.  Let $\mathfrak W_0$, $\mathfrak F_0$ 
we the weight systems induced by $S_0=(e,s_1,\dots,s_k)$, $F_0,F_1,\dots,F_k$.

Apply Theorem \ref{th-compN} to 
$G,S,\mathfrak w_0,\mathfrak F_0$ to obtain that
$\mathcal E_\mu \le C\mathcal E_\nu$ where
$$\nu(g)\simeq \frac{1}{\|g\|_{\mathfrak F_0,\mbox{\tiny com}}^{\alpha_0} 
V_{\mathfrak F_0,\mbox{\tiny com}}
(\|g\|_{\mathfrak F_0,\mbox{\tiny com}})}$$
with $V_{\mathfrak F_0,\mbox{\tiny com}}(r)= 
\#\{g\in G: \|g\|_{\mathfrak F_0,\mbox{\tiny com}}\le r\}.$
By construction, 
$$\nu(g)\simeq \frac{1}{\|g\| V(\|g\|)} $$
where $\|\cdot\|$ is the norm $\|\cdot\|_{\mathfrak K,\mbox{\tiny com}}$
based on the convex function $K_i \simeq F_i(r^{\alpha_0})$ provided by (\ref{conv}) and $V$ denotes the associated volume function. 
Indeed, by construction we have 
$\|\cdot\|\simeq \|\cdot\|_{\mathfrak F_0,\mbox{\tiny com}}^{\alpha_0}
$.
As $\|\cdot\|$ is a norm, 
an extension of Theorem \ref{th-BGK} obtained in \cite{SCZ0} and 
which allows volume growth 
of regular variation with positive index gives
$$\nu^{(n)}(e)\simeq  \frac{1}{V(n)}\simeq 
\frac{1}{V_{\mathfrak F_0,\mbox{\tiny com}}(n^{1/\alpha_0})}\simeq 
\frac{1}{\#Q(S_0,\mathfrak F_0,n)}\simeq \frac{1}{\#Q(S,\mathfrak F,n)}
. $$
Using the notation introduced in Theorem \ref{th-eq3}, we have
$$\#Q(S,\mathfrak F,r)\simeq n^{D(S,\mathfrak v)}L(n)$$ which yields 
the desired result.
\end{proof}

\subsection{Near diagonal lower bounds}
In this section we use Lemma \ref{lem-1d}(ii) to 
turn the sharp {\em on diagonal lower bounds}
of the previous section into {\em near diagonal lower bounds}.
The key tool is the following lemma.

\begin{lem}\label{lem-HSC}
Let $G$ be a finitely generated nilpotent group equipped with a 
generating $k$-tuple $(s_1,\dots,s_k)$ and a $k$-tuple of positive reals
$a=(\alpha_1,\dots,\alpha_k)\in (0,\infty]^k$. 
Let $\mathfrak w=\mathfrak w(a)$ be the two-dimensional weight system which assigns weight 
$w_i=(v_{i,1},v_{i,2})$ to $s_i$ where 
$$v_{i,1}=\frac{1}{\tilde{\alpha_i}},\;\;\tilde{\alpha}_i=\min\{2,\alpha_i\}$$
and
$$v_{i,2}= 0 \mbox{ unless } \alpha_i=2 \mbox{ in which case } v_{i,2}=1/2.$$ 
Let $\mathfrak F$ be the associated weight function system 
generated by 
$$F_i(r)=r^{v_{i,1}}[\log (1+r)]^{v_{i,2}},\;1\le i\le k.$$
Then
 \[
\ \left\vert \mu _{S,a}^{(2n+m)}(xg)-\mu _{S,a}^{
(2n+m)}(x)\right\vert \leq C \left( F_S^{-1}(\|g\|_{\Sigma,\mathfrak F}) /m
\right)^{1/2}\, \mu _{S,a}^{(2n)} (e).
\]
\end{lem}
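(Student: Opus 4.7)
The plan is to combine Theorem \ref{th-coord2F} (to decompose $g$ as a bounded-length product of generator powers), a telescoping argument, and the gradient-type inequality of Lemma \ref{lem-1d}(ii).

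First I would invoke the second bullet of Theorem \ref{th-coord2F} together with Corollary \ref{cor-coord2F} to write
\[
g=\prod_{j=1}^{p} s_{i_j}^{x_j}
\]
where $p$ and the sequence $(i_j)_1^p$ depend only on $(G,S,\mathfrak F)$ (not on $g$) and $|x_j|\le C F_{i_j}(r)$ for $r=F_S^{-1}(\|g\|_{\Sigma,\mathfrak F})$. Setting $h_j=\prod_{\ell<j}s_{i_\ell}^{x_\ell}$ this produces the telescoping identity
\[
\mu_{S,a}^{(2n+m)}(xg)-\mu_{S,a}^{(2n+m)}(x)=\sum_{j=1}^{p}\bigl[\mu_{S,a}^{(2n+m)}(xh_j\,s_{i_j}^{x_j})-\mu_{S,a}^{(2n+m)}(xh_j)\bigr].
\]
Since $p$ is bounded independently of $g$, it suffices to bound each summand by $C(F_S^{-1}(\|g\|_{\Sigma,\mathfrak F})/m)^{1/2}\mu_{S,a}^{(2n)}(e)$.

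Fix $j$, let $y=xh_j$, $s=s_{i_j}$, and factor $\mu_{S,a}^{(2n+m)}=\mu_{S,a}^{(n+m)}*\mu_{S,a}^{(n)}$. Applying Lemma \ref{lem-1d}(ii) with $\nu=\mu_{i_j}$, $f=\mu_{S,a}^{(n+m)}$, $g=\mu_{S,a}^{(n)}$ gives
\[
\bigl|\mu_{S,a}^{(2n+m)}(ys^{x_j})-\mu_{S,a}^{(2n+m)}(y)\bigr|^{2}\le C\,\frac{|x_j|^{2}}{\mathcal G_{\mu_{i_j}}(|x_j|)}\,\mathcal E_{s,\mu_{i_j}}(\mu_{S,a}^{(n+m)},\mu_{S,a}^{(n+m)})\,\|\mu_{S,a}^{(n)}\|_2^{2}.
\]
By symmetry, $\|\mu_{S,a}^{(n)}\|_2^{2}=\mu_{S,a}^{(2n)}(e)$, and since $\mu_{S,a}\ge k^{-1}\mu_{i_j}^{(s)}$, we have $\mathcal E_{s,\mu_{i_j}}\le k\,\mathcal E_{\mu_{S,a}}$.

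The key remaining ingredients are two auxiliary estimates. For the Dirichlet form factor I would use the standard spectral bound for reversible random walks: the function $(1-\lambda)\lambda^{2m}$ is at most $C/m$ on $[0,1]$, so by spectral calculus applied to the symmetric operator $P_{\mu_{S,a}}$,
\[
\mathcal E_{\mu_{S,a}}(\mu_{S,a}^{(n+m)},\mu_{S,a}^{(n+m)})\;\le\;\frac{C}{m}\,\mu_{S,a}^{(2n)}(e).
\]
For the truncated-moment factor, the definition of $F_{i}$ as the inverse of $r\mapsto r^{\tilde\alpha_i}/L_i(r)$ together with (\ref{tsmhyp}) yields $|x_j|^{2}/\mathcal G_{\mu_{i_j}}(|x_j|)\le C\,F_{i_j}^{-1}(|x_j|)\le CF_{i_j}^{-1}(CF_{i_j}(r))\le C\,r=C\,F_S^{-1}(\|g\|_{\Sigma,\mathfrak F})$ thanks to the doubling/regular-variation properties of $F_{i_j}$. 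Multiplying these bounds together produces the stated estimate for each telescoping term, and summing over $j=1,\dots,p$ completes the proof.

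The one delicate point is the spectral bound above: it is immediate when $P_{\mu_{S,a}}$ is non-negative on $\ell^{2}$, but in general one must pass to the lazy walk driven by $(\mu_{S,a}+\delta_e)/2$ (which only alters the estimates by constants, by Theorem \ref{th-PSC1}) or use the subadditivity of $n\mapsto \mu^{(2n)}(e)$ combined with a $2$-step argument to absorb the possible negative part of the spectrum. This is the main, but purely technical, obstacle.
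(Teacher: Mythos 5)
Your proposal follows the paper's proof essentially verbatim: decompose $g$ via Theorem \ref{th-coord2F}, telescope over the $p$ factors $s_{i_j}^{x_j}$, apply Lemma \ref{lem-1d}(ii) to $\mu_{S,a}^{(n+m)}*\mu_{S,a}^{(n)}$, and use $s^2/\mathcal G_i(s)\simeq F_i^{-1}(s)$. The only deviation is that where you re-derive the estimate $\mathcal E_{\mu_{S,a}}(\mu_{S,a}^{(n+m)},\mu_{S,a}^{(n+m)})\le Cm^{-1}\mu_{S,a}^{(2n)}(e)$ by spectral calculus (and correctly flag the negative-spectrum wrinkle), the paper simply cites \cite[Lemma 3.2]{HSC}; note that for $\mu_{S,a}$ the wrinkle is moot because $\mu_{S,a}(e)>0$ forces the spectrum of the convolution operator strictly above $-1$, so the function $(1-\lambda)\lambda^{2m}$ is $O(1/m)$ on the whole relevant spectral interval.
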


\begin{proof}
By Theorem \ref{th-coord2F}, 
there is an integer $p=p(G,S,\mathfrak{w})$ such
that any $g$ with $F^{-1}_S(\left\Vert y\right\Vert _{S,\mathfrak F})=r$ can be
expressed as  
\[
g=\prod\limits_{j=1}^{p}s_{i_{j}}^{x_{j}}\mbox{ with }\left\vert
x_{j}\right\vert \leq C F_{i_j}(r).
\]%
Write $\mu _{S,a}^{
(2n+m)}=\mu _{S,a}^{(n+m)}\ast \mu _{S,a}^{(n)}$ and, for
each step $s_{i_j}^{x_j}$, apply
Lemma \ref{lem-1d}(ii) to obtain
\begin{eqnarray*}
\lefteqn{ 
\left\vert \mu _{S,a}^{(2n+m)}(zs_{i_{j}}^{x_{j}})-\mu _{S,a}^{
(2n+m)}(z)\right\vert  } \hspace{.5in}&&\\
&\le &C \mathcal G_{i_j}(|x_j|)^{-1/2}\left\vert x_{j}\right\vert 
\mathcal{E}_{\mu _{S,a}}(\mu _{S,a}^{(n+m)},\mu _{S,a}^{
(n+m)})^{1/2}\left\Vert \mu _{S,a}^{(n)}\right\Vert _{2} \\
&\leq &Cr^{1/2} \mathcal{E}_{\mu _{S,a}}(\mu _{S,a}^{
(n+m)},\mu _{S,a}^{ (n+m)})^{1/2}\left\Vert \mu _{S,a}^{
(n)}\right\Vert _{2}.
\end{eqnarray*}%
Here, according to Lemma \ref{lem-1d}, 
$\mathcal G_i(r)= r^{2-\widetilde{\alpha}_i}$ if $v_i,2=0$ and 
$\mathcal G_i(r)=\log (1+r)$ if 
$v_{i,2}=1/2$ (i.e., if $\alpha_i=2$). Hence, 
$s^2 /\mathcal G_i(s) \simeq  F_i^{-1}(s)$, which gives the last inequality.

By \cite[Lemma 3.2]{HSC}, we also have
\[
\mathcal{E}_{\mu _{S,a}}(\mu _{S,a}^{(n+m)},\mu _{S,a}^{
(n+m)})^{1/2}\leq C m^{-1/2}\left\Vert \mu _{S,a}^{(n)}\right\Vert _{2}
=C m^{-1/2} \mu _{S,a}^{(2n)}(e)^{1/2}.
\]
This gives the desired inequality.
\end{proof}

\begin{theo}\label{th-low2near} 
Let $G$ be a finitely generated nilpotent group equipped with a 
generating $k$-tuple $(s_1,\dots,s_k)$ and a $k$-tuple of positive reals
$a=(\alpha_1,\dots,\alpha_k) \in (0,\infty]^k$. 
Set $\tilde{\alpha}_i=\min\{\alpha_i,2\}$. 
Let $\mathfrak w$ be the weight system which assigns weight 
$1/\tilde{\alpha}_i$ to $s_i\in S$. Let $\Sigma$ be a sequence 
of formal commutators as in {\em Theorem \ref{th-coord2F}}. Assume that
$w(s)>1/2$ for all $s\in \mbox{\em core}(\mathfrak w,S,\Sigma)$. 
Then, there exists $\epsilon>0$ such that,
uniformly over the region 
$\{x\in G: \|x\|_{S,\mathfrak w}\le F_{S}(\epsilon n)\}$, we have
$$\mu_{S,a}^{(n)}(x)\simeq  n^{-D(S,\mathfrak w)}.$$
\end{theo}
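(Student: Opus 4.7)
\emph{Upper bound.} The plan is to combine the on-diagonal equivalence from Theorem~\ref{th-low2} with the gradient-type estimate of Lemma~\ref{lem-HSC}. Since $\mu_{S,a}$ is symmetric, Cauchy--Schwarz gives
\[
\mu_{S,a}^{(n)}(x)\le \mu_{S,a}^{(2\lfloor n/2\rfloor)}(e)^{1/2}\,\mu_{S,a}^{(2\lceil n/2\rceil)}(e)^{1/2}
\]
for all $x\in G$, and together with the upper bound $\mu_{S,a}^{(n)}(e)\le Cn^{-D(S,\mathfrak w)}$ provided by Theorem~\ref{th-up1} (the upper half of Theorem~\ref{th-low2}) this yields $\mu_{S,a}^{(n)}(x)\le C'n^{-D(S,\mathfrak w)}$ uniformly in $x$.

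\emph{Lower bound.} I would write
\[
\mu_{S,a}^{(n)}(x)\ge \mu_{S,a}^{(n)}(e)-\bigl|\mu_{S,a}^{(n)}(x)-\mu_{S,a}^{(n)}(e)\bigr|
\]
and arrange $\epsilon$ so that the perturbation term eats less than half of the on-diagonal lower bound from Theorem~\ref{th-low2}. Decompose $n=2n'+m$ with $m=\lfloor n/3\rfloor$, so that $n'$ and $m$ are both of order $n$. Lemma~\ref{lem-HSC} applied with step $g=x$, together with Corollary~\ref{cor-coord2F} to identify $F_S^{-1}(\|x\|_{\Sigma,\mathfrak F})$ with $F_S^{-1}(\|x\|_{S,\mathfrak w})$ up to a multiplicative constant, gives
\[
\bigl|\mu_{S,a}^{(n)}(x)-\mu_{S,a}^{(n)}(e)\bigr|\le C\left(\frac{F_S^{-1}(\|x\|_{S,\mathfrak w})}{m}\right)^{1/2}\mu_{S,a}^{(2n')}(e).
\]
For $\|x\|_{S,\mathfrak w}\le F_S(\epsilon n)$ one has $F_S^{-1}(\|x\|_{S,\mathfrak w})\le \epsilon n$, and by Theorem~\ref{th-low2} the ratio $\mu_{S,a}^{(2n')}(e)/\mu_{S,a}^{(n)}(e)$ is bounded by a universal constant $K$ (using the polynomial rate of decay and the standard monotonicity of $k\mapsto \mu_{S,a}^{(2k)}(e)$). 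Choosing $\epsilon$ so that $CK\sqrt{3\epsilon}\le 1/2$ gives $\mu_{S,a}^{(n)}(x)\ge \tfrac12\mu_{S,a}^{(n)}(e)\ge c'n^{-D(S,\mathfrak w)}$.

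\emph{Main obstacle.} The only subtle point is reconciling the two-dimensional weight-function system $\mathfrak w(a)$ implicit in Lemma~\ref{lem-HSC} with the one-dimensional $\mathfrak w$ used in the statement of Theorem~\ref{th-low2near}. Under the assumption $w(s)>1/2$ for every $s\in\mathrm{core}(\mathfrak w,S,\Sigma)$, each core generator has $\alpha_i<2$, so the second (logarithmic) coordinate of $\mathfrak w(a)$ vanishes on the core. Hence $F_S$ involves no logarithmic correction, and Corollary~\ref{cor-normeq} guarantees that the norms $\|\cdot\|_{S,\mathfrak w}$ and $\|\cdot\|_{\Sigma,\mathfrak F}$ entering the gradient bound are comparable. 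Once this bookkeeping is handled, the rest of the argument is a straightforward polynomial-rate manipulation, with no new input needed beyond Theorem~\ref{th-low2} and Lemma~\ref{lem-HSC}.
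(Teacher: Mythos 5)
Your proof is correct and follows the same route as the paper, which simply cites Theorem~\ref{th-low2} for the on-diagonal estimate and Lemma~\ref{lem-HSC} for the near-diagonal fluctuation; the paper leaves the triangle-inequality/Cauchy--Schwarz bookkeeping (and the parity adjustment needed so that $n-m$ is even) implicit. Your discussion of the discrepancy between the one-dimensional $\mathfrak w$ and the two-dimensional $\mathfrak w(a)$ is exactly the right thing to check, and the hypothesis $w(s)>1/2$ on $\mathrm{core}(\mathfrak w,S,\Sigma)$ is indeed what makes Corollary~\ref{cor-normeq} applicable, since it forces $R^{\mathfrak w}_1=0$ and hence prevents any $s_i$ with $\alpha_i=2$ from lying in $\mathrm{core}(\mathfrak w,S)$.
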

\begin{proof} Theorem \ref{th-low2} gives 
$\mu_{S,a}^{(n)}(e)\simeq  n^{-D(S,\mathfrak w)}$. This, together with
Lemma \ref{lem-HSC}, yields the desired lower bound.
\end{proof}

\begin{theo}\label{th-low32near} 
Let $G$ be a finitely generated nilpotent group equipped with a 
generating $k$-tuple $(s_1,\dots,s_k)$ and a $k$-tuple of positive reals
$a=(\alpha_1,\dots,\alpha_k) \in (0,\infty]^k$.  Set 
$\tilde{\alpha}_i=\min\{\alpha_i,2\}$.
Let $\widetilde{\mathfrak w}$ be the weight system which 
assigns weight $\widetilde{w}_i=1/\tilde{\alpha}_i$ to $s_i$.
Let $\Sigma$ be as in {\em Theorem \ref{th-coord2F}} applied
to $(S,\widetilde{\mathfrak w})$ and assume that 
$\alpha_i=2$ for all $i\in \{1,\dots,k\}$
such that $s_i\in \mbox{\em core}(S,\widetilde{\mathfrak w},\Sigma)$. 
Then there exists $\epsilon>0$ such that, uniformly over the region
$$\{x\in G: |x|^2_S[\log |x|_S]^{-1}\le \epsilon n\},$$ we have
$$\mu_{S,a}^{(n)}(x)\simeq  [n\log n]^{-D(G)/2}.$$
\end{theo}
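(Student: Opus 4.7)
\emph{Proof plan.} The plan is to combine the sharp on-diagonal estimate
\[
\mu_{S,a}^{(n)}(e) \simeq [n\log n]^{-D(G)/2}
\]
supplied by Theorem \ref{th-low32} (together with Remark \ref{rem-th-up2}) with the one-step gradient bound of Lemma \ref{lem-HSC}, in exactly the way Theorem \ref{th-low2near} is deduced from Theorem \ref{th-low2} through that same lemma.

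Under the assumption that every generator in $\mbox{core}(S,\widetilde{\mathfrak w},\Sigma)$ has parameter $\alpha_i=2$, the two-dimensional weight attached to such a generator is $(\tfrac{1}{2},\tfrac{1}{2})$ and the corresponding weight function is $F_i(r)=r^{1/2}[\log(1+r)]^{1/2}$. In particular, the minimum weight function $F_S$ has this same form, so $F_S^{-1}(t)\simeq t^{2}/\log(e+t)$ for large $t$. Since all weight functions associated with core letters coincide, inspection of Definition \ref{defin-FQN} together with Corollary \ref{cor-coord2F} shows that $\|g\|_{S,\mathfrak F}\simeq\|g\|_{\Sigma,\mathfrak F}\simeq |g|_S$. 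Consequently
\[
F_S^{-1}(\|g\|_{\Sigma,\mathfrak F}) \simeq \frac{|g|_S^{\,2}}{\log(e+|g|_S)},
\]
and the region described in the theorem is exactly a ball of radius $\epsilon n$ in the scale $F_S^{-1}(\|\cdot\|_{\Sigma,\mathfrak F})$.

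Apply Lemma \ref{lem-HSC} with base point $x=e$, with $g$ the point of interest, and with $m=n$:
\[
\bigl|\mu_{S,a}^{(3n)}(g)-\mu_{S,a}^{(3n)}(e)\bigr|\le C\bigl(F_S^{-1}(\|g\|_{\Sigma,\mathfrak F})/n\bigr)^{1/2}\mu_{S,a}^{(2n)}(e).
\]
Using the regular variation of $t\mapsto [t\log t]^{-D(G)/2}$ we have $\mu_{S,a}^{(2n)}(e)\simeq\mu_{S,a}^{(3n)}(e)$, hence
\[
\mu_{S,a}^{(3n)}(g)\ge \mu_{S,a}^{(3n)}(e)\bigl(1-C'(F_S^{-1}(\|g\|_{\Sigma,\mathfrak F})/n)^{1/2}\bigr).
\]
Choosing $\epsilon>0$ with $C'\sqrt{\epsilon}\le 1/2$, on the region $|g|_S^{\,2}/\log(e+|g|_S)\le \epsilon n$ we obtain $\mu_{S,a}^{(3n)}(g)\ge \tfrac{1}{2}\mu_{S,a}^{(3n)}(e)\ge c[n\log n]^{-D(G)/2}$. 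A standard re-indexing, writing $N=2n+m$ with $n$ and $m$ comparable to $N/3$ (and shrinking $\epsilon$ if necessary), produces the lower bound for every integer $N$, independent of parity.

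The matching upper bound is cost-free: by symmetry of $\mu_{S,a}$ and Cauchy--Schwarz, $\mu_{S,a}^{(N)}(g)\le \mu_{S,a}^{(2\lfloor N/2\rfloor)}(e)^{1/2}\mu_{S,a}^{(2\lceil N/2\rceil)}(e)^{1/2}$, which is at most $C[N\log N]^{-D(G)/2}$ by Remark \ref{rem-th-up2}. The one technical point that needs care is the norm comparison $F_S^{-1}(\|g\|_{\Sigma,\mathfrak F})\simeq |g|_S^{\,2}/\log(e+|g|_S)$; this comes down to the observation that, under the stated hypothesis on the core, every relevant weight function is the single function $r^{1/2}[\log(1+r)]^{1/2}$, after which the rest of the argument is identical to the one used for Theorem \ref{th-low2near}.
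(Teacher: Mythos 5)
Your proposal follows the paper's route exactly: cite Theorem \ref{th-low32} for the sharp on-diagonal estimate, invoke Lemma \ref{lem-HSC} for the near-diagonal transfer, and translate the region condition via the norm comparison $F_S^{-1}(\|\cdot\|_{S,\mathfrak F})\simeq |\cdot|_S^2/\log|\cdot|_S$. The one place that needs repair is your justification of that comparison. You assert that the minimum weight function $F_S$ equals the core weight function $r^{1/2}[\log(1+r)]^{1/2}$ and that $\|g\|_{S,\mathfrak F}\simeq|g|_S$; neither holds in general. If some non-core generator $s_j$ has $\alpha_j>2$, its two-dimensional weight $(1/2,0)$ is lexicographically \emph{smaller} than the core weight $(1/2,1/2)$, so $F_S(r)=r^{1/2}$ and correspondingly $\|g\|_{S,\mathfrak F}\simeq|g|_S/[\log(e+|g|_S)]^{1/2}$. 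The two discrepancies happen to cancel in the composite $F_S^{-1}(\|g\|_{S,\mathfrak F})$, which is the weight-system invariant that actually enters Lemma \ref{lem-HSC}; this invariance is exactly what Corollary \ref{cor-normeq} (used together with Theorems \ref{th-coord2F}--\ref{th-coord2Fimp}, as in the paper) provides, and it is not a consequence of Corollary \ref{cor-coord2F} alone, which only relates $\|\cdot\|_{\mbox{\tiny gen}}$ to $\|\cdot\|_{\mbox{\tiny com}}$ for a \emph{fixed} weight-function system. With the reference corrected and the intermediate claims phrased for the invariant quantity rather than for $F_S$ and $\|\cdot\|_{S,\mathfrak F}$ separately, the rest of your argument is identical to the paper's.
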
 
\begin{proof} By Theorem \ref{th-low32}, we have
$\mu_{S,a}^{(n)}(e)\simeq  [n\log n]^{-D(G)/2}$. 
Let $\mathfrak w,\mathfrak F$ be the two dimensional weight system
and weight function system introduced above in Lemma \ref{lem-HSC}.
It follows from Theorems \ref{th-coord2F}-\ref{th-coord2Fimp} and Corollary
\ref{cor-normeq} that $F^{-1}_S(\|\cdot\|_{S,\mathfrak F})\simeq 
|\cdot|_S^2/\log |\cdot |_S$. The result follows. 

\end{proof}

\section{Proofs regarding approximate coordinate systems} \setcounter{equation}{0}
This section contains the proofs of the key 
results stated in 
Sections \ref{sec-wF}-\ref{sec-vol}, namely, 
Theorems \ref{th-coord2F}-\ref{th-coord3}.
Throughout this section, $G$ is a finitely generated nilpotent group equipped 
with a  generating $k$-tuple $(s_1,\dots,s_k)$. Formal commutators refer
to commutators on the alphabet $\{s_i^{\pm 1}: 1\le i\le k\}$.

\subsection{Proof of Theorem \ref{th-coord3} and assorted results}
Theorem \ref{th-coord3} is one of the keys to the random walk upper 
bounds of Section \ref{sec-rwub}.  It can be understood 
as providing a volume lower bound for the volume of certain balls together with 
some additional ``structural information'' on the balls in question. 

Fix a weight system $\mathfrak w$ and weight functions $F_c$
as in Theorem \ref{th-coord3}.
Let $G^\mathfrak w_h$ be the associated descending normal series in $G$.
By construction, $G^\mathfrak w_h$ is normal in $G$ and, for all $p,q,j$ 
such that $\bar{w}_p+\bar{w}_q\ge \bar{w}_j$, we have (see Section \ref{sec-main})
$$[G^\mathfrak w_p,G^\mathfrak w_q]\subset G^\mathfrak w_j.$$
It follows that the commutators map
$$ G^\mathfrak w_p\times G^\mathfrak w_q: (u,v)\mapsto [u,v]\in G^\mathfrak w_j$$
induces a group homomorphism 
$$ G^\mathfrak w_p/G^\mathfrak w_{p+1}\otimes G^\mathfrak w_q/G^\mathfrak w_{q+1}
\ra G^\mathfrak w_j/G^\mathfrak w_{j+1}.$$
This yields the following lemma.
\begin{lem}[Similar to {\cite[Lemma 3]{Bass}}] \label{lem-B}
Let $c$ be a formal commutator
of weight $\bar{w}_j$ and let $g_c$ be its image in $G$. 
There is an integer 
$\ell=\ell(c)\le 8^j$ and a sequence $(i_1,\dots,i_\ell)\in \{1,\dots,k\}^\ell$
such that, for any $r\ge 1$ and $n\in \mathbb Z$ satisfying $|n|\le F_c(r)$, 
we have
$$g_c^n= s_{i_1}^{n_{1}}s_{i_2}^{n_{2}}\cdots s_{i_\ell}^{n_{\ell}} \;\;
\mbox{ mod } G^\mathfrak w_{j+1}$$
for some $n_{i_j}\in \mathbb Z$ with $|n_{j}|\le F_{s_{i_j}}(r)$.
\end{lem}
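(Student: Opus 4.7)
The plan is to proceed by induction on the weight-value index $j$ with $w(c) = \bar{w}_j$. For $j = 1$, since $\bar{w}_1 = \min_i w_i$, any commutator of weight $\bar{w}_1$ must have length one; thus $c = s_i$ for some $i$ with $w_i = \bar{w}_1$ and the conclusion holds trivially with $\ell = 1$ and $(i_1) = (i)$. For $j > 1$, if $c$ has length one we are again immediately done, so suppose $c = [c_1, c_2]$ with $c_1, c_2$ of weights $\bar{w}_p, \bar{w}_q$ satisfying $\bar{w}_p + \bar{w}_q = \bar{w}_j$; then $p, q < j$ and the inductive hypothesis applies to both sub-commutators.

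The algebraic engine of the induction is that the commutator map induces a $\mathbb{Z}$-bilinear pairing
$$
(G^\mathfrak w_p / G^\mathfrak w_{p+1}) \times (G^\mathfrak w_q / G^\mathfrak w_{q+1}) \longrightarrow G^\mathfrak w_j / G^\mathfrak w_{j+1}.
$$
This follows from Proposition \ref{pro-Gw2}: the inclusions $[G^\mathfrak w_{p+1}, G^\mathfrak w_q],\,[G^\mathfrak w_p, G^\mathfrak w_{q+1}] \subset G^\mathfrak w_{j+1}$ together with $[G, G^\mathfrak w_j] \subset G^\mathfrak w_{j+1}$ make the standard identities $[xy,z] = [x,z]^y [y,z]$ and $[x,yz] = [x,z][x,y]^z$ reduce modulo $G^\mathfrak w_{j+1}$ to the bilinear relations $[xy,z] \equiv [x,z][y,z]$ and $[x,yz] \equiv [x,y][x,z]$; in particular they yield the power identity $[x^a, y^b] \equiv [x,y]^{ab} \pmod{G^\mathfrak w_{j+1}}$ for $x \in G^\mathfrak w_p,\ y \in G^\mathfrak w_q$, and show that $[x,y]$ modulo $G^\mathfrak w_{j+1}$ depends only on the classes of $x, y$ modulo $G^\mathfrak w_{p+1}$ and $G^\mathfrak w_{q+1}$ respectively.

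Given $|n| \le F_c(r) = F_{c_1}(r) F_{c_2}(r)$, perform Euclidean division $n = a N_2 + b$ with $N_2 = \lfloor F_{c_2}(r) \rfloor$, so that $|a| \le C F_{c_1}(r)$ and $|b| \le F_{c_2}(r)$. Using that $G^\mathfrak w_j / G^\mathfrak w_{j+1}$ is abelian together with the bilinear identity,
$$
g_c^n = [g_{c_1}, g_{c_2}]^{n} \equiv [g_{c_1}^{a}, g_{c_2}^{N_2}] \cdot [g_{c_1}, g_{c_2}^{b}] \pmod{G^\mathfrak w_{j+1}}.
$$
Now apply the inductive hypothesis to the four powers $g_{c_1}^a$, $g_{c_2}^{N_2}$, $g_{c_1}$, and $g_{c_2}^b$, writing each as a product of $\le 8^p$ (respectively $\le 8^q$) generator powers modulo $G^\mathfrak w_{p+1}$ (respectively $G^\mathfrak w_{q+1}$), using a fixed pattern $(\alpha_1,\dots,\alpha_{\ell_1})$ attached to $c_1$ and $(\beta_1,\dots,\beta_{\ell_2})$ attached to $c_2$. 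Since the residual errors (elements of $G^\mathfrak w_{p+1}$ or $G^\mathfrak w_{q+1}$) do not affect either commutator modulo $G^\mathfrak w_{j+1}$, one may substitute the corresponding generator-power words $U_a, V_{N_2}, U_1, V_b$ into the brackets and then expand each bracket as $U^{-1}V^{-1}UV$, producing at most $2(\ell_1+\ell_2)$ generator powers per bracket (the inverses reverse the pattern and negate exponents). Concatenation yields $g_c^n$ as a product of at most $4(\ell_1+\ell_2) \le 4(8^p + 8^q) \le 8 \cdot 8^{j-1} = 8^j$ generator powers, arranged in a fixed sequence built from eight ordered (possibly reversed) copies of the $c_1$- and $c_2$-patterns, with the exponent bounds inherited from the inductive step.

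The main technical nuisance will be tracking the multiplicative constants that accumulate from the approximate factorization of $n$ and from the iterated substitutions at each inductive level. These can be absorbed using the doubling property (\ref{F1}) of the $F_i$'s: an estimate of the form $|n_t| \le C F_{s_{i_t}}(r)$ can be rewritten as $|n_t| \le F_{s_{i_t}}(C' r)$ for a suitable $C' = C'(c)$, and this harmless rescaling of $r$ is inconsequential in all the downstream uses of the lemma, in particular the volume counts in Theorems \ref{th-VolQ} and \ref{th-coord3}.
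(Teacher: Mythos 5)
Your proof is correct and takes essentially the same route as the paper: induction on $j$, the bilinear pairing $(G^{\mathfrak w}_p/G^{\mathfrak w}_{p+1}) \times (G^{\mathfrak w}_q/G^{\mathfrak w}_{q+1}) \to G^{\mathfrak w}_j/G^{\mathfrak w}_{j+1}$ induced by the commutator, Euclidean division of $n$ across the two weight-function factors, and the power-of-$8$ bookkeeping. The paper's version is terser but structurally identical; your explicit handling of the substitution step (replacing $g_{c_i}^\bullet$ by the generator-power word modulo the relevant $G^{\mathfrak w}_{\cdot+1}$) and of the multiplicative constants via the doubling property (\ref{F1}) spells out what the paper leaves implicit.
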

\begin{proof} The proof is by induction on $j$. For $j=1$, $c$ must have 
length $1$ and $g_c^n=s_i^n$ for some $i\in \{1,\dots,k\}$. Assume the result
holds true for all $h<j$ and let $c$ be a commutator of weight $\bar{w}_j$.
Either $c$ has length $1$ and the result is trivial or $c=[u,v]$ where 
$u,v$ are commutators of weights $\bar{w}_p, \bar{w}_p$, $\bar{w}_p+\bar{w}_q=
\bar{w}_j$. Since $F_c=F_uF_v$, for all $|n|\le F_c(r)$ we can  write
$n= ab +d$ with $|a|,|d|\le F_u(r)$, $0\le d \le F_v(r)$.
Then
$$g_c^n=[u,v]^{ab}[u,v]^d =[ u^a,v^b][u^d,v] \mbox{ mod } G^\mathfrak w_{j+1}.$$
The desired result follows from the induction hypothesis.
\end{proof}
\begin{defin}Given $c$, $\ell=\ell(c)$ and $(i_1,\dots,i_\ell)$ as 
in Lemma \ref{lem-B}, for any $\mathbf x=(x_1,\dots,x_\ell)\in \mathbb Z^\ell$,
set
$$\mathbf g _c(\mathbf x)=  \mathbf g _c(x_1,\dots,x_\ell)=  
s_{i_1}^{x_{1}}s_{i_2}^{x_{2}}\cdots s_{i_\ell}^{x_{\ell}}\in G .$$
Set 
$$F^c_j=F_{s_{i_j}}=F_{i_j}, \;\;1\le j\le \ell.$$
\end{defin}

By Lemma \ref{lem-B}, if $w(c)=\bar{w}_j$ and $|n|\le F_c(r)$ then
$$ g^n_c=\mathbf g_c(\mathbf n(c)) \mbox{ mod } G^\mathfrak w_{j+1}$$ 
for some $\mathbf n(c)=(n_1(c),\dots, n_\ell(c))$ with 
$|n_{j}(c)|\le F_{s_{i_j}}(r)=F^c_j(r)$. 
 
\begin{theo} \label{th-coord31}
Let $c_1,\dots c_t$ be a sequence of formal 
commutators with non-decreasing $\mathfrak w$-weights and such that, 
for each $h$, the image in $G^\mathfrak w_h/G^\mathfrak w_{h+1}$
of the family $\{c_i: w(c_i)=\bar{w}_h\}$ is a linearly independent family.
Set
$$K(r)=\{g\in G: g=\prod_{i=1}^t\mathbf g_{c_i}(\mathbf x_i),\;\;
\mathbf x_i=(x^i_1,\dots,x^i_{\ell(c_i)})\in \mathbb Z^{\ell(c_i)},
|x^i_j|\le F^{c_i}_j(r)\}.$$ 
Then
$$\#K(r)\ge \prod_1^t (2F_{c_i}(r)+1)\ge \prod_{i=1}^t\prod_{j=1}^{\ell(c_i)} 
F^c_{j}(r).$$
\end{theo}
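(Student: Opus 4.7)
The plan is to construct an explicit injection from a product of intervals into $K(r)$. For each $i$, Lemma \ref{lem-B} provides, for every integer $n$ with $|n|\le F_{c_i}(r)$, a tuple $\mathbf{n}(c_i)=(n_1(c_i),\dots,n_{\ell(c_i)}(c_i))\in \mathbb{Z}^{\ell(c_i)}$ satisfying $|n_j(c_i)|\le F^{c_i}_j(r)$ and
\[
g_{c_i}^{\,n}\ \equiv\ \mathbf{g}_{c_i}(\mathbf{n}(c_i))\ \pmod{G^{\mathfrak{w}}_{h_i+1}},
\]
where $h_i$ is the index with $w(c_i)=\bar{w}_{h_i}$. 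Set $N_i=\{-\lfloor F_{c_i}(r)\rfloor,\dots,\lfloor F_{c_i}(r)\rfloor\}$ and define
\[
\Phi:\prod_{i=1}^t N_i\longrightarrow G,\qquad \Phi(n_1,\dots,n_t)=\prod_{i=1}^t\mathbf{g}_{c_i}(\mathbf{n}(c_i)).
\]
By construction every value of $\Phi$ lies in $K(r)$, so it will suffice to prove that $\Phi$ is injective; then $\#K(r)\ge \#\prod_i N_i=\prod_i(2\lfloor F_{c_i}(r)\rfloor+1)$, which yields the stated lower bound (the second inequality of the theorem being then the trivial $2F_{c_i}(r)+1\ge F_{c_i}(r)=\prod_j F^{c_i}_j(r)$).

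To prove injectivity of $\Phi$, I would argue by ascending induction on $h=1,\dots,j_*$ that, whenever $\Phi(\mathbf{n})=\Phi(\mathbf{n}')$, one has $n_i=n'_i$ for every $i\in I_h:=\{i:w(c_i)=\bar{w}_h\}$. The key observation is that, since the $c_i$ are listed in non-decreasing weight order, for each $i$ with $h_i\ge h$ the element $\mathbf{g}_{c_i}(\mathbf{n}(c_i))$ lies in $G^{\mathfrak{w}}_{h_i}\subseteq G^{\mathfrak{w}}_h$ (and in $G^{\mathfrak{w}}_{h+1}$ whenever $h_i>h$), while for $i\in I_h$ we have the congruence $\mathbf{g}_{c_i}(\mathbf{n}(c_i))\equiv g_{c_i}^{n_i}\pmod{G^{\mathfrak{w}}_{h+1}}$. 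Using the inductive assumption to cancel the leftmost identical factors $\mathbf{g}_{c_i}(\mathbf{n}(c_i))=\mathbf{g}_{c_i}(\mathbf{n}'(c_i))$ for $i\in I_1\cup\dots\cup I_{h-1}$, and then reducing the resulting equality modulo $G^{\mathfrak{w}}_{h+1}$, all factors with $h_i>h$ become trivial and the equation collapses, in the abelian group $G^{\mathfrak{w}}_h/G^{\mathfrak{w}}_{h+1}$, to
\[
\prod_{i\in I_h} g_{c_i}^{n_i}\ \equiv\ \prod_{i\in I_h} g_{c_i}^{n'_i}\ \pmod{G^{\mathfrak{w}}_{h+1}}.
\]
The hypothesis on $(c_1,\dots,c_t)$ states precisely that the images of $\{g_{c_i}:i\in I_h\}$ are linearly independent in this abelian quotient, whence $n_i=n'_i$ for $i\in I_h$, completing the induction step.

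The principal conceptual point—and the only place any subtlety enters—is the cancellation step: it requires the ordering of the $c_i$ by non-decreasing weight so that the ``already known'' factors appear on the left and can be eliminated by left multiplication inside the nonabelian group $G$; the abelian reduction is then legitimate because, after cancellation, only factors living in $G^{\mathfrak{w}}_h$ remain, and $G^{\mathfrak{w}}_h/G^{\mathfrak{w}}_{h+1}$ is abelian by Proposition \ref{pro-Gw2}. Once this is set up, the remainder is routine bookkeeping: the sizes $|n_j(c_i)|\le F^{c_i}_j(r)$ delivered by Lemma \ref{lem-B} ensure $\Phi(\mathbf{n})\in K(r)$, and the cardinality estimate $\prod_i(2F_{c_i}(r)+1)$ follows directly from injectivity, giving both inequalities claimed in the theorem.
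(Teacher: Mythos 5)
Your proposal is correct and follows exactly the same route as the paper's proof: define the map via the decompositions supplied by Lemma~\ref{lem-B}, then establish injectivity by moving up the weight filtration, at each level $h$ cancelling the already-matched left factors, reducing modulo $G^{\mathfrak w}_{h+1}$, and invoking the linear-independence hypothesis in the abelian quotient $G^{\mathfrak w}_h/G^{\mathfrak w}_{h+1}$. Your write-up is in fact somewhat more careful than the paper's (which merely says ``proceeding further up in the weight filtration'') in spelling out why the leftmost factors cancel and why only the weight-$\bar w_h$ factors survive the reduction.
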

\begin{proof} For each $(y_i)_1^t\in \mathbb Z^t$ with $|y_i|\le F_{c_i}(r)$, 
let $\mathbf y_i=(y^i_j)_1^{\ell(c_i)}$, $1\le i\le t$,  be such that
$$g_{c_i}^{y_i}=\mathbf g_{c_i}(\mathbf y_i)\;\mbox{ mod } G^\mathfrak w_{j+1},\;\;
w(c_i)=\bar{w}_j,
\; 1\le i\le t.$$
Such a $(\mathbf y^i)_1^t$ is given by Lemma \ref{lem-B}. Assume that 
two sequences 
$(y_i)_1^t$ and $(\tilde{y}_i)_1^t$ are such that
$\prod_{i=1}^t\mathbf g_{c_i}(\mathbf y_i)=\prod_{i=1}^t\mathbf g_{c_i}
(\widetilde{\mathbf {y}}_i)$. 
Then by projecting on 
$G^\mathfrak w_1/G^\mathfrak w_2$ and using the assumed linear independence
of the collection of the $c_i$'s with  $w(c_i)=\bar{w}_1$ in
 $G^\mathfrak w_1/G^\mathfrak w_2$ and the fact that 
$g_{c_i}^{y_i}=\mathbf g_{c_i}(\mathbf y^i)$ in $G^\mathfrak w_1/G^\mathfrak w_2$,we find that $y_i= \tilde{y}_i$ for 
those $i$ with $w(c_i)=\bar{w}_1$. This implies that
$\mathbf y_1=\widetilde{\mathbf y}_1$.
Proceeding further up in the weight 
filtration shows that we must have $ \mathbf y_i=\tilde{\mathbf y}_i$ for all 
$1\le i\le t$. This shows that there are at least
$\prod_1^t(2F_{c_i}(r)+1)$ distinct elements in $K(r)$ which is the 
desired result.
\end{proof}

\begin{theo}\label{th-coord32} 
Fix a weight system $\mathfrak w$ and weight functions $F_c$
as in {\em Theorem \ref{th-coord3}}.
Let $b_1,\dots b_t$ be a sequence of elements in $G$. Assume that : 
\begin{enumerate}
\item For each $i=1,\dots,t$, there exists an integer $h(i)$ such that 
$b_i\in G^\mathfrak w_{h(i)}$ and $b_i$ 
is torsion free in $G^\mathfrak w_{h(i)}/G^\mathfrak w_{h(i)+1}$. Further,
for each $h$, the system $\{b_i: h(i)=h\}$ is free in 
$G^\mathfrak w_{h(i)}/G^\mathfrak w_{h(i)+1}$.

\item For each $i=1,\dots,t$, there exists and increasing function
$\widetilde{F}^i$, a positive integer $\ell(i)$ and a sequence
${j^i_1},\dots, j^i_{\ell(i)}$  such that, 
for any $r>0$ and any integer $n$ with $|n|\le \widetilde{F}^i(r)$,  
there exists $\mathbf n^i= (n^i_1,\dots, n^i_{\ell(i)})$
with $ |n^i_q|\le F_{j^i_q}(r)$ satisfying
$$b_i^{n} = \prod_{q=1}^{\ell(i)} s_{j^i_q}^{n^i_{q}} 
\;\;\;\mbox{\em mod} \;\; 
G^\mathfrak w_{h(i)+1}.$$
\end{enumerate}
For $\mathbf x=(x_1,\dots, x_{\ell(i)})\in \mathbb Z^{\ell(i)}$,
set
$\mathbf b _i(\mathbf x)= \prod_{q=1}^{\ell(i)} s_{j^i_q}^{x_q}\in G$
and 
$$K(r)=\{g\in G: g=\prod_{i=1}^t\mathbf b_i(\mathbf x_i),\;\;
\mathbf x_i=(x^i_1,\dots,x^i_{\ell(i)})\in \mathbb Z^{\ell(i)},
|x^i_q|\le F_{j^i_q}(r)\}.$$ 
Then
$$\#K(r)\ge \prod_1^t (2\widetilde{F}_{i}(r)+1).$$
\end{theo}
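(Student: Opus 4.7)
The argument closely parallels that of Theorem \ref{th-coord31}. Hypothesis 2 plays the role of Lemma \ref{lem-B}: it provides, for each $i$ and each integer $n$ with $|n|\le \widetilde{F}^i(r)$, a tuple $\mathbf{n}^i=(n^i_1,\dots,n^i_{\ell(i)})$ satisfying $|n^i_q|\le F_{j^i_q}(r)$ and $b_i^n \equiv \mathbf{b}_i(\mathbf{n}^i) \bmod G^\mathfrak{w}_{h(i)+1}$. I fix once and for all a selector $n\mapsto \mathbf{n}^i(n)$ realizing this, and define
\[
\Phi_r : \prod_{i=1}^t \{y\in\mathbb{Z}: |y|\le \widetilde{F}^i(r)\} \longrightarrow K(r), \qquad (y_1,\dots,y_t) \longmapsto \prod_{i=1}^t \mathbf{b}_i(\mathbf{n}^i(y_i)).
\]
The image lies in $K(r)$ by the bounds on the $n^i_q$'s, so it suffices to show that $\Phi_r$ is injective, which will give $\#K(r)\ge \prod_i (2\widetilde{F}^i(r)+1)$.

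I would prove injectivity by induction on the weight level. Suppose $\Phi_r(y_1,\dots,y_t)=\Phi_r(\widetilde{y}_1,\dots,\widetilde{y}_t)$, and inductively assume that $y_i=\widetilde{y}_i$ for every $i$ with $h(i)<h$; the base case $h=\min_i h(i)$ requires no assumption. I project the equality $\prod_i \mathbf{b}_i(\mathbf{n}^i(y_i))=\prod_i \mathbf{b}_i(\mathbf{n}^i(\widetilde{y}_i))$ to $G/G^\mathfrak{w}_{h+1}$. Three observations do the work:
\begin{itemize}
\item For $i$ with $h(i)<h$, the two corresponding factors coincide as elements of $G$ by the induction hypothesis and the fact that $\mathbf{n}^i$ is a fixed function of $y_i$.
\item For $i$ with $h(i)>h$, hypothesis 2 gives $\mathbf{b}_i(\mathbf{n}^i(y_i))\equiv b_i^{y_i} \bmod G^\mathfrak{w}_{h(i)+1}\subseteq G^\mathfrak{w}_{h+1}$, so these factors are trivial modulo $G^\mathfrak{w}_{h+1}$.
\item For $i$ with $h(i)=h$, each factor lies in $G^\mathfrak{w}_h$; by Proposition \ref{pro-Gw2}, $[G,G^\mathfrak{w}_h]\subseteq G^\mathfrak{w}_{h+1}$, so $G^\mathfrak{w}_h/G^\mathfrak{w}_{h+1}$ is central in $G/G^\mathfrak{w}_{h+1}$, which permits arbitrary rearrangement of these factors.
\end{itemize}
Combining these observations, the projected equality collapses, in the abelian group $G^\mathfrak{w}_h/G^\mathfrak{w}_{h+1}$, to $\sum_{h(i)=h} y_i\,[b_i] = \sum_{h(i)=h} \widetilde{y}_i\,[b_i]$. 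Freeness of the family $\{b_i:h(i)=h\}$ in $G^\mathfrak{w}_h/G^\mathfrak{w}_{h+1}$ (hypothesis 1) then forces $y_i=\widetilde{y}_i$ for every $i$ with $h(i)=h$, which completes the induction.

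The main obstacle I anticipate is the careful bookkeeping in the rearrangement step: the original product is ordered and $G$ is non-abelian, so the passage from $\prod_i \mathbf{b}_i(\mathbf{n}^i(y_i))$ to a tidy expression where the level-$h$ factors can be isolated requires one to use centrality of $G^\mathfrak{w}_h/G^\mathfrak{w}_{h+1}$ in $G/G^\mathfrak{w}_{h+1}$ to slide those factors past the (equal) level-$(<h)$ factors without altering the coset mod $G^\mathfrak{w}_{h+1}$. The remainder is a straightforward transcription of the proof of Theorem \ref{th-coord31}, with the freeness of the $b_i$'s playing the role that linear independence of formal commutators played there.
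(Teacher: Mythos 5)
Your proof is correct and follows essentially the same strategy as the paper's: fix a selector realizing hypothesis 2, define the map $\Phi_r$, and establish injectivity by projecting onto successive quotients $G/G^{\mathfrak w}_{h+1}$ and invoking the freeness of $\{b_i: h(i)=h\}$ in $G^{\mathfrak w}_h/G^{\mathfrak w}_{h+1}$. The paper leaves the rearrangement bookkeeping implicit (referring back to Theorem \ref{th-coord31}), whereas you spell out why centrality of $G^{\mathfrak w}_h/G^{\mathfrak w}_{h+1}$ in $G/G^{\mathfrak w}_{h+1}$ lets the level-$h$ factors be isolated; this is a welcome clarification, not a different route.
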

\begin{proof} This a straightforward generalization of Theorem \ref{th-coord31}.
Instead of considering commutators and their natural weight function $F_c$, we
consider arbitrary group elements $b$ with associated weight function 
$\widetilde{F}$ with the property that $b$ is free in 
$G^\mathfrak w_h/G^\mathfrak w_{h+1}$, for some $u,h$, and   
$b^{n}$, $|n|\le \widetilde{F}(r)$, can be express modulo $G^\mathfrak w_{h+1}$   as a 
fixed product of powers of generators 
with properly controlled exponents.  The proof is essentially the 
same as that of Theorem \ref{th-coord31}. Namely, for each 
$(y_i)_1^t\in \mathbb Z^t$ with $|y_i|\le \widetilde{F}^i(r)$, 
let $\mathbf y_i=(y^i_j)_1^{\ell(i)}$, $1\le i\le t$,  be such that
$$b_i^{u_i y_i}=\mathbf b_i(\mathbf y_i)\;\mbox{ mod } G^\mathfrak w_{h(i)+1},\;\;
\; 1\le i\le t.$$
Such a $(\mathbf y^i)_1^t$ exists by hypothesis. Assume that 
two sequences 
$(y_i)_1^t$ and $(\tilde{y}_i)_1^t$ are such that
$\prod_{i=1}^t\mathbf b^{i}(\mathbf y_i)=\prod_{i=1}^t\mathbf b^i(\widetilde{\mathbf y}_i)$. 
Then by projecting on 
$G^\mathfrak w_1/G^\mathfrak w_2$ and using the assumed freeness
of the collection of the $b_i$'s with  $h(i)=1$ in
 $G^\mathfrak w_1/G^\mathfrak w_2$ and the fact that 
$b_i^{u_iy_i}=\mathbf b^i(\mathbf y^i)$ in $G^\mathfrak w_1/G^\mathfrak w_2$,we find that $y_i= \tilde{y}_i$ for 
those $i$ with $h(i)=1$. This implies $\mathbf y_1=\mathbf{\widetilde{y}}_1.$
Proceeding further up in the weight 
filtration shows that we must have $ y_i=\tilde{y}_i$ for all 
$1\le i\le t$. This shows that there are at least
$\prod_1^t(2\widetilde{F}^i(r)+1)$ distinct elements in $K(r)$, as  
desired.
\end{proof} 

\begin{rem} Theorem \ref{th-coord32} allows for much more freedom than 
Theorem \ref{th-coord31}. This freedom is used in the proof of Theorem
\ref{th-Vcomp}.
\end{rem}

\subsection{Commutator collection on free nilpotent groups}

In this section, we prove the following weak version of Theorem \ref{th-coord2F}.

\begin{theo}\label{th-coord2FW}
Referring to the setting and notation of {\em Theorem \ref{th-coord2F}}, 
assume 
that {\em (\ref{F1})-(\ref{F2})} hold true.
Then there  exist an integer 
$t=t(G,S,\mathfrak w) ,$ 
a constant $C=C(G,S,\mathfrak w)\ge 1$, and 
a sequence $\Sigma$ of commutators (depending on $G,S, \mathfrak w$) 
$$c_1,\dots,c_t \mbox{ with  non-decreasing weights } w(c_1)\preceq  
\dots\preceq  w(c_t)$$ such that  
\begin{itemize}  
\item[(i)] For any $r>0$, if $g\in G$ can be expressed as a word $\omega$
over $\mathfrak C(S)^{\pm 1}$
with $\mbox{\em deg}_{c}(\omega)\le F_{c}(r)$ for all 
$c\in \mathfrak C(S)$ then
$g$ can be expressed in the form
$$g=\prod_{i=1}^t c_i^{x_i}
\mbox{ with }|x_i|\le  F_{c_i}(Cr) \mbox{ for all } i\in \{1,\dots, t\}.$$
\item[(ii)] There exist an integer $p=p(G,S,\mathfrak w)$ and 
$(i_j)_1^p\in\{1,\dots, k\}^p$  (also depending on $(G,S,\mathfrak w)$
such that, if $g$ can be expressed as a word $\omega$ 
over $\{c_i^{\pm 1}: 1\le i\le t\}$
with $\mbox{\em deg}_{c_i}(\omega)\le F_{c_i}(r)$ for some $r>0$ then
$g$ can be expressed in the form
$$g=\prod_{j=1}^p s_{i_j}^{x_j}
\mbox{ with } |x_j|\le F_{{i_j}}(Cr).$$
\end{itemize}
\end{theo}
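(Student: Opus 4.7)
The plan is to reduce the theorem to the free nilpotent group $N = N(k,\ell)$ of nilpotency class $\ell$ equal to the class of $G$, and then push the conclusion down to $G$ via the canonical quotient map $N \to G$ sending the canonical generators of $N$ to $S$. In $N$, I would take $\Sigma = (c_1,\dots,c_t)$ to be Hall's basis of basic commutators (cf.~\cite[Ch.~11]{MHall}), listed in non-decreasing order of their $\mathfrak{w}$-weight, with ties broken by Hall's order. Then $t$ and the choice of $\Sigma$ depend only on $k$, $\ell$, and $\mathfrak{w}$.

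For part (i), the central tool is the classical Hall collection process. Given a word $\omega$ over $\mathfrak{C}(S)^{\pm 1}$ with $\mathrm{deg}_c(\omega) \le F_c(r)$, first expand every formal commutator symbol in $\omega$ into its group-word over $S^{\pm 1}$, producing a word $\tilde\omega$ whose letter-degrees satisfy $\mathrm{deg}_{s_i}(\tilde\omega) \le \sum_c \nu_i(c)\,\mathrm{deg}_c(\omega)$, where $\nu_i(c)$ counts the occurrences of $s_i$ in the build-word of $c$. The multiplicativity $F_c \simeq \prod_i F_{s_i}^{\nu_i(c)}$ combined with (F1)-(F2) then yields $\mathrm{deg}_{s_i}(\tilde\omega) \le F_i(C_0 r)$. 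Applying Hall collection to $\tilde\omega$ rewrites it as $\prod_{i=1}^t c_i^{x_i}$ with the classical polynomial estimate $|x_i| \le C\prod_q \mathrm{deg}_{s_{j_q}}(\tilde\omega)$, where $(s_{j_q})_q$ is the build-sequence of $c_i$. Multiplicativity of $F_{c_i}$ together with (F1) then deliver $|x_i| \le F_{c_i}(C_1 r)$.

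For part (ii), I would apply Lemma \ref{lem-B} iteratively along the filtration $G = G^\mathfrak{w}_1 \supseteq \cdots \supseteq G^\mathfrak{w}_{j_*+1} = \{e\}$. Starting from $g = \prod_{i=1}^t c_i^{x_i}$ with $|x_i| \le F_{c_i}(r)$, Lemma \ref{lem-B} expresses each $c_i^{x_i}$ as $\mathbf{g}_{c_i}(\mathbf{n}_i)$ modulo $G^\mathfrak{w}_{j_i+1}$, where the letter-sequence defining $\mathbf{g}_{c_i}$ depends only on $c_i$ and the exponents are controlled by the required weight functions. Collecting all factors with $w(c_i) = \bar{w}_1$ to the left, pushing the resulting $G^\mathfrak{w}_2$-valued corrections to the right, and re-expressing those corrections as words over formal commutators of weight $\ge \bar{w}_2$ (to which part (i) applies again), one peels off one level of the filtration per round. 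After at most $j_*$ rounds one arrives at $g = \prod_{j=1}^p s_{i_j}^{y_j}$ with a fixed sequence $(i_j)$ and $|y_j| \le F_{i_j}(Cr)$.

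The main technical obstacle is managing this iteration in part (ii): at each round the correction terms living in $G^\mathfrak{w}_{j+1}$ must be fed back through part (i) without destroying the degree control, and the multiplicative losses incurred over the $O(j_*)$ rounds must be absorbed into a single dilation of the argument of $F_{i_j}$. This is precisely where the doubling property (F1) does real work, letting a constant factor be swallowed by replacing $r$ with $C'r$. Finiteness of $j_*$ then forces both $p$ and $(i_j)$ to depend only on $(G,S,\mathfrak{w})$, completing the proof of both assertions.
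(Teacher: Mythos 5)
Your proposed proof of part (i) has a genuine gap in the very first reduction: expanding each formal commutator in $\omega$ into its group-word over $S^{\pm 1}$ destroys the degree control, and the claimed bound $\deg_{s_i}(\tilde\omega)\le F_i(C_0 r)$ is false. From $\deg_c(\omega)\le F_c(r)$ you correctly get $\deg_{s_i}(\tilde\omega)\le \sum_c \nu_i(c)\,F_c(r)$, but this sum is dominated by the high-weight commutators $c$ involving $s_i$, for which $F_c(r)\gg F_i(r)$. For a concrete instance, take all weights $w_i=1$, so $F_i(r)=r$ and $F_c(r)=r^{|c|}$. A word $\omega$ consisting solely of $F_c(r)=r^\ell$ copies of a length-$\ell$ commutator $c$ has $\deg_c(\omega)\le F_c(r)$, but its group-word expansion has $\deg_{s_i}(\tilde\omega)\asymp r^\ell$, not $\le C_0 r$. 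Feeding this into the classical Hall collection bound $|x_j|\le C\prod_q \deg_{s_{j_q}}(\tilde\omega)$ then yields exponents of order $r^{\ell|c_j|}$ rather than the required $F_{c_j}(Cr)\asymp r^{|c_j|}$ --- a loss in the exponent, not the constant, so no application of (F1) can absorb it. The paper avoids this by never expanding: the collection in Lemma \ref{lem-bullet2} is performed directly on words over the formal commutator alphabet, and the degree-tracking binomial estimate (indexed by the sets $C_p(i,j)$ of commutators $c$ with $[\,\cdots[c,c_i],\dots,c_i\,]=c_j$) is designed precisely so that $F_c F_{c_i}^p = F_{c_j}$ keeps everything at the correct scale.

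For part (ii), the iterative peeling through the weight filtration using Lemma \ref{lem-B} and pushing $G^\mathfrak w_{j+1}$-valued corrections back through part (i) is the right spirit and broadly parallels the paper's strategy (Lemmas \ref{lem-Theta} and \ref{lem-cprods}, which use a somewhat different $\Theta$-substitution device to unravel $c^x$ into controlled powers of generators). However, as written your part (ii) invokes part (i) at each round, so the error above propagates; the whole argument needs the correct commutator-level collection result in place before the induction can start.
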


\begin{rem} Note that it must be the case that, for any $j$, the image of 
$\{c_i: w(c_i)=\bar{w}_j\}$ 
in $G^\mathfrak w_j/G^\mathfrak w_{j+1}$
generates $G^\mathfrak w_j/G^\mathfrak w_{j+1}$. The key difference with 
Theorem \ref{th-coord2F} is that  Theorem \ref{th-coord2FW} does not identify
a maximal subset of $\{c_i: w(c_i)=\bar{w}_j\}$ that is free in
$G^\mathfrak w_j/G^\mathfrak w_{j+1}$.  
\end{rem}

The proof of Theorem \ref{th-coord2FW} requires a number of steps.
The first observation is that it is enough to prove 
Theorem \ref{th-coord2FW} in the case of the free nilpotent group 
$N(k,\ell)$ on $k$ generators $s_1,\dots, s_k$  and of nilpotency class
$\ell$. Indeed,
once Theorem \ref{th-coord2FW} is proved on $N(k,\ell)$, 
the same statement holds on any nilpotent $G$ of nilpotency class $\ell$
equipped with a generating k-tuple $S$  
via the canonical projection from $N(k,\ell)$ 
to $G$ (by definition, the canonical projection is the group homeomorphism
from $N(k,\ell)$ onto $G$ which sends the canonical 
$k$ generators of $N(k,\ell)$ to the given $k$ generators of $G$). 

\begin{nota}
For the rest of this section, we assume that $G=N(k,\ell)$ is the  
free nilpotent group 
$N(k,\ell)$ equipped with its canonical generating set $S=(s_1,\dots,s_k)$
and the multidimensional weight-system $\mathfrak w$ generated by the
$(w_1,\dots,w_k)$.
Without loss of generality, we assume that the commutator set $\mathfrak C(S)$
is equipped with a total order $\prec$ such that the function
$$w: \mathfrak C(S)\ni c\mapsto w(c)\in (0,\infty)\times \mathbb R^{d-1}$$
associated with the given weight system $\mathfrak w$ 
is non-decreasing. Hence, $c\prec c'$ implies  $w(c)\preceq w(c')$. 
In addition, we let $\mathfrak F$ be a weight function system that is compatible with $\mathfrak w$ in the sense that (\ref{F1})-(\ref{F2}) hold true.
\end{nota}

\begin{nota} Recall that $\mbox{deg}_c(\omega)$ denotes the number of occurrences 
of $c^{\pm 1}$ in the word $\omega$ over $\mathfrak C(S)$. Similarly, we define 
$\mbox{deg}^*_c(\omega)$ to be the number of occurrences of $c$ 
minus the number of occurrences of $c^{-1}$ in a word over $\mathfrak C(S)$. 
\end{nota}

On $\mathfrak C(S)$, consider  the map $J$ such that $J(s_i^{\pm 1})=s_i^{\mp 1}$
and $J([a,b])=[b,a]$. Abusing notation, we also write $J(c)=c^{-1}$. Note that 
$J^2$ is the identity.
Restrict $J$ to $\mathfrak C^*(S)=\{c: J(c)\neq c\}$ (where $J(c)=c$ is 
understood as equality as formal commutator so that $J(s_i)\neq s_i$
and $J([a,b])=[a,b]$ if and only if $a=b$).  
Let $\mathfrak C_+^*$ be the set of representative of $\mathfrak C^*(S)/J$
given by $ c\in \mathfrak C_+^*(S)$ if and only if $c=s_i$ or $c=[a,b]$ 
with $a\succ b$. 
  
It is convenient to enumerate all formal commutators in  
$\mathfrak C^*_+(S,\ell)$
and write
$$\mathfrak C^*_+(S,\ell)=\{c_1,\dots,c_t\},\;\;t=\#\mathfrak C^*_+(S,\ell).$$ 
Since $\ell$ is fixed throughout, we write 
$$\mathfrak C^*_+(S)=\mathfrak C_+^*(S,\ell).$$ 
Note that, a priori, 
this list contains commutators that are trivial in $N(k,\ell)$. This does 
not matter although these formal commutators can be omitted if desired. 
Let us describe the basic collecting process on $N_{k,\ell}$.\vspace{.1in}

\noindent{\bf Commutator collecting algorithm}\vspace{.05in}

\begin{itemize}
\item Given a word $\omega =c^{\epsilon_{i_1}}_{i_{1}}c^{\epsilon_{i_2}}_{i_{2}}...c^{\epsilon_{i_m}}_{i_{m}}$ in $\mathfrak{C}^*_+%
(S)\cup \mathfrak{C}^*_+(S)^{-1},$ first identify the commutator of lowest
order with respect to $\prec $, say it is commutator $%
c_{i_{j}}$, mark all the contributions of $c_{i_{j}}$ to $\omega $ from left
to right in order$:$ $\{y_{1},...,y_{q}\},$ $y_{j}\in \{
c_{i_{j}}^{\pm 1}\}.$

\item Starting with $y_{1}$, move $y_{1},...,y_{q}$ to the left one by
one by successive commutation. Note that every time $c_{i_{j}}$ jumps backward
over a
commutator $c$, the jump produces the sequence  
$...c_{i_{j}}c[c,c_{i_{j}}]...$. It follows that all
commutators that are created in this process belong to $\mathfrak C_+^*(S)$ 
and have weight 
$\succeq 2w(c_{i_{j}})\succ w(c_{i_j}).$ 

\item After $y_{1},...,y_{q}$ have been moved to the left, we obtain a word $%
y_{1}...y_{q}\omega'$ with the same image as $\omega$, and where
$\omega'$ is a word in commutators $\succ c_{i_{j}}.$ 

\item Apply the previous steps to $\omega'$, producing $\omega''$ and continue  
until the process terminates after at most $ \# \mathfrak C_+^*(S)$ steps.
\end{itemize}

This proves the following weak version of M. Hall basis theorem 
\cite[Theorem 11.2.3]{MHall} 
(in Hall's more sophisticated version, only the so called ``basic'' commutators are used 
and this results in a unique representation of any element of $N(k,\ell)$). 
\begin{pro}
Any element $g\in N(k,\ell)$ has a representation $%
$g=c_{1}^{x_{1}}c_{2}^{x_{2}}...c_{t}^{x_{t}},\;\;x_i\in \mathbb Z.$$
\end{pro}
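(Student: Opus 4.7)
The plan is to prove the proposition by running the commutator collecting algorithm described in the preceding paragraph on an arbitrary initial word representing $g$, and arguing that the procedure terminates in a bounded number of steps with output of the required form $c_1^{x_1}c_2^{x_2}\cdots c_t^{x_t}$.

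First, I would observe that since $S=\{s_1,\dots,s_k\}$ generates $N(k,\ell)$, every element $g\in N(k,\ell)$ can be written as a word $\omega$ in the letters $s_i^{\pm 1}$, and each $s_i$ lies in $\mathfrak{C}^*_+(S)$. Thus $\omega$ is a valid input to the collecting algorithm, i.e., a word in $\mathfrak{C}^*_+(S)\cup \mathfrak{C}^*_+(S)^{-1}$. Enumerate $\mathfrak{C}^*_+(S,\ell)=\{c_1,\dots,c_t\}$ according to the total order $\prec$, so that $w(c_1)\preceq w(c_2)\preceq\cdots\preceq w(c_t)$.

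Next, I would verify the one step invariant: after gathering all occurrences of the lowest commutator $c_{i_j}$ appearing in the current word to the left via the identity $c\,c_{i_j}^{\pm1}=c_{i_j}^{\pm1}\,c\,[c,c_{i_j}^{\pm1}]$, one obtains a new word of the form $c_{i_j}^{x}\,\omega'$, where $\omega'$ is a word over commutators all of which are strictly higher in the $\prec$ order than $c_{i_j}$. Indeed, each commutator originally present in the tail was already $\succ c_{i_j}$ (by minimality of $c_{i_j}$), and each newly created bracket $[c,c_{i_j}^{\pm1}]$ has weight $w(c)+w(c_{i_j})\succeq 2w(c_{i_j})\succ w(c_{i_j})$, so it corresponds (up to identifying $[c,c_{i_j}^{-1}]$ with an element of $\mathfrak{C}^*_+(S)$ via $J$) to a commutator strictly higher than $c_{i_j}$.

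Then I would iterate: apply the same step to $\omega'$ using its minimum commutator (necessarily $\succ c_{i_j}$), and so on. Because we are in $N(k,\ell)$, any bracket of formal length exceeding $\ell$ is trivial and can be dropped, so throughout the procedure only commutators drawn from the finite set $\mathfrak{C}^*_+(S,\ell)=\{c_1,\dots,c_t\}$ ever appear. Since the minimum commutator present in the tail moves strictly upward in $\prec$ at each iteration, the process halts after at most $t$ iterations. The output is then a product $c_1^{x_1}c_2^{x_2}\cdots c_t^{x_t}$ with integer exponents $x_i$ (some possibly zero), which represents $g$ in $N(k,\ell)$.

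The only subtle point — and the one I would treat most carefully — is the verification that collecting a $c_{i_j}^{-1}$ past a neighboring commutator $c$ produces only factors in $\mathfrak{C}^*_+(S)$ of strictly greater weight than $w(c_{i_j})$; this uses the identities $[c,c_{i_j}^{-1}]=[c_{i_j},c]^{c_{i_j}^{-1}}$ together with further collections of the resulting conjugation, all of which land in $\mathfrak{C}^*_+(S)$ at weights $\succeq 2w(c_{i_j})$. Everything else in the argument is bookkeeping on the finite poset $(\mathfrak{C}^*_+(S,\ell),\prec)$.
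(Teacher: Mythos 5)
Your proof is correct and follows essentially the same route as the paper: run the commutator collecting algorithm (lowest-first, via the identity $c\,c_{i_j}^{\pm 1} = c_{i_j}^{\pm 1}\,c\,[c,c_{i_j}^{\pm 1}]$), note that each newly created commutator has strictly higher weight and that length-$>\ell$ brackets vanish in $N(k,\ell)$, so the finite poset $(\mathfrak C^*_+(S,\ell),\prec)$ forces termination in at most $t$ rounds. The paper presents exactly this collecting argument immediately before the proposition and cites it as the proof, so there is no gap and nothing materially different in your treatment.
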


Next we want to have some control over $\{x_{i},$ $1\leq i\leq t\}.$ Let's
start with a simple binomial counting lemma adapted from \cite[page 173]{MHall} 
and  \cite{Tits}. We will use the following notation. For any two commutators 
$c_j\succ c_i$, let $C_{n-1}(i,j)$ be the sets of all commutators 
$c\in \mathfrak C^*_+(S)$
such that there exist  $\epsilon_0,\dots,\epsilon_ n
\in \{-1,1\}$ such that $c_j^{\epsilon_n}=
[\cdots [c^{\epsilon_0}, c_i^{\epsilon_1}],\dots,c_i^{\epsilon_{n-1}}]$ 
(as formal commutators in $\mathfrak C(S)$).

\begin{lem} Consider a word $%
\omega $ in $\{c_j:c_j\succeq c_{i}\}^{\pm 1}$. 
Let $m=\deg _{c_{i}}\omega $,
and let $\{y_{1},...,y_{m}\}$, $y_{j}\in \{$ $c_{i}^{\pm 1}\}$,  
be the left to right contribution of $c_{i}$ to $\omega$. 
For $0\leq q\leq m,$ there is a
word $\omega _{q}$ in $\{c_j:c_j\succeq c_{i}\}^{\pm 1}$ which starts with
$y_{1}...y_{q}$, whose left to right contribution of $c_i^{\pm 1}$
is $y_1,\dots, y_m$, and in which, for all $c_{j}\succ c_{i}$, 
\begin{eqnarray*}
\deg _{c_{j}}(\omega _{q}) &\leq &\deg _{c_{j}}(\omega
)+q\sum_{c\in C_1(i,j)}\deg _{c}(\omega )
+\binom{q}{2}%
\sum_{c\in C_2(i,j)}\deg _{c}(\omega ) \\
&&+...+\binom{q}{\ell}
\sum_{c\in C_\ell(i,j)}\deg _{c}(\omega )
\end{eqnarray*}%
Further, if $c^{\prime }$ denotes the lowest
commutator in $\omega $ with $c^{\prime }\succ c_{i}$ then contributions of
commutators $c$ with $w(c)\prec w(c^{\prime })+w(c_{i})$ remain unchanged in $\omega_q$.
\end{lem}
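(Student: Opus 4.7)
The plan is to proceed by induction on $q$, the number of occurrences of $c_i^{\pm 1}$ already collected to the left. The base case $q=0$ is trivial since $\omega_0 = \omega$ and $\binom{0}{n}=0$ for $n\geq 1$. For the inductive step, I pass from $\omega_q = y_1\cdots y_q\,\omega_q'$ to $\omega_{q+1}$ by moving $y_{q+1}$ leftward through $\omega_q'$ one position at a time, using the collection identity $c\cdot y_{q+1} = y_{q+1}\cdot c\cdot [c,y_{q+1}]$. Each such elementary swap modifies the word in only one way: the multiplicity of $[c, c_i^{\pm 1}]$ increases by exactly one, while every other commutator's count is unchanged. Summing over all commutators $c$ that $y_{q+1}$ must traverse (which are precisely the commutators of $\omega_q'$ lying to its left), I obtain the one-step recursion
\[
\deg_{c_j}(\omega_{q+1}) \le \deg_{c_j}(\omega_q) + \sum_{c'\in C_1(i,j)} \deg_{c'}(\omega_q).
\]

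Applying the inductive hypothesis to both $\deg_{c_j}(\omega_q)$ and each $\deg_{c'}(\omega_q)$, together with the compatibility
\[
c' = c_{j'} \in C_1(i,j),\; c\in C_n(i,j') \;\Longrightarrow\; c\in C_{n+1}(i,j),
\]
which follows directly from the definition of $C_n(i,j)$ as iterated bracketing with $c_i^{\pm 1}$, I find (setting $C_0(i,j) = \{c_j\}$)
\[
\deg_{c_j}(\omega_{q+1}) \le \sum_{n=0}^{\ell}\binom{q}{n}\sum_{c\in C_n(i,j)}\deg_c(\omega) + \sum_{n=0}^{\ell}\binom{q}{n}\sum_{c\in C_{n+1}(i,j)}\deg_c(\omega).
\]
Reindexing the second sum via $n\mapsto n-1$ and invoking Pascal's identity $\binom{q}{n}+\binom{q}{n-1}=\binom{q+1}{n}$ yields the desired bound at step $q+1$; the boundary term $C_{\ell+1}(i,j)$ vanishes since $N(k,\ell)$ has nilpotency class $\ell$, so chains of length exceeding $\ell$ produce only trivial commutators.

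The final assertion about preservation of low-weight counts is immediate from the additivity $w([c,c_i]) = w(c) + w(c_i)$: every newly generated commutator during the collection process arises by iterated bracketing with $c_i$ starting from a commutator already present in $\omega$, which lies in $\{c_k : c_k \succeq c'\}$, hence has weight $\succeq w(c') + w(c_i)$. Commutators of strictly smaller weight therefore never receive new contributions. The main obstacle is bookkeeping the signs $\epsilon_j\in\{\pm 1\}$ when identifying each $[c^{\epsilon_0}, c_i^{\epsilon}]$ with its representative in $\mathfrak C_+^*(S)$ via $[a,b]^{-1}=[b,a]$, and verifying that the implication $C_1(i,j)\circ C_n(i,j')\subset C_{n+1}(i,j)$ survives these sign conventions. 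This is tedious but elementary; once handled, the inductive machinery runs as described.
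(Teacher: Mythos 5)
Your proof is correct and follows essentially the same route as the paper: induction on $q$, the one-step recursion $\deg_{c_j}(\omega_{q+1}) \le \deg_{c_j}(\omega_q) + \sum_{c'\in C_1(i,j)}\deg_{c'}(\omega_q)$, the composition law $C_1(i,j)$ followed by $C_n(i,j')$ landing in $C_{n+1}(i,j)$, Pascal's identity, and weight additivity for the final assertion. The sign/bookkeeping caveat you flag is also glossed over in the paper, so nothing substantive is missing.
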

\begin{rem}
Note that, after we move all contributions of 
$c_{i}$ to $\omega$ to the left, we obtain a word $\omega_m$ with same image 
as $\omega$ of the form
\[
\omega _{m}=c_{i}^{x}\omega _{m}^{\prime } 
\]
where $x=\deg _{c_{i}}^{\ast }(\omega )$, $\omega _{m}^{\prime }$ is a
word in  $[\mathfrak C_+^*(S)\cap\{c\succ c_{i}\}]^{\pm 1}$, and in which the
contributions of
commutators $c$ with $w(c)\prec w(c^{\prime })+w(c_{i})$ remain the same than in $%
\omega .$
\end{rem}

\begin{proof} The proof is by  induction on $q$. 
It holds trivially for $q=0.$ The induction hypothesis
gives us a word $\omega _{q-1}$ with%
\begin{eqnarray*}
\deg _{c_{j}}(\omega _{q-1}) &\leq &\deg _{c_{j}}(\omega
)
+(q-1)\sum_{c\in C_1(i.j)}\deg _{c}(\omega )
+\binom{q-1}{2%
}\sum_{c\in C_2(i,j)}\deg _{c}(\omega ) \\
&&+...+\binom{q-1}{\ell}\sum_{c\in C_\ell(i,j)}\deg _{c}(\omega ).
\end{eqnarray*}%
Now, we move $y_{q}$ to the left as in the collecting process by successive
commutations. To keep track of contribution of $c_{j},$ notice 
that a new contribution of $c_j$ is produced  only if $%
y_{q}$ jumps over a commutator $c^{\pm 1}$ such that $[c^{\pm 1},y_{q}]=
c^{\pm 1}_{j}$. 
Further, $w([c^{\pm 1},y_{q}])=w(c)+w(c_{i})\succeq  w(c')+w(c_{i})$. 
Hence, $c_j$ must satisfies $w(c_j)\succeq w(c')+w(c_i)$.
Therefore we eventually get a word $%
\omega _{q}$ in $[\mathfrak C^*_+(S)\cap \{c\succeq c_{i}\}]^{\pm 1}$ with $\pi
(\omega _{q})=\pi(\omega )$, in which the left to right contribution 
of $c_{i}$ is the same as in $\omega$, which 
starts with $y_{1}...y_{q}$, and such that%
\[
\deg _{c_{j}}(\omega _{q})\leq \deg _{c_{j}}(\omega
_{q-1})+\sum_{c\in C_1(i,j)}\deg _{c}(\omega _{q-1}). 
\]%
Using the induction hypothesis on $\omega _{q-1}$ and the fact that
all brackets of length at least $\ell+1$ drop out,
\begin{eqnarray*}
\sum_{c\in C_1(i,j)}\deg _{c}(\omega _{q-1})&=&
\sum_{c=c_\alpha\in C_2(i,j)} 
\sum_{p=0}^{\ell} \binom{q-1}{p}\sum_{\tilde{c}\in C_p(i,\alpha)}
\deg _{\widetilde{c}}(\omega)\\
&\le & 
\sum_{p=1}^{\ell} \binom{q-1}{p-1}\sum_{\widetilde{c}\in C_p(i,j)}
\deg _{\widetilde{c}}(\omega).
\end{eqnarray*}
Hence, we have
\begin{eqnarray*}
\deg _{c_{j}}(\omega _{q})&\leq &\deg _{c_{j}}(\omega
_{q-1})+\sum_{c\in C_2(i,j)}\deg _{c}(\omega _{q-1})\\
&\le & \sum_{p=0}^{\ell}\left( \binom{q-1}{p}+ \binom{q-1}{p-1}\right)
\sum_{\widetilde{c}\in C_p(i,j)}
\deg _{\widetilde{c}}(\omega)\\
&= & \sum_{p=0}^{\ell}\binom{q}{p}\sum_{\widetilde{c}\in C_p(i,j)}
\deg _{\widetilde{c}}(\omega).
\end{eqnarray*}
\end{proof}

\begin{lem} \label{lem-bullet2}
\bigskip\ There exists a constant $C>0$ such that for any word $\omega $ in $%
[\mathfrak{C}^*_+(S)\cap \{c\succeq c_{i}\}]^{\pm 1}$ 
with $\deg _{c}\omega \leq
F_{c}(d)$ for all $c\succeq c_{i}$, there exists a word $\omega'$ in 
$[\mathfrak{C}_+^*(S)\cap \{c\succeq c_{i}\}]^{\pm 1}$ in collected form:%
\[
\omega'=\prod\limits_{j=i}^{t}c_{j}^{x_{j}} 
\]%
such that $\pi(\omega')=\pi(\omega )$, $x_{j}=\deg
_{c_{j}}^{\ast }\omega $ for those $j$ such that $w(c_{j})\prec 2w(c_{i})$
and $\left\vert x_{j}\right\vert \leq F_{c_{j}}(Cd)$ for all $i\leq j\leq t.$
\end{lem}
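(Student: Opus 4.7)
The plan is to prove the lemma by iterating the preceding lemma, call it (L). (L) handles one round of collection: it moves all occurrences of the current lowest commutator to the left while bounding the degree growth of higher commutators. I apply (L) successively, starting with $c_i$, then the next lowest commutator in the residual word, and so on, for at most $t-i+1$ rounds.

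For a single round I would show: if $\deg_c(\omega)\le F_c(d)$ for every $c\succeq c_i$, then after collecting $c_i$ we obtain $\omega_m=c_i^x\omega'_m$ with $|x|\le F_{c_i}(d)$ and $\deg_c(\omega'_m)\le C_1 F_c(d)$ for every $c\succ c_i$, where $C_1$ depends only on $(G,S,\mathfrak w,\mathfrak F)$. From (L),
\[
\deg_{c_j}(\omega_m)\le \sum_{p=0}^{\ell}\binom{q}{p}\sum_{c\in C_p(i,j)}\deg_c(\omega),\qquad q\le F_{c_i}(d).
\]
The crucial point is the weight identity $w(c)+p\,w(c_i)=w(c_j)$ holding for every $c\in C_p(i,j)$; compatibility (\ref{F2}) of $\mathfrak F$ with $\mathfrak w$ then gives the multiplicative comparability $F_c(d)\cdot F_{c_i}(d)^p\simeq F_{c_j}(d)$. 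Combined with $\binom{q}{p}\le F_{c_i}(d)^p$, the doubling hypothesis (\ref{F1}), and the fact that the cardinality of $C_p(i,j)$ and the value of $\ell$ are bounded in terms of $(G,S,\mathfrak w)$, each summand is bounded by a constant times $F_{c_j}(d)$, giving the claimed single-round bound.

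Iterating $t-i+1$ rounds, the degree of $c_j$ at the end of the process is bounded by $C_1^{t-i+1} F_{c_j}(d)$. Since this multiplicative constant depends only on $(G,S,\mathfrak w,\mathfrak F)$, the doubling property (\ref{F1}) absorbs it into the argument to give $|x_j|\le F_{c_j}(Cd)$ with $C$ depending only on the ambient data. For the preservation claim, I observe that every elementary commutation step inserts a new formal commutator $[c,c']^{\pm 1}$, where both $c$ and $c'$ are commutators already present in the current word (hence satisfy $c,c'\succeq c_i$). The inserted commutator has weight $w(c)+w(c')\succeq 2w(c_i)$, so it can never equal a $c_j$ with $w(c_j)\prec 2w(c_i)$; since commutation does not destroy existing factors either, $\deg^*_{c_j}$ is preserved throughout the process, giving $x_j=\deg^*_{c_j}(\omega)$. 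The main obstacle is precisely the comparability $F_c\cdot F_{c_i}^p\simeq F_{c_j}$ on $C_p(i,j)$: this is exactly what compatibility (\ref{F2}) delivers, and without it the $p$-summation would not close uniformly; the rest of the argument is bookkeeping with doubling.
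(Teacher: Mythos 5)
Your argument is correct and follows essentially the same route as the paper: iterate the one-step binomial counting lemma, bound the degree growth per round via the weight identity $w(c)+p\,w(c_i)=w(c_j)$ on $C_p(i,j)$ together with $\binom{q}{p}\le q^p\le F_{c_i}(d)^p$, and absorb the accumulated constants into the argument with the doubling property (\ref{F1}); the paper phrases this as a backward induction on $i$ whereas you phrase it as forward iteration, but the two are the same. Your handling of the preservation of $\deg^*_{c_j}$ for $w(c_j)\prec 2w(c_i)$ (noting that every commutator inserted during the process has weight $\succeq 2w(c_i)$, so such $c_j$ can never be created or cancelled) is a direct reformulation of the remark following the binomial lemma and is correct.

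One small bookkeeping slip worth noting: after the first round the degree hypothesis is no longer $\deg_c\le F_c(d)$ but $\deg_c\le C_1F_c(d)$, so the binomial term $\binom{q}{p}$ in the next round contributes $(C_1 F_{c_{i'}}(d))^p$, not $C_1 F_{c_{i'}}(d)^p$ — the per-round factor is really $C_1^{\ell+1}$ rather than $C_1$. The fix is exactly what you invoke at the end: after each round, replace the multiplicative constant by a change of argument via (\ref{F1}) (i.e.\ write $C_1 F_c(d)\le F_c(C_0 d)$) \emph{before} the next application of the binomial lemma, so that after at most $t$ rounds the degrees are bounded by $F_c(C_0^t d)=F_c(Cd)$ with $C$ depending only on $(G,S,\mathfrak w,\mathfrak F)$. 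With that minor correction the argument is complete.
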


\begin{proof} The proof is by backward induction on $i$.
For $i=t$, the statement holds trivially since commutators with $c\succeq c_t$
commute.

Suppose the assertion holds for $i+1.$  Consider a word $\omega $ on
$[\mathfrak C_+^*(S)\cap \{c\succeq c_{i}\}]^{\pm 1}$ as in the lemma. 
Let  $\{y_{1},...,y_{q}\}$ be the
contribution of $c_{i}$ to $\omega$, $q=\deg _{c_{i}}\omega .$
The previous lemma yields  $\omega _{q}=y_{1}...y_{q}\omega'_q,$ 
where  $\omega'_q$ is
a word in $[\mathfrak C^*_+(S)\cap \{c\succeq c_{i+1}\}]^{\pm 1}$. 
From the hypothesis on the degrees of $\omega ,$\ 
$$
\deg _{c_{j}}(\omega _{k}) \leq  \sum_{p=0}^{\ell}
\binom{k}{p} \sum_{c\in C_p(i,j)}F_{c}(d)
$$
From definition of weight functions, if $c\in C_p(i,j)$ then $%
F_{c}F_{c_{i}}^{p}=F_{c_{j}}.$ Further,$ \#C_p(i,j)\le t=\#\mathfrak C^*_+(S)$
and $q=\deg _{c_{i}}\omega
\leq F_{c_{i}}(d)$.
Therefore, we obtain 
\begin{eqnarray*}
\deg _{c_{j}}(\omega _{q}) &\leq & t F_{c_j}(d)\left( \sum_{p=0}^{\ell}
\binom{q}{p} F_{c_i}(d)^{-p}\right)\\
&\leq & t F_{c_j}(d)\left( \sum_{p=0}^{\ell}
q^p F_{c_i}(d)^{-p}\right)\le 
t(1+\ell) F_{c_{j}}(d).
\end{eqnarray*}
By assumption (\ref{F1}), there exists a constant $C_{1}$ such
that $$t(1+\ell)F_{c}(d)\leq F_{c}(C_{1}d)$$ for all 
$c$ and $d\geq 1.$
\end{proof}

Lemma \ref{lem-bullet2} with $i=1$ proves Theorem \ref{th-coord2FW}(i).
Next we work on improving  Theorem \ref{th-coord2FW}(i) in the special 
case of the free nilpotent group $N(k,\ell)$. This 
improvement will be instrumental in proving Theorem \ref{th-coord2FW}(ii).
It is based on the following important Lemma.

\begin{lem}\label{lem-free}
For each $j$, 
$N(k,\ell)^\mathfrak w_j/N(k,\ell)^\mathfrak w_{j+1}$ is a finitely generated
free abelian group.  
\end{lem}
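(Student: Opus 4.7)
The plan is to leverage M.\ Hall's basis theorem for $N(k,\ell)$: there is an ordered set of \emph{basic commutators} $b_1,\dots,b_T$ such that every element of $N(k,\ell)$ has a unique normal form $g=b_1^{y_1}b_2^{y_2}\cdots b_T^{y_T}$, and the basic commutators of ordinary length $m$ form a $\mathbb{Z}$-basis of $N(k,\ell)_m/N(k,\ell)_{m+1}$. I would first re-order (or refine) the Hall ordering so that basic commutators of lower $\mathfrak{w}$-weight come first, with ties within a single ordinary length broken by the original Hall order. Hall's theorem still applies to this refined ordering: every $g\in N(k,\ell)$ has a unique Hall-type normal form with respect to the re-ordered $b_1,\dots,b_T$. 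This is the one external input.

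Next, I would prove by descending induction on $j$ that
\[
N(k,\ell)^\mathfrak{w}_j\;=\;\Bigl\{\,\textstyle\prod_{i:\,w(b_i)\succeq\bar w_j} b_i^{y_i}\;:\;y_i\in\mathbb{Z}\Bigr\}.
\]
The inclusion $\supseteq$ is immediate from the definition of $G^\mathfrak{w}_j=\langle\mathfrak{C}^\mathfrak{w}_j\rangle$. For $\subseteq$, any generator of $N(k,\ell)^\mathfrak{w}_j$ is a formal commutator $c$ with $w(c)\succeq \bar w_j$; applying the commutator-collecting algorithm preceding Lemma \ref{lem-bullet2} (in the collected-form variant with input weight $\succeq \bar w_j$) yields a product of basic commutators all of weight $\succeq \bar w_j$, since bracketing only increases weights and basic commutators of length $\geq$ that of $c$ can only have weight $\succeq w(c)$. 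Any product of such powers therefore collects to a word only involving $\{b_i:w(b_i)\succeq \bar w_j\}$, proving the claim.

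With this description, the quotient $N(k,\ell)^\mathfrak{w}_j/N(k,\ell)^\mathfrak{w}_{j+1}$ is generated by the images of the basic commutators of weight exactly $\bar w_j$. Freeness then reduces to: if $\prod_{i:\,w(b_i)=\bar w_j} b_i^{y_i}\in N(k,\ell)^\mathfrak{w}_{j+1}$, then $y_i=0$ for all $i$. But by the previous step, the right-hand side can be rewritten as $\prod_{i:\,w(b_i)\succeq \bar w_{j+1}} b_i^{z_i}$. Uniqueness of the Hall normal form in the refined ordering forces $y_i=0$ for every $i$ with $w(b_i)=\bar w_j$. Hence the generators are $\mathbb{Z}$-linearly independent and $N(k,\ell)^\mathfrak{w}_j/N(k,\ell)^\mathfrak{w}_{j+1}$ is a finitely generated free abelian group, as required.

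The main obstacle is Step 2, i.e., proving the explicit description $N(k,\ell)^\mathfrak{w}_j=\{\prod_{w(b_i)\succeq \bar w_j}b_i^{y_i}\}$. The abstract collecting procedure of Lemma \ref{lem-bullet2} gives a representation as a product of commutators of controlled degrees, but to feed this into the Hall normal form one must verify carefully that passing from arbitrary formal commutators to basic commutators does not decrease weight. This is where the additivity $w([c_1,c_2])=w(c_1)+w(c_2)$ and the length-monotonicity of basic commutators are combined; once this is in place the remainder of the argument is purely formal.
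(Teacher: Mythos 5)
Your proposal takes a genuinely different route from the paper: the paper proves this lemma by a clean induction on the nilpotency class $\ell$, splitting into cases according to whether the given element lies in $N(k,\ell)^{\mathfrak w}_{j+1}N_\ell$ (handled via the free-abelian structure of the center $N_\ell$) or not (handled by projecting to $N(k,\ell-1)=N(k,\ell)/N_\ell$ and invoking the inductive hypothesis). You instead try to import Hall's basis theorem and re-order the basic commutators by $\mathfrak w$-weight. However, the key ``external input'' you invoke — that Hall's normal form theorem still applies to the weight-reordered sequence of basic commutators — is not an off-the-shelf fact, and in fact it is essentially equivalent to the lemma you are trying to prove. Hall's theorem asserts uniqueness of the normal form for the \emph{length-respecting} ordering; a permutation of a Malcev basis is in general not a Malcev basis (for instance, moving a central element to the front destroys the property). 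The ``brackets increase weight'' property only guarantees that the \emph{collecting} procedure terminates (existence of a weight-ordered factorization), not uniqueness. Uniqueness of the weight-ordered normal form is equivalent to the assertion that the re-ordered basic commutators form a Malcev basis, which in turn requires knowing that each $N(k,\ell)^{\mathfrak w}_j/N(k,\ell)^{\mathfrak w}_{j+1}$ is free abelian — i.e., the lemma. So the argument is circular as stated.

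A second, related issue lies in your Step 2 and the justification ``bracketing only increases weights and basic commutators of length $\geq$ that of $c$ can only have weight $\succeq w(c)$.'' The latter claim is false: a longer commutator built from lighter letters can easily have smaller weight than a shorter commutator built from heavier letters, so the inequality between length and weight is not monotone. The collecting algorithm of the paper (Lemmas \ref{lem-bullet2} and related) rewrites a word as a product over \emph{all} formal commutators in $\mathfrak C_+^*(S)$, not over basic commutators; reducing a non-basic formal commutator to basic ones requires the Jacobi identity, and controlling the weights that appear in the correction terms is precisely what the paper handles in Lemma \ref{lem-Free} — which is proved \emph{after} Lemma \ref{lem-free} and uses it. To make your approach genuinely independent, you would need to establish directly (for instance through the multigraded structure of the free nilpotent Lie ring, where basic commutators of a fixed multi-degree form a $\mathbb Z$-basis of the corresponding component) that the normal form of a formal commutator of weight $\bar w_j$ involves only basic commutators of weight $\succeq\bar w_j$; that step is missing, and it is the heart of the matter.
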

\begin{proof} The proof is by a backward induction on $\ell$. 
If $\ell=1$, $N(k,1)$ is the free abelian 
group on $k$ generators and the desired result holds by inspection.
Let $g\in N(k,\ell)^\mathfrak w_j$ 
such that $g\notin N(k,\ell)^\mathfrak w_{j+1}$. Let $N_\ell=N(k,\ell)_\ell$
be the center of $N(k,\ell)$ 
(i.e., the subgroup generated by commutators of length $\ell$).   
Assume first that $g\in N(k,\ell)^\mathfrak w_{j+1} N_\ell$. 
Since  
$$N(k,\ell)^\mathfrak w_{j+1}N_\ell /N(k,\ell)^\mathfrak w_{j+1}\simeq
 N_\ell/[N(k,\ell)^\mathfrak w_{j+1}\cap N_\ell],$$
and $N(k,\ell)^\mathfrak w_{j+1}\cap N_\ell$ 
is generated by the basic commutators of weight $\bar{w}_j$ and length $\ell$,
$N_\ell/[N(k,\ell)^\mathfrak w_{j+1}\cap N_\ell]$ is torsion free.
It thus follows that $g$ is not torsion in $N(k,\ell)^\mathfrak w_j/
N(k,\ell)^\mathfrak w_{j+1}$.   

Now, consider the case when $g\not\in N(k,\ell)^\mathfrak w_jN_\ell$.
Let $g'$ be the projection of $g$  in $N(K,\ell)/N_\ell=N(k,\ell-1)$.
Clearly $g'\in N(k,\ell-1)^\mathfrak w_j$ and 
$g'\not\in N(k,\ell-1)^\mathfrak w_{j+1}$ because 
the inverse image of $N(k,\ell-1)^\mathfrak w_{j+1}$ under this projection
is $N(k,\ell)^\mathfrak w_{j+1}N_\ell$.  Further,
$$N(k,\ell)^\mathfrak w_j N_\ell/N(k,\ell)^\mathfrak w_{j+1}N_\ell  
\simeq N(k,\ell-1)^\mathfrak w_j/N(k,\ell-1)^\mathfrak w_{j+1}.$$
By the induction hypothesis, $g'$ is not torsion in 
$N(k,\ell-1)^\mathfrak w_j/N(k,\ell-1)^\mathfrak w_{j+1}.$
It follows that $g$ is not torsion in $N(k,\ell)^\mathfrak w_j/
N(k,\ell)^\mathfrak w_{j+1}.$\end{proof}

Next, let $(b_i)_1^\tau$ be a sequence of elements 
of $\mathfrak C_+^*(S)$ such that 
$\{b_i: w(b_i)=\bar{w}_j\}$ projects to a basis of 
$N(k,\ell)^\mathfrak w_j/N(k,\ell)^\mathfrak w_{j+1}$.
Let $R^\mathfrak w_j$ be the rank of this torsion free abelian group and set
$m'_j= \sum_1^j R_i^\mathfrak w$ so that $\tau=m'_{j_*}$. 
Set also $m_j=\max\{i: w(c_i)=\bar{w}_j\}$. Without loss of generality, 
we can assume that our ordering  on $\mathfrak C^*_+(S)$ is such that
$$(b_i)_{m'_{j-1}+1}^{m'_j}=(c_j)_{m_{j-1}+1}^{m_{j-1}+R^\mathfrak w_j}.$$

\begin{lem}\label{lem-Free} Referring o the above setup and notation, 
there exists a constant $C>0$ such that for any word $\omega $ in 
$\{c_i:w(c_i)\succeq \overline{w}_{h}\}^{\pm 1}$ with $\deg _{c_{j}}\omega
\leq F_{c_{j}}(d)$ for all $j,$ there is a word $\omega _h$ 
\[
\omega_h=\prod\limits_{j=m'_{h-1}+1}^\tau  b_{j}^{x_{j}} 
\]%
such that $\pi (\omega _{h})=\pi (\omega )$ and $\left\vert x_{j}\right\vert
\leq CF_{c_{j}}(Cd)$, $m'_{h-1}+1\le j\le m'_h$. 
\end{lem}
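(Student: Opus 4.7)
The plan is to proceed by descending induction on $h$, starting from $h = j_* + 1$ (where $N(k,\ell)^{\mathfrak w}_{j_*+1} = \{e\}$ makes the statement vacuous) down to $h = 1$. Inductively, assuming the statement at level $h+1$, we need to handle a word $\omega$ in commutators of weight $\succeq \bar{w}_h$ by first converting its weight-$\bar{w}_h$ content into a product of the basis elements $\{b_i : w(b_i) = \bar{w}_h\}$, and then invoking the inductive hypothesis on the tail living at weight $\succeq \bar{w}_{h+1}$.

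To carry out this step I would use the defining property of the $b_i$'s: since $\{b_i : w(b_i) = \bar{w}_h\}$ projects to a basis of the free abelian group $N(k,\ell)^{\mathfrak w}_h/N(k,\ell)^{\mathfrak w}_{h+1}$ (Lemma \ref{lem-free}), for each $c_j$ with $w(c_j) = \bar{w}_h$ one fixes once and for all an identity $c_j = \tilde{b}_j \cdot q_j$, where $\tilde{b}_j$ is a fixed product of $b_i$'s at weight $\bar{w}_h$ and $q_j \in N(k,\ell)^{\mathfrak w}_{h+1}$ is represented by a fixed word $U_j$ in commutators of weight $\succeq \bar{w}_{h+1}$. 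Substituting these identities term by term into $\omega$ produces a word $\omega^*$ lying in $\{b_i : w(b_i) = \bar{w}_h\}^{\pm 1} \cup \{c : w(c) \succeq \bar{w}_{h+1}\}^{\pm 1}$ with $\pi(\omega^*)=\pi(\omega)$ and (because the $U_j$'s are fixed) degrees still bounded by $F_c(C_1 d)$ for some constant $C_1$ depending only on $G,S,\mathfrak{w},\mathfrak{F}$.

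Next, I would apply Lemma \ref{lem-bullet2} to $\omega^*$ with starting index $i_0 = m_{h-1}+1$ and parameter $C_1 d$. The decisive observation is that the collecting algorithm only introduces new commutators of weight strictly greater than the one being moved; since $\omega^*$ contains no non-basis commutators at weight $\bar{w}_h$, the collected form
\[
\omega^* \;\longmapsto\; \prod_{i:\, w(b_i) = \bar{w}_h} b_i^{y_i} \cdot \prod_{c_j:\, w(c_j) \succeq \bar{w}_{h+1}} c_j^{y_j}
\]
has $|y_i| \leq F_{b_i}(C_2 d)$ and $|y_j| \leq F_{c_j}(C_2 d)$. The second factor is precisely a word in commutators of weight $\succeq \bar{w}_{h+1}$ with the required degree control, so the inductive hypothesis yields $\prod_{i=m'_h+1}^{\tau} b_i^{z_i}$ with $|z_i| \leq F_{b_i}(C_3 C_2 d)$, and combining with the $y_i$'s at weight $\bar{w}_h$ gives the desired representation.

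The main technical obstacle is verifying the degree bound in the substitution step: one needs $\deg_c(\omega^*) \leq F_c(C_1 d)$ for each commutator $c$ of weight $\succeq \bar{w}_{h+1}$, despite the contribution $\sum_j |U_j|_c \cdot \deg_{c_j}(\omega)$ coming from the tails. This is where the hypotheses (\ref{F1})–(\ref{F2}) are essential: since the $U_j$ are fixed, this extra contribution is $O(F_{c_j}(d))$ for $w(c_j) = \bar{w}_h \prec w(c)$, and the doubling property combined with the monotonicity $F_{c_j} \preceq F_c$ allows one to absorb it into $F_c(C_1 d)$. Once this estimate is in place, the remainder of the argument is a bookkeeping exercise tracking how the constants $C_1, C_2, C_3$ compose through the induction.
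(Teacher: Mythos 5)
Your argument is correct and follows the paper's strategy (backward induction on $h$, Lemma~\ref{lem-bullet2} as the collecting engine, and the basis property of $\{b_i: w(b_i)=\bar w_h\}$ in $N(k,\ell)^{\mathfrak w}_h/N(k,\ell)^{\mathfrak w}_{h+1}$ to eliminate the non‑basis weight‑$\bar w_h$ commutators). The only real difference is the ordering of the two moves: the paper first applies Lemma~\ref{lem-bullet2} to put $\omega$ into a one‑exponent‑per‑commutator collected form, \emph{then} substitutes the non‑basis $c_i^{x_i}$ (for $i\in I_2(h)$) by the fixed expression $(\prod_{j\in I_1(h)}c_j^{z_{j,i}}v_i)^{x_i}$, and collects once more; you substitute the fixed identities $c_j=\tilde b_j q_j$ into the raw word first and then collect once. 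Both orderings are sound, and your degree‑control argument — the $U_j$'s are fixed, they enter $\leq\deg_{c_j}(\omega)\leq F_{c_j}(d)$ times, and $F_{c_j}\preceq F_c$ for $w(c)\succ\bar w_h$ by (\ref{F2}), so the overshoot is absorbed by a dilation using (\ref{F1}) — is precisely the needed bookkeeping. Your observation that commutators created in the collection have weight $\succeq 2\bar w_h\succeq\bar w_{h+1}$, so no non‑basis weight‑$\bar w_h$ terms reappear, is also the right reason the splitting into a level‑$h$ prefix plus a tail goes through. The paper's version pays one extra application of Lemma~\ref{lem-bullet2} but has the small advantage that each $c_i$ is substituted exactly once rather than occurrence‑by‑occurrence; yours is slightly leaner. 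Either way, the inductive hypothesis at level $h+1$ finishes the job, and your choice of vacuous base case $h=j_*+1$ instead of the paper's abelian base case $h=j_*$ is a harmless simplification.
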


\begin{proof} The proof is by
backward induction on $h.$
When $h=j_*,$ $N(k,\ell)_{j_*}^{\mathfrak{w}}$ is abelian and 
this is just linear algebra.

For a word $\omega$ as in the lemma, Lemma \ref{lem-bullet2} gives
a word 
$$\omega'=\prod_{i\ge m_{h-1}+1}c_i^{x_i}, \;\;|x_i|\le F_{c_i}(Cd)$$
with the same image as $\omega$. Set 
$$I_1(h)=\{m_{h-1}+1,\dots, m_{h-1}+R^\mathfrak w_h\},\;\;I_2(h)=\{m_{h-1}+R^\mathfrak w_h+1,\dots, m_h\}$$
For $i\in I_2(h)$, $c_i$ has the same image than
$$\prod_{j\in I_1(h)} c_j ^{z_{j,i}} v_i$$
with $v_i$ a word in $\{c_p: w(c_p)\succeq \bar{w}_{h+1}\}^{\pm 1}$.
Hence
$$\omega''= \prod_{j\in I_1(h)} c_j^{x_j} \prod_{i\in I_2(h)}
\left(\prod_{j\in I_1(h)} c_j^{z_{i,j}} v_i\right)^{x_i} \prod_{p> m_h} c_p^{x_p}
$$ 
has the same image than $\omega$. Applying Lemma \ref{lem-bullet2} to this word
$\omega''$ gives
$$\omega'_h=
\prod_{j\in I_1(h)} c_j^{x_j +\sum_{i\in I_2{h}}z_{i,j}x_i} 
\prod_{p> m_h} c_p^{x'_p}$$
with the same image  than $\omega''$ and  $|x'_p|\le F_{c_p}(Cd)$ for $p>m_{h}$.
Further, since $F_{c_i}\simeq F_{c_j}\simeq \mathbf F_h$, 
for $i\in I_1(h), j\in I_2(h)$, we have 
$$|x_j+\sum_{i\in I_2(h)} z_{i,j}x_i|\le F_{c_j}(Cd).$$
Applying the induction hypothesis to rewrite
$\prod_{p>m_h}c_p^{x'_p}$ finishes the proof.\end{proof}

\begin{theo}\label{th-coord2FF} Assume that the free nilpotent group $N(k,\ell)$
is equipped  with its canonical generating $k$-tuple $S=(s_1,\dots,s_k)$ and 
a weight system $\mathfrak w$ and weight-function system $\mathfrak F$  such 
that {\em (\ref{F1})-(\ref{F2})} hold true. 
Let $b_i$, $1\le i\le \tau$,  be a sequence of elements of $C^*_+(S)$ with
$w(b_i)\preceq w(b_{i+1})$, $1\le i\le \tau-1$ and 
such that, for each $j$, $\{b_i: w(b_i)=\bar{w}_j\}$ is a basis of
the free abelian group $N(k,\ell)^\mathfrak w_j/N(k,\ell)^\mathfrak w_{j+1}$. 
Then 
\begin{itemize}
\item[(i)] Any element $g\in N(k,\ell)$ can be expressed uniquely in the form
$$g=\prod_{i=1}^\tau b_i^{x_i}, \;\;x_i\in \mathbb Z, i\in \{1,\dots, \tau\}.$$
Further, 
$$F_{S}^{-1}(\|g\|_{\mathfrak C(S),\mathfrak F})\simeq 
\max_{1\le i\le \tau}\{F_{b_i}^{-1}(|x_i|)\}.$$
\item[(ii)]  There exist an integer $p$ and $(i_j)_1^p\in\{1,\dots, k\}^p$  
such that any $g\in N(k,\ell)$ with $\|g\|_{\mathfrak C(S),\mathfrak F}\le F_S(r)$, $r>0$,
can be expressed in the form
$$g=\prod_{j=1}^p s_{i_j}^{y_j}
\mbox{ with } |y_j|\le F_{{i_j}}(Cr),\;j\in \{1,\dots,p\}.$$ \end{itemize}
\end{theo}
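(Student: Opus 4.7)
For part (i), existence is straightforward: given $g\in N(k,\ell)$, I choose any word representing $g$ over $S^{\pm 1}$---a generator word works---and apply Lemma \ref{lem-Free} with $h=1$ to rewrite it as $g=\prod_{j=1}^\tau b_j^{x_j}$. For uniqueness, my plan is to project successively onto the quotients $N(k,\ell)^{\mathfrak w}_h/N(k,\ell)^{\mathfrak w}_{h+1}$ in order of increasing $h$: if $\prod b_j^{x_j}=\prod b_j^{x'_j}$, projection to the $\bar w_1$-level kills all $b_j$ of higher weight and leaves only those with $w(b_j)=\bar w_1$, which by Lemma \ref{lem-free} form a basis of a free abelian group; this forces $x_j=x'_j$ for the corresponding indices. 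Canceling the equal prefix and iterating up the filtration terminates at level $j_*$ with full uniqueness.

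For the norm equivalence in (i), one direction is immediate by exhibiting the explicit word $\omega=b_1^{x_1}\cdots b_\tau^{x_\tau}$: with $s=\max_j F_{b_j}^{-1}(|x_j|)$ one has $\deg_{b_j}(\omega)=|x_j|\le F_{b_j}(s)$ and $\deg_c(\omega)=0$ for the other $c$, so $F_S^{-1}(\|g\|_{\mathfrak C(S),\mathfrak F})\le s$. For the reverse, I start from a near-optimal word $\omega$ representing $g$ with $\deg_c(\omega)\le F_c(s)$ where $s=F_S^{-1}(\|g\|_{\mathfrak C(S),\mathfrak F})$, apply Lemma \ref{lem-Free} to rewrite as $\prod b_j^{x'_j}$ with $|x'_j|\le CF_{b_j}(Cs)$, use uniqueness to identify $x'_j=x_j$, and exploit the doubling of each $F_{b_j}$ built into (\ref{F1}) to conclude $F_{b_j}^{-1}(|x_j|)\lesssim s$.

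For (ii), my plan is an induction on the filtration level $h$, maintaining the invariant
\[
g=\omega_{\le h}\cdot g_{>h},\quad g_{>h}\in N(k,\ell)^{\mathfrak w}_{h+1},\quad F_S^{-1}(\|g_{>h}\|_{\mathfrak C(S),\mathfrak F})\le C_h r,
\]
with $\omega_{\le h}$ a product of powers of generators along a fixed sequence $(i_j)$ depending only on $(k,\ell,S,\mathfrak w)$ and with exponents bounded by $F_{i_j}(C_h r)$. The initialization $h=0$ is trivial. For $h\to h+1$, part (i) applied to $g_{>h}$ gives $g_{>h}=\prod_{i:w(b_i)\succeq \bar w_{h+1}}b_i^{x_i}$ with $|x_i|\le CF_{b_i}(Cr)$. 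Lemma \ref{lem-B} then expands each $b_i^{x_i}$ with $w(b_i)=\bar w_{h+1}$ as a product of powers of generators---along a fixed generator sequence attached to $b_i$---modulo $N(k,\ell)^{\mathfrak w}_{h+2}$, with exponents bounded by appropriate $F_{s_j}(Cr)$. Concatenating these blocks in the fixed order produces $\omega_{h+1}$; commuting the resulting error terms $u_i\in N(k,\ell)^{\mathfrak w}_{h+2}$ together with the tail $\prod_{w(b_i)\succ \bar w_{h+1}}b_i^{x_i}$ past $\omega_{h+1}$ to the right yields $g_{>h+1}\in N(k,\ell)^{\mathfrak w}_{h+2}$, because $[G,N(k,\ell)^{\mathfrak w}_{h+2}]\subset N(k,\ell)^{\mathfrak w}_{h+3}$. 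At $h=j_*$ we have $g_{>j_*}\in N(k,\ell)^{\mathfrak w}_{j_*+1}=\{e\}$, so $g=\omega_{\le j_*}$ has the required form.

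The main obstacle I anticipate is preserving the quasi-norm bound on $g_{>h+1}$ through the commutation bookkeeping. Each error $u_i$ satisfies $\|u_i\|_{\mathfrak C(S),\mathfrak F}\lesssim F_S(Cr)$, since $u_i$ equals the product of $b_i^{x_i}$ and the inverse of its generator expansion, both of which admit word representations with degree bounds dominated by $F_c(Cr)$. Each elementary commutation of an element of $N(k,\ell)^{\mathfrak w}_{h+2}$ past a single generator introduces a correction in $N(k,\ell)^{\mathfrak w}_{h+3}$ whose norm is controlled by the norms of the inputs via the quasi-norm structure. Since the total number of elementary commutations at level $h$ is bounded by a constant depending only on $(k,\ell,S,\mathfrak w,h)$, iterating the quasi-norm inequality yields the required bound $F_S^{-1}(\|g_{>h+1}\|_{\mathfrak C(S),\mathfrak F})\le C_{h+1}r$, closing the induction.
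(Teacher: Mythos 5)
Your treatment of part (i) --- existence via Lemma \ref{lem-Free}, uniqueness via successive projections to the torsion-free quotients $N(k,\ell)^{\mathfrak w}_j/N(k,\ell)^{\mathfrak w}_{j+1}$, and the norm equivalence via collection plus uniqueness --- matches the paper's argument.

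Your part (ii) takes a genuinely different route. The paper first applies (i) once to write $g=\prod_1^\tau b_j^{x_j}$ with $|x_j|\le F_{b_j}(Cr)$, and then uses a self-contained Lemma \ref{lem-cprods} (proved by a backward induction built on the $\Theta(\vec a,c)$ substitution construction of Lemma \ref{lem-Theta}) which expresses each $b_j^{x_j}$ \emph{exactly} as a product of generator powers along a fixed generator sequence with controlled exponents; the decomposition of $g$ then comes by simple concatenation, with no error terms to chase. You instead work by forward induction on the filtration level $h$, maintaining a decomposition $g=\omega_{\le h}\cdot g_{>h}$, and use Lemma \ref{lem-B}, which only expands $b_i^{x_i}$ modulo $N(k,\ell)^{\mathfrak w}_{h+2}$. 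This is a legitimate alternative but forces you to commute residual errors $u_i$ past the generator word, which is exactly the bookkeeping that Lemma \ref{lem-cprods} was designed to avoid.

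In that bookkeeping step there is a concrete gap. You claim that the total number of elementary commutations at level $h$ is bounded by a constant depending only on $(k,\ell,S,\mathfrak w,h)$, and that iterating the quasi-norm inequality then controls the norm of the accumulated error. Neither half holds as stated: when you push $u_i$ past $\omega_{h+1}$ letter by letter, the number of such commutations is of order the word-length of $\omega_{h+1}$, which grows like $\sum_j F_{i_j}(Cr)$ and hence is unbounded in $r$; and the quasi-norm inequality controls $\|ab\|$ in terms of $\|a\|\|b\|$ but gives no useful bound showing $[u_i,s^{x}]$ is small (it lands in a deeper subgroup for algebraic reasons, not metric ones). The repair is simpler than the mechanism you invoke: since $g_{>h+1}=\omega_{h+1}^{-1}g_{>h}$ as a group element, it is represented by the concatenated word $\omega_{h+1}^{-1}\cdot(\text{word for }g_{>h})$, whose degree in every $c$ is $\le CF_c(Cr)\le F_c(C'r)$ by (\ref{F1}); hence by the definition of $\|\cdot\|_{\mathfrak C(S),\mathfrak F}$ and part (i), $F_S^{-1}(\|g_{>h+1}\|_{\mathfrak C(S),\mathfrak F})\le C'r$ and the exponents in its $(b_i)$-decomposition are controlled. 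With that replacement your induction closes correctly, but as written, the justification in the final paragraph would not survive scrutiny.
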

\begin{rem} This result is a strong version of Theorem \ref{th-coord2F}
in the special case when $G=N(k,\ell)$.
\end{rem}
\begin{proof}[Proof of (i)]
The first assertion follows from Lemma \ref{lem-Free}. Uniqueness is clear if one
 considers the projections 
of $g$ onto the successive free abelian groups 
$N(k,\ell)^\mathfrak w_j/N(k,\ell)^\mathfrak w_{j+1}.$
\end{proof}

The proof of the the second assertion requires some preparation.
Given a
commutator $c$ with length $m\leq \ell,$ let $\sigma =\sigma _{1}...\sigma _{m}$
be the formal word on the alphabet $S$ obtained from $c$
by removing brackets and inverses. 
For $\overrightarrow{a}%
=(a_{1},...,a_{\ell})\in 
\mathbb{Z}
^{\ell},$ $\Theta (\overrightarrow{a},c)$ is defined as the expression we get
by substituting in $c$ each $\sigma _{i}$ by $\sigma _{i}^{a_{i}}$, while
keeping all the brackets and signs unchanged.
For example, if $c=[[s_{i_{1}},s_{i_{2}}^{-1}],s^{-1}_{i_{3}}],$ and $%
\overrightarrow{a}=(a_{1},a_2,a_{3},0,...,0),$ we have $$\Theta (%
\overrightarrow{a}%
,c)=[[s_{i_{1}}^{a_{1}},s_{i_{2}}^{-a_{2}}],s_{i_{3}}^{-a_{3}}].$$

\begin{lem}\label{lem-Theta}
For a commutator $c$ with length $m\leq \ell,$ let $\sigma =\sigma
_{1}...\sigma _{m}$ be the formal word associated with it. Suppose $%
a_{1},...,a_{m}\in 
\mathbb{Z}
$ are such that $\left\vert a_{j}\right\vert \leq F_{\sigma _{j}}(d)$ for
all $1\leq j\leq m$, $d>0$. Set $\overrightarrow{a}=(a_{1},...,a_{m},0,...,0)\in $ $%
\mathbb{Z}
^{\ell}$ and consider the element  $u\in N(k,\ell)$ such that 
$$u c^{a_{1}...a_{k}}=\Theta (%
\overrightarrow{a},c).$$ Then $u$ can be represented by a word $\omega $ 
on  $\{c_j: w(c_{j})\succ w(c)\}^{\pm 1}$ with  $\deg _{c_{j}}(\omega
)\leq F_{c_{j}}(Cd)$ for all $c_{j}$ with $w(c_j)\succ w(c)$.
\end{lem}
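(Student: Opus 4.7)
The plan is to prove Lemma \ref{lem-Theta} by induction on the length $m$ of the commutator $c$. For the base case $m=1$ we have $c=\sigma_1\in S$ and $\Theta(\vec a,c)=\sigma_1^{a_1}=c^{a_1}$, so $u=e$ and the empty word does the job. For the inductive step, write $c=[c',c'']$ with $c'$ of length $m'$, $c''$ of length $m''$, $m'+m''=m$, and split $\vec a$ accordingly into $\vec a'=(a_1,\dots,a_{m'})$ and $\vec a''=(a_{m'+1},\dots,a_m)$. By construction $\Theta(\vec a,c)=[\Theta(\vec a',c'),\Theta(\vec a'',c'')]$, and the inductive hypothesis applied to $c'$ and $c''$ gives
\[
\Theta(\vec a',c')=(c')^{A'}u',\qquad \Theta(\vec a'',c'')=(c'')^{A''}u'',
\]
where $A'=a_1\cdots a_{m'}$, $A''=a_{m'+1}\cdots a_m$, and $u',u''$ are words on commutators of weight strictly larger than $w(c')$, $w(c'')$ respectively, with degrees bounded by the $F_{(\cdot)}(Cd)$'s.

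I would then expand the bracket $[(c')^{A'}u',(c'')^{A''}u'']$ using the standard commutator identities $[xy,z]=[x,z]^y[y,z]$ and $[x,yz]=[x,z][x,y]^z$, together with the iterated identity
\[
[x^n,y]=[x,y]^n\,\prod_{p\ge 2}[\underbrace{x,\dots,x}_{p},y]^{P_p(n)}\quad\text{and analogously for }[x,y^n],
\]
where $P_p$ is a polynomial of degree $p$. Applied to $[(c')^{A'},(c'')^{A''}]$ this yields
\[
[(c')^{A'},(c'')^{A''}]=[c',c'']^{A'A''}\cdot v_0 = c^{A}\cdot v_0,
\]
with $A=A'A''=a_1\cdots a_m$ and $v_0$ a word in commutators built from at least three entries taken from $\{c',c''\}$. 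Such a commutator has weight at least $w(c')+w(c'')+\min\{w(c'),w(c'')\}\succ w(c)$, and its exponent is of the form $P(A',A'')$ of bidegree $(p,q)$ with $p+q\ge 3$. The cross terms $[(c')^{A'},u'']$, $[u',(c'')^{A''}]$, $[u',u'']$ produce commutators of weight strictly greater than $w(c')+w(c'')=w(c)$ as well, since $u',u''$ already lie above $c',c''$ in weight.

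The degree bounds then follow from the multiplicativity $F_{[a,b]}=F_aF_b$ together with the doubling hypothesis (\ref{F1}). Indeed, the hypothesis $|a_j|\le F_{\sigma_j}(d)$ gives $|A'|\le F_{c'}(d)$ and $|A''|\le F_{c''}(d)$, so for a higher commutator $c_\star$ assembled from $p$ copies of $c'$ and $q$ copies of $c''$ (plus possibly commutators of $c_j$'s of even higher weight from $u',u''$), the exponent satisfies
\[
|P(A',A'')|\le C\,F_{c'}(d)^p\,F_{c''}(d)^q=C\,F_{c_\star}(d)\le F_{c_\star}(Cd),
\]
by (\ref{F1}). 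Since only finitely many commutators $c_\star$ of weight $\succ w(c)$ appear and each contributes finitely many occurrences per expansion, absorbing the combinatorial factors into $C$ via (\ref{F1}) gives the required bound $\deg_{c_j}(\omega)\le F_{c_j}(Cd)$. Finally, multiplying on the right by $c^{-A}$ cancels the unique factor $c^A$ and leaves $u$ represented by such a word $\omega$ on commutators of weight $\succ w(c)$.

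The principal obstacle is bookkeeping: making sure that after repeated use of the commutator identities every exponent that arises is a polynomial in $A',A''$ whose bidegree matches the multiplicities in which $c'$ and $c''$ appear in the associated higher commutator, so that the multiplicativity $F_{c_\star}=F_{c'}^{\,p}F_{c''}^{\,q}$ absorbs the bound cleanly. Once this matching is established commutator by commutator, the estimate propagates through the induction with a single doubling constant $C=C(G,S,\mathfrak F)$.
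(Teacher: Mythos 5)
Your overall strategy — induction on the length of $c$, reduce to the bracket $[\Theta(\vec a',c'),\Theta(\vec a'',c'')]$, and argue that everything left over after peeling off $c^{A}$ lives in strictly higher weight with controlled exponents — is the right starting point and matches the paper's induction. But from there you take a genuinely different and harder road: you try to expand the bracket explicitly via commutator identities and bound each exponent by hand. The paper avoids all of that. It uses the bilinearity of the commutator map modulo $N^{\mathfrak w}_{h+1}$ (the induced homomorphism $N^{\mathfrak w}_{h_1}/N^{\mathfrak w}_{h_1+1}\otimes N^{\mathfrak w}_{h_2}/N^{\mathfrak w}_{h_2+1}\to N^{\mathfrak w}_h/N^{\mathfrak w}_{h+1}$) to conclude immediately that $\Theta(\vec a,c)\equiv c^{a_1\cdots a_m}$ modulo $N^{\mathfrak w}_{h+1}$, hence $u\in N^{\mathfrak w}_{h+1}$. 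It then observes that $u=[u_1f_1^{\dots},u_2f_2^{\dots}]c^{-\dots}$ is a short word whose degrees are automatically controlled, and invokes the already-proved collection result for the free group (Theorem \ref{th-coord2FF}(i), built on Lemmas \ref{lem-bullet2} and \ref{lem-Free}) to rewrite $u$ uniquely as $\prod b_j^{x_j}$; uniqueness together with $u\in N^{\mathfrak w}_{h+1}$ forces the low-weight exponents to vanish. No explicit bracket expansion is ever carried out.

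The gap in your version is exactly the bookkeeping you flag at the end, and it is not cosmetic. The identity you write, $[x^n,y]=[x,y]^n\prod_{p\ge 2}[\underbrace{x,\dots,x}_p,y]^{P_p(n)}$, is not correct in a free nilpotent group of class $\ge 4$: the expansion of $[x^n,y]$ involves commutators of arbitrary bracket pattern in $x,y$ (e.g.\ terms like $[[x,y],[x,y]]$ once they are nontrivial), not only left-normed iterates, and the polynomial exponent attached to a given commutator is determined by a nontrivial collection argument, not a closed formula. The assertion that the exponent of a commutator $c_\star$ containing $p$ copies of $c'$ and $q$ of $c''$ is a polynomial of bidegree exactly $(p,q)$ in $(A',A'')$ is the substance of the Hall collection process; stating it is not the same as proving it, and it is precisely what the multiplicativity $F_{c_\star}=F_{c'}^pF_{c''}^q$ needs in order to give the bound $F_{c_\star}(Cd)$. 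The paper sidesteps this entirely by never expanding the bracket: once one knows $u\in N^{\mathfrak w}_{h+1}$ and that $u$ has a short controlled word, the unique Hall-type representation (Theorem \ref{th-coord2FF}(i)) does all the work. I would recommend replacing the explicit expansion by the modular/tensor argument, which is both shorter and available since Theorem \ref{th-coord2FF}(i) is proved before this lemma. (One more small mismatch: the lemma puts $u$ on the left of $c^{a_1\cdots a_k}$, whereas your inductive form $\Theta(\vec a',c')=(c')^{A'}u'$ has it on the right; this is harmless but should be kept consistent when you plug the inductive hypothesis into the bracket.)
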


\begin{proof} The proof is by 
induction on the length $m$ of the commutator $c$. When $m=1$, 
the statement is trivial.

Suppose the statement is true for commutators of length $\leq m-1.$ Let $c$
be a commutator with length $m,$ say $c=[f_1,f_2],$
 where $f_{1}, f_{2}$ are commutators of length $m_{1},$ $m_{2}<m$. 
Write $\overrightarrow{a}_{1}=(a_{1},...,a_{m_{1}},0,...,0)$
and $\overrightarrow{a}_{2}=(a_{m_{1}+1},...,a_{m_{1}+m_{2}},0,...,0)$, then
by definition 
\[
\Theta (\overrightarrow{a},c)=[\Theta (\overrightarrow{a}_{1},f_{1}),
\Theta (\overrightarrow{a}_{2},f_{2})]. 
\]%
By the induction hypothesis, 
$$\Theta (\overrightarrow{a}%
_{1},f_{1})=u_{1}f_{1}^{a_{1}...a_{m_{1}}},\;\;\Theta (\overrightarrow{a}%
_{2},f_{2})=u_{2}f_{2}^{a_{m_{1}+1}...a_{m_{1}+m_{2}}}$$ 
where $u_{1}$ can
be represented by a word $\omega _{1}$ in commutators $c_{p}$ with $%
w(c_{p})\succ w(f_1)$ and $\deg _{c_{p}}(\omega )\leq F_{c_{p}}(Cd)$. Similarly, $%
u_{2}$ can be represented by a word $\omega_2 $ in commutators $c_{p}$ with $%
w(c_{p})\succ w(f_2)$ and $\deg _{c_{p}}(\omega )\leq F_{c_{p}}(Cd).$

Suppose $w(f_1)=\overline{w}_{h_1},$ $w(f_{2})=\overline{w}_{h_2},$ and $%
w([f_{1},f_{2}])=\overline{w}_{h}$. By the natural group homomorphism%
\[
N_{h_1}^{\mathfrak{w}}/N_{h_1+1}^{\mathfrak{w}}\otimes N_{h_2}^{\mathfrak{w}%
}/N_{h_2+1}^{\mathfrak{w}}\rightarrow N_{h}^{\mathfrak{w}}/N_{h+1}^{\mathfrak{w%
}}, 
\]%
we have that 
\begin{eqnarray*}
\lbrack \Theta (\overrightarrow{a}_{1},f_{1}),\Theta (%
\overrightarrow{a}_{2},f_{2})] &\equiv &[f_{1}^{a_{1}...a_{m_{1}}},
f_{2}^{a_{m_{1}+1}...a_{m_{1}+m_{2}}}]%
\mbox{ mod }N_{h+1}^{\mathfrak{w}} \\
&\equiv &[f_{1},f_{2}]^{a_{1}...a_{m_{1}+m_{2}}}\mbox{ }
\mbox{mod }N_{h+1}^{\mathfrak{w}}\\
&\equiv & c^{a_{1}...a_{m}}\mbox{ }\mbox{mod }N_{h+1}^{\mathfrak{w}}.
\end{eqnarray*}%
Therefore $u=\Theta (\overrightarrow{a},c)c^{-a_{1}...a_{m}}\in $ $N_{h+1}^{%
\mathfrak{w}},$ and since%
\[
u=[u_{1}f_{1}^{a_{1}...a_{k_{1}}}
,u_{2}f_{2}^{a_{k_{1}+1}...a_{k_{1}+k_{2}}}]c^{-a_{1}...a_{k}}, 
\]%
it can be represented by a word $\omega $ such that $\deg
_{c_i}\omega \leq 5F_{c_i}(Cd)$ for all $i.$ 
Then by Theorem \ref{th-coord2FF}(i), we have 
\[
u=\prod\limits_{j: w(b_j)\succeq \bar{w}_h} b_{j}^{x_{j}}. 
\]%
with $\left\vert x_{j}\right\vert \leq F_{b_{j}}(C^{\prime }d)$.
\end{proof}

\begin{lem} \label{lem-cprods}
For any $h$, there exist  constants $M_{h}>0$ and $C_h>0$ 
such that, for any $c\in \mathfrak C^*_+(S)$ with $w(c)\succeq \bar{w}_h$, 
there a integer  $p=p(c)$ with $0\le p\le M_h$ and 
a $p$-tuple $(i_1,\dots,i_p)\in\{1,\dots,k\}^{p}$, such that 
for any $x\in \mathbb Z$ with  $|x|\le F_c(d)$, $d>0$,
we have 
\[
c^x=s_{i_{1}}^{x_{1}}s_{i_{2}}^{x_{2}}...s_{i_{p}}^{x_{p}} \mbox{ with } x_{j}
\in  \mathbb{Z}
,\;\;| x_{j}| \leq F_{i_{j}}(Cd), j=1,\dots, p.
\]%
\end{lem}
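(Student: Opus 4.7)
The plan is a backward induction on $h$, with the trivial base case $h = j_*+1$. For the inductive step, fix $c \in \mathfrak{C}^*_+(S)$ with $w(c) = \bar{w}_h$ (the case $w(c) \succ \bar{w}_h$ being handled directly by the induction hypothesis); let $m \le \ell$ be its length and $\sigma_1 \cdots \sigma_m$ its build-word. Writing $F_j := \lceil F_{\sigma_j}(d) \rceil$ (which satisfies $F_j \le F_{\sigma_j}(C_0 d)$ by (\ref{F1})) and $B_i := F_1 \cdots F_i$, $B_0 := 1$, the bound $|x| \le F_c(d) \le \prod_j F_j$ permits a mixed-radix decomposition
\[
x = \sum_{i=0}^{m-1} r_i B_i, \qquad |r_i| \le F_{i+1},
\]
so that $c^x = \prod_{i=0}^{m-1} c^{r_i B_i}$ and it suffices to handle each of the $m \le \ell$ factors individually.

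For each $i$ define $\vec{a}^{(i)} \in \mathbb{Z}^\ell$ by placing $F_1, \ldots, F_i$ in positions $1, \ldots, i$, the remainder $r_i$ in position $i+1$, and $1$ in every other position. Every entry of $\vec{a}^{(i)}$ is bounded by $F_{\sigma_j}(C_0 d)$, while $\prod_j a^{(i)}_j = r_i B_i$, so Lemma \ref{lem-Theta} produces $u_i \in N(k,\ell)^{\mathfrak{w}}_{h+1}$ with $\Theta(\vec{a}^{(i)}, c) = u_i\, c^{r_i B_i}$, where $u_i$ is represented by a word on commutators of weight strictly greater than $w(c)$, with degrees bounded by $F_{c_p}(C_1 d)$. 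Separately, I would expand the brackets in $\Theta(\vec{a}^{(i)}, c)$ through the identity $[u,v] = u^{-1}v^{-1}uv$, writing it as a word of length $O_\ell(1)$ in powers $s_{i_j}^{\pm a^{(i)}_j}$ of generators, with exponents bounded by $F_{i_j}(C_0 d)$ and a generator-letter sequence that depends only on the formal shape of $c$.

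It remains to dispose of $u_i$. I would apply Theorem \ref{th-coord2FF}(i) in the free nilpotent group $N(k,\ell)$ to write $u_i = \prod_{j=1}^{\tau} b_j^{y_j}$ with $|y_j| \le F_{b_j}(C_2 d)$; uniqueness, together with $u_i \in N(k,\ell)^{\mathfrak{w}}_{h+1}$ and Lemma \ref{lem-free}, forces $y_j = 0$ whenever $w(b_j) \prec \bar{w}_{h+1}$. The induction hypothesis then rewrites each surviving $b_j^{y_j}$ as a product of at most $M_{h+1}$ powers of generators with exponents bounded by $F_{i_k}(C_{h+1} C_2 d)$, using a generator sequence that depends only on $b_j$. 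Concatenating these expressions for $u_i^{-1}\Theta(\vec{a}^{(i)}, c)$ over $i = 0, \ldots, m-1$ then yields the desired decomposition of $c^x$, with $M_h := \ell(O_\ell(1) + \tau M_{h+1})$ and $C_h$ obtained by composing the various constants encountered with those in (\ref{F1}). The one subtle point is to verify that the output sequence $(i_1, \ldots, i_p)$ is independent of $x$: this is automatic, since $x$ enters only through the values of the remainders $r_i$ and the exponents $y_j$, whereas the generator letters themselves are pinned down by the (fixed) build-word of $c$, the (fixed) bracket-expansion pattern of $\Theta$, and the (inductively-fixed) generator sequences attached to each $b_j$.
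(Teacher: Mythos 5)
Your proof is correct and follows the paper's own argument in all essentials: backward induction on $h$, a mixed-radix decomposition of $x$ governed by $F_{\sigma_1}(d),\dots,F_{\sigma_m}(d)$, application of Lemma \ref{lem-Theta} to the vectors $\vec{a}^{(i)}$ to peel off a correction $u_i\in N(k,\ell)^{\mathfrak w}_{h+1}$, expansion of each $u_i$ in the free-nilpotent basis $(b_j)$ with controlled exponents, and the inductive hypothesis at level $h+1$ to finish. The only cosmetic deviations are taking $h=j_*+1$ rather than $h=j_*$ as the base case, and routing through Theorem \ref{th-coord2FF}(i) plus the uniqueness argument where the paper cites Lemma \ref{lem-Free} directly; both give the same bounds, and the generator sequence is indeed pinned down by the formal data exactly as you argue.
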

\begin{proof} The proof is by 
backward induction on $h$. When $h=j_*$ and  $c$ is a commutator with 
$w(c)=\overline{w}_{j^*},$ let $\sigma
=\sigma _{1}...\sigma _{m}$, $\sigma_i\in \{s_1,\dots, s_k\}$ 
be the formal word associated with $c$ 
(by forgetting brackets and inverses). Write%
\[
x=a_{0}\prod\limits_{1\leq j\leq m}\left\lfloor F_{\sigma
_{j}}(d)\right\rfloor +a_{1}\prod\limits_{2\leq j\leq m}\left\lfloor
F_{\sigma _{j}}(d)\right\rfloor +...+a_{m-1}\left\lfloor
F_{\sigma _{m}}(d)\right\rfloor +a_{m} 
\]%
with $a_{j}\in 
\mathbb{Z}
$, $|a_0|\le C$  and $\left\vert a_{j}\right\vert \leq F_{\sigma_{j}}(d).$ Write 
\[
\overrightarrow{a}_{0}=(a_{0}\left\lfloor F_{\sigma _{1}}(d)\right\rfloor
,\left\lfloor F_{\sigma _{2}}(d)\right\rfloor ...,\left\lfloor F_{\sigma
_{m}}(d)\right\rfloor ), 
\]
\[
\overrightarrow{a}_{j}=(\underbrace{1,...,1}_{j-1},a_{j},\left\lfloor
F_{\sigma _{j+1}}(d)\right\rfloor ,...,\left\lfloor F_{\sigma
_{m}}(d)\right\rfloor ), 
\]%
then%
\[
c^{x}\equiv \Theta (\overrightarrow{a}_{1},c)...\Theta (\overrightarrow{a}%
_{k},c)\mbox{ }\mbox{mod }N(k,\ell)_{j_*+1}^{\mathfrak{w}}. 
\]%
Since $N(k,\ell)_{j_*+1}^{\mathfrak{w}}=\{e\},$ we actually have equality. 
Unraveling the
brackets in $\Theta (\overrightarrow{a}_{j},c)$ we get an expression in
the powers of the generators satisfying the desired conditions.

Suppose the claim holds for $h+1.$ Given a commutator $c$ with $w(c)=\bar{w}_h$,
let again $\sigma_1,\dots \sigma_m$ ($m$ depends on  $c$)
be the formal word on the generators associated with $c$. For $x\in  \mathbb Z$, 
$|x|\le F_c(d)$, decompose $x$ as above and use Lemma \ref{lem-Theta} to write
\[
c^{x}=u_{0}^{-1}\Theta (\overrightarrow{a}_{0},c)...u_{m}^{-1}%
\Theta (\overrightarrow{a}_{m},c) 
\]%
where $u_{i}\in N(k,\ell)^\mathfrak w_{h+1}$ 
can be represented by a word $\omega _{i}$  with 
$\deg _{c_{j}}\upsilon _{i}\leq F_{c_{j}}(Cd)$
for all $j.$  By Lemma \ref{lem-Free}, $u_i$ can also be represented
in the form  $\prod_{j\ge h+1} b_j^{y_{i,j}}$ with $|y_{i,j}|\le F_{b_j}(Cd)$.
Applying the induction hypothesis to each terms of these products
we can now write $c^{x}$ in the desired form
$c^x=s_{i_{1}}^{x_{1}}s_{i_{2}}^{%
x_{2}}...s_{i_{p}}^{x_{p}}$.  \end{proof}

\begin{proof}[Proof of Assertion (ii) in Theorem \ref{th-coord2FF}]
By Theorem \ref{th-coord2FF}(i), any $g\in N(k,\ell)$ with
$\|g\|_{S,\mathfrak F}\le F_S^{-1}(r)$, $r>0$, as a unique 
representation of the form
$g= \prod_1^\tau b_j^{x_j}$ with $|x_j|\le F_{b_j}(Cr)$.
Applying Lemma \ref{lem-cprods} with  $c=b_j, x=x_j$ for each $j=1,\dots,\tau$
produces a sequence  $((i_n)_1^p$ (independent of $g$) and a sequence
$(x'_n)\in \mathbb Z^p$ (depending on $g$)  with $|x'_n|\le F_{s_{i_n}}(Cr)$ for all $n\in \{1,\dots,p\}$ and 
such that 
$$g= \prod _1^p s_{i_n}^{x'_n}.$$
\end{proof}

\subsection{End of the proof of Theorem \ref{th-coord2F}} 
In order to finish the proof of Theorem \ref{th-coord2F} for a general 
finitely  generated nilpotent group $G$, we simply need to improve upon 
Theorem \ref{th-coord2FW}(i). 
Namely, Theorem \ref{th-coord2FW}(i) provide a decomposition of any element 
$g$ with $\|f\|_{\mathfrak C(S),\mathfrak F}\le F_S(r)$ in the form
$$g=\prod_1^t c_i^{x_i},\;\;|x_i|\le F_{c_i}(Cr).$$
Here $(c_i)_1^t$ is an enumeration of $\mathfrak C_+^*(S)$ so that 
$w(c_i) \preceq w(c_{i+1})$.

Now, let $(b_i)_1^\tau$ be a collection of formal commutators  with
$w(b_i)\preceq w(b_{i+1})$. For $j\in \{1,\dots, j_*\}$, let
$$m_j= \max\{i: w(b_i)= \bar{w}_j\}.$$ 
Clearly, $w(b_i)=\bar{w}_j$ if and only if $m_{j-1}+1\le i\le m_j$.
Recall that $R^\mathfrak w_j$ is the torsion free rank of the abelian group
$G^\mathfrak w_j/G^\mathfrak w_{j+1}$. 
We make two natural assumptions on the sequence $(b_i)$:
\begin{itemize}
\item[(A1)] For each $j$, $\{b_i': m_{j-1}<i\le m_j\}$ generates $G^\mathfrak w_j$ modulo $G^{\mathfrak w}_{j+1}$.
\item[(A2)] For each $j$, $\{b_i': m_{j-1}<i\le m_{j-1}+R^\mathfrak w_j\}$ 
is free in  $G^\mathfrak w_j/G^{\mathfrak w}_{j+1}$. 
\end{itemize}
Note that, since $R^\mathfrak w_j$ is the torsion free rank of $G^\mathfrak w_j/G^\mathfrak w_{j+1}$, (A2) implies that (the image of) 
$\{b_i': m_{j-1}<i\le m_{j-1}+R^\mathfrak w_j\}$ 
generates a subgroup of finite index in $G^\mathfrak w_j/G^\mathfrak w_{j+1}$. 

\begin{lem}\label{lem-bullet3} Referring to the notion introduce above, 
assume that $(b_i)_1^\tau$ satisfies {\em (A1)}. Then there exists $C\in (0,\infty)$ such that, for any $h=1,\dots, j_*$, any $g\in G$ that
can be written in the form
$$g=\prod_{i:w(c_i)\succeq \bar{w}_h} c_i^{x_i},\;\;|x_i|\le F_{c_i}(r)$$
can also be written in the from
$$g=\prod_{i:w(b_i)\succeq \bar{w}_h} b_i^{y_i},\;\;|x_i|\le F_{b_i}(Cr).$$
\end{lem}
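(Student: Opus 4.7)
The plan is to prove Lemma \ref{lem-bullet3} by backward induction on $h \in \{1,\dots,j_*+1\}$, with the case $h=j_*+1$ being vacuous. The base case $h = j_*$ is essentially linear algebra: $G^\mathfrak{w}_{j_*}$ is abelian (since $[G^\mathfrak{w}_{j_*},G^\mathfrak{w}_{j_*}] \subset G^\mathfrak{w}_{j_*+1} = \{e\}$) and by (A1) is generated by $\{b_i : w(b_i)=\bar{w}_{j_*}\}$. Hence each $c_i$ with $w(c_i)=\bar{w}_{j_*}$ admits a fixed representation $c_i = \prod_{j:w(b_j)=\bar{w}_{j_*}} b_j^{\alpha_{i,j}}$, and summing gives $g = \prod_j b_j^{Y_j}$ with $|Y_j| \leq C\max_i |x_i| \leq C F_{c_i}(r) \leq F_{b_j}(Cr)$, using (\ref{F2}) and the doubling hypothesis (\ref{F1}).

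For the induction step, assume the claim for $h+1$ and take $g = \prod_{i:w(c_i)\succeq \bar{w}_h} c_i^{x_i}$ with $|x_i| \leq F_{c_i}(r)$. Since the $c_i$'s are ordered by non-decreasing weight, split $g = g_h \cdot \tilde{g}$ where $g_h = \prod_{i:w(c_i)=\bar{w}_h} c_i^{x_i}$ and $\tilde{g} = \prod_{i:w(c_i)\succeq \bar{w}_{h+1}} c_i^{x_i}$. Apply (A1) at layer $h$: for each $c_i$ with $w(c_i)=\bar{w}_h$, write $c_i = \bigl(\prod_{j:w(b_j)=\bar{w}_h} b_j^{\alpha_{i,j}}\bigr) \cdot v_i$ where the constants $\alpha_{i,j}$ depend only on $(G,S,\mathfrak{w})$ and $v_i \in G^\mathfrak{w}_{h+1}$ admits a representation as a word of bounded complexity over $\{c_k : w(c_k)\succeq \bar{w}_{h+1}\}^{\pm 1}$. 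Substituting into $g_h$ yields a word over $\{b_j : w(b_j)=\bar{w}_h\}^{\pm 1} \cup \{c_k : w(c_k)\succeq \bar{w}_{h+1}\}^{\pm 1}$ whose degrees are controlled by $CF_{b_j}(r)$ on the $b_j$'s and by $CF_{c_k}(r)$ on the $c_k$'s (the second bound uses $F_{c_i}(r) \leq CF_{c_k}(r)$ whenever $w(c_k)\succeq \bar{w}_{h+1} \succ \bar{w}_h = w(c_i)$, together with doubling).

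Now run the commutator collecting procedure of Lemma \ref{lem-bullet2}, but with the $b_j$'s (of weight $\bar{w}_h$) playing the role of the lowest-weight generators being collected. Every commutator produced during collection has weight $\succeq \bar{w}_h + \bar{w}_h \succ \bar{w}_h$ or $\succeq \bar{w}_h + \bar{w}_{h+1}$, hence lies in $G^\mathfrak{w}_{h+1}$. The same binomial bookkeeping as in Lemma \ref{lem-bullet2} (using $F_{c'}F_{b_j}^p = F_c$ type identities) yields
\[
g_h = \prod_{j:w(b_j)=\bar{w}_h} b_j^{Y_j} \cdot w_h,
\]
with $|Y_j| \leq F_{b_j}(Cr)$ and $w_h \in G^\mathfrak{w}_{h+1}$ represented by a word over $\{c_k : w(c_k)\succeq \bar{w}_{h+1}\}^{\pm 1}$ of degree at most $F_{c_k}(Cr)$ in each $c_k$. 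Then $g = \prod_j b_j^{Y_j} \cdot (w_h \tilde{g})$ where $w_h\tilde{g}$ is represented by a word over $\{c_k:w(c_k)\succeq \bar{w}_{h+1}\}^{\pm 1}$ of controlled degree. Applying Lemma \ref{lem-bullet2} produces a collected expression $w_h\tilde{g} = \prod_{i:w(c_i)\succeq \bar{w}_{h+1}} c_i^{x'_i}$ with $|x'_i| \leq F_{c_i}(C'r)$, and the induction hypothesis converts this to $\prod_{i:w(b_i)\succeq \bar{w}_{h+1}} b_i^{y_i}$ with $|y_i| \leq F_{b_i}(C''r)$. Combining with the leading $b_j^{Y_j}$ factors and relabelling yields the desired form.

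The main obstacle is purely bookkeeping: verifying that the commutator-collection estimates of Lemma \ref{lem-bullet2} carry over when the generators being collected are the $b_j$'s (arbitrary formal commutators of weight $\bar{w}_h$) rather than lowest-order letters from $\mathfrak{C}^*_+(S)$. Since each $b_j$ is itself a formal commutator, the commutators $[c, b_j]$ produced during collection are again formal commutators of appropriate weight, and the binomial counting lemma applies verbatim; the argument is essentially a repackaging of Lemma \ref{lem-bullet2} with a different choice of pivot commutators, and the only genuine new ingredient is assumption (A1) which provides the initial substitution $c_i \equiv \prod_j b_j^{\alpha_{i,j}}$ modulo $G^\mathfrak{w}_{h+1}$.
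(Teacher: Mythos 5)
Your proof is correct and follows exactly the strategy the paper has in mind: the paper itself gives no details for Lemma~\ref{lem-bullet3}, saying only that the argument is a backward induction on $h$ similar to the proof of Lemma~\ref{lem-Free}, and your write-up (substitute each weight-$\bar{w}_h$ commutator $c_i$ by a bounded product of $b_j$'s times a remainder $v_i\in G^\mathfrak{w}_{h+1}$ via (A1), recollect with Lemma~\ref{lem-bullet2}, push the residue and $\tilde g$ into $G^\mathfrak{w}_{h+1}$, and recurse) is precisely that argument spelled out. One small remark: the worry you raise in the final paragraph about whether Lemma~\ref{lem-bullet2} applies with the $b_j$'s as pivots is a non-issue, since the $b_j$'s are themselves formal commutators, hence (up to sign) elements of $\mathfrak{C}^*_+(S)$, and the ones of weight $\bar{w}_h$ are the minimal-weight commutators appearing in the substituted word, so Lemma~\ref{lem-bullet2} applies verbatim with the collected expression having zero exponents on the weight-$\bar{w}_h$ commutators that are not among the $b_j$'s.
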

\begin{proof}The proof is by backward induction on $h$ and is similar to the proof of Lemma \ref{lem-Free}. The details are omitted.
\end{proof}

\begin{pro} Assume that, for each $j$, the image of 
$$\{b_i: m_{j-1}+1\le i\le m_{j-1}+R_j\}$$ 
in $G^\mathfrak w_j/G^\mathfrak w_{j+1}$
generates a subgroup of finite index in
$G^\mathfrak w_j/G^\mathfrak w_{j+1}$. Then there exists a constant $C>0$ 
such that for any word $\omega $ in 
$\{b_i: w(b_i)\succeq \bar{w}_h\}^{\pm 1}$ with $\deg _{b_{i}}\omega
\leq F_{b_{i}}(r)$ for all $i,$ there is a word $\omega'$ of the form
\[
\omega'=\prod_{i=m_{h-1}+1}^{\tau}b_{i}^{x_{i}}\]
with 
\[|x_i| \le \left\{\begin{array}{cc} F_{b_i}(Cr) &\mbox{ for } m_{j-1}+1\le i\le m_{j-1}+R^\mathfrak w_j\\
C& \mbox{ for } m_{j-1}+R^\mathfrak w_j+1\le i\le m_j\end{array}  \right. 
\]%
for 
$j\in \{h,\dots, j_*\}$ 
and such that $\pi (\omega')=\pi (\omega )$. 
\end{pro}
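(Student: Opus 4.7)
The plan is a forward induction on the weight level $j$, from $j = h$ up to $j_*$: at each step we bring the exponents at level $j$ into the required form (bounded by $F_{b_i}(Cr)$ on $I_1(j) := \{m_{j-1}+1,\dots,m_{j-1}+R^\mathfrak w_j\}$ and by a constant on $I_2(j) := \{m_{j-1}+R^\mathfrak w_j+1,\dots,m_j\}$), while absorbing the residual error into a tail word supported on strictly higher weight levels.

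To initialize, apply Lemma \ref{lem-bullet2} to $\omega$ viewed as a word in $\{c_i\}^{\pm 1}$, then Lemma \ref{lem-bullet3} (whose hypothesis (A1) is granted) to rewrite the output in terms of $(b_i)$, giving $\pi(\omega) = \prod_{i \ge m_{h-1}+1} b_i^{x_i}$ with $|x_i| \le F_{b_i}(C_0 r)$. Next, observe that the finite-index hypothesis on the image of $\{b_k : k \in I_1(j)\}$ inside $G^\mathfrak w_j/G^\mathfrak w_{j+1}$ yields, uniformly in $j \in \{h,\dots,j_*\}$, a fixed integer $N$ and fixed relations
\[
b_i^N = \Big(\prod_{k \in I_1(j)} b_k^{a_{i,k}}\Big)\, z_i, \qquad z_i \in G^\mathfrak w_{j+1}, \quad i \in I_2(j),
\]
in which $a_{i,k}, z_i$ depend only on $(G,S,\mathfrak w)$.

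For the inductive step, suppose $\pi(\omega)$ has already been rewritten as a ``clean'' prefix at levels $h,\dots,j-1$ (with the bounds required by the proposition) followed by a tail $\omega^{(j-1)}$ supported on $\{b_i : w(b_i) \succeq \bar{w}_j\}^{\pm 1}$ satisfying $\deg_{b_i}\omega^{(j-1)} \le F_{b_i}(C_{j-1} r)$. Reapply Lemmas \ref{lem-bullet2} and \ref{lem-bullet3} to put $\omega^{(j-1)}$ in the collected form $\prod_{i \ge m_{j-1}+1} b_i^{y_i}$ with $|y_i|\le F_{b_i}(C'_{j-1} r)$. For each $i \in I_2(j)$, divide $y_i = N q_i + s_i$ with $|s_i| < N$ and substitute $b_i^{y_i} = (\prod_k b_k^{a_{i,k}} z_i)^{q_i} b_i^{s_i}$. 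Since $[G^\mathfrak w_j, G^\mathfrak w_j] \subset G^\mathfrak w_{j+1}$, expanding and regrouping the level-$j$ factors gives
\[
\omega^{(j-1)} \;\equiv\; \prod_{k \in I_1(j)} b_k^{\tilde y_k} \prod_{i \in I_2(j)} b_i^{s_i} \pmod{G^\mathfrak w_{j+1}}, \qquad |\tilde y_k| \le F_{b_k}(C_j r),
\]
and all commutator corrections, the $z_i^{q_i}$, and the portion of $\omega^{(j-1)}$ supported on weights $\succ \bar{w}_j$ combine into the new tail $\omega^{(j)}$ in $\{b_i : w(b_i) \succeq \bar{w}_{j+1}\}^{\pm 1}$.

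The main obstacle is verifying that $\deg_{b_i}\omega^{(j)} \le F_{b_i}(C_j r)$. This rests on the multiplicativity $F_{[u,v]} = F_u F_v$ together with the compatibility (\ref{F1})--(\ref{F2}): any commutator $[b_p, b_q]$ generated by rearranging two level-$j$ factors has weight $\succeq 2\bar{w}_j \succ \bar{w}_j$ and weight-function $F_{b_p} F_{b_q}$, while the number of such commutators contributed per $i \in I_2(j)$ is at most $O(|q_i|) = O(F_{b_i}(C_{j-1} r)/N)$; consequently their aggregate degree at any commutator $c$ with $w(c)\succeq \bar{w}_{j+1}$ stays within $F_c(C_j r)$ for a suitably enlarged constant $C_j$ depending only on $C_{j-1}$, $N$, $\max_{i,k}|a_{i,k}|$, the structural constants of $G$, and the doubling constant of (\ref{F1}). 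The induction runs for $j_* - h$ steps and terminates with $C := C_{j_*}$ depending only on $(G, S, \mathfrak w, \mathfrak F)$, which is the desired conclusion.
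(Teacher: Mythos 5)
Your proposal is correct and takes essentially the same approach as the paper's proof: at each weight level you collect into ordered form via Lemmas \ref{lem-bullet2}--\ref{lem-bullet3}, use the finite-index relations $b_i^N \equiv \prod_{k\in I_1(j)} b_k^{a_{i,k}} \pmod{G^{\mathfrak w}_{j+1}}$ to reduce the exponents on $I_2(j)$ to a bounded range, push the resulting errors (commutator corrections and the $z_i^{q_i}$) into the higher-weight tail, and check that the degree bounds are preserved because $\mathbf F_j \preceq F_c$ for any $c$ of weight $\succeq\bar w_{j+1}$. The only difference is presentational: you phrase it as a forward iteration from $j=h$ up to $j_*$, while the paper runs a backward induction on $h$ with the abelian case $h=j_*$ as the base; these are the same argument unrolled in opposite directions.
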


\begin{proof} The proof is by 
backward induction on $h$. When $h=j_*$,$G_{j_*}^{\mathfrak{w}}$ is abelian
and the desired result holds.

In general, let  $\omega$ as in the proposition. By an application
of Lemmas \ref{lem-bullet2}-\ref{lem-bullet3}, we obtain a word
$\omega_{1}=\prod_{j=m_{h-1}+1}^{t}b_{j}^{x_{j}}$
with $|x_j|\le F_{b_j}(Cr)$ for all $j\ge m_{h-1}+1$ and such that 
$\pi(\omega)=\pi(\omega_1)$.

By hypothesis, the images of the commutators $
b_{j}, m_{h-1}+1\leq j\leq m_{h-1}+R_{h}^{\mathfrak{w}}$,
generates a subgroup of
finite index in $G_{h}^{\mathfrak{w}}/G_{h+1}^{\mathfrak{w}}$. 
Let $N_{h}$
denote the index. Then for $m_{h-1}+R_{h}^{\mathfrak{w}}+1\leq j\leq m_{h},$
there exists $a_{1}^{(j)},...,a_{R_{h}^{\mathfrak{w}}}^{(j)}\in 
\mathbb{Z}
$ such that 
$$b_{j}^{N_{h}}=
b_{m_{h-1}+1}^{a_{1}^{(j)}}...b_{m_{h-1}+R_{h}^{\mathfrak{w}}}^{a_{R_{h}^{%
\mathfrak{w}}}^{(j)}} \mbox{ mod }G_{h+1}^{\mathfrak{w}},$$ 
 that is $$\pi
(b_{j}^{N_{h}})=\pi (b_{m_{h-1}+1}^{a_{1}^{(j)}}...b_{m_{h-1}+R_{h}^{%
\mathfrak{w}}}^{a_{R_{h}^{\mathfrak{w}}}^{(j)}}v_{j}),$$ where $v_{j}$ is a
word in $\{c_i:w(c)\succeq \overline{w}_{h+1}\}^{\pm 1}.$
In
$$\omega_{1}=\prod_{j=m_{h-1}+1}^{t}b_{j}^{x_{j}},$$
for each $j\in \{m_{h-1}+R_{h}^{\mathfrak{w}}+1,\dots, m_{h}\},$ 
write $x_{j}=z_{j}N_{h}+y_{j}$ with $0\leq y_{j}<N_{h}$ and
replace $b_{j}^{N_{h}}$ by
the word 
$$\omega _{j}=b_{m_{h-1}+1}^{a_{1}^{(j)}}...b_{m_{h-1}+R_{h}^{%
\mathfrak{w}}}^{a_{R_{h}^{\mathfrak{w}}}^{(j)}}v_{j}.$$
This produce a new word 
\[
\omega _{1}^{\prime }=\prod\limits_{j=m_{h-1}+1}^{m_{h-1}+R_{h}^{\mathfrak{w%
}}}b_{j}^{x_{j}}\cdot \prod\limits_{j=m_{h-1}+1+R_{h}^{\mathfrak{w}%
}}^{m_{h}}\omega _{j}^{z_{j}}b_{j}^{y_{j}}\cdot
\prod\limits_{j=m_{h}+1}^{t}b_{j}^{x_{j}} 
\]%
satisfying $\pi (\omega _{1}^{\prime })=\pi (\omega _{1})$. For $%
m_{h-1}+1\leq j\leq m_{h-1}+R_{h}^{\mathfrak{w}},$ 
\[
\deg _{b_{j}}\omega _{1}^{\prime }\leq \left\vert x_{j}\right\vert
+\sum_{m_{h-1}+R_{h}^{\mathfrak{w}}+1\leq i\leq m_{h}}|
a_{j-m_{h-1}}^{(i)}| \left\vert x_{i}\right\vert , 
\]%
By hypothesis, $\deg _{b_{j}}\omega \leq F_{b_{j}}(Cd)\leq \mathbf{F}%
_{h}(C_1d)$ for all $m_{h-1}+1\leq j\leq m_{h}$ and
$$\max\{
|a_{n}^{(i)}|: m_{h-1}+R_{h}^{\mathfrak{w}%
}+1\leq i\leq m_{h}, 1\le n\le R^\mathfrak w_h\} = C_h<\infty.$$  
Hence, for $m_{h-1}+1\leq j\leq m_{h-1}+R_{h}^{\mathfrak{w}}$, we obtain 
$$
\deg _{b_{j}}\omega _{1}^{\prime } \leq 
C_1 (m_{h}-m_{h-1})
\mathbf{F}_{h}(Cd)  \leq \mathbf{F}_{h}(C_{2}d).$$
For $%
m_{h-1}+R_{h}^{\mathfrak{w}}+1\leq j\leq m_{h}$, $\deg _{b_{j}}\omega \leq
N_{h}$.  Finally, for any $c\in \{c_i: 1\le i\le t\}$ with
 $w(c)\succ \bar{w}_h$, we have $F_{c}\succ \mathbf{F}_{h}$ and 
\begin{eqnarray*}
\deg _{c}\omega _{1}^{\prime } &\leq &\deg _{c}\omega
_{1}+\sum_{m_{h-1}+R_{h}^{\mathfrak{w}}+1\leq k\leq m_{h}}\left\vert
z_{k}\right\vert \deg _{c}v_{k} \\
&\leq &F_{c}(C_{3}d).
\end{eqnarray*}%
Applying Lemmas \ref{lem-bullet2}-\ref{lem-bullet3} 
to $\omega _{1}^{\prime },$ we obtain a
word $\omega'$ with $\pi(\omega)=\pi(\omega')$ and 
\[
\omega _{2}=\prod\limits_{j=m_{h-1}+1}^{m_{h-1}+R_{h}^{\mathfrak{w}}}b_{j}^{%
\widetilde{x_{j}}} \prod\limits_{j=m_{h-1}+1+R_{h}^{\mathfrak{w}%
}}^{m_{h}}b_{j}^{y_{j}} \prod\limits_{j>m_h}b_{j}^{\widetilde{%
x_{j}}} 
\]%
where $\left\vert \widetilde{x_{j}}\right\vert \leq \mathbf{F}_{h}(C_{1}d)$
for $m_{h-1}+1\leq j\leq m_{h-1}+R_{h}^{\mathfrak{w}};$ $0\leq y_{j}<N_{h}$
for $m_{h-1}+R_{h}^{\mathfrak{w}}+1\leq j\leq m_{h}$, and $\left\vert 
\widetilde{x}_{j}\right\vert \leq F_{c_{j}}(C_{2}^{\prime }d)$ for all $%
j>m_{h}.$ Now, apply the induction hypothesis to 
$\prod\limits_{j=m_{h}+1}^{t}b_{j}^{%
\widetilde{x_{j}}},$ to obtain the desired conclusion.
\end{proof}

We end with the following simple improvement of the last statement in 
Theorem \ref{th-coord2F}. The p[roof is a simple combination of the 
previous proposition together with Lemma \ref{lem-cprods}.

\begin{theo}\label{th-coord2Fimp}
Let $G$ be a nilpotent group equipped with a generating $k$-tuple 
$S=(s_1,\dots,s_k)$. Let $\mathfrak w$, $\mathfrak F$ 
be weight and weight-function systems on $S$ satisfying 
{\em (\ref{F1})-(\ref{F2})}.  
Let $\Sigma=(c_1,\dots,c_t)$ be a tuple of formal commutators in 
$\mathfrak C(S)$ with  non-decreasing weights  $w(c_1)\preceq 
\dots\preceq  w(c_t)$. Let $m_j$, $j=0,\dots, j_*$ be defined by
$$\{c_i: w(c_i)=\bar{w}_j\}
= \{c_i: m_{j-1}<i\le m_j\}.$$
Assume  that (the image of) $\{c_i: w(c_i)=\bar{w}_j\}$
generates $G^\mathfrak w_j$ modulo $G^\mathfrak w_{j+1}$ and
that $ \{c_i: m_{j-1}<i\le m_{j-1} +R^\mathfrak w_j\}$
is free in $G^\mathfrak w_j/G^\mathfrak w_{j+1}$. 

There exist an integer $p=p(G,S,\mathfrak F)$, 
a constant $C=C(G,S,\mathfrak F)$ and a sequence
$(i_1,\dots,i_p)\in \{1,\dots, k\}^p$ such that 
if $g$ can be expressed as a word $\omega$ 
over $\mathfrak C(S)$
with $\mbox{\em deg}_{c}(\omega)\le F_{c}(r)$ for some $r\ge 1$  and all $c\in \mathfrak C(S)$ then
$g$ can be expressed in the form
$$g=\prod_{j=1}^p s_{i_j}^{x_j}
\mbox{ with } |x_j|\le C \left\{\begin{array}{ll} F_{{i_j}}(r) & \mbox{ if }
s_{i_j}\in \mbox{\em core}(S,\mathfrak w,\Sigma)\\
1 & \mbox{ if } s_{i_j}\not\in \mbox{\em core}(S,\mathfrak w,\Sigma).
\end{array} \right.$$
\end{theo}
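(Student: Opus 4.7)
The plan is to combine the proposition that immediately precedes Theorem \ref{th-coord2Fimp} with Lemma \ref{lem-cprods} applied commutator-by-commutator, exploiting the definition of $\mbox{core}(S,\mathfrak{w},\Sigma)$ to separate the contributions of core and non-core generators. Since the whole proof machinery of the section has been carried out in free nilpotent groups and then pushed down via the canonical projection, there is no further algebraic obstacle at the group-theoretic level.

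First, I would apply the preceding proposition to the sequence $(b_i)_{i=1}^{t}=\Sigma=(c_1,\dots,c_t)$ itself, which satisfies (A1)--(A2) by the hypotheses of the theorem. This yields a representation
$$g = \prod_{i=1}^{t} c_i^{x_i}$$
with $|x_i|\le F_{c_i}(Cr)$ whenever $m_{j-1}+1\le i\le m_{j-1}+R^{\mathfrak{w}}_j$ (the free range at weight level $j$) and $|x_i|\le C$ otherwise. Then, for each $c_i\in\Sigma$, Lemma \ref{lem-cprods} provides an integer $p(i)$ and a fixed tuple of generator indices $(i^{(i)}_1,\dots,i^{(i)}_{p(i)})$, depending only on $c_i,\mathfrak{w},\mathfrak{F}$, such that any power $c_i^{x}$ with $|x|\le F_{c_i}(d)$ can be rewritten as $\prod_{\ell=1}^{p(i)} s_{i^{(i)}_\ell}^{y^{(i)}_\ell}$ with $|y^{(i)}_\ell|\le F_{i^{(i)}_\ell}(C'd)$. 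Applying this to every factor $c_i^{x_i}$ and concatenating from $i=1$ to $t$ produces the advertised decomposition $g=\prod_{j=1}^{p} s_{i_j}^{x_j}$, in which the sequence $(i_1,\dots,i_p)$ is the concatenation of the templates $(i^{(i)}_\ell)_{\ell=1}^{p(i)}$ for $i=1,\dots,t$. Even when $x_i=0$, we insert the template with exponents set to zero, so that $(i_1,\dots,i_p)$ is fixed once $\Sigma$ is chosen and is independent of $g$ and $r$.

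The main step is then the exponent dichotomy. By the very definition of $\mbox{core}(S,\mathfrak{w},\Sigma)$, every letter in the build-sequence of a free commutator $c_i$ (one with $m_{j-1}+1\le i\le m_{j-1}+R^{\mathfrak{w}}_j$) belongs to $\mbox{core}(S,\mathfrak{w},\Sigma)$; hence the generators $s_{i^{(i)}_\ell}$ that Lemma \ref{lem-cprods} produces from such a $c_i$ are all core generators, and their exponents $|y^{(i)}_\ell|$ are bounded by $CF_{i^{(i)}_\ell}(r)$. For $i$ outside the free ranges, $|x_i|\le C$ is a bounded constant, so invoking Lemma \ref{lem-cprods} with $d$ taken to be an absolute constant produces bounded exponents $|y^{(i)}_\ell|$; in particular, whenever a non-core generator $s_{i_j}$ appears in the concatenated decomposition, it can only come from the expansion of a non-free $c_i^{x_i}$, so its exponent is bounded by a constant depending only on $G,S,\mathfrak{F},\Sigma$.

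The only slightly delicate point I anticipate is the verification that $(i_1,\dots,i_p)$ is genuinely fixed rather than merely of fixed length; this is what forces us to retain the templates from Lemma \ref{lem-cprods} verbatim (including for vanishing $x_i$) so that the full template is selected as soon as $\Sigma$ is chosen. The remainder of the argument is book-keeping: collecting the bounds $|y^{(i)}_\ell|\le CF_{i^{(i)}_\ell}(r)$ on the free side and $|y^{(i)}_\ell|\le C$ on the non-free side across the concatenation yields the stated dichotomy and completes the proof.
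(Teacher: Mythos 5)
Your proposal is correct and is essentially identical to the paper's own argument, which is stated only as a one-line sketch: combine the preceding proposition (applied to $\Sigma$, whose hypotheses hold since a maximal free family generates a finite-index subgroup of $G^{\mathfrak w}_j/G^{\mathfrak w}_{j+1}$) with Lemma \ref{lem-cprods} applied factor by factor. Your identification of the key point --- that the expansion of a free-range $c_i^{x_i}$ only produces letters from the build-word of $c_i$, hence only core generators, while non-core generators can only arise from the boundedly-powered non-free factors --- is exactly the observation that turns Theorem \ref{th-coord2F} into this refinement.
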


\bibliographystyle{plain}
\def\cprime{$'$}

\end{document}